\documentclass[aos]{imsart}

\RequirePackage{amsthm,amsmath,amsfonts,amssymb}
\RequirePackage[authoryear]{natbib}
\RequirePackage[colorlinks,linkcolor=blue,anchorcolor=blue,citecolor=blue,urlcolor=blue]{hyperref}
\RequirePackage{graphicx}
\RequirePackage{booktabs}
\setcitestyle{round}
\graphicspath{{figures/}{./}}

\startlocaldefs

\DeclareMathOperator{\Tr}{tr}
\DeclareMathOperator{\diag}{diag}
\DeclareMathOperator{\var}{var}

\theoremstyle{plain}
\newtheorem{theorem}{Theorem}
\newtheorem{proposition}{Proposition}
\newtheorem{lemma}{Lemma}
\newtheorem{corollary}{Corollary}
\theoremstyle{definition}

\newtheorem{assumption}{Assumption}
\newtheorem{remark}{Remark}

\endlocaldefs

\begin{document}

\begin{frontmatter}
\title{Local Optimality and Rigidity of Frobenius Tests for Dense High-Dimensional Covariance Alternatives}
\runtitle{Local Optimality and Rigidity of Frobenius Tests}

\begin{aug}
\author[A]{\fnms{Peter Reinhard}~\snm{Hansen}\ead[label=e1]{hansen@unc.edu}}
\author[B]{\fnms{Werner}~\snm{Ploberger}\ead[label=e2]{werner@ploberger.com}}
\author[C]{\fnms{Chen}~\snm{Tong}\ead[label=e3]{tongchen@xmu.edu.cn}}

\address[A]{Department of Economics,
University of North Carolina at Chapel Hill\printead[presep={,\ }]{e1}}

\address[B]{Department of Economics,
Washington University in St.\ Louis\printead[presep={,\ }]{e2}}

\address[C]{School of Economics,
Xiamen University\printead[presep={,\ }]{e3}}
\end{aug}

\begin{abstract}
We study identity testing for high-dimensional covariance matrices
against dense alternatives of unknown direction, with $p/n\to\gamma$.
Along a globally positive quadratic precision path, mixing Gaussian
alternatives over a Gaussian Orthogonal Ensemble direction yields a
contiguous experiment whose log likelihood reduces to the corrected
Frobenius statistic; its upper-tail test attains the limiting
weighted-power envelope at every fixed strength.  Fixing the prior's
Frobenius radius perturbs the mixture by only $O(p^{-1/2})$ in total
variation, and exact whitening carries the experiment, the statistic, and
its null law to any known null covariance.  Separately, under a
product-coordinate null, feasibility needs only $4+\eta$ moments, plus
identical distributions over time when means are estimated;
studentization and an exact degrees-of-freedom correction preserve the
local power.  A stability inequality turns near-envelope attainment into
null agreement with the Frobenius rule, so uniform noninferiority on the
typical dense bulk precludes gains at any contiguous alternative.  For
trace-matched rank-one alternatives, the corrected statistic is the first
likelihood direction when $\vartheta_n\to0$ and $n\vartheta_n\to\infty$;
at fixed strength, the log likelihood ratio in the
Onatski--Moreira--Hallin fixed-spike benchmark is governed by a richer
linear spectral statistic below the Baik--Ben Arous--P\'ech\'e threshold,
while eigenvalue separation permits cost-free largest-eigenvalue
enhancement above it.  Simulations illustrate the theory.
\end{abstract}

\begin{keyword}[class=MSC2020]
\kwdgroup[type=primary]{\kwd{62H15}}
\kwdgroup[type=secondary]{\kwd{62C15}\kwd{62E20}\kwd{60B20}}
\end{keyword}

\begin{keyword}
\kwd{High-dimensional covariance matrices}
\kwd{covariance identity testing}
\kwd{local power}
\kwd{Bayes optimality}
\kwd{Gaussian Orthogonal Ensemble}
\kwd{dense alternatives}
\end{keyword}

\end{frontmatter}

\section{Introduction}
\label{sec:introduction}

Testing restrictions on covariance and correlation matrices is a central problem in multivariate statistics. For fixed dimension $p$ and increasing sample size $n$, classical Gaussian likelihood-ratio statistics have chi-square null limits. The situation is different when $p/n$ tends to a positive constant: sample eigenvalues remain dispersed even under the identity null, and the usual likelihood-ratio chi-square calibration fails. Standard log-determinant tests also become unusable when the sample covariance is singular. Quadratic alternatives include the Frobenius criteria of \citet{John:1971} and \citet{Nagao:1973}, whose proportional-regime behavior was studied by
 \citet{LedoitWolf:2002}, with related high-dimensional tests developed by \citet{Srivastava:2005} and \citet{Schott:2005}.  For fixed $p$, John's criterion is locally most powerful invariant; Theorem~\ref{thm:goe-optimality} and Corollary~\ref{cor:composite} give the proportional-regime counterpart of that optimality.  Other approaches include
random-matrix corrections to likelihood-ratio and sphericity tests
\citep{BaiJiangYaoZheng:2009,WangYao:2013,JiangYang:2013}, and quadratic
U-statistic tests \citep{ChenZhangZhong:2010,CaiMa:2013,LiChen:2012}.
More recent work has broadened null validity, covariance structures, and
dense/sparse adaptivity: \citet{ZhengChengGuoZhu:2019} treat
high-dimensional correlation matrices directly, \citet{HanWu:2020} allow
general covariance structures with nonclassical calibration,
\citet{YuLiXue:2024} combine quadratic and maximum-type procedures to adapt
between dense and sparse signals, and \citet{Xiong:2025} develops
U-statistic identity tests.  Our contribution is complementary: we identify
a likelihood experiment, its full local-power envelope, and the rigidity of
attaining that envelope.

This literature provides reliable null calibration, minimax separation
rates, and power calculations for important alternatives.  A different
question remains: \emph{which local experiment makes a high-dimensional
Frobenius test optimal, and how unique is the result?}  A rate
result alone cannot answer this question.  Nor can pointwise power at a
selected sequence of alternatives: a quadratic test and a largest-eigenvalue
test respond to different distributions of the departure across
eigendirections, even when the two alternatives have the same Frobenius
norm.  We address the question by specifying an
isotropic dense experiment, deriving its mixture likelihood, and comparing
that likelihood directly with the feasible corrected Frobenius statistic.

The experiment uses a Gaussian Orthogonal Ensemble (GOE) matrix $H$ as the
direction of departure.  The GOE law is orthogonally invariant and its
direction is exactly uniform on the Frobenius sphere of symmetric matrices.
For fixed $\omega\ge0$, set $A=\omega H/n$ and define
\begin{equation}
\Sigma^{-1}_\omega(H)=I_p+A+A^2.
\label{eq:alternative}
\end{equation}
The path is positive definite for every realization of $H$, because
$1+x+x^2>0$ for all $x\in\mathbb{R}$.  It is spectrally dense: a nonvanishing
fraction of the eigenvalues move on the $n^{-1/2}$ scale, while the aggregate
Frobenius departure has a nondegenerate limit.  Mixing over $H$ assigns no preferred orientation to the alternatives.  
This provides a dense counterpart to spiked covariance models with uniformly 
distributed spike directions, which are used to study the local power of invariant eigenvalue tests,
\citep{Onatski:2009,OnatskiMoreiraHallin:2013}.
In spiked covariance models, the leading sample eigenvalue undergoes the
Baik--Ben Arous--P{\'e}ch{\'e} (BBP) phase transition: a separated sample
eigenvalue emerges only after the population spike crosses a critical
strength \citep{BaikBenArousPeche:2005,BaikSilverstein:2006}.

The first contribution is a likelihood characterization.  The integrated
likelihood follows the weighted-average-power tradition of
\citet{AndrewsPloberger:1994} and \citet{CarrascoHuPloberger:2014}.  The GOE mixture
likelihood is analytically intractable in its original form, but its
data-dependent part is only quadratic in $H$.  We use an exponential
embedding as a proof device and compare the resulting factorized likelihood
with the exact likelihood under (\ref{eq:alternative}).  Their ratio is a
function of $H$ alone, not of the sample, and the order-one fourth-order term
cancels after GOE averaging.  The resulting expansion is
\[
\log g_n(\omega)
=\omega^2 U_n-\frac{\omega^4\gamma^2}{8}+o_{P_n}(1),
\]
where $U_n$ is the corrected Frobenius martingale statistic.  Thus the
Frobenius threshold test is asymptotically equivalent to the
Neyman--Pearson test of the identity null against the GOE mixture.  Moreover,
the likelihood ratios at every fixed finite collection of strengths converge
jointly to a one-sided Gaussian location experiment.  The same test therefore
attains the mixture-power envelope at every fixed strength.

This conclusion is stronger and more specific than minimax rate
optimality.  Our statistic is closely related to the U-statistic of
\citet{ChenZhangZhong:2010}, whose Frobenius separation rate
\citet{CaiMa:2013} showed to be minimax under Gaussianity; the
contribution is an experiment in which the
corrected form is selected by the likelihood itself, together with its
asymptotic local-power envelope.  Two ingredients do the selecting:
the isotropic direction picks out the Frobenius family, and the
scale-neutral quadratic path, the unique trace-orthogonal member of
$I_p+A+cA^2$ (Proposition~\ref{prop:path-selection}), picks out the
data-driven correction.  The radial part of
the GOE prior plays no asymptotic role: replacing it by the uniform
prior on a Frobenius sphere of matched radius changes the mixture
experiment by only $O(p^{-1/2})$ in total variation, uniformly over
bounded strengths (Proposition~\ref{prop:fixed-radius}).

An exact whitening isomorphism (Lemma~\ref{lem:whitening}) carries the
Gaussian experiment, the statistic, and its pivotal null law to any
known null covariance $\Sigma_0$: the optimal statistic is the same statistic in
whitened coordinates, its null law does not depend on $\Sigma_0$, and
it uses $\Sigma_0$ only through the inner products
$x^\prime\Sigma_0^{-1}y$.  A second experiment replaces the dense
alternative by a single random factor of strength $\vartheta_n$, with
the null scale matched in trace so that the trace statistic carries no
linear mean signal.  Its character depends on the strength.
For every vanishing strength the hypotheses remain contiguous, and when
in addition $n\vartheta_n\to\infty$ the corrected Frobenius statistic is
the first nonzero direction of the likelihood, and its first-order power
gain, relative to the test's null rejection probability, agrees with the
small-shift expansion of the dense-mixture envelope
(Theorem~\ref{thm:weak-factor}).  At a fixed
strength below the Baik--Ben Arous--P\'ech\'e (BBP) threshold
$\vartheta=\sqrt\gamma$, the fixed-spike benchmark of
\citet{OnatskiMoreiraHallin:2013}, an invariant fixed-radius experiment
that remains contiguous, has a log likelihood ratio governed by a richer
linear spectral statistic, of which the corrected statistic is only the
leading quadratic term.  Above the threshold the largest eigenvalue
separates \citep{BaikBenArousPeche:2005,BaikSilverstein:2006}.  The
threshold thus bounds cost-free largest-eigenvalue enhancement within
this fixed-strength rank-one family.  The fixed-strength comparison
imports random-matrix limits and concerns that benchmark; the transfer
to the random-radius mixture is left unproved.

The second contribution makes that optimality feasible.  Under the null we
allow the coordinate distributions to be heterogeneous, asymmetric, and
non-Gaussian.  Independence, unit variances, and a uniformly bounded
$4+\eta$ moment suffice for a martingale central limit theorem.  The
observable corrections delete all coincident-time products, leaving a
degenerate distinct-time martingale whose null variance is free of trace
and fourth-moment fluctuations; the diagonal correction additionally
absorbs marginal kurtosis.  An exact scaling identity then shows that replacing sample
covariances by sample correlations is asymptotically innocuous.  When means
are estimated, an exact $n-1$ degrees-of-freedom identity gives the correct
centering, and demeaning has no first-order effect.  By contiguity, the fully
feasible statistic inherits the GOE local power
$1-\Phi(z_{1-\alpha}-\omega^2\gamma/2)$.

The third contribution is a rigidity result.  Classical weighted-likelihood
admissibility arguments are developed by \citet{AndrewsPloberger:1995}; in
the proportional regime, Bayes optimality rules out a
test that is nowhere worse almost everywhere under the mixing distribution
and strictly better on a set of positive prior probability.  It does not rule
out gains on one region that are offset by losses elsewhere.  Theorem~\ref{thm:rigidity}
does: a competing same-level test that is nowhere worse than the
invariant Frobenius benchmark uniformly over the typical dense bulk must merge
with the Frobenius test under the null and its power difference from
the Frobenius test vanishes along every contiguous sequence, so its
gains are confined to
direction sets whose GOE probability vanishes.  The premise is
transparent because the benchmark's directionwise power is
asymptotically constant over that bulk (Lemma~\ref{lem:flatness}).
We also provide a finite-$n$ stability
inequality: mixture-power regret of order $\delta$ implies null disagreement
of order $\sqrt\delta$ and, under $L^2$-bounded alternatives, power
disagreement of order $\delta^{1/4}$, in each case up to explicit
approximation terms that vanish without a claimed rate.  The
attainment argument is related to the near-optimality bounds of
\citet{ElliottMuellerWatson:2015}.

Rigidity is deliberately not called unrestricted admissibility.  In growing
dimension, power enhancement can add a detector that is asymptotically silent
under the null but powerful against selected noncontiguous signals
\citep{FanLiaoYao:2015,KockPreinerstorfer:2019,YuLiXue:2024}.  For example, a
largest-eigenvalue screen improves the known-scale Frobenius procedure at a
super-critical spike without affecting asymptotic size.  The positive and
negative statements identify the boundary: no contiguous power can be gained
without sacrificing power somewhere on the dense bulk, while improvements
remain possible on prior-negligible or noncontiguous alternatives.  This is
related to the negligibility phenomena studied by
\citet{KockPreinerstorfer:2024} in a different model; here the power
difference vanishes along all contiguous sequences.

Null calibration and alternative optimality are
separate questions.  Corrected likelihood-ratio and linear spectral
statistics recover valid null limits when $p/n$ does not vanish
\citep{BaiJiangYaoZheng:2009,JiangYang:2013,WangYao:2013}, but that does
not identify the alternatives against which a statistic is most
powerful; our likelihood calculation addresses optimality, the
product-coordinate limit theory makes the statistic feasible, and
Gaussianity is essential only for the former.

Separation-rate and local-power optimality likewise answer different
questions.  Quadratic U-statistics achieve the optimal Frobenius
separation rate \citep{ChenZhangZhong:2010,CaiMa:2013}, but at the
boundary rate alternatives with the same Frobenius norm induce different
limiting experiments: a rank-one spike concentrates its signal in a
single population eigenvalue, which yields a separated sample
eigenvalue only above the BBP threshold, while an isotropic dense
perturbation spreads it over many eigendirections.  The GOE mixture resolves this ambiguity by
specifying the angular distribution and the radial scale, and yields the
full local power curve rather than only the detection boundary.

Optimality here is deliberately weighted and local: for each
fixed strength the Neyman--Pearson lemma maximizes average power against
a simple mixture, the logic used when a nuisance parameter is present
only under the alternative
\citep{AndrewsPloberger:1994,CarrascoHuPloberger:2014}.  It implies
neither a uniformly most-powerful test nor the impossibility of
reallocating power across positive-prior regions; the rigidity theorem
obtains a directionwise conclusion only under uniform noninferiority on
a typical dense bulk, and the super-critical spike shows why.

The form of the alternative matters as well.  Against a
trace-matched weak random one-factor alternative, the corrected Frobenius
statistic is the leading likelihood direction when the factor strength
satisfies $\vartheta_n\to0$ and $n\vartheta_n\to\infty$;
at a fixed subcritical strength below the BBP threshold $\vartheta=\sqrt\gamma$
the fixed-spike benchmark stays contiguous, such that consistent
separation is impossible there, and its likelihood-optimal test is a
richer linear spectral statistic, while
above the threshold a largest-eigenvalue screen is consistent.  The rigidity
result of Section~\ref{sec:rigidity} binds on the contiguous side of this
boundary (Section~\ref{subsec:weak-factor}).

The main paper proceeds as follows: Section~\ref{sec:setup}
defines the experiment and statistic; Section~\ref{sec:likelihood} gives
the mixture likelihood, Gaussian-shift limit, local optimality, and
fixed-radius equivalence; Section~\ref{sec:implementation} establishes the
feasible null theory and local power; Section~\ref{sec:rigidity} gives
quantitative and qualitative rigidity; Section~\ref{sec:scope} delineates
scope, and Section~\ref{sec:simulation} reports the principal simulations.
The supplementary material, which follows the main text, contains the
auxiliary calculations, the admissibility, power-enhancement, and
conditional-flatness results, additional simulations, and complete
proofs.

\section{Dense alternatives and the corrected statistic}
\label{sec:setup}

Let $X_t=(X_{1t},\ldots,X_{pt})^\prime\in\mathbb R^p$, $t=1,\ldots,n$, denote
independent observations.  In the Gaussian optimality experiment,
$X_t\stackrel{\mathrm{i.i.d.}}{\sim}N_p(0,\Sigma)$.  The non-Gaussian null
theory in Section~\ref{sec:implementation} permits the marginal laws to vary
with both $i$ and $t$; temporal identical distribution is imposed only when
means are estimated.

The Gaussian experiment uses known coordinate scales and tests
\begin{equation}
H_0:\Sigma=I_p
\qquad\text{against}\qquad
H_1:\Sigma\ne I_p.
\label{eq:identity-hypothesis}
\end{equation}
Under alternatives, the marginal variances need not remain one.  When the
scales are unknown, the feasible procedure replaces the sample covariance
matrix by the sample correlation matrix and tests the scale-invariant
hypothesis that the population correlation matrix is $I_p$.  The distinction
is important: our formal likelihood-optimality statement concerns the
known-scale identity experiment, while the studentized procedure inherits its
first-order local power by an equivalence argument, and
Corollary~\ref{cor:composite} shows that the same envelope holds for
the composite unknown-scale null.

\begin{assumption}[Proportional asymptotics]
\label{ass:gamma}
$p=p(n)\to\infty$ and $p/n\to\gamma\in(0,\infty)$.
\end{assumption}

Write $\gamma_n=p/n$, such that $\gamma_n\rightarrow\gamma$.  We use
$\gamma_n$ in finite-$n$ expressions and reserve $\gamma$ for limiting
quantities.

This is the proportional regime in which the empirical spectral distribution
of $\hat R_n$ converges under the Gaussian null to the
Mar\v{c}enko--Pastur law rather than collapsing at one
\citep{MarchenkoPastur:1967}.  It is also the regime in which a classical
chi-square approximation to a fixed-dimensional likelihood ratio ceases to
describe the statistic.  The parameter $\gamma$ appears both in the null
variance and in the normalization of local alternatives, so power
comparisons across aspect ratios must specify what measure of signal strength
is held fixed.

Write
\[
\hat R_n=\frac1n\sum_{t=1}^nX_tX_t^\prime,
\qquad \hat r_{ij}=[\hat R_n]_{ij}.
\]
Under the Gaussian identity null, let $P_n$ be the law of the sample.  The
alternative direction $H$ has the GOE law
$Q_n$: the upper-triangular elements are independent, with
$H_{ij}\sim N(0,1)$ for $i<j$ and $H_{ii}\sim N(0,2)$.  Orthogonal
conjugation leaves this law unchanged.  More precisely, if
$R_p^{\mathrm{GOE}}=\|H\|_F$ and $U_p=H/\|H\|_F$, then $U_p$ is uniform on
the Frobenius sphere of the $p(p+1)/2$-dimensional symmetric-matrix space,
$U_p$ is independent of $R_p^{\mathrm{GOE}}$, and
$(R_p^{\mathrm{GOE}})^2/2$ is chi-square with $p(p+1)/2$ degrees of
freedom.

The polar representation gives a precise meaning to ``isotropic.''  The
angular component is exactly uniform over symmetric Frobenius directions and
the radius satisfies $R_p^{\mathrm{GOE}}/p=1+O_{Q_n}(p^{-1})$.  Thus the
prior is isotropic, and its spectrum is dense:
\[
p^{-1/2}\|H\|_{\mathrm{op}}\to_{Q_n}2,
\qquad
p^{-2}\Tr(H^2)\to_{Q_n}1,
\qquad
p^{-3}\Tr(H^4)\to_{Q_n}2.
\]
The empirical eigenvalue distribution of $p^{-1/2}H$ converges to the
semicircle law.  These facts distinguish the GOE mixture from a Haar-rotated
finite-rank spike.  They also explain why a nonvanishing fraction of the
precision eigenvalues moves at order $n^{-1/2}$, even though the aggregate
Frobenius displacement remains of constant order.

The angular component of the GOE is exactly uniform on the Frobenius
sphere.  Proposition~\ref{prop:fixed-radius} below shows that fixing its
radius at $p$ changes the induced mixture law by $O(p^{-1/2})$ in total
variation, with a bound uniform over bounded strengths.  Thus the
fixed-radius sphere prior is the canonical isotropic prior, while the
Gaussian radius is used to evaluate the mixture likelihood.  The simulations
include fixed dense spectra as an additional robustness diagnostic.

\subsection{The quadratic GOE path}
\label{sec:goe}

For fixed $\omega\ge0$, set $A=\omega H/n$ and use the path
(\ref{eq:alternative}).  Its leading Frobenius radius satisfies
\begin{equation}
\|\Sigma_{\omega}^{-1}(H)-I_p\|_F^2
=\frac{\omega^2}{n^2}\Tr(H^2)+o_{Q_n}(1)
\rightarrow \omega^2\gamma^2.
\label{eq:frobenius-radius}
\end{equation}
Thus the limiting Frobenius radius is
$\delta=\omega\gamma$.  Holding $\omega$ fixed while increasing $\gamma$
increases this radius; holding $\delta$ fixed instead requires
$\omega=\delta/\gamma$.  This distinction later changes the local mean shift
from $\omega^2\gamma/2$ to $\delta^2/(2\gamma)$.

The experiment has two defining ingredients, and both are needed for
the selection of the corrected statistic: an isotropic direction, which
selects the Frobenius family, and a scale-neutral local path, of which
$I_p+A+A^2$ is the unique member of the family $I_p+A+cA^2$ whose
likelihood carries no trace channel (Proposition~\ref{prop:path-selection}).
The path is the globally positive representative of the additive
covariance model $\Sigma=I_p-A$: since
$(I_p+A+A^2)^{-1}-(I_p-A)=A^3(I_p+A+A^2)^{-1}$, the GOE mixture of
$N_p(0,I_p-A)$, with the prior conditioned on $\|A\|_{\mathrm{op}}\le1/2$,
differs from $G_n(\omega)$ by $O(n^{-1/2})$ in total variation,
uniformly over bounded strengths (Supplement Section~S.3).
The use of the inverse covariance is convenient because the Gaussian log
likelihood is affine in the precision matrix, and the positivity of the
path means that no truncation of the GOE is needed to define the
alternative.  The
diagonal perturbations generated by the path are smaller in aggregate than
the dense off-diagonal component, but they are retained in the exact
identity experiment.  Studentization removes their first-order relevance for
the correlation implementation.
Let $P_{n,\Sigma_{\omega}(H)}$ denote the Gaussian sample law conditional on
$H$, which is drawn first, independently of the Gaussian innovations, and
define the mixture
\begin{equation}
G_n(\omega)(B)=\int P_{n,\Sigma_{\omega}(H)}(B)\mathrm dQ_n(H),
\qquad
g_n(\omega)=\frac{\mathrm dG_n(\omega)}{\mathrm dP_n}.
\label{eq:mixture}
\end{equation}

The corrected statistic forced by this mixture is
\begin{equation}
U_n=\frac1{n^2}\sum_{i<j}\sum_{t=2}^nX_{it}X_{jt}\sum_{s<t}X_{is}X_{js}
+\frac1{2n^2}\sum_i\sum_{t=2}^n(X_{it}^2-1)\sum_{s<t}(X_{is}^2-1).
\label{eq:Tn-martingale}
\end{equation}
The first line is the off-diagonal component; the second is asymptotically
negligible.  Equivalently, $4U_n$ is the squared Frobenius distance
$\Tr\{(\hat R_n-I_p)^2\}$ after removing the contributions from
coincident time indices.  This data-driven correction is essential outside
the Gaussian model.

For later use, the exact decomposition is worth displaying.  Write
\begin{align}
L&=2\sum_{i<j}\hat r_{ij}^{2}=C_L+S_L,
&C_L&=\frac{2}{n^2}\sum_{i<j}\sum_tX_{it}^2X_{jt}^2,
\label{eq:offdiag-decomposition}\\
D&=\sum_i(\hat r_{ii}-1)^2=C_D+S_D,
&C_D&=\frac{1}{n^2}\sum_i\sum_t(X_{it}^2-1)^2.
\label{eq:diag-decomposition}
\end{align}
Here $S_L$ and $S_D$ collect the distinct-time products, and
\begin{equation}
\Tr\{(\hat R_n-I_p)^2\}-(C_L+C_D)=S_L+S_D=4U_n.
\label{eq:exact-correction}
\end{equation}
This identity has no asymptotic remainder.  Under Gaussianity,
$C_L+C_D$ matches the random centering produced by the mixture likelihood.
The correction matters for the variance: under coordinate independence
a deterministic off-diagonal centering is already mean-correct but
leaves trace and fourth-moment fluctuations in the null variance.
Deleting the coincident-time products removes them, and the diagonal
part of the correction absorbs marginal kurtosis.

\subsection{A feasible corrected Frobenius statistic}

The off-diagonal correction has a simple U-statistic form.  For $i<j$,
\begin{equation}
D_{ij}=\hat r_{ij}^{2}
-\frac1{n^2}\sum_tX_{it}^2X_{jt}^2
=\frac{2}{n^2}\sum_{s<t}X_{is}X_{js}X_{it}X_{jt}.
\label{eq:pairwise-u-form}
\end{equation}
Thus the null centering is achieved by deleting equal-time products rather
than estimating a marginal kurtosis.  Under temporally i.i.d.\ observations with covariance entries $\sigma_{ij}$, 
\begin{equation}
\mathbb{E} D_{ij}=\frac{n-1}{n}\sigma_{ij}^2,
\qquad
\mathbb{E} Z_n=\sqrt{\tfrac{n(n-1)}{p(p-1)}}
\sum_{i<j}\sigma_{ij}^2.
\label{eq:signal-interpretation}
\end{equation}
Here $Z_n$ denotes the normalized known-scale statistic defined formally
in (\ref{eq:known-scale-statistic}). 

Equation (\ref{eq:signal-interpretation}) shows what the statistic
accumulates: squared off-diagonal covariances, with
(\ref{eq:pairwise-u-form}) removing their sampling bias under the identity
null, as in high-dimensional quadratic U-statistics
\citep{ChenZhangZhong:2010,CaiMa:2013}.  The key point here is that the
same corrected statistic emerges from the GOE mixture likelihood.

The diagonal martingale has only $p$ rowwise components, compared with
$p(p-1)/2$ off-diagonal pairs, and is $o_p(1)$ after the local
normalization.  Thus, omitting it after studentization loses no
first-order information in the dense experiment.  A low-rank spike can have
the same aggregate Frobenius departure but distribute it very differently across
sample eigenvalues; this is why (\ref{eq:signal-interpretation}) does not
imply spectral optimality.

\subsection{Extension to a known null covariance}
\label{subsec:known-sigma}

The identity null extends exactly to any known positive definite null
covariance $\Sigma_0$.  Let
$Y_t=\Sigma_0^{-1/2}X_t$ be the whitened observations, and define the
known-$\Sigma_0$ corrected statistic directly in the original coordinates,
\[
U_n^{\Sigma_0}
=\frac1{2n^2}\sum_{1\le s<t\le n}h_{\Sigma_0}(X_s,X_t),
\qquad
h_{\Sigma_0}(x,y)=(x^\prime\Sigma_0^{-1}y)^2-x^\prime\Sigma_0^{-1}x-y^\prime\Sigma_0^{-1}y+p .
\]
The alternative departs from $\Sigma_0$ in the conjugated direction, through the
precision path $\Sigma_\omega^{-1}(H;\Sigma_0)
=\Sigma_0^{-1/2}(I_p+A+A^2)\Sigma_0^{-1/2}$ with $A=\omega H/n$ and $H\sim Q_n$,
such that the identity experiment of (\ref{eq:alternative}) is the case
$\Sigma_0=I_p$.

\begin{lemma}[Exact whitening isomorphism]
\label{lem:whitening}
In the Gaussian experiment, fix $n$, $p$, and $\omega$.  Almost surely and for every $n$ and $p$:
(i) under $\Sigma=\Sigma_0$ the whitened sample $(Y_t)$ has the identity-null law
$P_n$, and under the conditional alternative it has the identity-experiment law
$P_{n,\Sigma_{\omega}(H)}$;
(ii) the mixture likelihood ratios coincide,
$g_n^{\Sigma_0}(\omega)(X_1,\ldots,X_n)=g_n(\omega)(Y_1,\ldots,Y_n)$;
(iii) $U_n^{\Sigma_0}(X_1,\ldots,X_n)=U_n(Y_1,\ldots,Y_n)$, with $U_n$ the
identity-null statistic (\ref{eq:Tn-martingale}).
\end{lemma}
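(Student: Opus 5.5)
The plan is to treat all three parts as consequences of the linear change of variables $Y_t=\Sigma_0^{-1/2}X_t$, where $\Sigma_0^{-1/2}$ is the symmetric positive definite square root. This map is a fixed bijection of $\mathbb R^{p\times n}$ and does not depend on $H$ or on the data.

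For (i), if $X_t\sim N_p(0,\Sigma)$ i.i.d., then $Y_t\sim N_p(0,\Sigma_0^{-1/2}\Sigma\Sigma_0^{-1/2})$ i.i.d. Under $\Sigma=\Sigma_0$ this is $N_p(0,I_p)$, so the whitened sample has law $P_n$. Under the conditional alternative,
\[
\Sigma=\Sigma_\omega(H;\Sigma_0)=\Sigma_0^{1/2}(I_p+A+A^2)^{-1}\Sigma_0^{1/2},
\]
obtained by inverting the conjugated precision path. Hence $\Sigma_0^{-1/2}\Sigma\Sigma_0^{-1/2}=(I_p+A+A^2)^{-1}=\Sigma_\omega(H)$, and the conditional law of $(Y_t)$ is $P_{n,\Sigma_\omega(H)}$ for every realization of $H$.

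For (ii), I will write both likelihood ratios as explicit densities. The conditional density ratio of $P_{n,\Sigma_\omega(H;\Sigma_0)}$ to $P_{n,\Sigma_0}$ at $X$ is
\[
\det\bigl(\Sigma_0\Sigma_\omega(H;\Sigma_0)^{-1}\bigr)^{n/2}\exp\Bigl\{-\tfrac12\sum_t X_t'\bigl(\Sigma_\omega(H;\Sigma_0)^{-1}-\Sigma_0^{-1}\bigr)X_t\Bigr\}.
\]
The determinant equals $\det(I_p+A+A^2)^{n/2}$. The quadratic form equals $\sum_t Y_t'(A+A^2)Y_t$ after substituting $X_t=\Sigma_0^{1/2}Y_t$. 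This is exactly the identity-experiment density ratio evaluated at $Y$. The Jacobian $\det\Sigma_0^{-n/2}$ appears in both numerator and denominator and cancels, so no Jacobian factor survives. Integrating over $H\sim Q_n$ then gives $g_n^{\Sigma_0}(\omega)(X)=g_n(\omega)(Y)$ pointwise. Exchanging the integral with evaluation is justified by Tonelli, since the integrand is positive. The "almost surely" qualifier reflects that densities are defined up to null sets; the pointwise version above removes that ambiguity.

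For (iii), I will substitute $x'\Sigma_0^{-1}y=(\Sigma_0^{-1/2}x)'(\Sigma_0^{-1/2}y)$. This gives
\[
h_{\Sigma_0}(X_s,X_t)=(Y_s'Y_t)^2-\|Y_s\|^2-\|Y_t\|^2+p .
\]
The remaining step, and the only one requiring care, is to check that $\frac1{2n^2}\sum_{s<t}\bigl[(Y_s'Y_t)^2-\|Y_s\|^2-\|Y_t\|^2+p\bigr]$ equals the martingale form (\ref{eq:Tn-martingale}). I will expand
\[
(Y_s'Y_t)^2=\sum_iY_{is}^2Y_{it}^2+2\sum_{i<j}Y_{is}Y_{js}Y_{it}Y_{jt}.
\]
The cross term reproduces the off-diagonal part of $U_n$. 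For the diagonal part, I will use $\sum_i(Y_{is}^2-1)(Y_{it}^2-1)=\sum_iY_{is}^2Y_{it}^2-\|Y_s\|^2-\|Y_t\|^2+p$, which matches the remaining terms. This identification is consistent with (\ref{eq:exact-correction}) and (\ref{eq:pairwise-u-form}). It is pure algebra, holds for every sample, and completes the proof.
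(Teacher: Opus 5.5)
Your proof is correct and follows the paper's argument essentially step for step. The fixed linear map $Y_t=\Sigma_0^{-1/2}X_t$ gives (i). Jacobian cancellation gives (ii), which you verify more explicitly by writing out the Gaussian density ratio and recovering (\ref{eq:loglr}) in whitened coordinates. Expanding $(Y_s'Y_t)^2$ pairwise gives (iii), with the coincident products $\sum_iY_{is}^2Y_{it}^2$ cancelling between the off-diagonal and diagonal parts.
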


Because Lemma~\ref{lem:whitening} holds pathwise, every conclusion of
Theorem~\ref{thm:goe-optimality} transfers verbatim to the
known-$\Sigma_0$ null, including the local-power envelope
$1-\Phi(z_{1-\alpha}-\omega^2\gamma/2)$.  The null
law of $U_n^{\Sigma_0}$ under $\Sigma=\Sigma_0$ coincides, for every $n$ and $p$,
with that of $U_n$ under the identity null: the test is \emph{exactly pivotal} in
$\Sigma_0$, so a single set of critical values serves every known null.  The
rule depends on $\Sigma_0$ only through the inner products $x^\prime\Sigma_0^{-1}y$;
no square root is formed, and any factorization $\Sigma_0=CC^\prime$ gives the identical
value.  The whitening carries the Gaussian likelihood and pivotality theory, but
it does not automatically transfer the non-Gaussian product-coordinate central
limit theorem of Section~\ref{sec:implementation}: applying $\Sigma_0^{-1/2}$ to a
non-Gaussian vector can destroy the coordinate independence that theorem assumes.

The correction also makes the statistic exactly uncorrelated with the
null scale.
\begin{lemma}
\label{lem:trace-frobenius}
Let $W_n^{\Sigma_0}=\Tr(\Sigma_0^{-1}\hat R_n)-p$ be the trace statistic, with
$\hat R_n=n^{-1}\sum_tX_tX_t^\prime$ (so $W_n^{I}=\Tr(\hat R_n)-p$).  Then, under
$\Sigma=\Sigma_0$ and for every $n$ and $p$,
$\operatorname{cov}(U_n^{\Sigma_0},W_n^{\Sigma_0})=0$.
\end{lemma}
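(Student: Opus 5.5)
By Lemma~\ref{lem:whitening}(iii), $U_n^{\Sigma_0}(X)=U_n(Y)$, and likewise $W_n^{\Sigma_0}(X)=\Tr(\hat R_n^{Y})-p=\sum_t(\|Y_t\|^2-p)/n$. Under $\Sigma=\Sigma_0$ the whitened sample has law $P_n$ by part~(i). It therefore suffices to prove $\operatorname{cov}(U_n,W_n^{I})=0$ under the Gaussian identity null. I will in fact show the stronger orthogonality $\mathbb E[U_n\,\phi(Y_r)]=0$ for every $r$ and every square-integrable function $\phi$ of a single observation. The trace statistic is a sum of such functions, so the claim follows.

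Write $U_n=\frac1{2n^2}\sum_{s<t}h(Y_s,Y_t)$, with $h=h_{I}$. The key step is that $h$ is a \emph{degenerate} kernel under the null: for fixed $x$ and $Y\sim N_p(0,I_p)$,
\[
\mathbb E\,h(x,Y)=\mathbb E(x^\prime Y)^2-\|x\|^2-\mathbb E\|Y\|^2+p=\|x\|^2-\|x\|^2-p+p=0 .
\]
By symmetry, $\mathbb E\,h(Y,y)=0$ for every fixed $y$ as well. This uses only $\mathbb E YY^\prime=I_p$, so it also covers the product-coordinate null with unit variances.

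Now take a term $\mathbb E[h(Y_s,Y_t)\phi(Y_r)]$ with $s<t$. Either $r\ne t$ or $r\ne s$. If $r\ne t$, condition on every observation except $Y_t$; by independence the conditional expectation is $\phi(Y_r)\,\mathbb E[h(Y_s,Y_t)\mid Y_s]=0$. If $r=t$, then $r\ne s$, and the same argument applies after conditioning on everything except $Y_s$. Hence every term vanishes and $\mathbb E[U_n\phi(Y_r)]=0$. Since $\mathbb E U_n=0$ (the case $\phi\equiv1$), this gives $\operatorname{cov}(U_n,W_n^{I})=0$ exactly for every $n$ and $p$.

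Integrability is not an issue, because Gaussian moments of all orders exist, so $h(Y_s,Y_t)\|Y_r\|^2$ is integrable and Fubini applies. The only delicate point is checking that the kernel in $U_n^{\Sigma_0}$ is centered under the whitened null. That means confirming the whitening identity (iii) and the computation $\mathbb E(x^\prime\Sigma_0^{-1}X)^2=x^\prime\Sigma_0^{-1}x$ when $X\sim N(0,\Sigma_0)$. I can also do this directly in the original coordinates, without invoking Lemma~\ref{lem:whitening}: each of the four pieces of $h_{\Sigma_0}$ has the required conditional mean, so the argument does not actually need the whitening lemma.
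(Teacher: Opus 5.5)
Your proof is correct and takes essentially the same approach as the paper's: both use the null degeneracy $\mathbb{E}\,h_{\Sigma_0}(x,X)=0$ of the kernel and then remove each term $\mathbb{E}[h(X_s,X_t)g(X_r)]$ by conditioning away the observation not shared with $r$. The whitening reduction and the extension to arbitrary square-integrable $\phi$ are harmless extras; the paper works directly in the original coordinates with $g(x)=x^\prime\Sigma_0^{-1}x-p$.
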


The two statistics thus provide orthogonal first-order channels: the trace
statistic $W_n^{\Sigma_0}$ responds linearly to scale perturbations, whereas the
corrected Frobenius statistic $U_n^{\Sigma_0}$ begins only at quadratic
order: under $\Sigma=c\Sigma_0$ its mean is
$\binom n2(2n^2)^{-1}p(c-1)^2$, quadratic in the scale departure $c-1$.
Deleting the coincident-time products in (\ref{eq:exact-correction}) is what
makes $U_n$ orthogonal to the trace at every $n$;
Section~\ref{subsec:weak-factor} uses this exact orthogonality in the
trace-matched weak-factor experiment.

\section{Mixture likelihood and local optimality}
\label{sec:likelihood}

For fixed $H$, the conditional Gaussian log likelihood is
\begin{equation}
\log\ell_n(H)
=\frac n2\log\det(I_p+A+A^2)
-\frac\omega2\Tr(\hat R_nH)
-\frac{\omega^2}{2n}\Tr(\hat R_nH^2).
\label{eq:loglr}
\end{equation}
The data-dependent part is exactly quadratic in $H$.  To exploit the
Gaussian factorization, expand
\begin{equation}
\log(I_p+A+A^2)
=A+\frac12A^2-\frac23A^3+\frac14A^4
+O(\|A\|_{\mathrm{op}}^5).
\label{eq:log-quadratic-path}
\end{equation}
The exponential matrix $\exp\{A+A^2/2-2A^3/3\}$ is used only as an
analytical bridge; it is not the alternative.  The associated factorized
approximation $\psi_n(H)$ satisfies
\[
\log\{\psi_n(H)/\ell_n(H)\}=f_n(H),
\]
where $f_n$ is independent of the data and vanishes in $Q_n$ probability on
the likelihood-relevant GOE bulk.  This data-independent comparison is the
key technical simplification.

Diagonalize $\hat R_n=O_n'D_nO_n$, write the sample eigenvalues as
$\lambda_1,\ldots,\lambda_p$, and set $W=O_nHO_n^\prime$.  Conditional on the
data, $W$ remains a GOE.  The approximation rotates to
\begin{equation}
\log\psi_n
=\frac{\omega^4\gamma_n^3}{4}
+\frac\omega2\sum_i(1-\lambda_i)W_{ii}
+\frac{\omega^2}{4n}\sum_{i,j}(1-2\lambda_i)W_{ij}^2.
\label{eq:psi-rotated}
\end{equation}
It therefore factorizes through
\begin{equation}
\mathbb{E}\exp\{aZ-bZ^2/2\}
=(1+b\tau^2)^{-1/2}
\exp\left\{\tfrac{a^2\tau^2}{2(1+b\tau^2)}\right\},
\qquad Z\sim N(0,\tau^2),\ 1+b\tau^2>0.
\label{eq:gaussian-integral}
\end{equation}
If $h_n=\int\psi_n(H)\mathrm dQ_n(H)$, direct integration and expansion
give
\begin{equation}
\log h_n=\frac{\omega^2}{4}S_n+\kappa_n+r_n,
\qquad \kappa_n\to-\frac{\omega^4\gamma^2}{8},
\qquad r_n=o_{P_n}(1),
\label{eq:hn}
\end{equation}
where
\begin{equation}
S_n
=\Tr\{(\hat R_n-I_p)^2\}
-\tfrac{p(p+1)}{n}
-2\gamma_n\{\Tr(\hat R_n)-p\}+o_{P_n}(1)
=4U_n+o_{P_n}(1).
\label{eq:Sn}
\end{equation}
The final equality explains both the data-driven centering and the statistic
selected by the mixture.

\begin{theorem}[Mixture likelihood and local optimality]
\label{thm:goe-optimality}
Suppose Assumption~\ref{ass:gamma} holds and $\omega>0$ is fixed.  Under the
Gaussian null $P_n$:
\begin{enumerate}
\item[(i)]
\[
\log g_n(\omega)
=\omega^2U_n-\tfrac{\omega^4\gamma^2}{8}+o_{P_n}(1),
\qquad
U_n\Rightarrow N\left(0,\tfrac{\gamma^2}{4}\right).
\]
Thus $G_n(\omega)$ is contiguous to $P_n$.
\item[(ii)] Fix $\alpha\in(0,1)$, and let $\Psi_n^{\mathrm{NP}}$ be an
exact level-$\alpha$ Neyman--Pearson test of $P_n$ against $G_n(\omega)$,
with likelihood-ratio critical value $k_{n,\alpha}$ and possible
randomization at equality.  If $s_\omega=\omega^2\gamma/2$, then
\[
k_{n,\alpha}\rightarrow
k_\alpha(\omega)
=\exp\{s_\omega z_{1-\alpha}-s_\omega^2/2\},
\qquad
\mathbb{E}_{P_n}\left|
\Psi_n^{\mathrm{NP}}-
1\{2U_n/\gamma_n>z_{1-\alpha}\}\right|\rightarrow0.
\]
By contiguity, the same $L^1$ equivalence holds under $G_n(\omega)$.
Thus the one-sided test based on $2U_n/\gamma_n$ is locally
asymptotically Bayes-optimal with GOE weights.
\item[(iii)] Let $\tau^2=\gamma^2/4$.  For fixed $m<\infty$ and fixed
$\theta_1,\ldots,\theta_m\ge0$,
\[
\left(U_n,\log g_n(\sqrt{\theta_1}),\ldots,
\log g_n(\sqrt{\theta_m})\right)
\Rightarrow
\left(U,\theta_1U-\tfrac12\theta_1^2\tau^2,\ldots,
\theta_mU-\tfrac12\theta_m^2\tau^2\right),
\]
where $U\sim N(0,\tau^2)$.  Thus every fixed finite subexperiment converges
in likelihood-ratio law to $Y\sim N(\theta\tau^2,\tau^2)$, $\theta\ge0$.
In particular, under $G_n(\omega)$,
\[
\tfrac{2U_n}{\gamma_n}\Rightarrow
N\left(\tfrac{\omega^2\gamma}{2},1\right),
\]
and the level-$\alpha$ upper-tail test has limiting power
$1-\Phi(z_{1-\alpha}-\omega^2\gamma/2)$.
\end{enumerate}
\end{theorem}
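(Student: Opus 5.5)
The plan is to prove (i) by sandwiching the mixture likelihood between the factorized bridge $h_n$ and the exact $g_n(\omega)$. Then (ii) and (iii) follow by Le Cam's third lemma and elementary Neyman--Pearson arguments in the limiting Gaussian shift.

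\textbf{Step 1: the likelihood expansion.} Write $g_n(\omega)=\int \ell_n(H)\,\mathrm dQ_n(H)=\int \psi_n(H)e^{-f_n(H)}\,\mathrm dQ_n(H)$. Since $f_n$ does not depend on the data, I would split the GOE integral into two regions. The first is a bulk set $B_n=\{\|H\|_{\mathrm{op}}\le 3\sqrt p,\ |p^{-2}\Tr H^2-1|\le\epsilon_n,\ |p^{-3}\Tr H^4-2|\le\epsilon_n\}$. On $B_n$ the expansion (\ref{eq:log-quadratic-path}) gives $\sup_{B_n}|f_n|\to0$, once the order-one quartic term $\frac n2\cdot\frac14\Tr A^4-(\text{its counterpart in }\psi_n)$ has been absorbed into the deterministic constant $\omega^4\gamma_n^3/4$. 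This is exactly where the statement ``the fourth-order term cancels after GOE averaging'' is used. The second is the complement $B_n^c$. There I would bound $\mathbb E_{P_n}\int_{B_n^c}\ell_n\,\mathrm dQ_n=Q_n(B_n^c)\to0$ using Fubini, because $\mathbb E_{P_n}\ell_n(H)=1$ for every $H$. The analogous bound for $\psi_n$ uses the Gaussian integral (\ref{eq:gaussian-integral}) together with a crude moment bound. Combining the two regions gives $\log g_n=\log h_n+o_{P_n}(1)$.

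Next I apply (\ref{eq:hn}) and (\ref{eq:Sn}): $\log h_n=\frac{\omega^2}{4}S_n+\kappa_n+r_n=\omega^2U_n-\omega^4\gamma^2/8+o_{P_n}(1)$. To verify (\ref{eq:Sn}), I would expand $\Tr\{(\hat R_n-I)^2\}$ and use (\ref{eq:exact-correction}). This reduces the claim to showing $C_L+C_D-p(p+1)/n-2\gamma_n(\Tr\hat R_n-p)=o_{P_n}(1)$, which is a law-of-large-numbers computation with fourth Gaussian moments. The CLT $U_n\Rightarrow N(0,\gamma^2/4)$ comes from the martingale CLT applied to the increments $\xi_t=n^{-2}\sum_{i<j}X_{it}X_{jt}\sum_{s<t}X_{is}X_{js}$ plus the diagonal part. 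The conditional variance is $n^{-4}\sum_t\sum_{i<j}(\sum_{s<t}X_{is}X_{js})^2\to\gamma^2/4$. The Lindeberg condition follows from a fourth-moment bound on the increments. Since the limit of $\log g_n$ is $N(-\mu,2\mu)$ with $\mu=\omega^4\gamma^2/8$, Le Cam's first lemma gives contiguity.

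\textbf{Step 2: Neyman--Pearson equivalence, part (ii).} Because $\log g_n$ is asymptotically an increasing affine function of $U_n$, the NP rejection region $\{g_n>k_{n,\alpha}\}$ coincides with $\{2U_n/\gamma_n>c_n\}$ up to sets of vanishing probability. The limiting law is continuous, so $c_n\to z_{1-\alpha}$ and $k_{n,\alpha}\to\exp\{s_\omega z_{1-\alpha}-s_\omega^2/2\}$, with $s_\omega=\omega^2\gamma/2$ because $\omega^2\cdot\gamma/2\cdot(2U_n/\gamma)$ matches the expansion. The $L^1$ equivalence under $P_n$ follows, and contiguity transfers it to $G_n(\omega)$.

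\textbf{Step 3: joint convergence and power, part (iii).} Applying Step 1 at each $\sqrt{\theta_k}$ makes the joint vector a continuous function of $U_n$ plus $o_{P_n}(1)$, which gives joint convergence. Le Cam's third lemma, with $\mathrm{cov}(U,\theta U)=\theta\tau^2$, then yields $U_n\Rightarrow N(\theta\tau^2,\tau^2)$ under $G_n(\sqrt\theta)$. With $\theta=\omega^2$ this gives $2U_n/\gamma_n\Rightarrow N(\omega^2\gamma/2,1)$, from which the stated power follows.

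\textbf{Main obstacle.} I expect the hardest part to be Step 1's uniform control of $f_n$ and of the tails. Two issues arise. First, the cancellation of the quartic term has to be made explicit: it is a deterministic-in-$H$ quantity $\propto n^{-3}\Tr H^4$ that concentrates at $2\gamma_n^3\omega^4/\ldots$, and it must be matched exactly against the constant in (\ref{eq:psi-rotated}). Second, the fifth-order remainder $n\|A\|_{\mathrm{op}}^5p=O(p\,n\,p^{5/2}/n^5)\to0$ is only controlled on the bulk set. I would handle the bulk via the GOE concentration facts stated in Section~\ref{sec:setup}, and the complement via the Fubini identity.
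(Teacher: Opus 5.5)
Your architecture matches the paper's. You split the GOE integral into a bulk and a tail, use that $\psi_n/\ell_n=e^{f_n(H)}$ does not depend on the data, and dispose of the $\ell_n$-tail by Fubini. You then evaluate $h_n$ by Gaussian factorization, identify $S_n=4U_n+o_{P_n}(1)$ through (\ref{eq:exact-correction}), and finish with the martingale CLT, Le Cam's lemmas and threshold matching; Steps~2--3 are fine.

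The gap is in Step~1. Your bulk $B_n$ controls $\|H\|_{\mathrm{op}}$, $\Tr H^2$ and $\Tr H^4$, but not $\Tr H^3$. Yet $\phi(u)=\frac23u^3-\frac14u^4+\cdots$ puts the cubic term $\frac{\omega^3}{3}\Tr(H^3)/n^2$ into $f_n$ at leading order, and your list of obstacles never mentions it. On $B_n$ one only has $|\Tr H^3|\le\|H\|_{\mathrm{op}}\Tr H^2\le3p^{5/2}(1+\epsilon_n)$, and that order is attained. Take eigenvalues $\sqrt p\,x_j$ with empirical law close to $(1+\epsilon x^3)$ times the semicircle density; this keeps the support $[-2,2]$ and the second and fourth moments, but has third moment $5\epsilon$. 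There $f_n\asymp\sqrt n$, so $\sup_{B_n}|f_n|$ diverges. In fact $f_n$ is not even bounded on $B_n$, which also blocks the paper's bounded-convergence route.

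The cubic trace is small only in $Q_n$-probability ($\Tr H^3=O_{Q_n}(p^{3/2})$), not deterministically on an operator-norm/even-trace set. You need a constraint such as $|\Tr H^3|\le\epsilon_nn^2$ with $n^{-1/2}\ll\epsilon_n\to0$. With it your uniform claim does hold and gives the multiplicative comparison $\int_{B_n}\ell_n\,\mathrm dQ_n=(1+o(1))\int_{B_n}\psi_n\,\mathrm dQ_n$. This is a slightly sharper form of the paper's step, which imposes only $|\Tr H^3|\le p^2$ and then uses data-independence plus bounded convergence of $\int_{\mathcal K_n^{\mathrm{LR}}}|1-e^{f_n}|\,\mathrm dQ_n$.

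The $\psi_n$-tail is the step the paper singles out as genuinely nontrivial, and ``a crude moment bound'' does not deliver it. Since $f_n$ is small only in probability, $\psi_n\,\mathrm dQ_n$ could carry mass on the rare set. Pointwise bounds on $\psi_n$ over $B_n^c$ that do not exploit the sign cancellation in $\Tr H^3$ (for instance $\phi(u)\le\frac23u^3$ on the positive eigenvalues) lose factors of order $e^{C\sqrt n}$.

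What works is the tilt $\tilde Q=h_n^{-1}\psi_n\,Q_n$. In the rotated coordinates of (\ref{eq:psi-rotated}) it is again a product of independent Gaussians, $W_{ij}\sim N(0,d_{ij}^{-1})$ and $W_{ii}\sim N(m_i,2d_i^{-1})$. Here $d_{ij},d_i=1+O(n^{-1})$ and $m_i=\omega(1-\lambda_i)/d_i$, on a data event that bounds $\lambda_{\max}(\hat R_n)$ and $|\Tr\hat R_n-p|$. Write $W=G+M$ with $M=\diag(m_1,\ldots,m_p)$ and couple $G$ to a GOE. You then verify each bulk constraint with $\tilde Q$-probability tending to one, and conclude $\int_{B_n^c}\psi_n\,\mathrm dQ_n=h_n\tilde Q(B_n^c)=o_{P_n}(1)$ from $h_n=O_{P_n}(1)$.

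For the cubic constraint this needs an ingredient absent from your plan: $\mathbb{E}_{\tilde Q}\Tr(G^2M)=p\sum_im_i+O(p)$. This is negligible only because $\sum_im_i\approx\omega(p-\Tr\hat R_n)=O_{P_n}(1)$; the trivial bound $|\sum_im_i|\le Cp$ would leave a shift of order $p^2$ in $\Tr W^3$. Finally, passing from the additive tail bounds to $\log g_n=\log h_n+o_{P_n}(1)$ uses that $h_n$ is bounded away from zero in probability, which follows from its lognormal limit.
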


Part (iii) is a finite-dimensional statement; the expansion is not asserted
uniformly over a continuum of $\omega$.  Every statistic $U_n^\prime$ satisfying
$U_n^\prime-U_n=o_{P_n}(1)$ yields the same conclusions after the corresponding
positive rescaling and critical-value adjustment; this transfer is used
repeatedly below without further comment.

The Gaussian radial law of the GOE is a computational convenience.
Let $Q_n^{\mathrm{sph}}$ be the uniform law on the
Frobenius sphere $\{S=S^\prime:\Tr(S^2)=p^2\}$, let
$G_n^{\mathrm{sph}}(\omega)$ be the corresponding mixture law, and let
$g_n^{\mathrm{sph}}(\omega)$ be its likelihood ratio with respect to $P_n$.

\begin{proposition}
\label{prop:fixed-radius}
Suppose Assumption~\ref{ass:gamma} holds.  For every finite $\Omega$,
\[
\sup_{0\le\omega\le\Omega}
\big\|G_n^{\mathrm{sph}}(\omega)-G_n(\omega)\big\|_{\mathrm{TV}}
=O\big(p^{-1/2}\big).
\]
\end{proposition}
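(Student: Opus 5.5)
The plan is to couple the two priors through the polar decomposition of the GOE and bound the total variation by a radial perturbation. Write $H=R\,U_p$ with $R=R_p^{\mathrm{GOE}}$ independent of $U_p$, which is uniform on the unit Frobenius sphere. Then $Q_n^{\mathrm{sph}}$ is the law of $p\,U_p$. Both mixtures are mixtures over $U_p$ of the same kernel, so by joint convexity of total variation,
\[
\|G_n^{\mathrm{sph}}(\omega)-G_n(\omega)\|_{\mathrm{TV}}
\le \mathbb{E}\,\big\|P_{n,\Sigma_\omega(pU_p)}-P_{n,\Sigma_\omega(RU_p)}\big\|_{\mathrm{TV}},
\]
where the expectation is over $(R,U_p)$. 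It therefore suffices to bound the total variation between two Gaussian product laws whose precisions differ only through the radial rescaling $R/p$. Since $(R)^2/2\sim\chi^2_{p(p+1)/2}$, we have $R/p-1=O(p^{-1})$ with Gaussian-type tails.

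For the conditional bound, use Pinsker (or the Hellinger bound) with the Gaussian Kullback--Leibler divergence. Set $\Theta_1=I+A+A^2$ and $\Theta_2=I+A'+A'^2$, with $A=\omega pU_p/n$ and $A'=(R/p)A$. Then
\[
\mathrm{KL}=\tfrac n2\{\Tr(\Theta_2\Theta_1^{-1})-p-\log\det(\Theta_2\Theta_1^{-1})\}\lesssim n\,\|\Theta_1^{-1/2}(\Theta_2-\Theta_1)\Theta_1^{-1/2}\|_F^2 .
\]
This holds on the event that the relative perturbation has operator norm at most $1/2$. Because $1+x+x^2\ge 3/4$, we have $\|\Theta_1^{-1}\|_{\mathrm{op}}\le 4/3$ for every realization, so no truncation of the direction is needed. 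Moreover,
\[
\Theta_2-\Theta_1=(\rho-1)A+(\rho^2-1)A^2,\qquad \rho=R/p .
\]
Hence
\[
\|\Theta_2-\Theta_1\|_F^2\lesssim (\rho-1)^2\big(\|A\|_F^2+\|A^2\|_F^2\big)\le(\rho-1)^2\,\omega^2\gamma_n^2\,(1+\|A\|_{\mathrm{op}}^2),
\]
using $\|A\|_F^2=\omega^2p^2/n^2$ on the sphere. The KL divergence is then of order $n(\rho-1)^2\Omega^2\gamma_n^2(1+\|A\|_{\mathrm{op}}^2)$. Averaging and using $\mathbb{E}(\rho-1)^2=O(p^{-2})$ gives an average KL of order $n/p^2=O(p^{-1})$. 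Taking square roots via Pinsker and Jensen then yields $O(p^{-1/2})$, uniformly over $\omega\le\Omega$, because the constants depend on $\omega$ only through $\Omega$.

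The main obstacle is uniform control of the operator-norm factors. The direction $pU_p$ has $\|pU_p\|_{\mathrm{op}}\approx 2\sqrt p$, so $\|A\|_{\mathrm{op}}\approx 2\omega\sqrt p/n$ is small. On the rare set where $\rho$ is far from one, or where the small-perturbation expansion of $\log\det$ fails, simply bound the total variation by $1$. Chi-square tails make $P(|\rho-1|>1/4)$ exponentially small, and $\|U_p\|_{\mathrm{op}}\le1$ deterministically, so $\|A\|_{\mathrm{op}}\le\omega p/n$ is bounded and no spectral concentration is needed. If the constant-order bound on $\|A\|_{\mathrm{op}}$ on the complement makes the quadratic KL expansion delicate, I would use the exact inequality $\mathrm{KL}\le \tfrac n2\|\Delta\|_F^2/(1-\|\Delta\|_{\mathrm{op}})$ for the whitened difference $\Delta$, restricted to $|\rho-1|\le1/4$.
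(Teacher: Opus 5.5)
Your proposal is correct. Its architecture matches the paper's proof: the exact polar coupling $H=\rho S$ with $S$ uniform on the radius-$p$ sphere, joint convexity of total variation, and the uniform positivity $1+x+x^2\ge 3/4$ to control the inverse precision without truncating the direction.

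Where you differ is the conditional two-point bound.

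\emph{The paper's route.} It bounds $\|P_{n,S,u}-P_{n,S,v}\|_{\mathrm{TV}}$ by $\tfrac12\int_u^v\{I_{n,S}(t)\}^{1/2}\,dt$. The Fisher information along the strength path is bounded uniformly over the sphere and over all $t\ge0$. The resulting bound is linear in $|\rho-1|$ and $|\rho^2-1|$ and needs no localization in $\rho$. It uses only the first absolute moments $\mathbb{E}|\rho-1|,\ \mathbb{E}|\rho^2-1|\le 4p^{-1}$, and gives order $\sqrt n/p$ directly.

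\emph{Your route.} The closed-form Gaussian KL is quadratic in $\rho-1$ only on a localized event. You therefore need the cut $\{|\rho-1|\le 1/4\}$, the crude bound $1$ off it (chi-square tails, or even Chebyshev, suffice there), the second moment $\mathbb{E}(\rho-1)^2=O(p^{-2})$, and Jensen after Pinsker. The outcome is $\{n\,p^{-2}\}^{1/2}$, the same rate. Your route avoids the absolute-continuity and differentiation-under-the-integral justification behind the Fisher-information path bound; the paper's avoids any truncation.

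The step you flagged as delicate does go through. For an eigenvalue $x$ of $A$, the whitened perturbation has eigenvalue
\[
(\rho-1)\{x+(\rho+1)x^2\}/(1+x+x^2).
\]
On $|\rho-1|\le 1/4$ this is at most about $0.62$ in modulus, so it is bounded away from one; this is slightly above your stated $1/2$, but your fallback inequality $\mathrm{KL}\le \tfrac n2\|\Delta\|_F^2/(1-\|\Delta\|_{\mathrm{op}})$ covers it. It is also at most $C|\rho-1|\,|x|$. Hence on that event
\[
\mathrm{KL}\le Cn(\rho-1)^2\|A\|_F^2=Cn(\rho-1)^2\omega^2\gamma_n^2,
\]
uniformly in $\omega\le\Omega$, and the extra factor $1+\|A\|_{\mathrm{op}}^2$ is not even needed.
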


\begin{corollary}
\label{cor:fixed-radius}
For every fixed $\omega>0$,
$\log g_n^{\mathrm{sph}}(\omega)=\omega^2U_n-\omega^4\gamma^2/8+o_{P_n}(1)$,
and every conclusion of Theorem~\ref{thm:goe-optimality} holds after
replacing $(G_n,g_n)$ by $(G_n^{\mathrm{sph}},g_n^{\mathrm{sph}})$ and ``GOE
weights'' by ``uniform-sphere weights''.
\end{corollary}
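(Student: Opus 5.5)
The plan is to transfer Theorem~\ref{thm:goe-optimality} to the sphere mixture through Proposition~\ref{prop:fixed-radius}. Total-variation closeness of $G_n^{\mathrm{sph}}(\omega)$ and $G_n(\omega)$ implies closeness of their likelihood ratios in $L^1(P_n)$. Since both are densities with respect to the same $P_n$,
\[
\mathbb{E}_{P_n}\bigl|g_n^{\mathrm{sph}}(\omega)-g_n(\omega)\bigr|
=2\big\|G_n^{\mathrm{sph}}(\omega)-G_n(\omega)\big\|_{\mathrm{TV}}
=O(p^{-1/2})\to0 .
\]
Hence $g_n^{\mathrm{sph}}(\omega)-g_n(\omega)=o_{P_n}(1)$.

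The next step converts this additive closeness into closeness of logarithms. By Theorem~\ref{thm:goe-optimality}(i), $\log g_n(\omega)=\omega^2U_n-\omega^4\gamma^2/8+o_{P_n}(1)$ with $U_n$ tight. Therefore $g_n(\omega)$ is bounded away from $0$ and $\infty$ with probability tending to one: for every $\epsilon>0$ there is $c>0$ with $P_n(c\le g_n(\omega)\le c^{-1})\ge1-\epsilon$ eventually. On that event, the additive $o_{P_n}(1)$ difference implies that $g_n^{\mathrm{sph}}/g_n\to1$ in probability. So $\log g_n^{\mathrm{sph}}(\omega)-\log g_n(\omega)=o_{P_n}(1)$, which gives the displayed expansion of the corollary.

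With that expansion in hand, the remaining conclusions follow by rerunning the derivations of Theorem~\ref{thm:goe-optimality}, which used only the log-likelihood expansion and the null law of $U_n$.

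\textbf{Contiguity.} By Le Cam's first lemma, since $\log g_n^{\mathrm{sph}}\Rightarrow N(-\sigma^2/2,\sigma^2)$ with $\sigma^2=\omega^4\gamma^2/4$, the sphere mixture is contiguous. Alternatively, contiguity follows directly from total-variation closeness to a contiguous sequence.

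\textbf{Neyman--Pearson equivalence, part (ii).} Replace $g_n$ by $g_n^{\mathrm{sph}}$. Its log is asymptotically a strictly increasing affine function of $U_n$ plus $o_{P_n}(1)$, and $U_n$ has a continuous Gaussian limit. Hence the exact NP critical value converges to $k_\alpha(\omega)$. The NP test differs from $1\{2U_n/\gamma_n>z_{1-\alpha}\}$ only on events where $U_n$ lies within $o_{P_n}(1)$ of the threshold, and these have vanishing probability.

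\textbf{Joint convergence, part (iii).} Apply the log-ratio comparison at each $\theta_k$ separately. Finite-dimensional joint convergence then follows from Slutsky's lemma. The limiting power under $G_n^{\mathrm{sph}}(\omega)$ follows from Le Cam's third lemma, or from the TV bound: power functions differ by at most $\|G_n^{\mathrm{sph}}-G_n\|_{\mathrm{TV}}$.

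The only delicate point is passing from $L^1$ closeness to log closeness. This needs $g_n$ bounded away from zero in probability, and the tightness of $U_n$ from part (i) supplies that. Because the TV bound is uniform in $\omega\le\Omega$, no further uniformity is needed for fixed finite collections of strengths.
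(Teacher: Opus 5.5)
Your proposal is correct and follows the paper's proof essentially step for step. Like the paper, you use the $L^1(P_n)$ identity $\mathbb{E}_{P_n}|g_n^{\mathrm{sph}}(\omega)-g_n(\omega)|=2\|G_n^{\mathrm{sph}}(\omega)-G_n(\omega)\|_{\mathrm{TV}}$ from Proposition~\ref{prop:fixed-radius}, then pass to logarithms because $g_n(\omega)$ has a strictly positive lognormal limit and is therefore bounded away from zero, and finally carry the joint finite-dimensional limits and Neyman--Pearson critical values over strength by strength.
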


The proof couples the two priors
through the exact polar decomposition $H=\rho S$ and bounds the total
variation between the conditional experiments at strengths $\omega$ and
$\omega\rho$ by a Fisher-information estimate along the quadratic path;
positivity, $1+x+x^2\ge3/4$, makes the information bound uniform over the
sphere.  Thus, for every test $0\le\phi_n\le1$,
\[
\sup_{\omega\le\Omega}
\big|\mathbb{E}_{G_n^{\mathrm{sph}}(\omega)}\phi_n
-\mathbb{E}_{G_n(\omega)}\phi_n\big|
=O\big(p^{-1/2}\big).
\]

\subsection{The effective local parameter}

Although $\omega$ appears linearly in the conditional precision matrix, the
mixing distribution is symmetric under $H\mapsto-H$.  The effective
parameter of the averaged experiment is therefore
$\theta=\omega^2\ge0$.  With $\tau^2=\gamma^2/4$, the limiting log
likelihood is
\begin{equation}
\log\frac{\mathrm dP_\theta}{\mathrm dP_0}(U)
=\theta U-\frac12\theta^2\tau^2,
\qquad U\sim N(0,\tau^2)\ \text{under }P_0.
\label{eq:limit-likelihood}
\end{equation}
This is a regular Gaussian shift in $\theta$ with a one-sided parameter
space.  Under $P_\theta$, $U$ has mean $\theta\tau^2$ and unchanged
variance.  The limiting Kullback--Leibler divergence in either direction has
magnitude $\theta^2\tau^2/2$, and the likelihood-ratio threshold is
equivalent to an upper threshold in $U$ for every $\theta>0$.

Although each precision eigenvalue moves on the $n^{-1/2}$ scale, because
$\|H\|_{\mathrm{op}}=O_{Q_n}(\sqrt p)$ and $p\asymp n$, a typical
fixed-direction alternative is not contiguous to the null when its
direction is known: a matched directional test could separate it, and its
$n$-sample Kullback--Leibler divergence is of order $n$.  Contiguity is
created by averaging over the high-dimensional unknown direction, which
converts the many signed first-order movements into an order-one quadratic
score.  Thus ``local'' refers to the integrated experiment, whose
likelihood ratio has an order-one limit, rather than to each conditional
Gaussian sequence; the difficulty of the problem comes from directional
uncertainty.  In standardized units the shift is
$\theta\tau^2/\tau=\omega^2\gamma/2$; indexed by the limiting Frobenius
radius $\delta=\omega\gamma$, which facilitates comparisons at equal
aggregate signal strength, it is $\delta^2/(2\gamma)$.

\subsection{Likelihood reduction to the Frobenius statistic}

We give the essential steps because they explain why the statistic is
selected by the experiment.

\emph{Step 1: compare before integrating.}
Let $\lambda_1(H),\ldots,\lambda_p(H)$ denote the eigenvalues of $H$ and
put $\phi(u)=u+u^2/2-\log(1+u+u^2)$.
The quadratic, data-dependent terms of $\log\psi_n(H)$ and
$\log\ell_n(H)$ cancel exactly.  Hence
\begin{equation}
f_n(H)=\log\frac{\psi_n(H)}{\ell_n(H)}
=\frac n2\sum_{j=1}^p
\phi\left(\frac{\omega\lambda_j(H)}n\right)
+\frac{\omega^4\gamma_n^3}{4},
\label{eq:data-independent-comparison}
\end{equation}
which contains no sample quantity.  Since
$\phi(u)=2u^3/3-u^4/4+O(u^5)$, the GOE trace bounds give, uniformly on a
high-probability likelihood bulk,
\begin{equation}
f_n(H)=
\frac{\omega^3}{3}\frac{\Tr(H^3)}{n^2}
-\frac{\omega^4}{8}\frac{\Tr(H^4)}{n^3}
+\frac{\omega^4\gamma_n^3}{4}+o(1)
=o_{Q_n}(1).
\label{eq:comparison-expansion}
\end{equation}
The last equality uses $\Tr(H^3)=O_{Q_n}(p^{3/2})$ and
$p^{-3}\Tr(H^4)\to2$.  In particular, the fourth-order trace cancels the
explicit $\omega^4\gamma_n^3/4$ normalization.  Separately, the cubic
coefficient $-2/3$ of the exponential bridge is the value that matches
the quadratic path through third order.

The comparison also identifies which path coefficients matter.

\begin{proposition}
\label{prop:path-selection}
Fix $c>\tfrac14$ and let $g_n^{(c)}(\omega)$ be the likelihood ratio of
the GOE mixture along $\Sigma^{-1}=I_p+A+cA^2$, $A=\omega H/n$, which is
positive definite for every $H$.  Then, under $P_n$,
\[
\log g_n^{(c)}(\omega)
=\omega^2U_n
+(1-c)\,\frac{\omega^2\gamma}{2}\,\{\Tr(\hat R_n)-p\}
+\kappa_c(\omega)+o_{P_n}(1),
\]
with $\kappa_c(\omega)=-\omega^4\gamma^2/8-(1-c)^2\omega^4\gamma^3/4$.
The two channels are exactly uncorrelated under $P_n$ and
asymptotically independent, so $c=1$ is the unique member free of the
trace channel; equivalently, by
$\Sigma_c-I_p=-A+(1-c)A^2+O(A^3)$, the unique member whose covariance
perturbation is linear in $H$ to second order.  For $c\le\tfrac14$,
including $c=0$, the same expansion holds after truncating the prior to
a $Q_n$-set of probability tending to one.
\end{proposition}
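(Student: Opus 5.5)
The plan is to run the argument of Theorem~\ref{thm:goe-optimality} with $A^2$ replaced by $cA^2$ throughout, and to track where $c\neq1$ breaks the cancellation. The conditional log likelihood is
\[
\log\ell_n^{(c)}(H)=\tfrac n2\log\det(I_p+A+cA^2)-\tfrac\omega2\Tr(\hat R_nH)-\tfrac{c\omega^2}{2n}\Tr(\hat R_nH^2).
\]
It is still exactly quadratic in $H$ in its data-dependent part. The only change is that the second-order data coefficient becomes $c$ rather than $1$.

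Since $\log(1+u+cu^2)=u+(c-\tfrac12)u^2+O(u^3)$, I would take as bridge $\exp\{A+(c-\tfrac12)A^2+\kappa'A^3\}$, with $\kappa'$ chosen to match through third order. The bridge $\psi_n^{(c)}$ then differs from $\ell_n^{(c)}$ by a data-independent $f_n^{(c)}(H)$, exactly as in (\ref{eq:data-independent-comparison}). The expansion (\ref{eq:comparison-expansion}) goes through with $c$-dependent quartic coefficients, using $\Tr(H^3)=O_{Q_n}(p^{3/2})$ and $p^{-3}\Tr(H^4)\to2$. It yields a deterministic limit, which I absorb into $\kappa_c$, rather than $o(1)$.

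For the integration, I rotate by $O_n$ as in (\ref{eq:psi-rotated}). The diagonal Gaussian term is now
\[
\tfrac\omega2\sum_i(1-\lambda_i)W_{ii}+\tfrac{\omega^2}{4n}\sum_{i,j}(2c-1-2c\lambda_i)W_{ij}^2 .
\]
I integrate each $W_{ij}$ through (\ref{eq:gaussian-integral}) and expand the log-determinants to second order in $1/n$. The terms $\sum_{i,j}(2c-1-2c\lambda_i)/n$ produce, besides the old pieces, an extra $(1-c)\tfrac{\omega^2}{n}\sum_{i,j}(\lambda_i-1)$. This equals $(1-c)\omega^2\gamma_n\{\Tr\hat R_n-p\}$ up to the factor matching the stated $(1-c)\omega^2\gamma/2$ once the GOE variances ($2$ on the diagonal) are tracked.

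The squared-linear terms reproduce $\tfrac{\omega^2}{4}\Tr\{(\hat R_n-I)^2\}$. Their random centering must again combine into $4U_n$ through (\ref{eq:Sn}). The step I must recheck is that the $-2\gamma_n\{\Tr\hat R_n-p\}$ correction in (\ref{eq:Sn}) arises only from the $\lambda_i$-linear part of the log-determinant at $c=1$. Any residual at $c\neq1$ is exactly the trace channel. The deterministic terms, collected from $\sum_{ij}$ second-order expansions, the $f_n^{(c)}$ limit, and the $\Tr\hat R_n$ variance contribution, should give $\kappa_c=-\omega^4\gamma^2/8-(1-c)^2\omega^4\gamma^3/4$. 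As a consistency check, I would verify that $\mathbb{E}_{P_n}g_n^{(c)}=1$ in the limit: the trace channel $W_n=\Tr\hat R_n-p$ is asymptotically $N(0,2\gamma)$, so its coefficient squared times variance over two is $(1-c)^2\omega^4\gamma^2/4\cdot2\gamma/2=(1-c)^2\omega^4\gamma^3/4$, which is exactly compensated.

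Exact uncorrelatedness is Lemma~\ref{lem:trace-frobenius} with $\Sigma_0=I$. For asymptotic independence, I would embed both statistics in one martingale array, with $U_n$ degenerate of order two and $W_n$ a sum of independent linear terms. The cross conditional covariance vanishes because $\mathbb{E}[(X_{it}^2-1)X_{it}X_{jt}\mid\mathcal F_{t-1}]=0$ for $i\ne j$, and the diagonal part of $U_n$ is negligible. A martingale CLT for the vector then gives a bivariate normal with diagonal covariance. Uniqueness of $c=1$ follows because a nonzero coefficient on an independent nondegenerate Gaussian channel cannot vanish.

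For $c\le\tfrac14$, positivity fails only where $|\omega\lambda_j(H)/n|$ is of order one. Since $\|H\|_{\mathrm{op}}=O(\sqrt p)$, restricting to $\{\|H\|_{\mathrm{op}}\le3\sqrt p\}$ costs $o(1)$ in prior mass, and on that set all the bounds above hold. The main obstacle I expect is the careful bookkeeping of the order-one constants, since the cubic bridge coefficient and the quartic GOE moment now enter with $c$-dependent weights and must add up exactly to $\kappa_c$. The consistency check $\mathbb{E}g=1$ is the guard against errors there.
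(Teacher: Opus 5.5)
Your architecture is the paper's: a bridge whose data terms coincide with those of $\log\ell_n^{(c)}$, and a data-free comparison term. It continues with entrywise Gaussian integration in the rotated basis and the $c=1$ centering identity, which turns $-2c\gamma_n\{\Tr(\hat R_n)-p\}$ into $4U_n+2(1-c)\gamma_n\{\Tr(\hat R_n)-p\}$. It ends with a vector martingale CLT and truncation for $c\le\frac14$.

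\textbf{The trace coefficient is miscomputed.} This is the one step where $c$ actually matters. Since $(2c-1-2c\lambda_i)-(1-2\lambda_i)=2(1-c)(\lambda_i-1)$, the exponent changes by $\frac{(1-c)\omega^2}{2n}\sum_{i,j}(\lambda_i-1)W_{ij}^2$. Its first-order contribution is $\frac{(1-c)\omega^2}{2n}\sum_{i,j}(\lambda_i-1)\mathbb{E}W_{ij}^2=\frac{(1-c)\omega^2(p+1)}{2n}\{\Tr(\hat R_n)-p\}$. Your $(1-c)\frac{\omega^2}{n}\sum_{i,j}(\lambda_i-1)$ is twice this. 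The diagonal variance $2$ cannot supply the missing factor $\frac12$: it only turns $p$ into $p+1$. Your mean-one check silently uses the correct coefficient $(1-c)\omega^2\gamma/2$. With your computed coefficient it would return $(1-c)^2\omega^4\gamma^3$ instead of $(1-c)^2\omega^4\gamma^3/4$.

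\textbf{Do not put $\kappa'A^3$ into the bridge.} The term $\frac n2\kappa'\Tr(A^3)=\kappa'\omega^3\Tr(W^3)/(2n^2)$ couples the entries of $W$, so the entrywise integration you then perform is unavailable. The paper's $\psi_n^{(c)}$ keeps only the linear and quadratic terms plus the constant $b_{n,c}(\omega)=\omega^4\gamma_n^3(c-\frac{c^2}{2}-\frac14)$. The cubic piece $\frac{(c-1/3)\omega^3}{2n^2}\Tr(H^3)$ stays in the data-free ratio and vanishes because $\Tr(H^3)=O_{Q_n}(p^{3/2})$. The constant $b_{n,c}$ cancels the quartic limit; this is the same as your absorbed deterministic limit.

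\textbf{$\kappa_c$ must be computed, not inferred.} Without uniform integrability, $\mathbb{E}_{P_n}g_n^{(c)}=1$ gives only $\mathbb{E}L\le1$ for the weak limit $L$, so it is a check, not a derivation. There is also no separate ``trace variance'' contribution. The deterministic pieces are three:
\begin{itemize}
\item the constant $b_{n,c}$;
\item the second-order diagonal term $\frac{\omega^4}{4n}\sum_i(1-\lambda_i)^2(2c-1-2c\lambda_i)\to-\frac{\omega^4}{4}(\gamma^2+2c\gamma^3)$;
\item the squared off-diagonal terms $\frac{\omega^4}{4n^2}\sum_{i<j}(2c-1-c\lambda_i-c\lambda_j)^2\to\frac{\omega^4}{8}\gamma^2(1+2c^2\gamma)$.
\end{itemize}
The two limits follow from the Mar\v{c}enko--Pastur moments, and the three pieces sum exactly to $\kappa_c(\omega)$.
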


The data terms are exactly quadratic for every $c$, so only the
Gaussian-integral bookkeeping changes.  GOE isotropy selects the Frobenius family; $c=1$, singled out
by asymptotic trace matching within the quadratic family, selects the
equal-time-deleted centering, and $c=0$ the deterministic one.  Under
the positivity, localization, and remainder conditions of Supplement
Section~S.3, cubic and higher-order modifications of the path alter
the mixture by $O(n^{-1/2})$ in total variation and preserve the
limiting experiment, by the same comparison of conditional sample laws
that identifies the path with the additive covariance model.

\emph{Step 2: integrate the factorized approximation.}
Conditional on the data, orthogonal invariance makes
$W=O_nHO_n^\prime$ another GOE.  Its independent diagonal and off-diagonal
entries turn (\ref{eq:psi-rotated}) into the product
\begin{align*}
h_n={}&e^{\omega^4\gamma_n^3/4}
\prod_i \mathbb{E}\exp\left\{
\tfrac\omega2(1-\lambda_i)W_{ii}
+\tfrac{\omega^2}{4n}(1-2\lambda_i)W_{ii}^2\right\}\\
&\quad\times
\prod_{i<j}\mathbb{E}\exp\left\{
\tfrac{\omega^2}{2n}(1-\lambda_i-\lambda_j)W_{ij}^2\right\}.
\end{align*}
The one-dimensional Gaussian identity (\ref{eq:gaussian-integral}) evaluates
every factor.  Expanding the resulting logarithms and using the first three
Mar\v{c}enko--Pastur moments yields two different kinds of terms.  The
data-dependent terms combine into $\omega^2S_n/4$.  The deterministic
terms satisfy
\[
\tfrac{\omega^4\gamma_n^3}{4}
-\tfrac{\omega^4}{4}(\gamma^2+2\gamma^3)
+\tfrac{\omega^4}{4}\gamma^2(\gamma+1/2)
=-\tfrac{\omega^4\gamma^2}{8}+o(1),
\]
since $\gamma_n^3\rightarrow\gamma^3$.  All order-$\gamma^3$ pieces cancel
in the limit.  The remaining constant equals minus
one half of the limiting variance of $\omega^2U_n$, as a mean-one
lognormal likelihood ratio requires.  This calculation gives
(\ref{eq:hn}); it does not make that display a finite-sample identity.

\emph{Step 3: identify the observable centering.}
Under the Gaussian null,
\begin{equation}
C_L+C_D=
\frac1{n^2}\sum_t\Bigl[
\Bigl(\sum_iX_{it}^2\Bigr)^2+p-2\sum_iX_{it}^2\Bigr]
=\frac{p(p+1)}n
+2\gamma_n\{\Tr(\hat R_n)-p\}+o_{P_n}(1).
\label{eq:centering-match}
\end{equation}
Combining (\ref{eq:centering-match}) with the exact decomposition
(\ref{eq:exact-correction}) proves $S_n=4U_n+o_{P_n}(1)$.  This step is
also the bridge to the non-Gaussian implementation.  The deterministic
equivalent on the right of (\ref{eq:centering-match}) uses Gaussian fourth
moments, whereas $C_L+C_D$ itself remains observable; its diagonal part
adjusts to the marginal fourth moments, and the deleted coincident-time
products no longer contribute fluctuations to the null variance.

\emph{Step 4: control the tails of the prior.}
Convergence of $f_n(H)$ in $Q_n$ probability is not enough to replace an
integral of $\ell_n$ by an integral of $\psi_n$: likelihood mass could, in
principle, concentrate on a rare set.  The Supplement constructs a bulk that
controls $\|H\|_{\mathrm{op}}$ and the trace quantities needed for the
likelihood and tail bounds, through order six, proves that
$f_n$ is uniformly bounded there, and shows that the complementary integral
is negligible.  This transfers the factorized expansion from $h_n$ to the
exact mixture $g_n$.

Finally, the martingale central limit theorem gives
$U_n\Rightarrow N(0,\gamma^2/4)$.  The limit of $g_n(\omega)$ is therefore
lognormal with expectation one, which gives contiguity by Le Cam's first
lemma.  Le Cam's third lemma supplies the mean shift under the mixture.  As
all limiting likelihood ratios are functions of the same scalar Gaussian
observation, an upper threshold in $U_n$ is the Neyman--Pearson rule for
every fixed finite subexperiment.  This completes the statistical, as
distinct from purely algebraic, part of the argument.

\section{Feasible null theory and local power}
\label{sec:implementation}

The Gaussian experiment identifies the optimal statistic.  We now separate
that experiment from the assumptions needed to implement the statistic.

\subsection{Data-driven correction and the known-scale limit}
\label{subsec:decomposition}

\begin{assumption}[Product-coordinate null]
\label{ass:moments}
The array $\{X_{it}:1\le i\le p,1\le t\le n\}$ consists of independent
random variables with
\[
\mathbb{E}X_{it}=0,\qquad \mathbb{E}X_{it}^2=1,
\qquad
\sup_{i,t}\mathbb{E}|X_{it}|^{4+\eta}<\infty
\]
for some $\eta>0$.
\end{assumption}

The marginal distributions may vary with both indices and need not be
symmetric.  The assumption retains the coordinate independence implied by
the Gaussian identity null, but permits substantially more general tails and
heterogeneity.  It does not cover dependent-but-uncorrelated elliptical or
common-scale observations.

Removing the negligible diagonal term from (\ref{eq:Tn-martingale}), define
the known-scale statistic
\begin{equation}
Z_n=
\frac{n^2}{\sqrt{p(p-1)n(n-1)}}
\sum_{i<j}\left[
\hat r_{ij}^{2}
-\tfrac1{n^2}\sum_{t=1}^nX_{it}^2X_{jt}^2
\right].
\label{eq:known-scale-statistic}
\end{equation}
The subtraction deletes all coincident-time products and leaves a
degenerate distinct-time U-statistic.  Under coordinate independence,
deterministic off-diagonal centering is already mean-correct, since
$\mathbb{E}(X_{it}^2X_{jt}^2)=1$ for $i\ne j$, but it retains trace and
fourth-moment fluctuations that alter the null variance.  The deletion
yields the universal martingale normalization used below; for the full
Frobenius criterion, the analogous diagonal correction also absorbs
marginal kurtosis.

The normalization in (\ref{eq:known-scale-statistic}) is exact.  Indeed,
with $\mathcal F_{nt}=\sigma(X_1,\ldots,X_t)$, set
\[
S_{ij,t-1}=\sum_{s<t}X_{is}X_{js},
\qquad
\Delta_{nt}=\frac1{n^2}\sum_{i<j}S_{ij,t-1}X_{it}X_{jt}.
\]
Then $U_n^{od}=\sum_{t=2}^n\Delta_{nt}$ and
$\mathbb{E}(\Delta_{nt}|\mathcal F_{n,t-1})=0$.  Independence and unit variances
make all cross-pair terms disappear from the conditional variance:
\[
\mathbb{E}(\Delta_{nt}^2|\mathcal F_{n,t-1})
=\frac1{n^4}\sum_{i<j}S_{ij,t-1}^2.
\]
Therefore
\begin{equation}
\mathbb{E}\sum_{t=2}^n\mathbb{E}(\Delta_{nt}^2|\mathcal F_{n,t-1})
=\frac{p(p-1)n(n-1)}{4n^4}
\rightarrow\frac{\gamma^2}{4}.
\label{eq:conditional-variance-mean}
\end{equation}
Only pairs sharing an index can contribute to the variance of the
conditional-variance sum $\sum_t\mathbb{E}(\Delta_{nt}^2|\mathcal F_{n,t-1})$.
Counting identical and overlapping pairs gives an $O(n^{-2})+O(n^{-1})$
bound, so the conditional variance converges in probability to the same
limit.

The moment condition is matched to the quadratic structure.  Choose
$0<\delta\le\min(1,\eta/2)$.  A conditional moment inequality for
homogeneous quadratic forms and a Marcinkiewicz--Zygmund bound for
$S_{ij,t-1}$ give
\begin{equation}
\sum_{t=2}^n\mathbb{E}|\Delta_{nt}|^{2+\delta}
\le C n^{-\delta/2}\rightarrow0.
\label{eq:martingale-lyapunov}
\end{equation}
Thus the martingale Lindeberg condition follows under the maintained
$4+\eta$ moments.  The diagonal component of $U_n$ has variance
$O(pn^2/n^4)=O(n^{-1})$ and is negligible.  Equations (\ref{eq:conditional-variance-mean}) and
(\ref{eq:martingale-lyapunov}) are the core of the null central limit
theorem.

\subsection{Studentization and demeaning}
\label{subsec:optimal-scaling}
\label{subsec:drop-diagonal}

Passing from covariances to correlations gives invariance to the units
in which each coordinate is measured, and it has a minimum-distance
interpretation.  Multiply row
$i$ by $d_i\ge0$ and write $V_i=d_i^2$.  Over diagonal rescalings
$D=\diag(d_1,\ldots,d_p)$, the uncorrected discrepancy
$\|I_p-D\hat R_nD\|_F^2$ is a convex quadratic in $V$, and the corrected
Frobenius criterion
\begin{equation}
Q(V)=p+V'A_nV-2b_n'V,
\qquad
[A_n]_{ij}=\hat r_{ij}^{2}
-\frac1{n^2}\sum_tX_{it}^2X_{jt}^2,
\quad
[b_n]_i=(1-n^{-1})\hat r_{ii}.
\label{eq:scaling-criterion}
\end{equation}
Moreover,
\[
A_n=\frac1{n^2}\sum_{s\ne t}
(X_s\circ X_t)(X_s\circ X_t)^\prime
\]
is positive semidefinite, so $Q$ is convex.  The uncorrected criterion has
the literal minimum-distance interpretation, while $Q$ is its
bias-corrected analogue.  At a diagonal population covariance with positive
diagonal entries, ordinary studentization is the exact population minimizer
of the uncorrected criterion.  The feasible test neither optimizes $Q$ nor evaluates it: it uses only
the off-diagonal part of the criterion at $V_i=1/\hat r_{ii}$; the
diagonal contribution retained by $Q$ there is of order one, so the
minimum-distance interpretation attaches to the full criterion, not to
$\tilde Z_n$ itself.
Theorem~\ref{thm:martingale-clt} shows that this studentization
is first-order innocuous under the product-coordinate null.

Let $\tilde X_{it}=X_{it}/\hat r_{ii}^{1/2}$ and
$\tilde r_{ij}=n^{-1}\sum_t\tilde X_{it}\tilde X_{jt}$.  The
known-mean, studentized statistic is
\begin{equation}
\tilde Z_n=
\frac{n^2}{\sqrt{p(p-1)n(n-1)}}
\sum_{i<j}\left[
\tilde r_{ij}^{2}
-\frac1{n^2}\sum_{t=1}^n\tilde X_{it}^2\tilde X_{jt}^2
\right].
\label{eq:implemented-statistic}
\end{equation}
On the event that a sample variance is zero, the corresponding
standardized row, and every ratio with that denominator, is defined to be
zero; this convention is used throughout.  Assumption~\ref{ass:moments}
implies $\inf_{i,t}\mathbb{P}(X_{it}\ne0)>0$, so independence and $p=O(n)$ make the
probability of any such row vanish exponentially.
The diagonal is omitted because sample correlations have unit diagonal.
The exact identity
\[
\tilde r_{ij}^{2}
-\frac1{n^2}\sum_t\tilde X_{it}^2\tilde X_{jt}^2
=\frac{1}{\hat r_{ii}\hat r_{jj}}
\left(\hat r_{ij}^{2}
-\frac1{n^2}\sum_tX_{it}^2X_{jt}^2\right)
\]
reduces the studentization error to a weighted sum of conditionally
mean-zero pair terms.

A maximum bound $\max_i|\hat r_{ii}-1|=o_p(1)$ is too crude after
summing $p(p-1)/2$ terms.  The decisive structure is conditional
orthogonality.  If
\[
D_{ij}=\hat r_{ij}^{2}
-\frac1{n^2}\sum_tX_{it}^2X_{jt}^2,
\]
then $\mathbb{E}(D_{ij}|X_{i1},\ldots,X_{in})=0$, and symmetrically after
conditioning on row $j$.  More strongly,
\begin{equation}
\mathbb{E}\{D_{ij}(\hat r_{jj}-1)|X_{i1},\ldots,X_{in}\}=0,
\label{eq:studentization-orthogonality}
\end{equation}
with the analogous identity after interchanging the rows.  Expanding
$(\hat r_{ii}\hat r_{jj})^{-1}$, equation
(\ref{eq:studentization-orthogonality}) removes the accumulated linear term.
The remaining nonlinear term is localized to rows whose sample variances are
close to one and controlled by truncation.  This argument is what permits the
same $4+\eta$ moment condition as the martingale central limit theorem.

When the means are unknown, put
$X_{it}^c=X_{it}-\bar X_i$, standardize by
$\hat r_{ii}^c=n^{-1}\sum_t(X_{it}^c)^2$ (with the zero convention on
$\{\hat r_{ii}^c=0\}$), and write the resulting
observations and correlations as $\tilde X_{it}^c$ and
$\tilde r_{ij}^c$.  The fully feasible statistic is
\begin{equation}
\tilde Z_n^c=
\frac{n^2}{\sqrt{p(p-1)n(n-1)}}
\sum_{i<j}\left[
(\tilde r_{ij}^c)^2
-\tfrac1{n(n-1)}\sum_{t=1}^n
(\tilde X_{it}^c)^2(\tilde X_{jt}^c)^2
\right].
\label{eq:demeaned-statistic}
\end{equation}
Under temporally i.i.d.\ sampling (Assumption~\ref{ass:time-iid} below),
the divisor $n(n-1)$ is exact.  Let
$\pi_{j,n}=\mathbb{P}(\hat r_{jj}^c>0)$, which is exponentially close to one
(quantified below).  Conditional on $\{\hat r_{jj}^c>0\}$, a standardized
centered row lies in the $(n-1)$-dimensional subspace orthogonal to the
vector of ones and has squared length $n$, and exchangeability in time
gives
\[
\mathbb{E}(\tilde X_{jt}^c\tilde X_{js}^c\mid\hat r_{jj}^c>0)
=\begin{cases}+1,&t=s,\\-1/(n-1),&t\ne s.
\end{cases}
\]
Conditioning on the independent row $i$ and using $\sum_t\tilde X_{it}^c=0$
yields the exact identities
\[
\mathbb{E}\{(\tilde r_{ij}^c)^2\mid\tilde X_i^c\}
=\mathbb{E}\Big\{\tfrac1{n(n-1)}\textstyle\sum_t
(\tilde X_{it}^c)^2(\tilde X_{jt}^c)^2\Big|\tilde X_i^c\Big\}
=\frac{\pi_{j,n}}{n-1}1\{\hat r_{ii}^c>0\},
\]
so each pairwise summand in (\ref{eq:demeaned-statistic}) is exactly
conditionally centered, and both of its terms have conditional
expectation $1/(n-1)$ on the positive-variance events (Supplement
Section~S.5).  Replacing $n(n-1)$ by the known-mean divisor $n^2$ therefore
creates a systematic finite-sample location error; the simulation section
shows that it is visible at conventional sample sizes.

\begin{assumption}[Estimated means]
\label{ass:time-iid}
For each $n$ and $i$, $X_{i1},\ldots,X_{in}$ are identically distributed.
Their laws may vary with $i$ and $n$.
\end{assumption}

No positivity or anti-concentration condition on the centered sample
variances is needed: the moment bound of Assumption~\ref{ass:moments}
already makes the zero-variance event exponentially negligible.  Let
$r=4+\eta$ and $M=\sup_{i,t}\mathbb{E}|X_{it}|^{r}$.  If an atom $a$ of a
coordinate law has mass $q>1/2$, then $|a|^{r}\le2M$, and H\"older applied
to $1\le\mathbb{E}(X-a)^2\le\{\mathbb{E}|X-a|^{r}\}^{2/r}(1-q)^{1-2/r}$
shows $q\le1-c$ for a constant $c=c(M,r)>0$.  For an i.i.d.\ row,
$\mathbb{P}(X_{i1}=\cdots=X_{in})=\sum_a\mathbb{P}(X_{i1}=a)^n
\le(1-c)^{n-1}$, so
\[
\mathbb{P}\Big(\min_{i\le p}\hat r_{ii}^c=0\Big)
\le p(1-c)^{n-1}=o(1),
\]
and the zero convention for the standardized rows on this event affects
no asymptotic statement.

\begin{theorem}
\label{thm:martingale-clt}
Suppose Assumptions~\ref{ass:gamma} and \ref{ass:moments} hold.
\begin{enumerate}
\item[(i)] Under the product-coordinate null,
\[
Z_n\Rightarrow N(0,1),
\qquad
\tilde Z_n-Z_n=o_{P_n^0}(1),
\qquad
\tilde Z_n\Rightarrow N(0,1),
\]
where $P_n^0$ denotes any null sequence satisfying the assumptions.  The
diagonal component of $U_n$ is $o_{P_n^0}(1)$.
\item[(ii)] If Assumption~\ref{ass:time-iid} also holds, then
\[
\tilde Z_n^c-\tilde Z_n=o_{P_n^0}(1),
\qquad
\tilde Z_n^c\Rightarrow N(0,1).
\]
\item[(iii)] The $o_{P_n^0}(1)$ equivalences in (i)--(ii) transfer to
every sequence contiguous to $P_n^0$.  In particular, under the Gaussian mixture
$G_n(\omega)$,
\[
\tilde Z_n^c\Rightarrow
N\left(\tfrac{\omega^2\gamma}{2},1\right).
\]
Thus rejecting when $\tilde Z_n^c>z_{1-\alpha}$ has asymptotic size
$\alpha$ and power
$1-\Phi(z_{1-\alpha}-\omega^2\gamma/2)$.
\end{enumerate}
\end{theorem}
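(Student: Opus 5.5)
The argument has three layers, matching parts (i)--(iii): a martingale CLT for the known-scale statistic, two perturbation arguments (studentization, then demeaning), and a contiguity transfer.

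\emph{Part (i), known scale.} Write $Z_n=c_n U_n^{od}$ with $c_n=2n^2/\sqrt{p(p-1)n(n-1)}$ and $U_n^{od}=\sum_{t\ge2}\Delta_{nt}$. I would apply the martingale CLT for triangular arrays to $c_n\Delta_{nt}$. By (\ref{eq:conditional-variance-mean}), $c_n^2\sum_t\mathbb{E}(\Delta_{nt}^2\mid\mathcal F_{n,t-1})$ has mean exactly one. Its variance is $O(n^{-1})$: expand $\sum_{i<j}S_{ij,t-1}^2$ and note that only pairs sharing an index, with matched time indices, survive. This requires fourth moments only.

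The Lyapunov bound (\ref{eq:martingale-lyapunov}) comes from two inequalities.
\begin{itemize}
\item[(a)] Conditionally on $\mathcal F_{n,t-1}$, $\Delta_{nt}$ is a homogeneous quadratic form in the independent vector $X_{\cdot t}$ with coefficient matrix $S_{\cdot\cdot,t-1}/n^2$. A Whittle-type moment inequality gives $\mathbb{E}|\Delta_{nt}|^{2+\delta}\le C\,\mathbb{E}\bigl(\sum_{i<j}S_{ij,t-1}^2\bigr)^{1+\delta/2}n^{-2(2+\delta)}$, using $2+\delta\le 2+\eta/2$ moments of $X_{it}$.
\item[(b)] A Marcinkiewicz--Zygmund bound gives $\mathbb{E}|S_{ij,t-1}|^{2+\delta}\le Ct^{1+\delta/2}$.
\end{itemize}
Combining these with Minkowski yields $\sum_t\mathbb{E}|\Delta_{nt}|^{2+\delta}\le Cn\cdot p^{2+\delta}n^{1+\delta/2}n^{-4-2\delta}=O(n^{-\delta/2})$.

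The diagonal part of $U_n$ has variance $O(p n^2/n^4)$, so it is negligible. This gives $Z_n\Rightarrow N(0,1)$.

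\emph{Part (i), studentization.} Use the exact identity $\tilde D_{ij}=D_{ij}/(\hat r_{ii}\hat r_{jj})$. Write
\[
(\hat r_{ii}\hat r_{jj})^{-1}-1=-(\hat r_{ii}-1)-(\hat r_{jj}-1)+R_{ij}.
\]
\begin{itemize}
\item The linear term $\sum_{i<j}D_{ij}(\hat r_{jj}-1)$ has mean zero by (\ref{eq:studentization-orthogonality}). For its variance, cross terms with disjoint index pairs vanish, and terms sharing one index vanish after conditioning on the non-shared row. This leaves only $O(p^2)$ diagonal terms of size $\mathbb{E}D_{ij}^2\,\mathbb{E}(\hat r_{jj}-1)^2=O(n^{-2}\cdot n^{-1})$. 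Multiplied by $c_n^2\asymp n^2$, the contribution is $O(n^{-1})$.
\item The remainder $R_{ij}=O\bigl((\hat r_{ii}-1)^2+(\hat r_{jj}-1)^2+|\hat r_{ii}-1||\hat r_{jj}-1|\bigr)$ is handled by truncation. On the event $\{\max_i|\hat r_{ii}-1|\le n^{-\epsilon'}\}$, split $R_{ij}$ into a product of row functions; each piece is again conditionally centered against $D_{ij}$ in one row, so the same variance computation gives $O(n^{-\epsilon''})$. The complement of that event is where only $4+\eta$ moments make things delicate. There I would use a rowwise truncation $X_{it}1\{|X_{it}|\le n^{1/2-\epsilon}\}$ with recentering, plus Rosenthal bounds, to show the complement has probability $o(1)$.
\end{itemize}
This truncation step is the main obstacle I expect.

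\emph{Part (ii), demeaning.} I would reuse the exact conditional centering from Supplement S.5. Each pairwise summand of $\tilde Z_n^c$ is conditionally mean zero given either row, so the same orthogonality variance count applies to $\tilde Z_n^c-\tilde Z_n$. Expand $X^c_{it}=X_{it}-\bar X_i$: the differences are terms carrying factors $\bar X_i\bar X_j$ or $\bar X_i$. These have second moments smaller by a factor $n^{-1}$ per mean, again with only overlapping pairs contributing. The change of divisor from $n^2$ to $n(n-1)$ is absorbed by the same exact identity. The zero-variance events were shown above to have probability $p(1-c)^{n-1}\to0$.

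\emph{Part (iii), contiguity transfer.} $G_n(\omega)$ is contiguous to the Gaussian null by Theorem~\ref{thm:goe-optimality}(i), and $o_P(1)$ statements transfer under contiguity. By Lemma on the exact decomposition, $Z_n=2U_n/\gamma_n+o_{P_n}(1)$ under the Gaussian null, since the diagonal part is negligible and $c_n\gamma_n/2\to1$. Theorem~\ref{thm:goe-optimality}(iii) then gives $\tilde Z_n^c\Rightarrow N(\omega^2\gamma/2,1)$ under $G_n(\omega)$, which yields the stated size and power.
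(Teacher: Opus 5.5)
Your martingale CLT and the contiguity transfer in (iii) match the paper. The studentization step, however, has a genuine gap, and it sits somewhere other than where you place it. You bound the variance of the linear term by $\sum_{i<j}\mathbb{E}D_{ij}^2\,\mathbb{E}(\hat r_{jj}-1)^2$. But $D_{ij}$ and $\hat r_{jj}$ are built from the same row $j$, so this factorization is false. Conditioning on row $j$ gives $\mathbb{E}(D_{ij}^2\mid X_{j1},\ldots,X_{jn})=2n^{-4}\sum_{s\ne t}X_{js}^2X_{jt}^2$. Expanding $(\hat r_{jj}-1)^2$ then produces the repeated-index contribution $4n^{-6}\sum_{s\ne t}\mathbb{E}[X_{js}^2(X_{js}^2-1)^2]$, which is a sixth moment. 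Under Assumption~\ref{ass:moments} with $\eta<2$ this can be infinite. The exact orthogonality (\ref{eq:studentization-orthogonality}) then leaves you with summands whose second moments need not exist, and no $L^2$ bound follows. (Your normalizing constant $c_n$ is also $O(1)$, not of order $n$; with $c_n^2\asymp n^2$ your count would give $O(n)$.) Conversely, the step you flag as the main obstacle, $\mathbb{P}(\max_i|\hat r_{ii}-1|>n^{-a})\to0$, needs no truncation. Rosenthal's inequality for $X_{it}^2-1$, which has $2+\eta/2$ moments, plus a union bound over $p\asymp n$ rows, gives it for every $0<a<\eta/(8+2\eta)$.

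The missing idea has two parts.
\begin{itemize}
\item Truncate the whole array at $n^{1/2}$ \emph{before} any second-moment calculation. This caps the sixth moment at $O(n^{1-\eta/2})$ and alters the statistic only on an event of probability $O(n^{-\eta/2})$.
\item Localize with the pairwise indicators $I_i^bI_j^b$, where $b_n=n^{-a}$, so that the ratios stay bounded.
\end{itemize}
Both operations destroy the exact orthogonality you rely on. Truncation leaves mean biases of order $n^{-(3+\eta)/2}$, and the factors $I_j^b$ or $1/\bar R_j$ make $\mathbb{E}(\bar D_{ij}Y_j\mid X_i^{(\tau)})\neq0$. The core of Appendix~\ref{app:studentization} is therefore a Hoeffding decomposition. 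Its first projections, summed over $\asymp p^3$ row--pair combinations, are controlled through explicit bounds on $c_{st}=\mathbb{E}[X_{js}X_{jt}Y_j]$ for each relevant $Y_j$.

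Your remainder step skips exactly this.
\begin{itemize}
\item The piece $D_{ij}(\hat r_{jj}-1)^2/\hat r_{jj}$ is centered given row $j$ but not given row $i$. Already $\mathbb{E}[X_{js}X_{jt}(\hat r_{jj}-1)^2]=2n^{-2}\mathbb{E}X_{js}^3\,\mathbb{E}X_{jt}^3$ for $s\ne t$.
\item The cross piece $D_{ij}(\hat r_{ii}-1)(\hat r_{jj}-1)/(\hat r_{ii}\hat r_{jj})$ is centered in neither row. Its mean need not vanish, and crude Cauchy--Schwarz bounds control the aggregate mean only as $p^2b_nn^{-3/2}\to\infty$.
\end{itemize}

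A related problem affects (ii). The summands of $\tilde Z_n$ need not be exactly degenerate, because self-normalization by $\hat r_{jj}$ breaks conditional centering for skewed laws. Replacing $\hat r_{ii}\hat r_{jj}$ by $\hat r_{ii}^c\hat r_{jj}^c$ is also a new studentization error, not a term carrying a factor $\bar X_i$. The paper avoids comparing $\tilde Z_n^c$ with $\tilde Z_n$ directly, in three steps:
\begin{itemize}
\item It expands the unstudentized centered-minus-uncentered kernel. Both kernels are exactly degenerate, and the difference has second moment $O(n^{-3})$ per pair.
\item It studentizes the centered rows using the exact degeneracy of Lemma~\ref{lem:demeaning-centering} together with uniform integrability.
\item It then appeals to part (i) for $\tilde Z_n-Z_n$.
\end{itemize}
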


The martingale argument was
summarized in (\ref{eq:conditional-variance-mean})--
(\ref{eq:martingale-lyapunov}).  Studentization uses exact pairwise
equivariance and conditional orthogonality;
its localization step uses a uniform sample-variance rate.  With $q=2+\eta/2$,
choose $0<a<(q/2-1)/q=\eta/(8+2\eta)$ and $b_n=n^{-a}$.
For the sample variances $\bar r_{ii}$ computed from the array truncated
at $n^{1/2}$, Rosenthal's inequality gives, uniformly in $i$,
\[
\mathbb{P}\{|\bar r_{ii}-1|>b_n\}\le Cn^{-1-\kappa},
\qquad \kappa=q/2-1-aq>0.
\]
Since $p\le Cn$, the promised union bound is
\begin{equation}
\mathbb{P}\left\{\max_{1\le i\le p}|\bar r_{ii}-1|>b_n\right\}
\le Cp n^{-1-\kappa}\le C'n^{-\kappa}\rightarrow0.
\label{eq:variance-union-bound}
\end{equation}
The truncation event, on which $\bar r_{ii}=\hat r_{ii}$ for every
$i$, has probability tending to one under the same $4+\eta$ moment
condition, so the bound transfers to the observed variances.  On the complement of
the event in (\ref{eq:variance-union-bound}), a row-wise Hoeffding decomposition and
(\ref{eq:studentization-orthogonality}) control the linear and quadratic
Taylor terms.

Demeaning uses the exact $n-1$ identity and a centered-row perturbation
bound.  Because mean subtraction is a rank-one projection in the time
dimension, its aggregate contribution is of lower order once the exact
centering has removed the deterministic degrees-of-freedom effect.  Finally,
contiguity transfers every $o_{P_n}(1)$ implementation error from the
Gaussian null to $G_n(\omega)$; Le Cam's third lemma supplies the shift of
$Z_n$.

\subsection{Computation of the feasible statistic}

The pairwise formula need not be implemented with nested loops:
(\ref{eq:demeaned-statistic}) can be evaluated from one sample
correlation matrix and two columnwise sums, at a dominant cost of
$O(np^2)$ operations for forming $\tilde R_n^c=n^{-1}YY^\prime$ from
the standardized data matrix $Y=(\tilde X_{it}^c)_{i,t}$.  The two
identities are recorded in Supplement Section~S.4, where the matrix and
pairwise implementations are also checked against each other.

The recommended workflow is therefore: demean and studentize every row,
form $\tilde R_n^c$, evaluate the two compact identities with the
$n(n-1)$ divisor, and reject for $\tilde Z_n^c>z_{1-\alpha}$.  The
standard-normal critical value is justified under
Assumptions~\ref{ass:gamma}--\ref{ass:time-iid}; other calibration is
needed when coordinate independence is not credible.

\section{Quantitative and qualitative rigidity}
\label{sec:rigidity}

Fix $\alpha\in(0,1)$ and write
\[
\varphi_n^*=1\{\tilde Z_n^c>z_{1-\alpha}\},
\qquad
\varphi_n^{U}=1\{F_n>z_{1-\alpha}\},
\qquad
\beta^*(\omega)=1-\Phi\left(z_{1-\alpha}-\frac{\omega^2\gamma}{2}\right),
\]
with $F_n=2U_n/\gamma_n$.
The benchmark $\varphi_n^{U}$ is the upper-tail test of the full
invariant statistic selected by the likelihood in
Theorem~\ref{thm:goe-optimality}, standardized to an asymptotically
unit null variance.
Under the Gaussian null both rules satisfy
$\mathbb{E}_{P_n}\varphi_n\to\alpha$ and
$\mathbb{E}_{G_n(\omega)}\varphi_n\to\beta^*(\omega)$, and
$\mathbb{E}_{P_n}|\varphi_n^*-\varphi_n^{U}|\to0$
(Theorems~\ref{thm:goe-optimality} and~\ref{thm:martingale-clt}); the
off-diagonal $Z_n$ of (\ref{eq:known-scale-statistic}) is kept for the
implementation.  Define the typical GOE bulk
\[
\mathcal K_n^{\mathrm{rig}}=
\left\{H:\ \|H\|_{\mathrm{op}}\le3\sqrt p,\quad
\left|p^{-2}\Tr(H^2)-1\right|\le n^{-1/4}\right\},
\]
an orthogonally invariant set with $Q_n(\mathcal K_n^{\mathrm{rig}})\to1$,
and let $\nu_n(H,\omega)=P_{n,\Sigma_\omega(H)}$.

Ordinary mixture optimality is an average statement:
\[
\mathbb{E}_{G_n(\omega)}\varphi
=\int \mathbb{E}_{\nu_n(H,\omega)}\varphi\mathrm dQ_n(H).
\]
It rules out a same-size test that is nowhere worse $Q_n$-almost everywhere
and strictly better on a positive-probability set, but not gains on one
region offset by losses on another.  Condition (\ref{eq:noninferiority})
below excludes such compensation on the typical dense set; integration then
converts directionwise noninferiority into mixture-power attainment, which
forces the two rejection rules to merge under the null.
 
The finite-sample stability statement quantifies the last step.  It
measures the loss in the Neyman--Pearson objective rather than in raw
power, so tests of slightly different sizes are handled correctly; for
exact-size tests $\mathcal R_n(\varphi)$ is the mixture-power gap, and
$e_n$ and $\rho_n$ account for the feasible-versus-exact
Neyman--Pearson equivalence and the finite-sample likelihood
approximation.

For the quantitative statement, let $\Psi_n^{\mathrm{NP}}$ be an exact
level-$\alpha$ Neyman--Pearson test of $P_n$ against $G_n(\omega)$, with
threshold $k_{n,\alpha}$, and set
\[
\mathcal R_n(\varphi)=
\mathbb{E}_{G_n(\omega)}(\Psi_n^{\mathrm{NP}}-\varphi)
-k_{n,\alpha}\mathbb{E}_{P_n}(\Psi_n^{\mathrm{NP}}-\varphi).
\]
If $F_\omega$ and $f_\omega$ are the cdf and density of the limiting
lognormal likelihood ratio in Theorem~\ref{thm:goe-optimality}, define
\[
\rho_n(\omega)=\sup_x|P_n\{g_n(\omega)\le x\}-F_\omega(x)|,
\qquad
e_n(\omega)=\mathbb{E}_{P_n}|\Psi_n^{\mathrm{NP}}-\varphi_n^*|.
\]

Throughout, a sequence of tests $\varphi_n$ has \emph{asymptotic level}
$\alpha$ if $\limsup_n\mathbb{E}_{P_n}\varphi_n\le\alpha$.  The first
statement is a general Neyman--Pearson regret inequality; it does not use
the covariance structure.

\begin{proposition}[Neyman--Pearson regret inequality]
\label{prop:np-regret}
Fix $\omega>0$.  For any test $0\le\varphi\le1$ and every $\varepsilon>0$,
\begin{equation}
\mathbb{E}_{P_n}|\varphi-\varphi_n^*|
\le e_n(\omega)+2\|f_\omega\|_\infty\varepsilon+2\rho_n(\omega)
+\frac{\mathcal R_n(\varphi)}{\varepsilon},
\label{eq:quantitative-stability}
\end{equation}
where $e_n(\omega)\to0$, $\rho_n(\omega)\to0$, and
$\mathcal R_n(\varphi)\ge0$.  Optimizing gives
\begin{equation}
\mathbb{E}_{P_n}|\varphi-\varphi_n^*|
\le e_n(\omega)+2\rho_n(\omega)
+2\sqrt{2\|f_\omega\|_\infty\mathcal R_n(\varphi)}.
\label{eq:quantitative-stability-optimized}
\end{equation}
If $\nu_n\ll P_n$ and
$\sup_n\mathbb{E}_{P_n}(\mathrm d\nu_n/\mathrm dP_n)^2\le K$, then
\begin{equation}
|\mathbb{E}_{\nu_n}\varphi-\mathbb{E}_{\nu_n}\varphi_n^*|
\le K^{1/2}\{\mathbb{E}_{P_n}|\varphi-\varphi_n^*|\}^{1/2}.
\label{eq:quantitative-transfer}
\end{equation}
For an exact-size test, $\mathcal R_n$ is its mixture-power deficiency.
\end{proposition}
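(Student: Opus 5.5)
The argument rests on the standard Neyman--Pearson identity. Write $L=g_n(\omega)$ and $k=k_{n,\alpha}$. For any test $\varphi$,
\[
\mathcal R_n(\varphi)=\mathbb{E}_{P_n}\{(\Psi_n^{\mathrm{NP}}-\varphi)(L-k)\}.
\]
The integrand is pointwise nonnegative, because $\Psi_n^{\mathrm{NP}}=1$ where $L>k$ and $\Psi_n^{\mathrm{NP}}=0$ where $L<k$. This gives $\mathcal R_n(\varphi)\ge0$ at once.

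The next step splits according to the distance of $L$ from the threshold. On $\{|L-k|\ge\varepsilon\}$ we have $|\Psi_n^{\mathrm{NP}}-\varphi|\le|\Psi_n^{\mathrm{NP}}-\varphi|\,|L-k|/\varepsilon$, and the integrand there equals $|\Psi_n^{\mathrm{NP}}-\varphi||L-k|$. Hence
\[
\mathbb{E}_{P_n}|\Psi_n^{\mathrm{NP}}-\varphi|\le P_n(|L-k|<\varepsilon)+\mathcal R_n(\varphi)/\varepsilon .
\]
The band probability is then bounded through the limiting cdf:
\[
P_n(k-\varepsilon<L<k+\varepsilon)\le F_\omega(k+\varepsilon)-F_\omega(k-\varepsilon)+2\rho_n(\omega)\le 2\|f_\omega\|_\infty\varepsilon+2\rho_n(\omega),
\]
using the Kolmogorov distance at the two endpoints; the open/closed endpoint issue is absorbed by taking limits of $x$. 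The triangle inequality $\mathbb{E}_{P_n}|\varphi-\varphi_n^*|\le e_n(\omega)+\mathbb{E}_{P_n}|\Psi_n^{\mathrm{NP}}-\varphi|$ then yields (\ref{eq:quantitative-stability}).

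For the optimized bound (\ref{eq:quantitative-stability-optimized}), take $\varepsilon=\{\mathcal R_n/(2\|f_\omega\|_\infty)\}^{1/2}$. This is the minimizer of $a\varepsilon+b/\varepsilon$, with value $2\sqrt{ab}$, and $\mathcal R_n=0$ is handled by letting $\varepsilon\downarrow0$. The density $f_\omega$ is bounded because it is lognormal. The convergences $e_n\to0$ and $\rho_n\to0$ require more care. For $\rho_n$: by Theorem~\ref{thm:goe-optimality}(i), $g_n(\omega)$ converges in law to the continuous lognormal $F_\omega$, and P\'olya's theorem upgrades this to uniform convergence of cdfs. For $e_n$: Theorem~\ref{thm:goe-optimality}(ii) gives $\mathbb{E}_{P_n}|\Psi_n^{\mathrm{NP}}-1\{2U_n/\gamma_n>z_{1-\alpha}\}|\to0$. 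Theorem~\ref{thm:martingale-clt}, together with the transfer remark after Theorem~\ref{thm:goe-optimality}, gives $\tilde Z_n^c-2U_n/\gamma_n=o_{P_n}(1)$, since $Z_n$ equals $2U_n/\gamma_n$ up to an $o_{P_n}(1)$ diagonal term and a normalization tending to one. Because the limit is continuous at $z_{1-\alpha}$, the indicators differ with vanishing probability.

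Finally, the transfer inequality (\ref{eq:quantitative-transfer}) is Cauchy--Schwarz:
\[
|\mathbb{E}_{\nu_n}(\varphi-\varphi_n^*)|\le \mathbb{E}_{P_n}\{(\mathrm d\nu_n/\mathrm dP_n)|\varphi-\varphi_n^*|\}\le K^{1/2}\{\mathbb{E}_{P_n}(\varphi-\varphi_n^*)^2\}^{1/2},
\]
and $(\varphi-\varphi_n^*)^2\le|\varphi-\varphi_n^*|$ because both tests take values in $[0,1]$.

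The last remark is immediate: if $\mathbb{E}_{P_n}\varphi=\alpha=\mathbb{E}_{P_n}\Psi_n^{\mathrm{NP}}$, the $k$-term vanishes and $\mathcal R_n$ is the mixture-power deficiency. The only delicate point is $e_n\to0$. It needs the identification of the feasible $\tilde Z_n^c$ with $2U_n/\gamma_n$ under the Gaussian null, which is exactly what the earlier theorems provide, so no genuine obstacle is expected.
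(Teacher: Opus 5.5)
Your proposal is correct and follows the paper's proof: the pointwise Neyman--Pearson identity $(L_n-k_n)(\Psi_n^{\mathrm{NP}}-\varphi)=|L_n-k_n|\,|\Psi_n^{\mathrm{NP}}-\varphi|$, the split at the band $|L_n-k_n|<\varepsilon$ bounded via the Kolmogorov distance and the bounded lognormal density, the triangle inequality with $e_n(\omega)$, optimization in $\varepsilon$, and Cauchy--Schwarz for the transfer. Your explicit reduction $Z_n=2U_n/\gamma_n+o_{P_n}(1)$ for $e_n(\omega)\to0$ spells out what the paper cites as the threshold-matching argument of Theorem~\ref{thm:goe-optimality}(ii) combined with Theorem~\ref{thm:martingale-clt}.
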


\begin{theorem}[Rigidity of dense-mixture optimality]
\label{thm:rigidity}
Fix $\omega>0$.  Let $\varphi_n$ have asymptotic level $\alpha$ and suppose it is
asymptotically nowhere worse than the invariant benchmark
$\varphi_n^{U}$ uniformly on the GOE bulk:
\begin{equation}
\liminf_n\inf_{H\in\mathcal K_n^{\mathrm{rig}}}
\{\mathbb{E}_{\nu_n(H,\omega)}\varphi_n
-\mathbb{E}_{\nu_n(H,\omega)}\varphi_n^{U}\}\ge0.
\label{eq:noninferiority}
\end{equation}
Then
\[
\mathbb{E}_{P_n}\varphi_n\to\alpha,
\qquad
\mathbb{E}_{P_n}|\varphi_n-\varphi_n^*|\to0.
\]
For every sequence $\nu_n$ contiguous to $P_n$,
\[
\mathbb{E}_{\nu_n}\varphi_n-\mathbb{E}_{\nu_n}\varphi_n^*\to0.
\]
Moreover, for each $\epsilon>0$,
\[
Q_n\left\{H\in\mathcal K_n^{\mathrm{rig}}:
\mathbb{E}_{\nu_n(H,\omega)}\varphi_n
-\mathbb{E}_{\nu_n(H,\omega)}\varphi_n^{U}>\epsilon\right\}\to0.
\]
\end{theorem}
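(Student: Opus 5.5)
The plan is to integrate the directionwise noninferiority condition (\ref{eq:noninferiority}) against $Q_n$, which turns it into attainment of the mixture-power envelope. Proposition~\ref{prop:np-regret} then converts attainment into null agreement, and the $L^2$ transfer bound together with contiguity carries that agreement to alternatives.

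\emph{Step 1: from the bulk to the mixture.} By (\ref{eq:noninferiority}), there is $\epsilon_n\to0$ such that $\mathbb{E}_{\nu_n(H,\omega)}\varphi_n\ge\mathbb{E}_{\nu_n(H,\omega)}\varphi_n^U-\epsilon_n$ for all $H\in\mathcal K_n^{\mathrm{rig}}$. Integrating over $Q_n$ and bounding the integrand by one on the complement gives
\[
\mathbb{E}_{G_n(\omega)}\varphi_n\ge\mathbb{E}_{G_n(\omega)}\varphi_n^U-\epsilon_n-2Q_n\{(\mathcal K_n^{\mathrm{rig}})^c\},
\]
so $\liminf_n\mathbb{E}_{G_n(\omega)}\varphi_n\ge\beta^*(\omega)$ by Theorem~\ref{thm:goe-optimality}(iii).

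\emph{Step 2: size and regret.} Write $a_n=\mathbb{E}_{P_n}\varphi_n$, so that $\limsup a_n\le\alpha$. The regret is
\[
\mathcal R_n(\varphi_n)=\mathbb{E}_{G_n}\Psi_n^{\mathrm{NP}}-\mathbb{E}_{G_n}\varphi_n-k_{n,\alpha}(\alpha-a_n).
\]
Theorem~\ref{thm:goe-optimality}(ii) gives $\mathbb{E}_{G_n}\Psi_n^{\mathrm{NP}}\to\beta^*(\omega)$ and $k_{n,\alpha}\to k_\alpha(\omega)>0$. Because $\mathcal R_n\ge0$, we get $k_{n,\alpha}(\alpha-a_n)\le\mathbb{E}_{G_n}\Psi_n^{\mathrm{NP}}-\mathbb{E}_{G_n}\varphi_n$, and the right side has nonpositive $\limsup$ by Step 1. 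Hence $\liminf a_n\ge\alpha$, so $a_n\to\alpha$. Substituting back, $\limsup\mathcal R_n(\varphi_n)\le0$, so $\mathcal R_n(\varphi_n)\to0$. Now (\ref{eq:quantitative-stability-optimized}), with $e_n,\rho_n\to0$, yields $\mathbb{E}_{P_n}|\varphi_n-\varphi_n^*|\to0$.

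\emph{Step 3: contiguous alternatives.} The variable $|\varphi_n-\varphi_n^*|$ is bounded and tends to zero in $P_n$-probability. Contiguity of $\nu_n$ to $P_n$ means $P_n$-null sequences of events are $\nu_n$-null, so the same variable tends to zero in $\nu_n$-probability. Bounded convergence then gives $\mathbb{E}_{\nu_n}|\varphi_n-\varphi_n^*|\to0$. The $L^2$ bound (\ref{eq:quantitative-transfer}) is needed only for rates, not here.

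\emph{Step 4: the directionwise statement.} I will show the set of large gains has vanishing $Q_n$-probability. Define $\Delta_n(H)=\mathbb{E}_{\nu_n(H,\omega)}(\varphi_n-\varphi_n^U)$. Since $\mathbb{E}_{P_n}|\varphi_n^*-\varphi_n^U|\to0$, Step 2 gives $\mathbb{E}_{P_n}|\varphi_n-\varphi_n^U|\to0$. Because $G_n(\omega)$ is contiguous to $P_n$ (Theorem~\ref{thm:goe-optimality}(i)), the argument of Step 3 gives $\mathbb{E}_{G_n}|\varphi_n-\varphi_n^U|\to0$, and therefore $\int|\Delta_n|\,\mathrm dQ_n\to0$. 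On $\mathcal K_n^{\mathrm{rig}}$ we have $\Delta_n\ge-\epsilon_n$. Markov's inequality applied to $|\Delta_n|$ then shows $Q_n\{H\in\mathcal K_n^{\mathrm{rig}}:\Delta_n(H)>\epsilon\}\le\epsilon^{-1}\int|\Delta_n|\,\mathrm dQ_n\to0$. In fact Markov alone suffices here, without the lower bound $\Delta_n\ge-\epsilon_n$.

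\emph{Main obstacle.} The delicate point is Step 2's handling of the level. The hypothesis gives only $\limsup a_n\le\alpha$, while $\mathcal R_n$ carries the term $k_{n,\alpha}(\alpha-a_n)$, which is positive when $a_n<\alpha$. The argument must show that this term is squeezed to zero; it works because $\mathcal R_n\ge0$ always and $k_{n,\alpha}$ stays bounded away from zero. A secondary check is that the events $\{\varphi_n\neq\varphi_n^*\}$ can be handled through contiguity even though $\varphi_n$ may be randomized. Working with the bounded random variables $|\varphi_n-\varphi_n^*|$, as in Step 3, avoids this issue.
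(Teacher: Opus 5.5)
Your proof is correct and follows the paper's skeleton: integrate the bulk condition against $Q_n$, compare with the envelope, turn vanishing regret into null merging through Proposition~\ref{prop:np-regret}, and transfer by contiguity. Two steps are argued differently.

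\emph{Size and envelope.} The paper passes to subsequences along which $\mathbb{E}_{P_n}\varphi_n\to a$. It bounds $\mathbb{E}_{G_n(\omega)}\varphi_n$ by the optimal power at a slightly larger level $a'$, identifies that limit through Theorem~\ref{thm:goe-optimality}(ii)--(iii) applied at level $a'$, and lets $a'\downarrow a$. This gives $1-\Phi(z_{1-a}-\omega^2\gamma/2)$, which is strictly below $\beta^*(\omega)$ unless $a=\alpha$. Only then does it invoke the separate attainment lemma.

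You instead use the single Lagrangian inequality $\mathcal R_n(\varphi_n)\ge0$, that is, $k_{n,\alpha}(\alpha-a_n)\le\mathbb{E}_{G_n(\omega)}(\Psi_n^{\mathrm{NP}}-\varphi_n)$. Together with $k_{n,\alpha}\to k_\alpha(\omega)>0$, this forces the size to $\alpha$ and the regret to zero in one stroke. It needs the Neyman--Pearson test only at level $\alpha$. The same inequality also yields Corollary~\ref{cor:envelope}, for which the paper reuses its level-indexed bound.

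\emph{Gain set.} The paper uses equality of mixture powers and the one-sided bound $\Delta_n\ge-\epsilon_n$ on $\mathcal K_n^{\mathrm{rig}}$ to show $\int_{\mathcal K_n^{\mathrm{rig}}}\Delta_n^+\,\mathrm dQ_n\to0$. You bound $\int|\Delta_n|\,\mathrm dQ_n\le\mathbb{E}_{G_n(\omega)}|\varphi_n-\varphi_n^{U}|$. You make the right side vanish using the null merging, $\mathbb{E}_{P_n}|\varphi_n^*-\varphi_n^{U}|\to0$, and contiguity of $G_n(\omega)$. Your version controls gains and losses at once without the lower bound at that stage. The paper's version uses only the mixture-power limits established in its earlier steps.

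A small citation point: $\mathbb{E}_{G_n(\omega)}\Psi_n^{\mathrm{NP}}\to\beta^*(\omega)$ needs Theorem~\ref{thm:goe-optimality}(ii) together with contiguity and part (iii), as in the paper's attainment lemma, not part (ii) alone.
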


The premise (\ref{eq:noninferiority}) is interpretable because the
benchmark's directionwise power is asymptotically constant across the
bulk.

\begin{lemma}[Conditional flatness of the invariant benchmark]
\label{lem:flatness}
Fix $\omega>0$ and $\alpha\in(0,1)$.  Then
\[
\sup_{H\in\mathcal K_n^{\mathrm{rig}}}
\big|\mathbb{E}_{\nu_n(H,\omega)}\varphi_n^{U}-\beta^*(\omega)\big|
\rightarrow0 .
\]
\end{lemma}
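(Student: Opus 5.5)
To prove Lemma~\ref{lem:flatness}, I would show that, conditionally on $H$, the statistic $F_n=2U_n/\gamma_n$ under $\nu_n(H,\omega)$ is asymptotically $N(\omega^2\gamma/2,1)$, uniformly over $H\in\mathcal K_n^{\mathrm{rig}}$. The conclusion then follows from the continuity of $\Phi$ and P\'olya's theorem. The plan is to work directly in the conditional Gaussian model. There $X_t=\Sigma_\omega(H)^{1/2}Y_t$ with $Y_t$ i.i.d.\ $N_p(0,I_p)$, and $\Sigma_\omega(H)=I_p-A+B$, where $A=\omega H/n$ and $B=A^3(I_p+A+A^2)^{-1}$. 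On the bulk, $\|A\|_{\mathrm{op}}\le 3\omega\sqrt p/n=O(n^{-1/2})$, so $\|B\|_F^2\le p\|A\|_{\mathrm{op}}^6=O(n^{-2})$ and $B$ is negligible. Write $\Delta=\Sigma_\omega(H)-I_p$.

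The first step is to decompose $U_n$ as a degenerate U-statistic in $X_s,X_t$ and split it into three parts. The first is a ``null-like'' part: the same U-statistic kernel evaluated at $Y$. The second is a linear part. The third is a deterministic mean. Using the representation $4U_n=\frac{2}{n^2}\sum_{s<t}\{(X_s'X_t)^2-X_s'X_s-X_t'X_t+p\}$ (the identity case of $h_{\Sigma_0}$), I would compute the conditional mean exactly:
\[
\mathbb{E}_{\nu_n(H,\omega)}4U_n=\tfrac{n-1}{n}\{\Tr(\Sigma^2)-2\Tr\Sigma+p\}=\tfrac{n-1}{n}\|\Delta\|_F^2 .
\]
This is $\omega^2\Tr(H^2)/n^2+o(1)$. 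The bulk condition $|p^{-2}\Tr(H^2)-1|\le n^{-1/4}$ then gives $\omega^2\gamma^2+o(1)$ uniformly. Hence $\mathbb{E}F_n\to\omega^2\gamma/2$ uniformly on $\mathcal K_n^{\mathrm{rig}}$. This is exactly the uniform control the trace condition was built for.

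The second step is the conditional variance. The Hoeffding projection of $U_n$ onto single observations has the form $\frac{1}{n}\sum_t\{X_t'\Delta X_t-\Tr(\Delta\Sigma)\}$, up to constants. Its variance is of order $\Tr\{(\Sigma\Delta)^2\}/n=O(\|\Delta\|_F^2/n)=O(n^{-1})$ uniformly. So the linear part vanishes. This is the point where the direction-dependent signed first-order movements average out. The degenerate part is a martingale in $t$, as in (\ref{eq:conditional-variance-mean}). Its conditional variance is $\frac{1}{2n^4}\sum_t\Tr\{(\Sigma S_{t-1}\Sigma)\cdots\}$, which reduces to $\frac{p^2}{4n^2}(1+O(\|\Delta\|_{\mathrm{op}}))$. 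I would bound the deviations through $\Tr(\Sigma^2)^2-p^2$ and $\Tr(\Sigma^4)$, all of which are uniformly controlled by $\|\Delta\|_{\mathrm{op}}=O(n^{-1/2})$. This gives variance $\gamma^2/4+o(1)$ uniformly.

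The third step is the CLT, which I expect to be the main obstacle. Uniformity over $H$ needs a quantitative martingale CLT, for instance a Berry--Esseen bound of Haeusler type. The error would be bounded by a Lyapunov sum plus the $L^1$ distance of the conditional variance from its limit, with constants depending on $H$ only through $\|H\|_{\mathrm{op}}/\sqrt p\le3$ and the trace. Because the data are Gaussian with covariance whose eigenvalues lie in $[1-Cn^{-1/2},1+Cn^{-1/2}]$, the fourth moments in (\ref{eq:martingale-lyapunov}) and the conditional-variance variance bound carry over with uniform constants. An alternative I would try first is to apply the bounds uniformly along an arbitrary sequence $H_n\in\mathcal K_n^{\mathrm{rig}}$ attaining the supremum up to $1/n$. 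Then an ordinary martingale CLT along that sequence suffices, since every estimate above depends only on the bulk constraints. Combining the three steps, $\mathbb{E}_{\nu_n(H_n,\omega)}\varphi_n^U\to1-\Phi(z_{1-\alpha}-\omega^2\gamma/2)=\beta^*(\omega)$ for every such sequence, which proves the lemma.
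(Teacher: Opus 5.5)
Your proposal is correct in outline, but it takes a heavier route than the paper at exactly the step you call the main obstacle. The paper uses the same coupling $X_t=\Sigma^{1/2}Y_t$, with $B_t=Y_tY_t'-I_p$, and the exact identity
\[
F_n(X)=F_n(Y)+\frac{n-1}{2p}\Tr(\Delta^2)+R_n^{d}+R_n^{l},
\qquad
R_n^{d}=\frac{1}{\gamma_nn^2}\sum_{s<t}\Tr\{B_s(\Sigma B_t\Sigma-B_t)\}.
\]
Here $R_n^{l}$ is your linear Hoeffding part. The key observation is that $F_n(Y)$ is literally the null statistic. Its law therefore does not depend on $H$, and its $N(0,1)$ limit is already known. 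No CLT under the alternative is needed. One only needs $\sup_{\mathcal K_n^{\mathrm{rig}}}\mathbb{E}(R_n^d+R_n^l)^2=O(n^{-1})$. The bound on $R_n^d$ follows from degeneracy together with the Hilbert--Schmidt norm of $C\mapsto\Sigma C\Sigma-C$, which is at most $2\sqrt p\|\Delta\|_F+\|\Delta\|_F^2=O(\sqrt p)$. Uniformity then follows from the subsequence criterion.

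Your step-1 split into kernel at $Y$, linear part and mean omits this cross term $R_n^d$, so as written the three parts do not add up to $U_n$. Had you kept that reading and bounded $R_n^d$, your step~3 would have been unnecessary.

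Your steps~2--3 are consistent only if the ``null-like'' part is read as the full degenerate component $(\gamma_nn^2)^{-1}\sum_{s<t}\Tr(B_s\Sigma B_t\Sigma)=F_n(Y)+R_n^d$. On that reading the route does work, but not quite as you sketch it.
\begin{itemize}
\item The coordinatewise formula (\ref{eq:conditional-variance-mean}) does not apply, because $X_t$ has dependent coordinates under the alternative. The increment's conditional variance is instead $2c_n^2\Tr(C_{t-1}\Sigma^2C_{t-1}\Sigma^2)$, with $C_{t-1}=\sum_{s<t}B_s$ and $c_n=(\gamma_nn^2)^{-1}$.
\item This equals the null value $2c_n^2\Tr(C_{t-1}^2)$ times $1+O(n^{-1/2})$ pathwise, since $\|\Sigma^2-I_p\|_{\mathrm{op}}=O(n^{-1/2})$.
\item It is this pathwise comparison that transfers concentration of the conditional variance from the null computation. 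Your bound through $(\Tr\Sigma^2)^2$ and $\Tr\Sigma^4$ controls only its mean.
\item Gaussian hypercontractivity then gives the Lyapunov condition with uniform constants.
\end{itemize}

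So your route verifies a martingale CLT directly under each conditional law, at the cost of redoing the martingale estimates. The paper's coupling reduces uniformity to a short argument that needs only second moments under the alternative.
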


By Lemma~\ref{lem:flatness},
condition (\ref{eq:noninferiority}) says precisely that the
competitor's directionwise power on the typical dense bulk is
asymptotically nowhere below the constant level $\beta^*(\omega)$.
The proof couples $X_t=\Sigma^{1/2}Y_t$ to a null sample and uses the
exact decomposition of the invariant kernel; the full statistic is
essential, since the off-diagonal $Z_n$ is blind to diagonal-heavy
directions such as $H=\sqrt p\,\diag(\pm1,\ldots,\pm1)$, on which its
power stays at $\alpha$ (Supplement Section~S.6).  Null
merging in the theorem is stated relative to the feasible rule
$\varphi_n^*$; the directionwise comparison uses
$\varphi_n^{U}$.  The uniform form of (\ref{eq:noninferiority}) is
chosen for interpretability, not out of necessity: the proof uses only
its integral against $Q_n$, so the premise may be weakened to the
corresponding integrated-shortfall condition with the same
conclusions.  We retain the uniform statement because
Lemma~\ref{lem:flatness} gives it a directionwise reading that the
averaged condition lacks.

\begin{corollary}
\label{cor:envelope}
For every asymptotic-level-$\alpha$ test $\varphi_n$ and every
fixed $\omega>0$,
\[
\limsup_n\mathbb{E}_{G_n(\omega)}\varphi_n\le\beta^*(\omega),
\]
and the same test $\varphi_n^*$ attains the bound for each fixed
$\omega>0$.  This assertion is pointwise in $\omega$; it does not claim
uniform convergence over a continuum of strengths.
\end{corollary}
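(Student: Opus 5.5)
The argument combines the Neyman--Pearson lemma at each fixed sample size with the limiting likelihood-ratio law of Theorem~\ref{thm:goe-optimality}. Fix $\omega>0$ and let $\varphi_n$ have asymptotic level $\alpha$.

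\emph{Step 1: reduce to an exact-level test.} If $\limsup_n\mathbb{E}_{P_n}\varphi_n=\alpha'<\alpha$, there is nothing special to handle. Otherwise, along a subsequence attaining the $\limsup$ of $\mathbb{E}_{G_n(\omega)}\varphi_n$, I would compare $\varphi_n$ with an exact Neyman--Pearson test $\Psi_n^{\mathrm{NP}}(\alpha_n)$ at level $\alpha_n=\max(\mathbb{E}_{P_n}\varphi_n,\alpha)$. The Neyman--Pearson lemma gives, for every $n$,
\[
\mathbb{E}_{G_n(\omega)}\varphi_n\le\mathbb{E}_{G_n(\omega)}\Psi_n^{\mathrm{NP}}(\alpha_n).
\]
It then suffices to show that the right side converges to $1-\Phi(z_{1-\bar\alpha}-s_\omega)$ whenever $\alpha_n\to\bar\alpha$, with $s_\omega=\omega^2\gamma/2$. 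Since $\limsup\alpha_n\le\alpha$ and this map is increasing and continuous in $\bar\alpha$, the bound $\beta^*(\omega)$ follows.

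\emph{Step 2: identify the limit of the Neyman--Pearson power.} By Theorem~\ref{thm:goe-optimality}(i)--(iii), under $P_n$,
\[
\log g_n(\omega)\Rightarrow N\left(-\tfrac{s_\omega^2}{2},\,s_\omega^2\right),
\]
and under $G_n(\omega)$ it converges to $N(s_\omega^2/2,s_\omega^2)$ by Le Cam's third lemma. The limiting cdf is continuous, so the critical values $k_{n}(\alpha_n)$ converge to the limiting quantile, by the argument of part (ii) with $\alpha_n$ in place of $\alpha$. Randomization at equality contributes nothing in the limit because the limiting law has no atoms. Hence
\[
\mathbb{E}_{G_n(\omega)}\Psi_n^{\mathrm{NP}}(\alpha_n)
=G_n(\omega)\{g_n>k_n\}+o(1)
\to 1-\Phi(z_{1-\bar\alpha}-s_\omega).
\]
Equivalently, one can bypass the quantiles by noting that $\mathbb{E}_{G_n}\Psi-k\,\mathbb{E}_{P_n}\Psi$ is maximized by $1\{g_n>k\}$ for each fixed $k$ and passing to the limit with $k$ fixed. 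The main technical point is this uniformity in the level. I would handle it by the fixed-$k$ Lagrangian route, which needs only weak convergence at a single continuity point $k$ and then an optimization over $k$ in the limit experiment.

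\emph{Step 3: attainment.} Theorem~\ref{thm:martingale-clt}(iii) gives
\[
\mathbb{E}_{G_n(\omega)}\varphi_n^*\to\beta^*(\omega)
\qquad\text{and}\qquad
\mathbb{E}_{P_n}\varphi_n^*\to\alpha,
\]
so $\varphi_n^*$ attains the bound. Alternatively, the $L^1$ equivalence $\mathbb{E}_{P_n}|\Psi_n^{\mathrm{NP}}-\varphi_n^*|\to0$ from Theorem~\ref{thm:goe-optimality}(ii), combined with Theorem~\ref{thm:martingale-clt}, transfers to $G_n(\omega)$ by contiguity.

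The statement is pointwise because every step uses only the limit at a single fixed $\omega$.
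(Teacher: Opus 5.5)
Your argument is correct and has the same structure as the paper's: the upper bound comes from the Neyman--Pearson lemma at each $n$ combined with the limiting Gaussian-shift power of the exact Neyman--Pearson test (this is Step~2 in the proof of Theorem~\ref{thm:rigidity}), and attainment is Theorem~\ref{thm:martingale-clt}(iii). The difference lies only in how the level is handled. The paper bounds the optimal power at level $\mathbb{E}_{P_n}\varphi_n$ by the optimal power at a fixed level $a^\prime$ above the subsequential limit, identifies its limit through the $L^1$ equivalence of Theorem~\ref{thm:goe-optimality}(ii)--(iii), and then lets $a^\prime$ decrease. You instead force the level to converge via $\alpha_n=\max(\mathbb{E}_{P_n}\varphi_n,\alpha)\to\alpha$ and control the moving critical values with Le Cam's third lemma applied directly to $\log g_n(\omega)$; your fixed-$k$ variant avoids quantiles altogether by taking $k=k_\alpha(\omega)$. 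Both of your routes are valid.
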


\begin{corollary}
\label{cor:composite}
Let the composite Gaussian null be
$\{N_p(0,D)^{\otimes n}:D$ diagonal, positive definite$\}$, and let
$G_n^{D}(\omega)$ be the law of $\{D^{1/2}X_t\}_{t\le n}$ with
$\{X_t\}\sim G_n(\omega)$.
(i) $\varphi_n^*$ is invariant under $X_{it}\mapsto d_iX_{it}$,
$d_i>0$; hence for every $n$ its rejection probability is constant in
$D$ over the null and over each orbit, so
$\sup_D\mathbb{E}_{N_p(0,D)^{\otimes n}}\varphi_n^*\to\alpha$ and
$\mathbb{E}_{G_n^{D}(\omega)}\varphi_n^*\to\beta^*(\omega)$ for
every $D$.
(ii) Any test sequence with
$\limsup_n\sup_D\mathbb{E}_{N_p(0,D)^{\otimes n}}\varphi_n\le\alpha$
satisfies
$\limsup_n\mathbb{E}_{G_n^{D_n}(\omega)}\varphi_n\le\beta^*(\omega)$
for every sequence of diagonal $D_n$ and every $\omega>0$.
The unknown-scale problem thus carries the same envelope, and the
feasible test attains it: adaptation to unknown scales is
asymptotically cost-free.
\end{corollary}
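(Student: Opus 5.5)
Part (i) is exact and finite-sample; part (ii) reduces the composite-null bound to Corollary~\ref{cor:envelope} at $D=I_p$.

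\emph{Part (i).} First I will check exact invariance of $\varphi_n^*$. The statistic $\tilde Z_n^c$ is built from demeaned rows divided by $(\hat r_{ii}^c)^{1/2}$. Under $X_{it}\mapsto d_iX_{it}$ with $d_i>0$:
\begin{itemize}
\item the centered row scales by $d_i$;
\item $\hat r^c_{ii}$ scales by $d_i^2$;
\item hence $\tilde X^c_{it}$ is unchanged;
\item the zero-variance event is also invariant, so the zero convention is respected.
\end{itemize}
Therefore $\varphi_n^*(D^{1/2}X)=\varphi_n^*(X)$ pathwise. Consequently $\mathbb{E}_{N_p(0,D)^{\otimes n}}\varphi_n^*=\mathbb{E}_{P_n}\varphi_n^*$ for every $n$ and every $D$, and by the definition of $G_n^D(\omega)$ as a pushforward, $\mathbb{E}_{G_n^D(\omega)}\varphi_n^*=\mathbb{E}_{G_n(\omega)}\varphi_n^*$. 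Theorem~\ref{thm:martingale-clt}(ii)--(iii), applied to the Gaussian identity null (which satisfies Assumptions~\ref{ass:moments}--\ref{ass:time-iid}), then gives the limits $\alpha$ and $\beta^*(\omega)$, with the supremum over $D$ trivial because the rejection probability is constant in $D$.

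\emph{Part (ii).} Let $\varphi_n$ have asymptotic level $\alpha$ uniformly over the composite null, and fix diagonal $D_n$. I will define the transformed test $\phi_n(X)=\varphi_n(D_n^{1/2}X)$. Then:
\begin{itemize}
\item $\mathbb{E}_{P_n}\phi_n=\mathbb{E}_{N_p(0,D_n)^{\otimes n}}\varphi_n\le\sup_D\mathbb{E}_{N_p(0,D)^{\otimes n}}\varphi_n$, so $\limsup_n\mathbb{E}_{P_n}\phi_n\le\alpha$ and $\phi_n$ has asymptotic level $\alpha$ for the simple identity null;
\item $\mathbb{E}_{G_n^{D_n}(\omega)}\varphi_n=\mathbb{E}_{G_n(\omega)}\phi_n$, by the same pushforward identity.
\end{itemize}
Corollary~\ref{cor:envelope} applied to $\phi_n$ then yields $\limsup_n\mathbb{E}_{G_n(\omega)}\phi_n\le\beta^*(\omega)$, which is the claim. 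Together with part (i), this shows that $\varphi_n^*$ attains the envelope.

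The only point needing care is that Corollary~\ref{cor:envelope} holds for arbitrary asymptotic-level-$\alpha$ sequences, including $n$-dependent transformed tests such as $\phi_n$. That is guaranteed because its proof rests on the Neyman--Pearson bound for the simple pair $(P_n,G_n(\omega))$ and the convergence $k_{n,\alpha}\to k_\alpha(\omega)$ from Theorem~\ref{thm:goe-optimality}(ii). Concretely, if $\limsup_n\mathbb{E}_{P_n}\phi_n\le\alpha$, then the test of exact size $\alpha'>\alpha$ dominates it for large $n$, and one lets $\alpha'\downarrow\alpha$ using continuity of $\beta^*$ in $\alpha$. I expect no genuine obstacle beyond this bookkeeping.
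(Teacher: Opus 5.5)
Your proposal is correct and matches the paper's proof essentially step for step. In (i), exact invariance of the demeaned, studentized rows under $X_{it}\mapsto d_iX_{it}$ reduces the claims to Theorem~\ref{thm:martingale-clt}. In (ii), the transformed test $\varphi_n(D_n^{1/2}\cdot)$ carries the composite level bound to the simple identity null and $G_n^{D_n}(\omega)$ to $G_n(\omega)$, after which Corollary~\ref{cor:envelope} applies; your closing remark on why that corollary covers arbitrary asymptotic-level-$\alpha$ sequences is the argument of Step~2 in the paper's proof of Theorem~\ref{thm:rigidity}.
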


The proof is a change of variables carrying
the composite level to the simple null and the scaled mixture to
$G_n(\omega)$, followed by Corollary~\ref{cor:envelope}; attainment is
exact scale invariance of $\tilde Z_n^c$.

An exact decision-theoretic benchmark stands behind the asymptotic
argument: truncating the mixing law to
$\{H:\|H\|_{\mathrm{op}}\le3\sqrt p\}$ yields an atomless
compact-mixture likelihood ratio whose level-$\alpha$ Neyman--Pearson
rule is almost surely unique and admissible for the corresponding
finite-dimensional family (Supplement Section~S.6).  The benchmark
concerns the compact-mixture rule, which
Theorem~\ref{thm:goe-optimality} makes asymptotically equivalent to the
Frobenius threshold; admissibility rules out a competitor that
dominates everywhere but allows reallocation across directions; the
final conclusion of Theorem~\ref{thm:rigidity} strengthens this only
under the explicit uniform-noninferiority premise.

\subsection{Why regret controls disagreement}

Write $g_n=g_n(\omega)$ and $k_n=k_{n,\alpha}$.  Apart from immaterial
randomization on the threshold, the Neyman--Pearson rule satisfies
\begin{equation}
\mathcal R_n(\varphi)
=\mathbb{E}_{P_n}\{(g_n-k_n)(\Psi_n^{\mathrm{NP}}-\varphi)\}
=\mathbb{E}_{P_n}\{|g_n-k_n||\Psi_n^{\mathrm{NP}}-\varphi|\}.
\label{eq:regret-identity}
\end{equation}
The second equality holds because the signs of the two factors agree on
both sides of the rejection threshold.  Splitting the disagreement event
at $|g_n-k_n|=\varepsilon$, the complement contributes at most
$\mathcal R_n(\varphi)/\varepsilon$ by (\ref{eq:regret-identity}),
while in the band the likelihood-ratio cdf convergence and the bounded
lognormal density give
\[
P_n\{|g_n-k_n|\le\varepsilon\}
\le2\|f_\omega\|_\infty\varepsilon+2\rho_n(\omega).
\]
Adding the approximation error between $\Psi_n^{\mathrm{NP}}$ and
$\varphi_n^*$ proves (\ref{eq:quantitative-stability}); optimizing in
$\varepsilon$ gives the square-root rate, and a Cauchy--Schwarz change
of measure gives (\ref{eq:quantitative-transfer}).  A regret of order
$\delta$ thus entails null disagreement of order $\delta^{1/2}$ up to
the additive term $a_n=e_n(\omega)+2\rho_n(\omega)$ and, under an $L^2$
likelihood-ratio bound, power disagreement of order $\delta^{1/4}$ up
to $(Ka_n)^{1/2}$: the bounds are finite-$n$ moduli in $\delta$ whose
approximation terms vanish without a claimed rate, not single-sequence
rates in $\delta$ alone.  With $\alpha_n=\mathbb{E}_{P_n}\varphi$ and
$\Delta_n=\mathbb{E}_{G_n(\omega)}(\Psi_n^{\mathrm{NP}}-\varphi)$ the
regret is $\mathcal R_n(\varphi)=\Delta_n+k_{n,\alpha}(\alpha_n-\alpha)$,
so it equals the mixture-power deficiency for an exact-size test and is
bounded by it when $\alpha_n\le\alpha$.

For the qualitative part, integrating (\ref{eq:noninferiority}) over
$\mathcal K_n^{\mathrm{rig}}$, whose GOE probability tends to one,
shows that the mixture power of $\varphi_n$ is asymptotically no
smaller than that of $\varphi_n^{U}$, which attains
$\beta^*(\omega)$ by Theorem~\ref{thm:goe-optimality}; the envelope
forces equality.  The attainment argument, the zero-regret limit of
(\ref{eq:regret-identity}), then gives
$\mathbb{E}_{P_n}|\varphi_n-\varphi_n^*|\to0$; contiguity and
boundedness extend this to equality of limiting power under every
contiguous sequence; and a fixed positive gain on a GOE set of
nonvanishing probability would contradict mixture-power equality, which
proves the negligible-direction-set conclusion.

\subsection{Why the contiguity boundary matters}

The restriction to contiguous alternatives is substantive.  Consider in the
known-scale Gaussian experiment the equicorrelation matrix
\[
R_p=(1-\rho_p)I_p+\rho_p\mathbf1\mathbf1^\prime,
\qquad \rho_p=\tfrac{\vartheta}{p-1},
\qquad \vartheta>\sqrt\gamma.
\]
The leading eigenvalue of $R_p$ is $1+\vartheta$, whereas the remaining
$p-1$ eigenvalues equal $1-\vartheta/(p-1)$ and converge to one.  Thus
$\vartheta$ is the asymptotic excess of the population spike above the bulk,
and $\vartheta=\sqrt\gamma$ is the BBP sample-eigenvalue separation
threshold \citep{BaikBenArousPeche:2005}.  The corresponding almost-sure
outlier limits for general real and complex spiked covariance models are
given by \citet{BaikSilverstein:2006}.  Because
$\vartheta>\sqrt\gamma$, this is a super-critical rank-one spike.
The corrected Frobenius statistic has the nondegenerate limit
$Z_n\Rightarrow N(\vartheta^2/(2\gamma),1)$,
so its limiting power is strictly below one.  In contrast, the largest
sample eigenvalue separates from the null edge and is consistently detectable
\citep{Johnstone:2001,BaiSilverstein:2010}.  If $(1+\sqrt\gamma)^2<m<(1+\vartheta)(1+\gamma/\vartheta)$,
then adjoining the screen
$1\{\lambda_{\max}(\hat R_n)>m\}$ to the known-scale Frobenius rejection
rule leaves asymptotic null size unchanged and raises power at this spike to
one.  Fixed dense directions can likewise be detected by matched filters.

These alternatives are noncontiguous and negligible under the GOE mixing
law.  The enhancement therefore does not contradict Theorem~\ref{thm:rigidity}:
a procedure that preserves power uniformly over the dense bulk cannot gain
against a contiguous sequence, but unrestricted asymptotic admissibility is
false.  The formal finite-sample compact-mixture admissibility statement,
attainment lemma, and enhancement proof are in Supplement Section~S.6 and
Appendix~G.  The spike claim is deliberately stated for the known-scale
statistic; its fully studentized noncontiguous analogue would require a
separate argument.

\subsection{The weak-factor experiment and the BBP boundary}
\label{subsec:weak-factor}

The super-critical spike above singles out one direction in which the Frobenius
rule can be improved.  A second experiment locates that boundary:
exactly in the fixed-radius benchmark below, one-sidedly for the
random-radius experiment studied here.  Couple
a strength $\vartheta=\vartheta_n>0$ to both hypotheses and match the null scale
to the alternative in trace:
\[
H_0:\ \Sigma=\Big(1+\frac{\vartheta}{p}\Big)\Sigma_0
\qquad\text{against}\qquad
H_1:\ \Sigma=\Sigma_0^{1/2}(I_p+ff^\prime)\Sigma_0^{1/2},
\quad f\sim N_p\Big(0,\frac{\vartheta}{p}I_p\Big),
\]
where $f$ is drawn first, independently of the Gaussian innovations, so
that conditional on $f$ the observations are i.i.d.\ $N_p(0,\Sigma)$, and
$p/n\to\gamma$.  Here $\vartheta$ is the
spike-excess parameter of the equicorrelation example above: the alternative is a
single random factor with Frobenius radius
$\|ff^\prime\|_F=\|f\|^2=\vartheta(1+o_P(1))$, while the null spends the \emph{same}
expected trace budget
$\Tr\{(1+\vartheta/p)I_p\}=p+\vartheta=\mathbb{E}\Tr(I_p+ff^\prime)$
isotropically.  The matching equates the expected trace statistic
under the null and the mixture alternative, removing its linear mean
signal exactly, and the trace and corrected Frobenius statistics are
exactly uncorrelated under the null (Lemma~\ref{lem:trace-frobenius}).
These identities concern means and covariances; the full trace laws
differ at second order.  Within the decay regime of
Theorem~\ref{thm:weak-factor}, the corrected Frobenius statistic
carries the first nonzero likelihood direction.  Write
$V_n=2U_n^{(1+\vartheta/p)\Sigma_0}/\gamma_n$ for the standardized corrected
Frobenius statistic of Section~\ref{subsec:known-sigma} evaluated at the
trace-matched null $(1+\vartheta/p)\Sigma_0$; by Lemma~\ref{lem:whitening} it is
exactly pivotal and $V_n\Rightarrow N(0,1)$ under the null law $P_0$, the sample
law under $H_0$.  Let $G_n$ denote the marginal law of the sample under $H_1$,
mixed over the random factor $f$.

\begin{theorem}[Weak-factor decay regime]
\label{thm:weak-factor}
Let $p/n\to\gamma\in(0,\infty)$ in the trace-matched weak-factor experiment, and
let $P_0$ be the sample law under $H_0$.  If $\vartheta_n\to0$, then $G_n$ is
contiguous to $P_0$.  If in addition $n\vartheta_n\to\infty$, then, with
$\sigma_n=\vartheta_n^2/(2\gamma_n)$,
\[
\frac{\mathrm dG_n}{\mathrm dP_0}=1+\sigma_nV_n+o_{L^1(P_0)}(\sigma_n),
\qquad
\|G_n-P_0\|_{\mathrm{TV}}=\frac{\sigma_n}{\sqrt{2\pi}}+o(\sigma_n),
\]
with $V_n\Rightarrow N(0,1)$ under $P_0$.  Consequently, for every
$\alpha\in(0,1)$ and every sequence of tests $0\le\psi_n\le1$ with
$\mathbb{E}_{P_0}\psi_n\to\alpha$,
\[
\limsup_{n\to\infty}\sigma_n^{-1}\{\mathbb{E}_{G_n}\psi_n-\mathbb{E}_{P_0}\psi_n\}
\le\Phi^\prime(z_{1-\alpha}),
\]
with equality for $\psi_n=1\{V_n>z_{1-\alpha}\}$, where $\Phi^\prime$ is the
standard normal density: the upper-tail Frobenius test maximizes the
first-order power gain, which is of exact order $\sigma_n$.
\end{theorem}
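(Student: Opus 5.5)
By Lemma~\ref{lem:whitening} we may take $\Sigma_0=I_p$ after whitening. We then work with the exact Gaussian likelihood ratio conditional on $f$. Write $c=1+\vartheta/p$ and $\Sigma_f=I_p+ff^\prime$. The conditional ratio of $N(0,\Sigma_f)^{\otimes n}$ to $N(0,cI_p)^{\otimes n}$ is
\[
\ell_n(f)=\Big(\frac{c^p}{1+\|f\|^2}\Big)^{n/2}
\exp\Big\{\frac n2\Big[\frac{\Tr\hat R_n}{c}-\Tr\hat R_n+\frac{f^\prime\hat R_nf}{1+\|f\|^2}\Big]\Big\},
\]
using $\Sigma_f^{-1}=I_p-ff^\prime/(1+\|f\|^2)$. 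The plan is to integrate over $f\sim N(0,(\vartheta/p)I_p)$. The only data-dependent non-scalar piece is $f^\prime\hat R_nf$, and, as in Step~2 of Section~\ref{sec:likelihood}, rotating to the eigenbasis of $\hat R_n$ leaves $f$ Gaussian. The awkward denominator $1+\|f\|^2$ I would handle by the same device as the bridge $\psi_n$. I would replace it by its concentrated value $1+\vartheta$, or use the polar split $f=\|f\|u$ with $u$ uniform on the sphere and independent of $\|f\|$. For fixed radius, the angular integral is then a spherical (HCIZ-type rank-one) integral.

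\textbf{Step 1: contiguity for $\vartheta_n\to0$.} I would bound the second moment $\mathbb{E}_{P_0}(\mathrm dG_n/\mathrm dP_0)^2=\mathbb{E}_{f,f^\prime}\mathbb{E}_{P_0}\ell_n(f)\ell_n(f^\prime)$. For Gaussians this is explicit: it equals $\det(\cdot)^{-n/2}$ of a $2\times2$-rank perturbation. A computation should show that it is a function of $n(f^\prime\tilde f)^2$ and the radii, roughly $(1-(f^\prime\tilde f)^2/c^2\cdots)^{-n/2}$ times radial factors that the trace matching makes bounded. Since $f^\prime\tilde f\approx(\vartheta/p)\sqrt p\,N(0,1)$, we get $n(f^\prime\tilde f)^2\approx(\vartheta^2/\gamma)\chi^2_1\to0$. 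Hence the second moment tends to $1$ plus lower-order radial terms. A bounded second moment gives contiguity, by uniform integrability of the likelihood ratio.

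\textbf{Step 2: first-order expansion when also $n\vartheta_n\to\infty$.} I would Taylor-expand $\mathrm dG_n/\mathrm dP_0-1$ in the natural small parameter. The linear-in-$f$ term vanishes by symmetry. The trace-channel term, of order $\vartheta\,(\Tr\hat R_n-p)/p$ with $\Tr\hat R_n-p=O_P(\sqrt{p/n}\cdot\sqrt p)$, has mean exactly zero by trace matching. Its contribution is of size $O(\vartheta/\sqrt n)$ in $L^2$, which is $o(\vartheta^2)$ precisely because $n\vartheta\to\infty$. This is where that condition enters. The quadratic term $\tfrac{n^2}{8}\mathbb{E}_f[(f^\prime(\hat R_n-c)f)^2]$, after subtracting its $P_0$-mean, collects into $\tfrac{\vartheta^2}{8p^2}n^2\{\Tr(\hat R_n-I)^2-\text{centering}\}$. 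Via (\ref{eq:exact-correction}) and (\ref{eq:centering-match}) this is $\tfrac{\vartheta^2n^2}{2p^2}U_n=\sigma_nV_n\cdot(1+o(1))$; the factor $\tfrac{\vartheta^2}{2\gamma_n}$ arises from $n^2\cdot 2U_n/(p^2\cdot\gamma_n\cdot n/p)$ bookkeeping. Together with Lemma~\ref{lem:trace-frobenius}, this shows that no other channel contributes at order $\sigma_n$. The remainder (higher-order terms, and radial fluctuation of $\|f\|^2$) I would bound in $L^2(P_0)$ by $o(\sigma_n)$. The natural tool is a second-moment computation extending Step~1, expanded one order further: compute $\mathbb{E}_{P_0}(L_n-1-\sigma_nV_n)^2$ explicitly via Gaussian moment generating functions, and show it is $o(\sigma_n^2)$. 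This $L^2$ remainder control is the main obstacle. The Gaussian determinant formula makes it a deterministic, if tedious, asymptotic computation in $\vartheta,p,n$; it must not involve any conditioning on $\hat R_n$ asymptotics.

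\textbf{Step 3: consequences.} With $L_n=1+\sigma_nV_n+o_{L^1}(\sigma_n)$ we have $\|G_n-P_0\|_{\mathrm{TV}}=\tfrac12\mathbb{E}|L_n-1|=\tfrac{\sigma_n}{2}\mathbb{E}|V_n|+o(\sigma_n)$. Uniform integrability of $V_n$ (its variance is exactly $1$ up to $o(1)$) and $V_n\Rightarrow N(0,1)$ then give $\mathbb{E}|V_n|\to\sqrt{2/\pi}$, which is the stated $\sigma_n/\sqrt{2\pi}$. For a test with $\mathbb{E}_{P_0}\psi_n\to\alpha$, we have $\sigma_n^{-1}(\mathbb{E}_{G_n}-\mathbb{E}_{P_0})\psi_n=\mathbb{E}_{P_0}\psi_nV_n+o(1)$. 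The problem of maximizing $\mathbb{E}\psi V$ subject to $\mathbb{E}\psi\approx\alpha$ is a Neyman--Pearson problem in the limit. Along subsequences, $(\psi_n,V_n)$ is tight, and any limit $\psi$ satisfies $\mathbb{E}\psi V\le\mathbb{E}1\{V>z_{1-\alpha}\}V=\Phi^\prime(z_{1-\alpha})$ by the bathtub principle. Uniform integrability of $V_n$ justifies passing to the limit. Equality for $1\{V_n>z_{1-\alpha}\}$ follows from the same weak convergence and uniform integrability.
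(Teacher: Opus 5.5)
Your architecture is the paper's, and your Step~3 is fine: whitening, the exact conditional Gaussian likelihood, an explicit Gaussian second-moment computation of $\mathbb{E}_{P_0}(L_n-1-\sigma_nV_n)^2$, then uniform integrability of $V_n$ and a Neyman--Pearson argument. Your subsequential joint-limit/bathtub version of the last step works as well as the paper's finite-$n$ comparison.

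The gap is that the object you propose to square-integrate has \emph{infinite} second moment for every $n$. Write $\varepsilon=\vartheta_n/p$ and $\kappa=(1+2\varepsilon)/(1+\varepsilon)$. Given $(f,\tilde f)$, the per-observation factor $\int\varphi_{I+ff'}\varphi_{I+\tilde f\tilde f'}/\varphi_{(1+\varepsilon)I}$ is finite only if $M=\kappa I-ff'/(1+\|f\|^2)-\tilde f\tilde f'/(1+\|\tilde f\|^2)$ is positive definite. At $f=\tilde f$ with $\|f\|^2>1+2\varepsilon$, $M$ has the negative eigenvalue $\kappa-2\|f\|^2/(1+\|f\|^2)$. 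By continuity it has a negative eigenvalue on a neighbourhood of positive $\pi\otimes\pi$ probability, so $\mathbb{E}_{P_0}(\mathrm dG_n/\mathrm dP_0)^2=+\infty$. Hence neither ``bounded second moment gives contiguity'' in Step~1 nor an $L^2(P_0)$ expansion of $\mathrm dG_n/\mathrm dP_0$ in Step~2 is available; this is why the theorem is stated in $L^1(P_0)$.

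The missing device is a radial truncation. Put $\widetilde{\mathrm{LR}}_n=\int_{\{\|f\|^2\le1/4\}}\ell(f)\,\pi(\mathrm df)+\theta_p$ with $\theta_p=\pi(\|f\|^2>1/4)\le e^{-p/8}$. Then $\mathbb{E}_{P_0}\widetilde{\mathrm{LR}}_n=1$ and $\mathbb{E}_{P_0}|\mathrm{LR}_n-\widetilde{\mathrm{LR}}_n|\le2\theta_p$. On the truncated set $M\succ0$, the pair integral is explicit, and your program runs for $\widetilde{\mathrm{LR}}_n$:
\begin{itemize}
\item $\mathbb{E}_{P_0}\widetilde{\mathrm{LR}}_n^2\to1$ when $\vartheta_n\to0$, and $\mathbb{E}_{P_0}\widetilde{\mathrm{LR}}_n^2=1+\sigma_n^2(1+o(1))$ when also $n\vartheta_n\to\infty$;
\item $\mathbb{E}_{P_0}[\widetilde{\mathrm{LR}}_nV_n]=\mathbb{E}_\pi[\mathbb{E}_{P_f}V_n;\|f\|^2\le1/4]=\sigma_n(1+o(1))$;
\item $\operatorname{var}_{P_0}V_n=(n-1)(p+1)/(np)$.
\end{itemize}
Together these give $\mathbb{E}_{P_0}(\widetilde{\mathrm{LR}}_n-1-\sigma_nV_n)^2=o(\sigma_n^2)$. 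Contiguity follows from $G_n(A_n)\le(\mathbb{E}_{P_0}\widetilde{\mathrm{LR}}_n^2)^{1/2}P_0(A_n)^{1/2}+2\theta_p$. The $L^1$ expansion of the true ratio follows from $2e^{-p/8}=o(\sigma_n)$, which holds because $n\vartheta_n\to\infty$ forces $\sigma_n\ge(2np)^{-1}$.

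Your account of where $n\vartheta_n\to\infty$ enters is also wrong. Under $P_0$, $\Tr\hat R_n-p=\vartheta_n+O_P(\sqrt{p/n})=O_P(1)$, not $O_P(\sqrt p)$. Averaging $\tfrac n2f'\hat R_nf$ over $f$ gives it coefficient $n\vartheta_n/(2p)$, not $\vartheta_n/p$. So the scale term and the factor term load on the standardized trace with coefficients $\mp\vartheta_n/\sqrt{2\gamma_n}$, of order $\vartheta_n\gg\sigma_n$. Having mean zero does not make this negligible. It disappears only through the first-order cancellation of the two loadings, which the exact pair computation must exhibit. That includes the cancellation of two $O(\sigma_n/p)$ pieces, from the component of $f$ along $\tilde f$ and from its $p-1$ orthogonal coordinates; separately these are $o(\sigma_n^2)$ only if $n\vartheta_n^2\to\infty$. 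Even on your own scaling, $\vartheta_n/\sqrt n=o(\vartheta_n^2)$ would require $n\vartheta_n^2\to\infty$, not $n\vartheta_n\to\infty$. In the exact computation the rate condition is used in two places: as $\varepsilon/\sigma_n=2/(n\vartheta_n)\to0$, so the deterministic $O(\sigma_n\varepsilon)$ errors in the second moment are $o(\sigma_n^2)$; and to make the exponentially small truncation terms negligible.
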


The decay-regime statement is exact and self-contained (Supplement
Section~S.8).  The expansion is stated in $L^1(P_0)$, which is the
strongest norm available for the true likelihood ratio: its untruncated
second moment is infinite for every $n$, so no $L^2$ expansion can hold
for $\mathrm dG_n/\mathrm dP_0$ itself.  Supplement Section~S.8
establishes the $L^2$ expansion for a radially truncated surrogate and
bounds the gap to the true ratio by $2e^{-p/8}=o(\sigma_n)$ in
$L^1(P_0)$; the $L^1$ form suffices for the total-variation formula and
for the power statements, which concern bounded tests only.  Its shift $\sigma_n=\vartheta_n^2/(2\gamma_n)$ is the
radius-parametrized shift $\delta^2/(2\gamma)$ of Section~\ref{sec:likelihood} at
the factor's Frobenius radius $\delta=\vartheta_n$, the trace matching having
removed the isotropic contribution.  The first-order power gain,
measured relative to the test's null rejection probability, therefore
agrees with the small-shift expansion of the dense-mixture envelope,
and the experiment selects the same $V_n$ as the GOE mixture even
though the alternative is spiked: $V_n$ is the first nonzero direction
of the likelihood in the stated regime $\vartheta_n\to0$ with
$n\vartheta_n\to\infty$.  The first-order gain transfers to the
feasible test.

\begin{corollary}
\label{cor:feasible-weak-factor}
Under the conditions of the expansion in Theorem~\ref{thm:weak-factor},
let $\psi_n^F=1\{\tilde Z_n^c>z_{1-\alpha}\}$ be the fully feasible
test computed from $\Sigma_0^{-1/2}X_t$.  Then
\[
\mathbb{E}_{G_n}\psi_n^F-\mathbb{E}_{P_0}\psi_n^F
=\sigma_n\Phi^\prime(z_{1-\alpha})+o(\sigma_n).
\]
\end{corollary}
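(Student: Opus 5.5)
The corollary follows from Theorem~\ref{thm:weak-factor} once $\psi_n^F$ is shown to be asymptotically equivalent to $1\{V_n>z_{1-\alpha}\}$ with a disagreement small enough to survive division by $\sigma_n$. Plain $L^1(P_0)$ disagreement $o(1)$ is not enough, because the target error is $o(\sigma_n)$ and $\sigma_n\to0$. The right device is the likelihood expansion itself. For any bounded test $\psi$,
\[
\mathbb{E}_{G_n}\psi-\mathbb{E}_{P_0}\psi=\sigma_n\mathbb{E}_{P_0}(\psi V_n)+o(\sigma_n),
\]
with the remainder uniform over $0\le\psi\le1$, because the error term of the theorem is $o_{L^1(P_0)}(\sigma_n)$. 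The claim therefore reduces to showing
\[
\mathbb{E}_{P_0}(\psi_n^FV_n)\to\Phi'(z_{1-\alpha}).
\]
This is a statement purely under the null, so no rate is needed.

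\emph{Step 1: whitening and scale invariance.} Under $P_0$ the observations $\Sigma_0^{-1/2}X_t$ are i.i.d.\ $N_p(0,(1+\vartheta_n/p)I_p)$. Since $\tilde Z_n^c$ is exactly invariant under coordinatewise rescaling (Corollary~\ref{cor:composite}(i)), its law under $P_0$ is the law under the Gaussian identity null $P_n$. Likewise, by Lemma~\ref{lem:whitening}, $V_n$ is the statistic $F_n=2U_n/\gamma_n$ computed from the rescaled whitened sample $Y_t=(1+\vartheta_n/p)^{-1/2}\Sigma_0^{-1/2}X_t$. Scale invariance gives $\tilde Z_n^c(\Sigma_0^{-1/2}X)=\tilde Z_n^c(Y)$. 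Hence the pair $(\psi_n^F,V_n)$ under $P_0$ has exactly the joint law of $(1\{\tilde Z_n^c>z_{1-\alpha}\},F_n)$ under $P_n$, for every $n$.

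\emph{Step 2: the null limit.} Theorem~\ref{thm:martingale-clt} (with Gaussian data satisfying Assumptions~\ref{ass:moments}--\ref{ass:time-iid}), together with Theorem~\ref{thm:goe-optimality}, gives $\tilde Z_n^c-F_n=o_{P_n}(1)$ and $F_n\Rightarrow N(0,1)$. The text of Section~\ref{sec:rigidity} records this as $\mathbb{E}_{P_n}|\varphi_n^*-\varphi_n^U|\to0$, and the off-diagonal/diagonal difference between $Z_n$ and $F_n$ is negligible. Therefore $(\tilde Z_n^c,F_n)\Rightarrow(Z,Z)$ with $Z\sim N(0,1)$. Because the threshold set $\{z_{1-\alpha}\}$ has limiting probability zero, this yields
\[
\psi_n^FV_n\Rightarrow Z\,1\{Z>z_{1-\alpha}\}.
\]

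\emph{Step 3: convergence of expectations.} This is where the main obstacle lies. Convergence of $\mathbb{E}(\psi_n^FV_n)$ needs uniform integrability of $V_n$, since $|\psi_n^FV_n|\le|V_n|$. It suffices to show $\sup_n\mathbb{E}_{P_n}F_n^2<\infty$. This holds because $U_n$ is a mean-zero martingale with
\[
\mathbb{E}U_n^2=\frac{p(p-1)n(n-1)}{4n^4}+O(n^{-1}),
\]
by the exact conditional-variance computation (\ref{eq:conditional-variance-mean}) plus the $O(n^{-1})$ diagonal variance. With uniform integrability in hand, $\mathbb{E}_{P_0}(\psi_n^FV_n)\to\mathbb{E}[Z\,1\{Z>z_{1-\alpha}\}]=\Phi'(z_{1-\alpha})$.

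\emph{Step 4: conclusion.} Combining Step 3 with the uniform expansion gives the stated
\[
\mathbb{E}_{G_n}\psi_n^F-\mathbb{E}_{P_0}\psi_n^F=\sigma_n\Phi'(z_{1-\alpha})+o(\sigma_n).
\]
The only delicate point remains Step 3: replacing the $L^1$ disagreement of tests, which is only $o(1)$, by the weighted functional $\mathbb{E}(\psi V_n)$. This is what lets a non-rate null equivalence suffice for an $o(\sigma_n)$ conclusion.
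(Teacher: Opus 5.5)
Your proof is correct and follows the paper's route. The $L^1(P_0)$ expansion of Theorem~\ref{thm:weak-factor} reduces the gain to $\sigma_n\mathbb{E}_{P_0}(\psi_n^FV_n)+o(\sigma_n)$. Scale invariance together with Theorem~\ref{thm:martingale-clt} gives $\tilde Z_n^c-V_n=o_{P_0}(1)$, and the bounded second moment of $V_n$ then finishes the argument without any rate. The only difference is cosmetic. The paper bounds $\sigma_n|\mathbb{E}_{P_0}\{V_n(\psi_n^F-\varphi_n^V)\}|\le\sigma_n\{\operatorname{var}_{P_0}(V_n)\}^{1/2}d_n^{1/2}$ by Cauchy--Schwarz, with $d_n=\mathbb{E}_{P_0}|\psi_n^F-\varphi_n^V|\to0$, and then invokes the equality case of Theorem~\ref{thm:weak-factor} for $\varphi_n^V=1\{V_n>z_{1-\alpha}\}$. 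You instead obtain $\mathbb{E}_{P_0}(\psi_n^FV_n)\to\Phi^\prime(z_{1-\alpha})$ directly from joint weak convergence and uniform integrability, which is also how that equality case is proved.
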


The feasible test thus attains the first-order gain, measured relative
to its own null rejection probability.  The proof uses only that the
gain is linear in $\mathrm dG_n/\mathrm dP_0-1$: with
$L_n=1+\sigma_nV_n+r_n$, the gains of two tests differ by at most
$\sigma_n(\mathbb{E}_{P_0}V_n^2)^{1/2}
(\mathbb{E}_{P_0}|\psi_n^F-\varphi_n^V|)^{1/2}+\mathbb{E}_{P_0}|r_n|$,
where $\varphi_n^V=1\{V_n>z_{1-\alpha}\}$, and
Theorem~\ref{thm:martingale-clt} makes the middle factor vanish.  No
rate for the feasible approximation is needed, and no claim is made
that the nominal-size error is $o(\sigma_n)$.

At a fixed strength $\vartheta_n\to\vartheta\in(0,\infty)$ we compare the
experiment with the invariant fixed-radius spiked experiment of
\citet{OnatskiMoreiraHallin:2013}, the fixed-spike benchmark, whose
conclusions we import rather than reprove: conditional on the radius $\|f\|^2\to_p\vartheta$, the factor
direction is Haar-uniform, so the conditional alternative is theirs at
strength $\|f\|^2$, tested against the trace-matched null; the transfer
of their conclusions to the random-radius mixture is not proved here
(Supplement Section~S.8.5).  In their subcritical experiment,
$\vartheta<\sqrt\gamma$, the hypotheses remain contiguous and the
asymptotically optimal test is not $V_n$ but a likelihood-based
\emph{linear spectral statistic}: with $Y_t=\Sigma_0^{-1/2}X_t$ and
$\hat S_Y=n^{-1}\sum_tY_tY_t^\prime$, a centered sum
$\sum_j\log\{z_0(\vartheta)-\bar\lambda_j\}$ of the eigenvalues
$\bar\lambda_j$ of the trace-matched matrix
$(1+\vartheta_n/p)^{-1}\hat S_Y$, with the trace correction
$\Tr\{(1+\vartheta_n/p)^{-1}\hat S_Y\}-p$, organized around the saddlepoint
$z_0(\vartheta)=(1+\vartheta)(\gamma+\vartheta)/\vartheta$.  The corrected
Frobenius statistic $V_n$ is only the leading quadratic term of this statistic as
$\vartheta\downarrow0$, so it is first-order optimal in the vanishing-strength
limit and is generally suboptimal at fixed subcritical strength.  For a
supercritical strength $\vartheta>\sqrt\gamma$ contiguity fails in the
trace-matched experiment itself: by the
Baik--Ben Arous--P\'ech\'e and Baik--Silverstein spike phase transition
\citep{BaikBenArousPeche:2005,BaikSilverstein:2006}, the largest whitened
eigenvalue separates,
$\lambda_{\max}\to(1+\vartheta)(1+\gamma/\vartheta)>(1+\sqrt\gamma)^2$, while the
null edge stays at $(1+\sqrt\gamma)^2$, so a largest-eigenvalue test has size
$\to0$ and power $\to1$.  We do not analyze the critical case
$\vartheta=\sqrt\gamma$.

This makes the escape clause of Section~\ref{sec:rigidity} precise:
cost-free enhancement by an asymptotically null largest-eigenvalue screen
is possible above $\vartheta=\sqrt\gamma$, the regime of the
equicorrelation example and Supplement Section~S.6.  The supercritical
statement is proved for the random-radius experiment itself; that the
boundary is exact, with no cost-free enhancement below $\sqrt\gamma$,
holds in the imported fixed-radius benchmark.  There a richer
linear spectral statistic may improve power at a fixed spiked alternative,
but Theorem~\ref{thm:rigidity} implies that such a gain cannot coexist with
uniform noninferiority over the dense bulk.

\section{Scope of the theory}
\label{sec:scope}

The three main results concern different probability models.
Theorem~\ref{thm:goe-optimality} is a likelihood statement for known-scale
Gaussian covariance identity testing.  Theorem~\ref{thm:martingale-clt} is a
null and implementation result for correlation identity testing under
independent, possibly non-Gaussian coordinates.  Contiguity links the two by
transferring the feasible-statistic equivalence to the GOE mixture.  It does
not convert the non-Gaussian null class into a non-Gaussian likelihood
optimality experiment.

The angular law and the radial scale are both essential.  GOE normalization
gives $p^{-2}\Tr(H^2)\to1$ and spreads the perturbation over order $p$
eigendirections.  Thus the mixture retains the aggregate Frobenius departure but
averages away signed first-order movements, leaving the quadratic parameter
$\theta=\omega^2$.  A different directional prior, even at the same limiting
Frobenius radius, need not have the same likelihood reduction or power
envelope.  The fixed-spectrum simulations in Section~\ref{sec:simulation}
are therefore robustness checks.

The dense and spiked experiments mark another boundary: at equal Frobenius
norm a rank-one spike and a GOE perturbation can reverse the ordering of
Frobenius and largest-eigenvalue tests.  The super-critical spike in
Section~\ref{sec:rigidity} is noncontiguous and has asymptotically
negligible GOE weight, so power enhancement there
\citep{FanLiaoYao:2015,YuLiXue:2024} is consistent with rigidity on the
typical dense bulk.

The whitening isomorphism of
Section~\ref{subsec:known-sigma} is exact at every $n$ and $p$: it
carries every optimality and pivotality statement to an arbitrary known null
covariance $\Sigma_0$, requiring only the inner products
$x^\prime\Sigma_0^{-1}y$.  The trace-matched weak-factor experiment of
Section~\ref{subsec:weak-factor} is asymptotic: contiguity holds for
every vanishing strength, the first-order power gain matches the
small-shift expansion of the dense-mixture envelope under
the additional rate $n\vartheta_n\to\infty$
(Theorem~\ref{thm:weak-factor}), and the BBP threshold
$\vartheta=\sqrt\gamma$ is the boundary for cost-free
largest-eigenvalue enhancement in the fixed-strength rank-one family,
exactly so in the fixed-radius benchmark and from above only in the
random-radius experiment;
the subcritical fixed-strength picture rests on imported random-matrix
limits and on the benchmark comparison of
Section~\ref{subsec:weak-factor}, and the critical case
$\vartheta=\sqrt\gamma$ is not analyzed.

The product-coordinate null is substantive: the observable centering
deletes the coincident-time products, so the null
variance is free of heterogeneous trace and fourth-moment fluctuations,
and the $4+\eta$ condition controls the martingale and studentization
remainders, but coordinate independence supplies the conditional
orthogonality used throughout.  Dependent-but-uncorrelated scale mixtures can
have identity covariance while violating the variance formula; the final
simulation makes this boundary visible.  Extending the feasible theory to
such models requires a new long-run or cross-sectional variance analysis,
not merely a different critical value.

\section{Simulation evidence}
\label{sec:simulation}

The experiments assess five observable implications of the theory: null
calibration under the product-coordinate model; convergence of the
studentization and demeaning errors; the exact $n-1$ centering effect; the
GOE local-power formula and its dependence on $\gamma$; and the contrast
between dense and spiked alternatives.  Two further descriptive stress
tests, one varying the dense spectrum and the other violating coordinate
independence through a common random scale, are reported in Supplement
Section~S.9.

The primary procedure is the fully feasible statistic
(\ref{eq:demeaned-statistic}).  Every row is demeaned and studentized, the
diagonal is omitted, and the test rejects when
$\tilde Z_n^c>z_{0.95}$.  For targeted comparisons we also compute the
known-scale statistic $Z_n$, its known-mean studentized version
$\tilde Z_n$, and a deliberately naive demeaned statistic that retains
the known-mean divisor $n^2$ instead of $n(n-1)$.  The naive version
isolates the consequence of ignoring the residual degree of freedom.

The deterministically centered statistic
\[
Z_n^{\mathrm{det}}=
\frac{n^2}{\sqrt{p(p-1)n(n-1)}}
\left\{\sum_{i<j}(\tilde r_{ij}^c)^2-
\tfrac{1}{n-1}\binom{p}{2}\right\}
\]
isolates the role of the data-driven correction.  A largest-eigenvalue
test targets low-rank signals; its statistic is the largest eigenvalue
of the demeaned, studentized sample correlation matrix, the feasible
analogue of the covariance spectrum in the enhancement discussion of
Section~\ref{sec:rigidity}; because raw Tracy--Widom critical values can
be conservative at the displayed dimensions, its critical value is the
corresponding quantile from an independent Gaussian null simulation of
the same statistic at the same $(n,p)$.  The two tests are thus
calibrated under the Gaussian null and approximately size matched;
the spectral power standard errors are conditional on the realized
calibration threshold.

Unless stated otherwise, results use $10{,}000$ evaluation replications at
nominal level $0.05$ and an independent $10{,}000$-replication sample for
spectral calibration.  The exact-GOE implementation comparison also uses
$10{,}000$ replications.  Common random numbers are used across signal
strengths within a design.  Monte Carlo standard errors are retained in the
replication outputs; with $10{,}000$ replications, the largest possible
standard error is $0.0050$, and the standard error at rejection
probability $0.05$ is about $0.0022$.

\subsection{Null approximation}

Table~\ref{tab:adm-size-focused} uses $n=200$ and
$p/n\in\{0.5,1,2\}$.  The Gaussian, standardized $t_{10}$, $t_8$, and $t_5$,
and centered-and-standardized skewed chi-square marginals all satisfy
Assumption~\ref{ass:moments}; the $t_5$ design lies close to its moment
boundary.  Among these covered designs, the corrected-test rejection
frequencies range from $0.045$ to $0.053$, with no systematic deterioration
under heavy tails, skewness, or the larger aspect ratio.  The standardized
$t_3$ row is an out-of-assumption stress test: it has unit variance but an
infinite fourth moment.  In that row, the corrected test is mildly
conservative, with rejection frequencies from $0.036$ to $0.039$, whereas
the deterministic-centering and Gaussian-calibrated largest-eigenvalue tests
overreject.  These $t_3$ results are finite-sample diagnostics and do not
extend the theorem beyond its moment condition.  For the designs covered by
Assumption~\ref{ass:moments}, deterministic centering also performs reasonably
because $\mathbb{E}(X_{it}^2X_{jt}^2)=1$ for $i\ne j$; the common-scale experiment
below shows why that observation does not extend to dependent squares.  The
independently calibrated largest-eigenvalue test is included to put the later
power comparison on an approximately size-matched footing.

\begin{table}[t]
\centering
\caption{Monte Carlo size under product-coordinate nulls}
\label{tab:adm-size-focused}
\footnotesize
\begin{tabular*}{\textwidth}{@{\extracolsep{\fill}}lccccccccc}
\toprule
&\multicolumn{3}{c}{$\gamma=0.5$}
&\multicolumn{3}{c}{$\gamma=1$}
&\multicolumn{3}{c}{$\gamma=2$}\\
\cmidrule(lr){2-4}\cmidrule(lr){5-7}\cmidrule(lr){8-10}
Design
& $\tilde Z_n^c$ & $Z_n^{\mathrm{det}}$
& $\lambda_{\max}^{\mathrm{cal}}$
& $\tilde Z_n^c$ & $Z_n^{\mathrm{det}}$
& $\lambda_{\max}^{\mathrm{cal}}$
& $\tilde Z_n^c$ & $Z_n^{\mathrm{det}}$
& $\lambda_{\max}^{\mathrm{cal}}$\\
\midrule
Gaussian & 0.051 & 0.052 & 0.046 & 0.045 & 0.048 & 0.047 & 0.050 & 0.052 & 0.050 \\
$t_{10}$ & 0.053 & 0.056 & 0.051 & 0.051 & 0.052 & 0.048 & 0.049 & 0.049 & 0.050 \\
$t_8$ & 0.053 & 0.055 & 0.049 & 0.049 & 0.051 & 0.046 & 0.046 & 0.048 & 0.046 \\
$t_5$ & 0.048 & 0.052 & 0.050 & 0.047 & 0.055 & 0.050 & 0.047 & 0.051 & 0.049 \\
$t_3^{\star}$ & 0.039 & 0.074 & 0.070 & 0.038 & 0.079 & 0.070 & 0.036 & 0.083 & 0.077 \\
Skewed $\chi^2_4$ & 0.049 & 0.053 & 0.047 & 0.051 & 0.055 & 0.051 & 0.047 & 0.052 & 0.048 \\
\bottomrule
\end{tabular*}
\par
\vspace{0.05cm}
\begin{minipage}{\textwidth}
\footnotesize\emph{Notes:}
Rejection frequencies at nominal level $0.05$, based on $10{,}000$
evaluation replications with $n=200$.
$\tilde Z_n^c$ is the demeaned, studentized, diagonal-free corrected
statistic; $Z_n^{\mathrm{det}}$ replaces the data-driven correction by its
deterministic product-null counterpart; and
$\lambda_{\max}^{\mathrm{cal}}$ denotes the largest-eigenvalue test using
an independently simulated Gaussian critical value.  The Student designs use
$X_{it}=\sqrt{(\nu-2)/\nu}T_{it}$ with
$T_{it}{\sim}\ {\mathrm{i.i.d.}}\ t_\nu$.  The skewed chi-square design uses $X_{it}=(Y_{it}-4)/\sqrt{8}$ with
$Y_{it}{\sim}\ {\mathrm{i.i.d.}}\ \chi_4^2$. ${}^{\star}$ The standardized $t_3$ design has an infinite fourth moment and lies
outside Assumption~\ref{ass:moments}; it is included only as a
finite-sample stress test.  
\end{minipage}
\end{table}

\subsection{Studentization, demeaning, and exact centering}

Theorem~\ref{thm:martingale-clt} predicts the successive equivalences
\[
\tilde Z_n-Z_n=o_p(1),
\qquad
\tilde Z_n^c-\tilde Z_n=o_p(1).
\]
Table~\ref{tab:adm-implementation-main} examines both steps on common
samples under Gaussian, standardized $t_5$, and skewed chi-square
marginals.  Both mean absolute errors roughly halve between $n=p=100$ and
$400$ in every design; the slower studentization convergence under $t_5$
is consistent with the heavier tails allowed by the $4+\eta$ theorem,
while the chi-square design shows that the same equivalences extend to
asymmetric marginals.

Across the nine designs and dimensions, the three correctly centered
rejection frequencies range from $0.044$ to $0.057$. The naive demeaned version
behaves very differently: its rejection probability ranges from $0.115$
to $0.127$. When $p=n$, retaining the divisor $n^2$ after demeaning
creates the leading location shift $n/\{2(n-1)\}$, approximately one half
over this range, explaining the persistent overrejection.

\begin{table}[t]
\centering
\caption{Implementation equivalence under product-coordinate nulls}
\label{tab:adm-implementation-main}
\footnotesize
\begin{tabular*}{\textwidth}{@{\extracolsep{\fill}}lccccccc}
\toprule
&& \multicolumn{2}{c}{Mean absolute difference}
& \multicolumn{4}{c}{Rejection frequency}\\
\cmidrule(lr){3-4}\cmidrule(lr){5-8}
Design & $n=p$
& $\mathbb{E}|\tilde Z_n-Z_n|$
& $\mathbb{E}|\tilde Z_n^c-\tilde Z_n|$
& $Z_n$ & $\tilde Z_n$ & $\tilde Z_n^c$ & Naive\\
\midrule
Gaussian & 100 & 0.157 & 0.113 & 0.051 & 0.049 & 0.051 & 0.126 \\
Gaussian & 200 & 0.111 & 0.080 & 0.048 & 0.048 & 0.045 & 0.123 \\
Gaussian & 400 & 0.080 & 0.056 & 0.050 & 0.049 & 0.050 & 0.121 \\
$t_5$ & 100 & 0.271 & 0.109 & 0.050 & 0.044 & 0.046 & 0.115 \\
$t_5$ & 200 & 0.199 & 0.078 & 0.051 & 0.047 & 0.047 & 0.124 \\
$t_5$ & 400 & 0.146 & 0.055 & 0.054 & 0.051 & 0.051 & 0.122 \\
Skewed $\chi_4^2$ & 100 & 0.240 & 0.121 & 0.057 & 0.047 & 0.048 & 0.120 \\
Skewed $\chi_4^2$ & 200 & 0.173 & 0.083 & 0.054 & 0.051 & 0.051 & 0.127 \\
Skewed $\chi_4^2$ & 400 & 0.126 & 0.057 & 0.049 & 0.046 & 0.047 & 0.121 \\
\bottomrule
\end{tabular*}
\par\smallskip
\begin{minipage}{\textwidth}
\footnotesize\emph{Notes:} Results are based on $10{,}000$ common-sample
replications per row at nominal level $0.05$. The first two numerical
columns report sample mean absolute differences; the remaining columns
report rejection frequencies. The skewed chi-square design uses
$X_{it}=(Y_{it}-4)/\sqrt{8}$, where
$Y_{it}{\sim}\ {\mathrm{i.i.d.}}\ \chi_4^2$. ``Naive'' denotes the
demeaned statistic retaining the divisor $n^2$ instead of the exact
$n(n-1)$ divisor.
\end{minipage}
\end{table}

Under the exact GOE mixture the same pattern holds.  At $\omega=2$, the
rejection probabilities of $(Z_n,\tilde Z_n,\tilde Z_n^c)$ are
$(0.508,0.485,0.487)$ at $n=p=100$ and
$(0.564,0.558,0.557)$ at $n=p=200$: the feasible implementation
preserves the local power, with gaps shrinking in $n$.

\subsection{The local-power formula}

For each replication we draw a new GOE matrix and simulate from the exact
quadratic alternative (\ref{eq:alternative}).  The top row of
Figure~\ref{fig:adm-sim} compares rejection frequencies with
$\beta^*(\omega,\gamma)=1-\Phi(z_{0.95}-\omega^2\gamma/2)$.
At $\gamma=1$, the finite-sample curves approach the envelope as $n=p$
increases.  At $\omega=2$, power rises from $0.487$ to $0.557$ and $0.594$
as the common dimension increases from $100$ to $200$ and $400$, compared
with the limit $0.639$.  At $\omega=2.5$, the corresponding values are
$0.740$, $0.842$, and $0.891$, compared with $0.931$.

At $n=200$, the ordering across $\gamma=0.5,1,2$ agrees with the theory
throughout the grid; at $\omega=1$, for example, the simulated rejection
frequencies are $0.080$, $0.121$, and $0.243$, against theoretical values
$0.082$, $0.126$, and $0.260$.  This comparison holds $\omega$ fixed; at a
fixed limiting Frobenius radius $\delta=\omega\gamma$ the shift is
$\delta^2/(2\gamma)$, so the comparative statics reverse.

\begin{figure}[t]
\centering
\includegraphics[width=\textwidth]{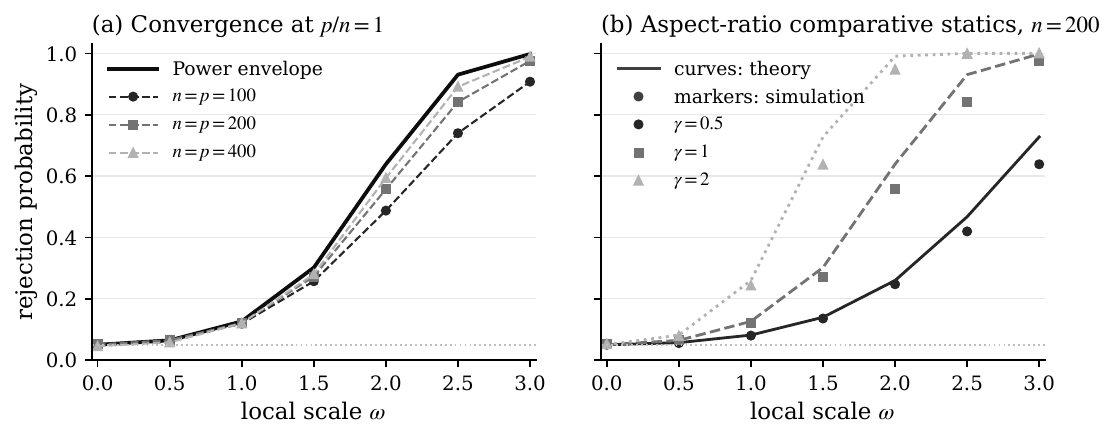}\\[2pt]
\includegraphics[width=\textwidth]{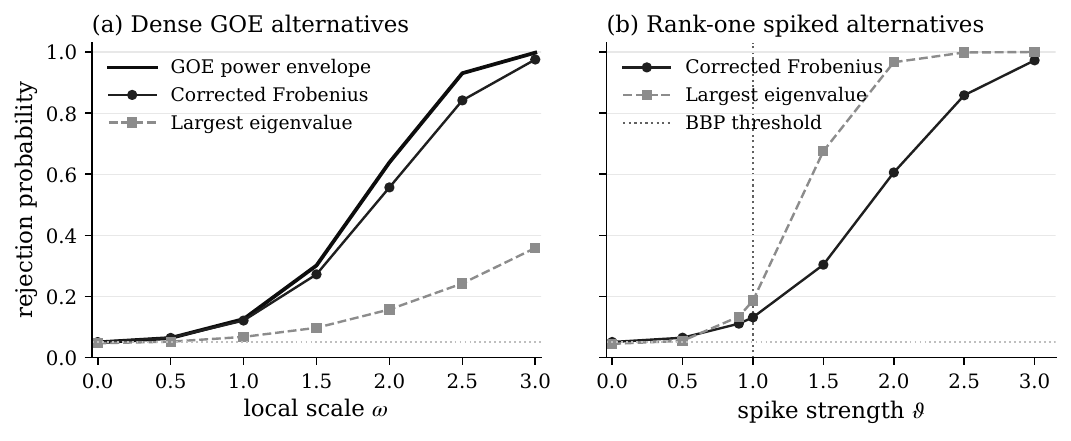}
\caption{Top row: finite-sample validation of the GOE local-power formula;
the left panel shows convergence at $\gamma=1$, the right panel varies the
aspect ratio at $n=200$; curves are the Gaussian-shift power functions,
markers are simulated rejection frequencies.  Bottom row: power under
dense and spiked alternatives at $n=p=200$, with the largest-eigenvalue
test calibrated under the Gaussian null; the left panel uses dense
GOE alternatives, the right panel rank-one spiked alternatives, with the
vertical line at $\vartheta=\sqrt\gamma=1$ marking the BBP threshold.}
\label{fig:adm-sim}
\end{figure}

\subsection{Dense and spiked alternatives}

The bottom row of Figure~\ref{fig:adm-sim} compares the corrected Frobenius
test with a Gaussian-calibrated largest-eigenvalue test at $n=p=200$.  Under the GOE
alternatives, Frobenius power is $0.557$ at $\omega=2$ against $0.158$
for the spectral test; at $\omega=3$ the values are $0.976$ and $0.359$.

Under a shrinking equicorrelation spike, the ordering reverses above the
BBP sample-eigenvalue separation threshold
$\vartheta=\sqrt\gamma=1$ when $\gamma=1$.  At $\vartheta=0.5$, Frobenius and
spectral powers are $0.065$ and $0.055$; at $\vartheta=1.5$ they are $0.304$
and $0.675$; at $\vartheta=2$ they are $0.606$ and $0.967$; and at
$\vartheta=2.5$ they are $0.858$ and $0.998$.

Dense-spectrum, path-robustness, and common-scale stress tests are
reported in Supplement Section~S.9.

\section{Discussion}
\label{sec:conclusion}

Along the quadratic precision path, the upper-tail test of the
corrected Frobenius statistic is asymptotically equivalent to the
Neyman--Pearson test against the GOE mixture.  It therefore attains
the limiting mixture-power envelope at every fixed strength, a full
local power curve rather than a detection rate.

The fully feasible correlation statistic has the same null limit and,
by contiguity, the same GOE local power as the known-scale statistic,
under $4+\eta$ moments for studentization and temporally i.i.d.\
sampling for demeaning; its null variance formula uses coordinate
independence.

Rigidity is the converse of the envelope: a same-level test that is
asymptotically nowhere worse than the benchmark uniformly over the
typical dense bulk merges with the Frobenius test under the Gaussian
null, and its power difference from the Frobenius test vanishes along
every sequence contiguous to that null, so gains are confined to
noncontiguous or prior-negligible signals.  The projection lemma of Supplement
Section~S.7, by which the closest positive-semidefinite rank-$k$
Frobenius approximation of a symmetric discrepancy retains its $k$
largest positive eigencomponents, suggests fitting and deflating one
factor at a time as a residual diagnostic; its calibration is open.

\begin{acks}[Acknowledgments]
We thank Michael Wolf for detailed comments, Martin Wagner, and
participants at the 6th Vienna Workshop on High-Dimensional Time
Series in Macroeconomics and Finance (May 2024) and at the W\&R Talk
of the Quantitative Economics Division at the University of Klagenfurt
(June 2025) for helpful comments and discussions.
\end{acks}

\begin{funding}
Werner Ploberger acknowledges financial support from the Weidenbaum
Center on the Economy, Government, and Public Policy at Washington
University in St.\ Louis.  Chen Tong acknowledges financial support
from the National Natural Science
Foundation of China (72301227) and the Fujian Provincial Natural Science
Foundation of China (2025J08008).
\end{funding}

\clearpage
\setcounter{table}{0}
\renewcommand{\theHtable}{S.\arabic{table}}
\begin{center}
{\large\bfseries Supplementary Material}\\[6pt]
{\bfseries Supplement to ``Local Optimality and Rigidity of Frobenius Tests for
Dense High-Dimensional Covariance Alternatives''}
\end{center}
\medskip
\noindent This supplementary material contains the auxiliary calculations, the
finite-sample admissibility, power-enhancement, and conditional-flatness
results, the trace-matched weak-factor decay analysis, additional
simulations, and complete proofs for the main text.  Sections are
numbered S.1, S.2, \ldots\ and appendices A--G; plain-numbered references
(Theorem~1, Assumption~2, equation~(16), Section~3) refer to the main
text above.
\medskip

\setcounter{section}{0}
\setcounter{equation}{0}
\setcounter{theorem}{0}
\numberwithin{theorem}{section}
\numberwithin{proposition}{section}
\numberwithin{lemma}{section}
\numberwithin{corollary}{section}
\numberwithin{definition}{section}
\numberwithin{assumption}{section}
\numberwithin{remark}{section}

\renewcommand{\thesection}{S.\arabic{section}}
\renewcommand{\thetable}{S.\arabic{table}}
\renewcommand{\thesubsection}{S.\arabic{section}.\arabic{subsection}}
\renewcommand{\theHsection}{S.\arabic{section}}
\numberwithin{equation}{section}

\section{GOE geometry and trace calculations}

Let $\mathbb S^p$ be the real symmetric $p\times p$ matrices with the
Frobenius inner product, and let $d_p=p(p+1)/2$.  A GOE matrix $H$ has
independent upper-triangular entries with $H_{ij}\sim N(0,1)$ for $i<j$ and
$H_{ii}\sim N(0,2)$.

\begin{proposition}[Exact GOE polar decomposition]
\label{prop:goe-polar}
Let $R_p^{\mathrm{GOE}}=\|H\|_F$ and $U_p=H/\|H\|_F$, with an arbitrary
definition on the null event $\{H=0\}$.  Then $U_p$ is uniform on the
Frobenius unit sphere of $\mathbb S^p$, $U_p$ and
$R_p^{\mathrm{GOE}}$ are independent, and
\[
\frac{(R_p^{\mathrm{GOE}})^2}{2}\sim\chi^2_{d_p}.
\]
In particular, $R_p^{\mathrm{GOE}}/p=1+O_p(p^{-1})$.
\end{proposition}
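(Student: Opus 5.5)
The plan is to identify the GOE law with a standard isotropic Gaussian on $\mathbb S^p$ after a linear isometry, and then read off the polar decomposition from the rotation invariance of that Gaussian.

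\emph{Step 1: coordinates.} Take the orthonormal basis of $(\mathbb S^p,\langle\cdot,\cdot\rangle_F)$ consisting of $E_{ii}$ for $i\le p$ and $(E_{ij}+E_{ji})/\sqrt2$ for $i<j$. In this basis the coordinates of $H$ are $\xi_{ii}=H_{ii}$ and $\xi_{ij}=\sqrt2\,H_{ij}$. Under $Q_n$ these $d_p$ coordinates are independent, and each is $N(0,2)$. So $\xi=(\xi_{ij})_{i\le j}$ is distributed as $N(0,2I_{d_p})$, and $\|H\|_F^2=\sum_{i\le j}\xi_{ij}^2=|\xi|^2$. The whole point of the normalization $H_{ii}\sim N(0,2)$ is that it makes the law isotropic in Frobenius geometry.

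\emph{Step 2: polar decomposition.} For an isotropic Gaussian vector $\xi\sim N(0,2I_d)$, the direction $\xi/|\xi|$ is uniform on $S^{d-1}$ and independent of $|\xi|$. This follows from the density $\propto e^{-|x|^2/4}$ in polar coordinates, or equivalently from the fact that the law of $(\xi/|\xi|,|\xi|)$ is invariant under $O(d)$ acting on the first component. Because the coordinate map is a linear isometry, "uniform on the Frobenius unit sphere of $\mathbb S^p$" is exactly the push-forward of uniform measure on $S^{d_p-1}$, so $U_p$ is uniform and independent of $R_p^{\mathrm{GOE}}$. The event $\{H=0\}$ is null, so the arbitrary convention there is irrelevant. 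Finally, $(R_p^{\mathrm{GOE}})^2/2=\sum(\xi_{ij}/\sqrt2)^2$ is a sum of $d_p$ squared standard normals, that is, $\chi^2_{d_p}$.

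\emph{Step 3: radius concentration.} We have $\mathbb{E}(R_p^{\mathrm{GOE}})^2=2d_p=p^2+p$ and $\var\{(R_p^{\mathrm{GOE}})^2\}=8d_p=4p(p+1)$. By Chebyshev, $(R_p^{\mathrm{GOE}})^2/p^2=1+p^{-1}+O_p(p^{-1})=1+O_p(p^{-1})$. Taking square roots, with $\sqrt{1+x}=1+O(|x|)$ near $0$, gives $R_p^{\mathrm{GOE}}/p=1+O_p(p^{-1})$.

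No step is a real obstacle. The only point needing care is Step 1: checking the factor $\sqrt2$ on the off-diagonal coordinates, so that the variances come out equal, and making sure "uniform on the Frobenius sphere" is interpreted through this isometry.
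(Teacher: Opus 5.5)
Your proposal is correct and follows essentially the same route as the paper: the same Frobenius-orthonormal basis with coordinates $H_{ii}$ and $\sqrt2H_{ij}$, all i.i.d.\ $N(0,2)$, the rotation-invariance argument for the polar decomposition, and the $\chi^2_{d_p}$ identification. Your radius step is the paper's $\chi^2_{d_p}=d_p+O_p(\sqrt{d_p})$ argument, with the variance $8d_p=4p(p+1)$ and Chebyshev's inequality made explicit, followed by the same square-root expansion.
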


\begin{proposition}[GOE spectrum]
\label{prop:goe-spectrum}
As $p\to\infty$,
\[
p^{-1/2}\|H\|_{\mathrm{op}}\to_p2,
\]
and the empirical distribution of the eigenvalues of $p^{-1/2}H$ converges
weakly, in probability, to the semicircle law with density
$(2\pi)^{-1}\sqrt{4-x^2}$ on $[-2,2]$.
\end{proposition}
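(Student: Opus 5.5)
The plan is to cite the standard Wigner-matrix results and verify only the pieces specific to this normalization.

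\emph{Semicircle law.} The matrix $p^{-1/2}H$ is a real Wigner matrix: its upper-triangular entries are independent and centered, the off-diagonal entries have variance $1/p$, and the diagonal entries have variance $2/p$. I would prove weak convergence in probability by the moment method.
\begin{itemize}
\item[(a)] For each fixed $k$, compute $\mathbb{E}\,p^{-1}\Tr\{(p^{-1/2}H)^k\}$ by expanding over closed walks of length $k$ on $\{1,\ldots,p\}$.
\item[(b)] Walks that traverse each edge exactly twice along a tree contribute $C_{k/2}$, the Catalan number, for even $k$. All other walks contribute $O(p^{-1})$, because every entry has finite moments of all orders. The diagonal variance $2$ only affects walks containing loops, and these are lower order.
\item[(c)] Bound the variance of $p^{-1}\Tr\{(p^{-1/2}H)^k\}$ by $O(p^{-2})$ with the same graph counting.
\item[(d)] The Catalan numbers are the moments of the semicircle density on $[-2,2]$. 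That law is compactly supported and hence determined by its moments, so convergence of moments in probability gives weak convergence in probability.
\end{itemize}
Alternatively, one can use the Gaussian-specific Stieltjes-transform route via Stein's identity $\mathbb{E}H_{ij}F(H)=\mathbb{E}\,\partial_{ij}F$. This gives the self-consistent equation $m=-1/(z+m)$ directly.

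\emph{Operator norm.} The norm requires two bounds.
\begin{itemize}
\item \emph{Lower bound.} The semicircle law puts mass in every neighbourhood of $\pm2$. Therefore, for each $\epsilon>0$, a positive fraction of the eigenvalues of $p^{-1/2}H$ exceed $2-\epsilon$ in absolute value with probability tending to one. This gives $\liminf p^{-1/2}\|H\|_{\mathrm{op}}\ge2$ in probability.
\item \emph{Upper bound.} This is the main obstacle, since weak convergence does not control the extreme eigenvalues. For the Gaussian ensemble the cleanest route is Gordon/Slepian comparison, which gives $\mathbb{E}\|H\|_{\mathrm{op}}\le2\sqrt p+O(1)$. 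One writes $\lambda_{\max}(H)=\sup_{\|u\|=1}u'Hu$ and compares this Gaussian process with $2\langle g,u\rangle$ plus a scalar Gaussian to absorb the diagonal variance. The same bound follows for $\lambda_{\max}(-H)$ by symmetry of the law under $H\mapsto-H$.
\item \emph{Concentration.} The map $H\mapsto\|H\|_{\mathrm{op}}$ is $\sqrt2$-Lipschitz with respect to the standard Gaussian coordinates of the independent entries. Gaussian concentration then gives fluctuations of order $O_p(1)=o_p(\sqrt p)$, which yields $\limsup p^{-1/2}\|H\|_{\mathrm{op}}\le2$ in probability.
\item \emph{Fallback.} If the comparison argument is inconvenient, use the moment method with $k=k_p\asymp\log p$. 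Füredi–Komlós-type counting gives $\mathbb{E}\Tr(H^{2k})\le p\,(2\sqrt p)^{2k}(1+o(1))^{2k}$, and Markov's inequality on $\|H\|_{\mathrm{op}}^{2k}\le\Tr(H^{2k})$ gives the same upper bound.
\end{itemize}
Combining the lower and upper bounds gives $p^{-1/2}\|H\|_{\mathrm{op}}\to_p2$.

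Both statements are classical (Wigner; Füredi–Komlós; Bai–Yin). In the write-up I would cite them, for instance Anderson–Guionnet–Zeitouni, and verify only that the diagonal normalization $H_{ii}\sim N(0,2)$ changes neither limit.
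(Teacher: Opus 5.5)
Your proposal is correct and follows the paper's route. The paper also just cites Anderson--Guionnet--Zeitouni: Theorem~2.1.1 for the semicircle law, and Theorem~2.1.22 for $\lambda_{\max}(p^{-1/2}H)\to_p2$, which rests on the F\"uredi--Koml\'os-type walk counting you give as your fallback. Like you, it notes that the diagonal variance $2$ changes neither limit and uses $-H\stackrel{d}{=}H$ for the lower edge.

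One small correction to your optional Gaussian-comparison step: no auxiliary scalar Gaussian is needed. With $H_{ii}\sim N(0,2)$ one has $\mathbb{E}(u'Hu-v'Hv)^2=4\{1-(u'v)^2\}\le4\|u-v\|^2$ for unit $u,v$. Sudakov--Fernique then gives $\mathbb{E}\lambda_{\max}(H)\le2\,\mathbb{E}\|g\|\le2\sqrt p$ directly.
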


Both assertions are classical.  In the normalization of
\citet{AndersonGuionnetZeitouni:2010}, a Wigner matrix has off-diagonal
entries of variance $1/p$ and diagonal entries with all moments finite,
which is exactly $p^{-1/2}H$ here; the diagonal variance $2$ affects
neither limit.  Wigner's theorem is their Theorem~2.1.1, and
$\lambda_{\max}(p^{-1/2}H)\to_p2$ is their Theorem~2.1.22, whose
moment-growth condition is satisfied by Gaussian entries; since $-H$ is
again a GOE matrix, $\lambda_{\min}(p^{-1/2}H)\to_p-2$ as well, which
gives the operator-norm statement.  Almost-sure versions are in
\citet[Chapters~2 and~5]{BaiSilverstein:2010}; only convergence in
probability is used below.

\begin{lemma}[Trace orders]
\label{lem:traces}
For a $p\times p$ GOE matrix,
\begin{align*}
p^{-1/2}\Tr(H)&\sim N(0,2),
&p^{-2}\Tr(H^2)&\to_p1,\\
p^{-3/2}\Tr(H^3)&\Rightarrow N(0,24),
&p^{-3}\Tr(H^4)&\to_p2,
\end{align*}
and $\Tr(H^5)=o_p(p^{7/2})$.  More generally,
$p^{-(k+1)}\Tr(H^{2k})\to_p C_k$, where
$C_k=(k+1)^{-1}\binom{2k}{k}$.
\end{lemma}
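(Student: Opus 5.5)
Every claim is a statement about joint moments and traces of independent Gaussian entries, and I would prove each item in turn. \textbf{Linear trace.} $\Tr(H)=\sum_iH_{ii}$ is a sum of $p$ independent $N(0,2)$ variables. Hence $p^{-1/2}\Tr(H)\sim N(0,2)$ exactly, with no limit needed.

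\textbf{Quadratic trace.} Write
\[
\Tr(H^2)=\sum_iH_{ii}^2+2\sum_{i<j}H_{ij}^2 .
\]
This is a sum of independent terms with mean $2p+p(p-1)=p^2+p$ and variance $O(p^2)$. Chebyshev then gives $p^{-2}\Tr(H^2)\to_p1$. Equivalently, Proposition~\ref{prop:goe-polar} gives $\Tr(H^2)=2\chi^2_{d_p}$ directly.

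\textbf{Even traces.} For $p^{-(k+1)}\Tr(H^{2k})\to_pC_k$ I would use the standard moment method for Wigner matrices. Expand $\mathbb{E}\Tr(H^{2k})$ as a sum over closed walks of length $2k$ on $\{1,\dots,p\}$. Gaussianity kills every walk that traverses some edge an odd number of times. The walks that contribute at order $p^{k+1}$ are exactly those whose skeleton is a tree with $k$ edges, each traversed twice. These are counted by the Catalan number $C_k$ times $p(p-1)\cdots(p-k)\sim p^{k+1}$. All other surviving walks have at most $k$ distinct vertices and contribute $O(p^k)$. The diagonal variance $2$ enters only through walks with loops, which have fewer vertices and so only affect lower-order terms. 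For concentration, I would bound $\var\Tr(H^{2k})=O(p^{2k})$, either by the same graph counting on pairs of walks or more quickly via the Gaussian Poincar\'e inequality. Applied to the entries, Poincar\'e gives $\var f\le 2\,\mathbb{E}\|\nabla f\|^2$. Here $\nabla\Tr(H^{2k})$ is essentially $2kH^{2k-1}$, whose squared Frobenius norm is $O(p\cdot p^{2k-1})=O(p^{2k})$ on average. Dividing by $p^{2k+2}$ gives variance $O(p^{-2})$. The case $k=2$ yields $C_2=2$ for the quartic trace.

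\textbf{Odd traces.} For $\Tr(H^3)$ I would avoid an explicit CLT computation where possible, but the stated limit $N(0,24)$ requires one. I would write $\Tr(H^3)$ as a polynomial chaos in the independent Gaussian entries and decompose it by Wiener chaos.
\begin{itemize}
\item The third-chaos part is dominated by $6\sum_{i<j<k}H_{ij}H_{jk}H_{ki}$. Its variance is $36\binom p3\sim6p^3$.
\item The first-chaos part comes from terms $H_{ii}H_{ij}^2$ and $H_{ii}^3$. Projecting gives $\sum_i H_{ii}\cdot c(p+O(1))$ with $c=3$. This has variance about $9\cdot2p\cdot p^2=18p^3$.
\end{itemize}
The two components are orthogonal, so the total variance is $24p^3$, matching the claim. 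To get asymptotic normality I would use two facts.
\begin{itemize}
\item The first-chaos piece is exactly Gaussian.
\item The third-chaos piece satisfies the fourth-moment theorem of Nualart--Peccati. Its fourth cumulant is $o(p^6)$, because the triangle-counting contractions are lower order.
\end{itemize}
Orthogonal chaoses with each normalized component converging then give joint Gaussian convergence, by Peccati--Tudor.

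\textbf{Quintic trace.} For $\Tr(H^5)=o_p(p^{7/2})$ it suffices to show $\mathbb{E}\Tr(H^5)^2=O(p^6)$, or any bound that is $o(p^7)$. Odd powers have zero mean by the symmetry $H\mapsto-H$. I would again use Poincar\'e: $\var\Tr(H^5)\le C\,\mathbb{E}\Tr(H^8)=O(p^5)$, which is far stronger than needed. The same Poincar\'e argument gives $\var\Tr(H^3)\le C\,\mathbb{E}\Tr(H^4)=O(p^3)$, confirming the order $p^{3/2}$.

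\textbf{Main obstacle.} I expect the hard step to be the exact constant and normality for $\Tr(H^3)$: bookkeeping the diagonal contributions to the first chaos and verifying the fourth-cumulant condition. The rest is routine moment counting.
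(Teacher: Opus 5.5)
Your proposal is correct, but it takes a different route from the paper.

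\textbf{The paper's route.} It obtains the higher traces from the spectral limits of Proposition~\ref{prop:goe-spectrum}. It writes $p^{-1-k/2}\Tr(H^k)$ as the $k$th empirical moment of the eigenvalues of $p^{-1/2}H$ and reads off the semicircle moments: Catalan numbers for even $k$, zero for odd $k$. The odd case is all it uses for $\Tr(H^5)$. For $\Tr(H^3)$ it computes the variance by partitioning index triples by their number of distinct indices, getting $18p^3$ from two and $6p^3$ from three. It then cites the CLT for linear eigenvalue statistics of Wigner matrices for normality.

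\textbf{Your route.} You work directly with the entries: walk counting for the means, Gaussian Poincar\'e for the variances, and a Wiener-chaos decomposition for the cubic trace. This gives a self-contained argument that needs no external CLT. It also avoids the operator-norm limit, which the paper's route needs to pass from weak convergence of the spectral distribution to convergence of the unbounded moments $x^k$. You also get quantitative rates: $\var\{p^{-(k+1)}\Tr(H^{2k})\}=O(p^{-2})$, and the sharp $\Tr(H^5)=O_p(p^{5/2})$ instead of $o_p(p^{7/2})$. Your decomposition also explains the paper's $18+6$ split. The two-distinct-index block is exactly the variance of your first-chaos term $3(p+1)\sum_iH_{ii}$, and the three-distinct-index block is the variance of the triangle sum.

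\textbf{Two simplifications.} First, Peccati--Tudor is not needed. The term $3(p+1)\sum_iH_{ii}$ depends only on the diagonal entries and $6\sum_{i<j<k}H_{ij}H_{jk}H_{ki}$ only on the off-diagonal ones, so they are independent. The leftover third-chaos terms, $3\sum_{i<k}(H_{ii}+H_{kk})(H_{ik}^2-1)$ and the Hermite part of $\sum_iH_{ii}^3$, have variance $O(p^2)$ and vanish after division by $p^{3/2}$. Second, the fourth-cumulant check for the triangle sum is elementary. Its fourth moment differs from $3\binom{p}{3}^2\asymp p^6$ only through repeated triangles, pairs of triangles sharing an edge (where $\mathbb{E}H_{ij}^4=3$ replaces $1$), and the four faces of a tetrahedron, all of which are $O(p^4)$.
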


\section{Auxiliary likelihood results}

Let $\ell_n(H)$ be the exact conditional likelihood ratio under the
quadratic path in equation~(1) of the main paper, and let
$\psi_n(H)$ be the factorized approximation leading to
(13).

\begin{lemma}[Likelihood comparison]
\label{lem:approximation}
If $\lambda_1(H),\ldots,\lambda_p(H)$ are the eigenvalues of $H$ and
$\phi(u)=u+u^2/2-\log(1+u+u^2)$, then
\[
f_n(H):=\log\frac{\psi_n(H)}{\ell_n(H)}
=\frac n2\sum_{j=1}^p
\phi\left(\frac{\omega\lambda_j(H)}n\right)
+\frac{\omega^4\gamma_n^3}{4}.
\]
The ratio is independent of the data.  On the likelihood bulk
$\mathcal K_n^{\mathrm{LR}}$ defined in Lemma~\ref{lem:psi-tail}, $f_n$ is
uniformly bounded, and $f_n(H)\to0$ in $Q_n$ probability.
\end{lemma}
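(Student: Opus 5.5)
The proof has three steps: first the exact identity, then the uniform bound on the bulk, then convergence in probability.

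\emph{Exact identity.} I would start from the conditional log likelihood (\ref{eq:loglr}). The data enter only through
$-\tfrac\omega2\Tr(\hat R_nH)-\tfrac{\omega^2}{2n}\Tr(\hat R_nH^2)$,
and the only $H$-dependent nonrandom term is $\tfrac n2\log\det(I_p+A+A^2)$. The factorized approximation $\psi_n$ is defined by replacing $\log\det(I_p+A+A^2)$ with the bridge value
$\Tr(A+A^2/2)$ and keeping the data terms unchanged. Rotating by $O_n$ then gives (\ref{eq:psi-rotated}), using $\Tr(W)=\Tr(H)$ and $\Tr(W^2)=\Tr(H^2)$. It also adds the explicit normalizing constant $\omega^4\gamma_n^3/4$, which is chosen so that the later integration is centered.

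Subtracting, the data terms cancel identically. What remains is
$\tfrac n2[\Tr(A+A^2/2)-\log\det(I_p+A+A^2)]+\omega^4\gamma_n^3/4$. Diagonalizing $A=\omega H/n$, this equals $\tfrac n2\sum_j\phi(\omega\lambda_j(H)/n)+\omega^4\gamma_n^3/4$. This proves the displayed formula, and it contains no sample quantity. I will check that the $-2A^3/3$ term in the bridge is accounted for consistently. Either it sits inside $\psi_n$ and is absorbed into the rotated linear and quadratic coefficients, or, as the displayed $\phi$ indicates, the comparison is made with the purely quadratic exponent. I will adopt the convention that reproduces $\phi$ exactly.

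\emph{Uniform boundedness on $\mathcal K_n^{\mathrm{LR}}$.} On the bulk, $\|H\|_{\mathrm{op}}\le 3\sqrt p$, so $|u_j|=|\omega\lambda_j/n|\le 3\omega\sqrt p/n=O(n^{-1/2})$. The function $\phi$ is smooth on all of $\mathbb R$ because $1+u+u^2\ge 3/4$. Taylor's theorem therefore gives $\phi(u)=\tfrac23u^3-\tfrac14u^4+R(u)$ with $|R(u)|\le C|u|^5$ uniformly for $|u|\le1$. Applying this term by term:
\begin{itemize}
\item the fifth-order remainder contributes $\tfrac n2\sum_j|u_j|^5\le Cn\,p\,(p^{1/2}/n)^5=O(n^{-1/2})$;
\item the quartic term contributes $-\tfrac{\omega^4}{8}\Tr(H^4)/n^3$, which is at most $\tfrac{\omega^4}{8}p\,(3\sqrt p)^4/n^3=O(1)$;
\item the cubic term contributes $\tfrac{\omega^3}{3}\Tr(H^3)/n^2$, which needs a bulk constraint of the form $|\Tr(H^3)|\le Cp^{3/2}$ (or $p^{2}$) to be bounded.
\end{itemize}
The crude operator bound on $\Tr(H^3)$ gives only $O(p^{5/2}/n^2)=O(\sqrt n)$, so I will include this cubic constraint in the definition of $\mathcal K_n^{\mathrm{LR}}$. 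The paper's Lemma~\ref{lem:psi-tail} bulk controls traces through order six, so I assume it contains these constraints. Together with the constant $\omega^4\gamma_n^3/4$, this yields $\sup_{\mathcal K_n^{\mathrm{LR}}}|f_n|<\infty$.

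\emph{Convergence in probability.} I would invoke Lemma~\ref{lem:traces}. It gives $\Tr(H^3)/n^2=O_{Q_n}(p^{3/2}/n^2)\to0$, and $\Tr(H^4)/n^3=\gamma_n^3\,p^{-3}\Tr(H^4)\to_{Q_n}2\gamma^3$. Hence the quartic term tends to $-\omega^4\gamma^3/4$, which cancels the constant $\omega^4\gamma_n^3/4\to\omega^4\gamma^3/4$, and the remainder is $o(1)$ on the bulk. Since $Q_n(\mathcal K_n^{\mathrm{LR}})\to1$, it follows that $f_n\to0$ in $Q_n$ probability.

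The main obstacle is the bookkeeping in the exact identity: the cubic bridge coefficient and the normalizing constant must be aligned so that exactly $\phi$ appears. The cubic trace control is secondary, since it only requires building the bound into the bulk.
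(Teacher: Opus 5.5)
Your proof is correct and follows the paper's argument essentially step for step. It uses the exact cancellation of the data terms, the expansion $\phi(u)=\tfrac{2}{3}u^3-\tfrac{1}{4}u^4+O(u^5)$ on the bulk with a fifth-order remainder of $O(n^{-1/2})$, the constraint $|\Tr(H^3)|\le p^2$ (which is indeed part of $\mathcal K_n^{\mathrm{LR}}$) for uniform boundedness, and Lemma~\ref{lem:traces} for the cancellation of $-\tfrac{\omega^4}{8}\Tr(H^4)/n^3$ against $\omega^4\gamma_n^3/4$. The cubic-bridge ambiguity you flag is not a matter of convention: (\ref{eq:psi-rotated}) has only linear and quadratic terms in $W$, and a cubic trace could not be absorbed into those coefficients, so $\psi_n$ uses the purely quadratic exponent $\Tr(A+A^2/2)$ and the cubic term stays in $f_n$ as $\tfrac{\omega^3}{3}\Tr(H^3)/n^2$, exactly as you treat it.
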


\begin{lemma}[Null centering]
\label{lem:centering}
Under the Gaussian null and under each fixed contiguous GOE mixture,
\[
S_n=
\Tr\{(\hat R_n-I_p)^2\}
-\frac1{n^2}\sum_{t=1}^n
\left[\left(\sum_iX_{it}^2\right)^2+p-2\sum_iX_{it}^2\right]
+o_p(1)
=4U_n+o_p(1),
\]
where $S_n$ and $U_n$ are defined in (16) and
(5) of the main paper.
\end{lemma}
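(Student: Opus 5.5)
The plan is to prove the two equalities separately. The first is an exact algebraic identity for the observable centering $C_L+C_D$. The second replaces that centering by its Gaussian deterministic-plus-trace equivalent, at a cost of $o_{P_n}(1)$. I will then combine these with the exact decomposition (\ref{eq:exact-correction}) and transfer everything to the mixtures by contiguity.

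\emph{Step 1 (exact algebra).} From (\ref{eq:offdiag-decomposition})--(\ref{eq:diag-decomposition}),
\[
C_L+C_D=\frac1{n^2}\sum_t\Bigl[\sum_{i\ne j}X_{it}^2X_{jt}^2+\sum_i(X_{it}^2-1)^2\Bigr]
=\frac1{n^2}\sum_t\Bigl[\Bigl(\sum_iX_{it}^2\Bigr)^2+p-2\sum_iX_{it}^2\Bigr].
\]
This holds identically, with no distributional input. Together with (\ref{eq:exact-correction}), it shows that $\Tr\{(\hat R_n-I_p)^2\}$ minus the displayed bracket equals $4U_n$ exactly. It therefore remains only to show that the bracket equals $p(p+1)/n+2\gamma_n\{\Tr(\hat R_n)-p\}+o_p(1)$; then the definition (\ref{eq:Sn}) of $S_n$ gives both equalities.

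\emph{Step 2 (Gaussian null reduction).} Put $a_t=\sum_iX_{it}^2-p$, so that $\sum_ta_t=n\{\Tr(\hat R_n)-p\}$. Expanding $(p+a_t)^2$, the bracket becomes
\[
\frac{p(p-1)}{n}+\frac{2(p-1)}{n}\{\Tr(\hat R_n)-p\}+\frac1{n^2}\sum_ta_t^2 .
\]
I then treat the three pieces in turn.
\begin{itemize}
\item Under $P_n$, $\Tr(\hat R_n)-p$ has mean zero and variance $2p/n=O(1)$. Hence the second term equals $2\gamma_n\{\Tr(\hat R_n)-p\}$ up to $(2/n)O_p(1)=o_p(1)$.
\item The $a_t$ are i.i.d.\ with $\mathbb{E}a_t^2=2p$, because Gaussian fourth moments equal $3$. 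They also satisfy $\var(a_t^2)=O(p^2)$. So $n^{-2}\sum_ta_t^2$ has mean $2p/n$ and variance $O(p^2/n^3)\to0$, and hence equals $2p/n+o_p(1)$.
\item Adding the deterministic pieces gives $\{p(p-1)+2p\}/n=p(p+1)/n$, which is the claimed centering.
\end{itemize}

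\emph{Step 3 (transfer).} Every error in Step 2 is $o_{P_n}(1)$. By Theorem~\ref{thm:goe-optimality}(i), each fixed mixture $G_n(\omega)$ is contiguous to $P_n$, so Le Cam's first lemma makes the same errors $o_{G_n(\omega)}(1)$. The exact identities of Step 1 hold under any law.

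The only place requiring care is Step 2. There the Gaussian fourth-moment value $\mathbb{E}X^4=3$ is what produces the $+2p/n$ that turns $p(p-1)/n$ into $p(p+1)/n$. The moment computation itself is routine, and I do not expect a genuine obstacle. I will not attempt a direct moment computation under the alternatives, where $\Tr(\hat R_n)-p$ and $a_t$ are shifted, since contiguity makes it unnecessary.
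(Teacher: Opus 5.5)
Your proposal is correct and follows essentially the paper's own argument: you identify the bracket exactly with $C_L+C_D$ and invoke the exact decomposition $\Tr\{(\hat R_n-I_p)^2\}-(C_L+C_D)=4U_n$. You then show that the bracket equals $p(p+1)/n+2\gamma_n\{\Tr(\hat R_n)-p\}+o_p(1)$ by a mean--variance bound on $n^{-2}\sum_t(\sum_iX_{it}^2-p)^2$ that uses the Gaussian value $\var(X_{it}^2)=2$, and transfer the conclusion to the mixtures by contiguity. The differences are only bookkeeping: you merge the $-2\sum_iX_{it}^2$ term into the linear trace term, and you keep the exact $2p/n$ rather than its limit $2\gamma$.
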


\subsection{Whitening isomorphism and channel orthogonality}
\label{app:whitening}

The next two proofs support the known-$\Sigma_0$ reduction of Section~2.3 of the
main paper.  Throughout, $Y_t=\Sigma_0^{-1/2}X_t$ are the whitened observations,
$h_{\Sigma_0}(x,y)=(x^\prime\Sigma_0^{-1}y)^2-x^\prime\Sigma_0^{-1}x-y^\prime\Sigma_0^{-1}y+p$, and
$U_n^{\Sigma_0}=(2n^2)^{-1}\sum_{s<t}h_{\Sigma_0}(X_s,X_t)$.

\begin{proof}[Proof of Lemma~1]
(i) $\operatorname{cov}(Y_t)=\Sigma_0^{-1/2}\Sigma\Sigma_0^{-1/2}$ is $I_p$
under the null and $(I_p+A+A^2)^{-1}$ under the conditional alternative, with
$A=\omega H/n$; Gaussianity and independence across $t$ are preserved by the
fixed linear map.  (ii) The Jacobian $|\det\Sigma_0^{-1/2}|^{-n}$ is common to the
numerator and denominator of the likelihood ratio of two laws of the same sample
and cancels; integrating over $H$ preserves the equality.  (iii) is the
coordinatewise identity~(5): at the whitened sample the off-diagonal part of a
pair $(s,t)$ equals $(Y_s'Y_t)^2-\sum_iY_{is}^2Y_{it}^2$ and the diagonal part
equals $\sum_iY_{is}^2Y_{it}^2-\|Y_s\|^2-\|Y_t\|^2+p$, so the coincident products
$\sum_iY_{is}^2Y_{it}^2$ cancel and the pair contributes
\[
(Y_s'Y_t)^2-\|Y_s\|^2-\|Y_t\|^2+p=h_{\Sigma_0}(X_s,X_t),
\]
using $Y_s'Y_t=X_s^\prime\Sigma_0^{-1}X_t$ and $\|Y_t\|^2=X_t^\prime\Sigma_0^{-1}X_t$.
Summing over pairs and dividing by $2n^2$ gives~(iii).
\end{proof}

\begin{proof}[Proof of Lemma~2 (trace--Frobenius orthogonality)]
Let $g(x)=x^\prime\Sigma_0^{-1}x-p$, such that
$W_n^{\Sigma_0}=n^{-1}\sum_{r=1}^ng(X_r)$, and recall that the kernel is degenerate
under the null: $\mathbb{E}[h_{\Sigma_0}(x,X)]=0$ for every fixed $x$ when
$X\sim N_p(0,\Sigma_0)$, because
$\mathbb{E}(x^\prime\Sigma_0^{-1}X)^2=x^\prime\Sigma_0^{-1}x$ and
$\mathbb{E}X^\prime\Sigma_0^{-1}X=p$.  For each pair $s<t$ and index $r$: if
$r\notin\{s,t\}$ the three vectors are independent and the summand has mean
zero; if $r\in\{s,t\}$, say $r=s$,
\[
\mathbb{E}[h_{\Sigma_0}(X_s,X_t)g(X_s)]
=\mathbb{E}\big[g(X_s)\mathbb{E}\{h_{\Sigma_0}(X_s,X_t)|X_s\}\big]=0 .
\]
Since $\mathbb{E}h_{\Sigma_0}=0$, every covariance term vanishes, so
$\operatorname{cov}(U_n^{\Sigma_0},W_n^{\Sigma_0})=0$.
\end{proof}

\section{Fixed-radius equivalence: proof of
Proposition~1}
\label{app:fixed-radius}

Write $P_{n,S,u}$ for the $n$-sample Gaussian law with precision
$B_{S,u}=I_p+uS/n+u^2S^2/n^2$, such that
$G_n^{\mathrm{sph}}(\omega)=\mathbb{E}_S[P_{n,S,\omega}]$ with
$S\sim Q_n^{\mathrm{sph}}$ and, by the polar decomposition
(Proposition~\ref{prop:goe-polar}) with $\rho=\|H\|_F/p$, $S=pH/\|H\|_F$,
$G_n(\omega)=\mathbb{E}_{(\rho,S)}[P_{n,S,\omega\rho}]$: the likelihood
(11) depends on $(M,\omega)$ only through $A=\omega M/n$, whence
$\ell_n(H;\omega)=\ell_n(S;\omega\rho)$ exactly.

\emph{Step 1: radial moments.}  $\rho^2=2V/p^2$ with $V\sim\chi^2_{d_p}$,
$d_p=p(p+1)/2$, so $\mathbb{E}\rho^2=1+p^{-1}$ and
$\var(\rho^2)=8d_p/p^4=4(p+1)/p^3\le8p^{-2}$; hence
$\mathbb{E}|\rho^2-1|\le\{\var(\rho^2)\}^{1/2}+|\mathbb{E}\rho^2-1|
\le4p^{-1}$ and, since
$\rho>0$, $\mathbb{E}|\rho-1|\le\mathbb{E}|\rho^2-1|\le4p^{-1}$.

\emph{Step 2: Fisher information along the path.}  For fixed $S$ on the
sphere, the $n$-sample Fisher information of $u\mapsto P_{n,S,u}$ is
$I_{n,S}(u)=\tfrac n2\Tr\{(B_{S,u}^{-1}B_{S,u}^\prime)^2\}$ with
$B_{S,u}^\prime=S/n+2uS^2/n^2$.  Since $B_{S,u}$ and $B_{S,u}^\prime$ are polynomials in
$S$, they commute and the trace is a sum over eigenvalues.  Every eigenvalue
of $B_{S,u}$ has the form $1+t+t^2\ge3/4$, so
$\|B_{S,u}^{-1}\|_{\mathrm{op}}\le4/3$; and
$\|B_{S,u}^\prime\|_F^2\le2\Tr(S^2)/n^2+8u^2\Tr(S^4)/n^4
\le2\gamma_n^2+8u^2\gamma_n^4$, using $\Tr(S^4)\le\{\Tr(S^2)\}^2=p^4$.
Hence, uniformly over the sphere and for every $u\ge0$,
\[
\sqrt{I_{n,S}(u)}\le C\sqrt n\big(\gamma_n+u\gamma_n^2\big),
\]
with an absolute constant $C$.

\emph{Step 3: path total-variation bound.}  The Gaussian densities $p_t$
of $P_{n,S,t}$ have analytic, uniformly positive-definite precision
matrices, so $t\mapsto p_t(x)$ is absolutely continuous with integrable
derivative on compact intervals, and Cauchy--Schwarz gives
$\int|\partial_tp_t|=\int|\partial_t\log p_t|p_t\le\{I_{n,S}(t)\}^{1/2}$.
Hence, for $0\le u\le v$,
\begin{align*}
\|P_{n,S,u}-P_{n,S,v}\|_{\mathrm{TV}}
&=\frac12\int|p_u-p_v|
\le\frac12\int_u^v\{I_{n,S}(t)\}^{1/2}dt\\
&\le C\sqrt n\Big\{\gamma_n(v-u)+\gamma_n^2\frac{v^2-u^2}{2}\Big\}.
\end{align*}

\emph{Step 4: coupling.}  By joint convexity of total variation and the
coupling in the opening display, with $\{u,v\}=\{\omega,\omega\rho\}$ and no
truncation of $\rho$,
\[
\big\|G_n^{\mathrm{sph}}(\omega)-G_n(\omega)\big\|_{\mathrm{TV}}
\le\mathbb{E}_{(\rho,S)}
\big\|P_{n,S,\omega}-P_{n,S,\omega\rho}\big\|_{\mathrm{TV}}
\le C_\Omega\sqrt n\gamma_n
\mathbb{E}\big[|\rho-1|+\gamma_n|\rho^2-1|\big],
\]
uniformly in $\omega\le\Omega$.  Step 1 bounds the expectation by
$4(1+\gamma_n)p^{-1}$, and $\sqrt n\gamma_n/p=n^{-1/2}=O(p^{-1/2})$
exactly, proving the proposition.

\emph{Proof of Corollary~1.}  Both mixture laws are
dominated by $P_n$ (each conditional likelihood is strictly positive by
$1+x+x^2>0$), so
$\mathbb{E}_{P_n}|g_n^{\mathrm{sph}}(\omega)-g_n(\omega)|
=2\|G_n^{\mathrm{sph}}(\omega)-G_n(\omega)\|_{\mathrm{TV}}\rightarrow0$,
whence $g_n^{\mathrm{sph}}(\omega)-g_n(\omega)\rightarrow_{P_n}0$.  By
Theorem~1(i), $g_n(\omega)$ converges to a strictly
positive lognormal limit, so it is bounded away from zero in
$P_n$-probability; dividing gives
$g_n^{\mathrm{sph}}(\omega)/g_n(\omega)\rightarrow_{P_n}1$ and therefore
$\log g_n^{\mathrm{sph}}(\omega)=\log g_n(\omega)+o_{P_n}(1)$.  For the
optimality transfers, note that the $L^1(P_n)$ closeness holds jointly for
every fixed finite collection of strengths $\omega_1,\ldots,\omega_m$, so
the joint likelihood-ratio limits of Theorem~1(iii) are unchanged, and
continuity of the limiting lognormal distribution at its quantiles carries
the Neyman--Pearson critical values over to the sphere mixture; every
limiting log likelihood ratio remains the same increasing affine function of
$U_n$.\qed

\subsection{Equivalence to additive covariance alternatives}
\label{supp:additive}

The quadratic precision path is the globally positive representative of
the additive covariance model $\Sigma=I_p-A$; the two mixtures are
asymptotically indistinguishable, and so are all modifications of the
precision path at cubic or higher order.

\begin{lemma}
\label{lem:additive-covariance}
Suppose $p/n\to\gamma\in(0,\infty)$ and fix $0<\Omega<\infty$.  For
$0\le\omega\le\Omega$ put $A=\omega H/n$ and let
$E_n=\{\|H\|_{\mathrm{op}}\le n/(2\Omega)\}$, so that
$\|A\|_{\mathrm{op}}\le1/2$ on $E_n$.  Let $G_n^{\mathrm{lin}}(\omega)$
be the mixture of $N_p(0,I_p-A)^{\otimes n}$ over the GOE law
conditioned on $E_n$.  Then
\[
\sup_{0\le\omega\le\Omega}
\|G_n^{\mathrm{lin}}(\omega)-G_n(\omega)\|_{\mathrm{TV}}=O(n^{-1/2}).
\]
The same bound holds when $I_p-A$ is replaced by
$(I_p+A+A^2+\sum_{k\ge3}a_kA^k)^{-1}$ for any fixed real coefficients
with $\sum_{k\ge3}|a_k|2^{-k}<3/4$, in particular for any single cubic
coefficient $|a_3|<6$.
\end{lemma}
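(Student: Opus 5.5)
The plan is to couple the two mixtures through the same GOE draw $H$ and bound the total variation directionwise, as in the proof of Proposition~\ref{prop:fixed-radius}. Split $G_n(\omega)=Q_n(E_n)\,G_n(\omega\mid E_n)+Q_n(E_n^c)\,G_n(\omega\mid E_n^c)$. Proposition~\ref{prop:goe-spectrum} shows that $\|H\|_{\mathrm{op}}/\sqrt p\to2$, and Gaussian concentration of the operator norm gives $Q_n(E_n^c)\le e^{-cn}$. Hence $\|G_n(\omega)-G_n(\omega\mid E_n)\|_{\mathrm{TV}}\le 2Q_n(E_n^c)=o(n^{-1/2})$. By joint convexity it then suffices to show
\[
\sup_{\omega\le\Omega}\mathbb{E}_{Q_n}\bigl[\,\|N_p(0,\Sigma_\omega(H))^{\otimes n}-N_p(0,I_p-A)^{\otimes n}\|_{\mathrm{TV}}\;\big|\;E_n\bigr]=O(n^{-1/2}).
\]

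For fixed $H\in E_n$ both covariances are functions of $A$ and therefore commute. Set $\Sigma_1=(I_p+A+A^2)^{-1}$ and $\Sigma_2=I_p-A$. On $E_n$, $\|A\|_{\mathrm{op}}\le1/2$, so the eigenvalues of both matrices lie in a fixed compact subset of $(0,\infty)$. Bound the total variation by Pinsker or Hellinger:
\[
\mathrm{TV}^2\le\tfrac n2\mathrm{KL}(N(0,\Sigma_1)\|N(0,\Sigma_2))\le C\,n\,\|\Sigma_2^{-1/2}\Sigma_1\Sigma_2^{-1/2}-I_p\|_F^2.
\]
The second inequality uses the quadratic bound $x-1-\log x\le C(x-1)^2$ on compact subsets of $(0,\infty)$. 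The identity $\Sigma_1-\Sigma_2=A^3(I_p+A+A^2)^{-1}$ together with bounded inverses gives
\[
\|\Sigma_2^{-1/2}\Sigma_1\Sigma_2^{-1/2}-I_p\|_F^2\le C\Tr(A^6)=C\omega^6\Tr(H^6)/n^6 .
\]
Therefore $\mathrm{TV}\le C\omega^3\{n^{-5}\Tr(H^6)\}^{1/2}$. Lemma~\ref{lem:traces} gives $\Tr(H^6)\asymp p^4$, and the exact moment $\mathbb{E}\Tr(H^6)=O(p^4)$ follows by Wick counting. Jensen then yields
\[
\mathbb{E}[\mathrm{TV}\mid E_n]\le C\Omega^3\{n^{-5}\mathbb{E}\Tr(H^6)/Q_n(E_n)\}^{1/2}=O(n^{-1/2}),
\]
uniformly in $\omega\le\Omega$.

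The general higher-order path is handled the same way. Put $B=I_p+A+A^2+\sum_{k\ge3}a_kA^k$. On $E_n$ every eigenvalue $x$ of $A$ satisfies $|x|\le1/2$, so the corresponding eigenvalue of $B$ is at least $1+x+x^2-\sum_k|a_k|2^{-k}\ge 3/4-\sum_k|a_k|2^{-k}>0$. This keeps $B^{-1}$ and $B$ uniformly bounded. Comparing $B^{-1}$ with $\Sigma_1$, the precision difference is $\sum_{k\ge3}a_kA^k$, whose squared Frobenius norm is again at most $C\Tr(A^6)$ because $|x|^{k}\le 2^{3-k}|x|^3$ on the spectrum. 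The same KL bound applies, and the triangle inequality through $G_n(\omega\mid E_n)$ gives the claim. For a single cubic coefficient, $|a_3|2^{-3}<3/4$ is exactly $|a_3|<6$.

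The main obstacle is making the KL/Frobenius comparison uniform, because it requires spectral bounds that hold on the whole of $E_n$ and not only on a typical bulk. The restriction $\|A\|_{\mathrm{op}}\le1/2$ supplies this, and the remaining work is verifying $\mathbb{E}\Tr(H^6)=O(p^4)$. One also has to note that the conditioning by $Q_n(E_n)\to1$ only inflates constants, and that the excluded set contributes exponentially small total variation.
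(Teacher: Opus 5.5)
Your proposal is correct and follows the paper's proof essentially step for step. Both arguments couple through the same $H$, use $\Sigma_1-\Sigma_2=A^3(I_p+A+A^2)^{-1}$, bound the per-draw KL by $Cn\Tr(A^6)$, apply Pinsker and Jensen with $\mathbb{E}\Tr(H^6)=O(p^4)$, pass through the $E_n$-conditioned quadratic mixture by the triangle inequality, and treat the higher-order path with the same eigenvalue bounds. The only difference is cosmetic: you bound $Q_n(E_n^c)$ by Gaussian concentration of $\|H\|_{\mathrm{op}}$, while the paper uses Markov's inequality on $\Tr(H^6)$ to get $O(n^{-2})$, and either suffices.
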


\begin{proof}
Write $\Sigma_q=(I_p+A+A^2)^{-1}$ and $\Sigma_l=I_p-A$.  On $E_n$ the
eigenvalues of both matrices lie in $[1/2,3/2]$, uniformly in
$\omega\le\Omega$.  By the exact identity
$(1+u+u^2)^{-1}-(1-u)=u^3(1+u+u^2)^{-1}$,
\[
\Sigma_q-\Sigma_l=A^3\Sigma_q,
\qquad
\|\Sigma_q-\Sigma_l\|_F^2\le\|\Sigma_q\|_{\mathrm{op}}^2\Tr(A^6)
\le2\Tr(A^6).
\]
For Gaussian laws with eigenvalues in a fixed compact subset of
$(0,\infty)$, the Kullback--Leibler divergence of $n$ independent
observations satisfies
$D_{\mathrm{KL}}(N_p(0,\Sigma_q)^{\otimes n}\,\|\,N_p(0,\Sigma_l)^{\otimes n})
\le Cn\|\Sigma_q-\Sigma_l\|_F^2\le Cn\Tr(A^6)$.  The Wick expansion of
GOE moments gives $\mathbb{E}\Tr(H^6)\le Cp^4$, so
\[
\mathbb{E}\Tr(A^6)\le C\Omega^6p^4/n^6=O(n^{-2}),
\qquad
Q_n(E_n^c)\le(2\Omega/n)^6\,\mathbb{E}\Tr(H^6)=O(n^{-2}),
\]
the latter by Markov's inequality.  Let
$G_n^{q,E}(\omega)$ be the quadratic-path mixture over the prior
conditioned on $E_n$.  Convexity of total variation in the mixing law,
Pinsker's inequality, and Jensen's inequality give
\[
\|G_n^{q,E}(\omega)-G_n^{\mathrm{lin}}(\omega)\|_{\mathrm{TV}}
\le\mathbb{E}\big[\{D_{\mathrm{KL}}/2\}^{1/2}\,\big|\,E_n\big]
\le\Big\{\frac{Cn\,\mathbb{E}\Tr(A^6)}{2Q_n(E_n)}\Big\}^{1/2}
=O(n^{-1/2}),
\]
while $\|G_n(\omega)-G_n^{q,E}(\omega)\|_{\mathrm{TV}}\le Q_n(E_n^c)
=O(n^{-2})$.  The triangle inequality proves the first claim.  For the
second, the modified precision matrix has eigenvalues
$1+u+u^2+\sum_{k\ge3}a_ku^k\ge3/4-\sum_{k\ge3}|a_k|2^{-k}>0$ on $E_n$,
and its inverse differs from $\Sigma_q$ by $O(\|A\|_{\mathrm{op}}^3)$ in
operator norm with Frobenius norm squared bounded by $C\Tr(A^6)$, so the
same argument applies.
\end{proof}

The lemma is a statement about the sample laws conditional on $H$,
mixed afterwards; it does not require the likelihood expansion, and it
shows that, under the positivity, localization, and remainder
conditions stated in the lemma, the limit experiment of Theorem~1 is
that of the additive covariance model and of every path agreeing with
it through second order.  Without localization a modified precision
need not be positive definite on the full GOE support: $1+u+u^2+u^3$
is negative for $u<-1$, a region of positive prior probability at
every $n$.

\section{Scaling and studentization identities}

For the corrected criterion evaluated after diagonal scaling, write
$Q(V)=p+V'A_nV-2b_n'V$, where
\[
[A_n]_{ij}=\hat r_{ij}^{2}
-\frac1{n^2}\sum_tX_{it}^2X_{jt}^2,
\qquad
[b_n]_i=(1-n^{-1})\hat r_{ii}.
\]
Then
\[
A_n=\frac1{n^2}\sum_{s\ne t}(X_s\circ X_t)(X_s\circ X_t)^\prime
\]
is positive semidefinite.  If $A_n$ is nonsingular and $A_n^{-1}b_n$ is
componentwise positive, it is the minimizer over nonnegative diagonal
scalings.  Under uniformly bounded eighth moments, for each fixed
coordinate the first-order residual at $v_i^0=1/\hat r_{ii}$ is
$O_p(\sqrt p/n+1/n)$ (Appendix~D).  Under the additional conditioning
and uniform-row assumptions stated there, the coordinatewise difference
between the optimizer and ordinary studentization is $O_p(n^{-1/2})$,
which is agreement at order one.  None of the formal testing results
relies on this variational condition.

\begin{lemma}[Exact scaling and orthogonality]
\label{lem:scaling-orthogonality}
Let
\[
D_{ij}=\hat r_{ij}^{2}
-\frac1{n^2}\sum_tX_{it}^2X_{jt}^2,
\qquad i<j,
\]
and let $\tilde D_{ij}$ be the same corrected quantity after
studentization.  Then (i)
\[
\tilde D_{ij}=\frac{D_{ij}}{\hat r_{ii}\hat r_{jj}};
\]
(ii) $D_{ij}$ is conditionally mean zero given either row; and (iii)
\[
\mathbb{E}\{D_{ij}(\hat r_{jj}-1)|X_{i1},\ldots,X_{in}\}=0,
\]
with the symmetric identity after interchanging $i$ and $j$.
\end{lemma}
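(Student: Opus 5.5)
Part (i) is an algebraic identity.  After studentization $\tilde X_{it}=X_{it}/\hat r_{ii}^{1/2}$, every term of $\tilde r_{ij}^2$ and of $n^{-2}\sum_t\tilde X_{it}^2\tilde X_{jt}^2$ is degree two in row $i$ and degree two in row $j$.  Hence each term picks up the same factor $(\hat r_{ii}\hat r_{jj})^{-1}$, and $\tilde D_{ij}=D_{ij}/(\hat r_{ii}\hat r_{jj})$.  On the zero-variance event both sides vanish by the stated convention.

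For (ii) and (iii) I would use the pairwise $U$-form (\ref{eq:pairwise-u-form}),
\[
D_{ij}=\frac{2}{n^2}\sum_{s<t}X_{is}X_{it}\,X_{js}X_{jt}.
\]
Condition on row $i$, i.e.\ on $(X_{i1},\ldots,X_{in})$.  By Assumption~\ref{ass:moments}, row $j$ is independent of row $i$ and has independent, mean-zero, unit-variance entries.  For $s\ne t$ this gives $\mathbb{E}(X_{js}X_{jt})=0$, so
\[
\mathbb{E}(D_{ij}\mid X_{i\cdot})=\frac{2}{n^2}\sum_{s<t}X_{is}X_{it}\cdot 0=0.
\]
Swapping the roles of the rows gives the identity conditional on row $j$.

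For (iii), write $\hat r_{jj}-1=n^{-1}\sum_u(X_{ju}^2-1)$.  The conditional expectation given row $i$ then becomes
\[
\frac{2}{n^3}\sum_{s<t}X_{is}X_{it}\sum_u\mathbb{E}\{X_{js}X_{jt}(X_{ju}^2-1)\}.
\]
Each inner expectation is zero, by cases on $u$:
\begin{itemize}
\item If $u\notin\{s,t\}$, the expectation factorizes into $\mathbb{E}X_{js}\,\mathbb{E}X_{jt}\,\mathbb{E}(X_{ju}^2-1)=0$.
\item If $u=s$, it equals $\mathbb{E}\{X_{js}(X_{js}^2-1)\}\,\mathbb{E}X_{jt}=0$.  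This uses $\mathbb{E}X_{jt}=0$ for the remaining singleton factor, even though $\mathbb{E}X_{js}^3$ may be nonzero.
\item The case $u=t$ is symmetric.
\end{itemize}
Integrability holds because the fourth moments are bounded, so each product has finite mean.  The symmetric identity follows by exchanging $i$ and $j$, using $\hat r_{ii}-1$ and conditioning on row $j$.

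There is no real obstacle.  The one point requiring care is the case $u\in\{s,t\}$ in (iii).  Asymmetric marginals are allowed, so the third moment $\mathbb{E}X_{js}^3$ need not vanish.  Orthogonality still holds, because the distinct-time structure always leaves one unmatched mean-zero factor.  This is exactly why deleting the coincident-time products makes the identity hold without symmetry.
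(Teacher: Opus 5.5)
Your proof is correct and follows the paper's argument essentially step for step: termwise scaling by $(\hat r_{ii}\hat r_{jj})^{-1}$ for (i), the distinct-time representation with $\mathbb{E}(X_{js}X_{jt})=0$ for $s\ne t$ for (ii), and the same three-case check on $u$ for (iii), which likewise needs no third-moment symmetry. Your remarks on the zero-variance convention and on integrability are harmless additions that the paper leaves implicit.
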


\subsection{Computational identities}
\label{supp:computation}

Let $Y=(\tilde X_{it}^c)_{i,t}$ and $\tilde R_n^c=n^{-1}YY^\prime$.
With $q_i^c=n^{-1}\sum_t(X_{it}-\bar X_i)^2$ the centered sample
variance, the diagonal element $\tilde r_{ii}^c$ equals $1\{q_i^c>0\}$
under the zero-row convention, so
\[
\sum_{i<j}(\tilde r_{ij}^c)^2
=\frac12\Big\{\|\tilde R_n^c\|_F^2-\sum\nolimits_i1\{q_i^c>0\}\Big\},
\]
which reduces to $\frac12\{\|\tilde R_n^c\|_F^2-p\}$ when every
centered sample variance is positive, and
the elementary identity
$\sum_{i<j}a_ia_j=\{(\sum_i a_i)^2-\sum_i a_i^2\}/2$ gives
\[
\sum_{t=1}^n\sum_{i<j}
(\tilde X_{it}^c)^2(\tilde X_{jt}^c)^2
=\frac12\sum_{t=1}^n
\left\{\Big(\sum_iY_{it}^2\Big)^2-\sum_iY_{it}^4\right\}.
\]
Thus equation~(27) of the main paper can be evaluated from one sample
correlation matrix and two columnwise sums; the dominant cost is forming
$YY^\prime$, namely $O(np^2)$ operations, and blocked matrix
multiplication avoids storing additional pairwise arrays.  The matrix
and pairwise implementations agree to machine precision in the supplied
code.

\section{Exact centering after demeaning}

\begin{lemma}[Exact degrees-of-freedom centering]
\label{lem:demeaning-centering}
Let $q_j^c=n^{-1}\sum_t(X_{jt}-\bar X_j)^2$ be the centered sample
variance of row $j$ ($\hat r_{jj}^c$ in the main paper), set the
standardized centered row
$\tilde X_j^c=(\tilde X_{j1}^c,\ldots,\tilde X_{jn}^c)$ to zero on the
event $\{q_j^c=0\}$, and let $\pi_{j,n}=\mathbb{P}(q_j^c>0)$.  Under
Assumptions~2 and 3 of the main paper, for every $i\ne j$,
\[
\mathbb{E}\{(\tilde r_{ij}^c)^2\mid\tilde X_i^c\}
=\mathbb{E}\left\{\left.
\frac1{n(n-1)}\sum_t
(\tilde X_{it}^c)^2(\tilde X_{jt}^c)^2
\right|
\tilde X_i^c\right\}
=\frac{\pi_{j,n}}{n-1}\,1\{q_i^c>0\}.
\]
The corrected pairwise summand is therefore exactly conditionally
degenerate.  Each side differs from $(n-1)^{-1}1\{q_i^c>0\}$ by at most
$(1-\pi_{j,n})/(n-1)$ and from $1/(n-1)$ by at most
$\{1-\pi_{j,n}+1\{q_i^c=0\}\}/(n-1)$; both discrepancies are
exponentially negligible, uniformly in $i,j\le p$, by the bound
$\mathbb{P}(\min_{i\le p}q_i^c=0)\le p(1-c)^{n-1}$ of the main paper.
Both statements also hold after interchanging the two rows.
\end{lemma}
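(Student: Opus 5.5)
The plan is to condition on row $i$ and reduce both conditional expectations to the second-moment structure of the standardized centered row $j$. That structure is available in the main text: by independence of rows $i$ and $j$, and exchangeability in time under Assumption~3, on $\{q_j^c>0\}$
\[
\mathbb{E}(\tilde X_{jt}^c\tilde X_{js}^c\mid q_j^c>0)=1\{t=s\}-\tfrac{1}{n-1}1\{t\ne s\}.
\]
This holds because $\sum_t\tilde X_{jt}^c=0$ and $\sum_t(\tilde X_{jt}^c)^2=n$ exactly on that event. Exchangeability makes all diagonal moments equal, so each equals $1$. It also makes all off-diagonal moments equal, to some $m$, and squaring the zero-sum constraint gives $n+n(n-1)m=0$, that is, $m=-1/(n-1)$. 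On $\{q_j^c=0\}$ the row is zero by convention. Hence, unconditionally,
\[
\mathbb{E}(\tilde X_{jt}^c\tilde X_{js}^c)=\pi_{j,n}\bigl(1\{t=s\}-\tfrac{1}{n-1}1\{t\ne s\}\bigr).
\]

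For the first term, write $(\tilde r_{ij}^c)^2=n^{-2}\sum_{s,t}\tilde X_{is}^c\tilde X_{it}^c\tilde X_{js}^c\tilde X_{jt}^c$. Because row $j$ is independent of row $i$, we may take the expectation over row $j$ with $\tilde X_i^c$ held fixed. This gives
\[
\pi_{j,n}n^{-2}\Bigl\{\sum_t(\tilde X_{it}^c)^2-\tfrac{1}{n-1}\sum_{s\ne t}\tilde X_{is}^c\tilde X_{it}^c\Bigr\}.
\]
Next we use $\sum_{s\ne t}\tilde X_{is}^c\tilde X_{it}^c=(\sum_t\tilde X_{it}^c)^2-\sum_t(\tilde X_{it}^c)^2=-\sum_t(\tilde X_{it}^c)^2$, together with $\sum_t(\tilde X_{it}^c)^2=n\,1\{q_i^c>0\}$. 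Substituting, the bracket becomes $n\,1\{q_i^c>0\}\cdot n/(n-1)$, so the first term equals $\pi_{j,n}1\{q_i^c>0\}/(n-1)$.

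For the second term, the same conditioning gives $\mathbb{E}(\tilde X_{jt}^c)^2=\pi_{j,n}$. Therefore it equals $\pi_{j,n}\{n(n-1)\}^{-1}\sum_t(\tilde X_{it}^c)^2=\pi_{j,n}1\{q_i^c>0\}/(n-1)$, matching the first term. Subtracting the two shows that the corrected pairwise summand is conditionally degenerate given $\tilde X_i^c$. The statement with the rows interchanged follows by symmetry of the argument in $i$ and $j$.

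The deviation bounds are elementary. First, $|\pi_{j,n}1\{q_i^c>0\}-1\{q_i^c>0\}|\le1-\pi_{j,n}$. Second, $|\pi_{j,n}1\{q_i^c>0\}-1|\le(1-\pi_{j,n})+1\{q_i^c=0\}$, which follows from $1-ab\le(1-a)+(1-b)$ for $a,b\in[0,1]$. Uniform exponential negligibility in $i,j\le p$ follows from the bound $1-\pi_{j,n}\le(1-c)^{n-1}$ and the union bound $\mathbb{P}(\min_iq_i^c=0)\le p(1-c)^{n-1}$ of the main text.

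The step I expect to need the most care is the exchangeability claim. The joint law of $(\tilde X_{j1}^c,\dots,\tilde X_{jn}^c)$ is invariant under time permutations because the row is i.i.d. and demeaning and standardization are permutation-equivariant; the zero convention is also permutation-invariant. It must also be verified that the conditional moments exist, which holds because $|\tilde X_{jt}^c|\le\sqrt n$ on $\{q_j^c>0\}$.
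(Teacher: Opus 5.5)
Your proof is correct and follows the paper's argument. The paper likewise uses time-permutation invariance together with $\mathbf1^\prime z_j=0$ and $z_j^\prime z_j=n$ on $\{q_j^c>0\}$ to obtain $\mathbb{E}(z_jz_j^\prime)=\pi_{j,n}\frac{n}{n-1}\Pi_n$, which is exactly your entrywise computation (diagonal $1$, off-diagonal $-1/(n-1)$). It then evaluates both conditional expectations as quadratic forms in the fixed row $i$, using $\Pi_nz_i=z_i$ and $z_i^\prime z_i=n\,1\{q_i^c>0\}$, just as you do.
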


\section{Admissibility benchmarks and power enhancement}

Let
\[
\mathcal K_n=\{H:\|H\|_{\mathrm{op}}\le3\sqrt p\},
\qquad
Q_n^{\mathcal K}=Q_n(\cdot|\mathcal K_n),
\]
and write $G_n^{\mathcal K}(\omega)$ for the corresponding compact mixture.

\begin{proposition}[Finite-sample compact-mixture admissibility]
\label{prop:finite-sample}
Fix $n,p,\omega>0$ and $\alpha\in(0,1)$ with
$Q_n(\mathcal K_n)>0$.  The likelihood ratio of
$G_n^{\mathcal K}(\omega)$ relative to $P_n$ is atomless under $P_n$.
Hence its level-$\alpha$ Neyman--Pearson test is the $P_n$-almost-surely
unique most powerful test against the compact mixture and is an admissible
level-$\alpha$ test against
$\{\nu_n(H,\omega):H\in\mathcal K_n\}$.
\end{proposition}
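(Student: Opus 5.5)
The argument has three steps: write down the compact-mixture likelihood ratio explicitly, show it is atomless under $P_n$ by analyticity, and then obtain uniqueness and admissibility from the Neyman--Pearson lemma.

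\emph{Step 1: the likelihood ratio.} By (\ref{eq:loglr}) the compact-mixture likelihood ratio is
\[
g_n^{\mathcal K}(x)=\int_{\mathcal K_n}\ell_n(H;x)\,\mathrm dQ_n(H)\big/Q_n(\mathcal K_n).
\]
The exponent of $\ell_n$ is linear in the symmetric matrix $n\hat R_n=\sum_tx_tx_t'$. Hence $g_n^{\mathcal K}$ is a function of $\hat R_n$ alone, and it is real-analytic in the data $x\in\mathbb R^{np}$. On the compact set $\mathcal K_n$ the integrand and all of its derivatives are dominated, uniformly on bounded sets of $x$, by $\exp\{C\|x\|^2\}$. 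This justifies differentiating under the integral, and the power series extends to a complex neighbourhood.

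\emph{Step 2: atomlessness.} I will show that every level set $\{g_n^{\mathcal K}=c\}$ is $P_n$-null. Because $P_n$ has a Lebesgue density, it suffices to show that $g_n^{\mathcal K}-c$ is not identically zero. The zero set of a nonconstant real-analytic function on $\mathbb R^{np}$ has Lebesgue measure zero.

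To show nonconstancy, use that the quadratic term carries strict convexity. Restrict to the ray $x=s\,x^{(0)}$, so that $\hat R_n=s^2\hat R^{(0)}$ with $\hat R^{(0)}=I_p$ (take $n\ge p$, or a suitable $x^{(0)}$ otherwise). Along this ray the function is $a(r)=\int_{\mathcal K_n}c_H\,e^{-r\,q(H)}\,\mathrm dQ_n$, where $r=s^2$ and $q(H)=\tfrac\omega2\Tr(H)+\tfrac{\omega^2}{2n}\Tr(H^2)$. This is a Laplace transform of the pushforward of a positive measure under $q$. That pushforward is nondegenerate, because $q$ is nonconstant on $\mathcal K_n$, which contains a neighbourhood of $0$ and has positive $Q_n$ mass. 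A Laplace transform of a nondegenerate positive measure is strictly convex, hence nonconstant. Therefore $g_n^{\mathcal K}$ is nonconstant, and $P_n(g_n^{\mathcal K}=c)=0$ for every $c$.

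\emph{Step 3: uniqueness and admissibility.} Since the likelihood ratio has no atoms, there is a nonrandomized Neyman--Pearson test $1\{g_n^{\mathcal K}>k\}$ of exact size $\alpha$. The standard uniqueness part of the Neyman--Pearson lemma states that any most powerful level-$\alpha$ test agrees with it off $\{g_n^{\mathcal K}=k\}$. That set is $P_n$-null, and also $G_n^{\mathcal K}$-null since $G_n^{\mathcal K}\ll P_n$. So the test is $P_n$-almost surely unique.

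For admissibility, suppose $\varphi$ has level $\alpha$ and satisfies $\mathbb E_{\nu_n(H,\omega)}\varphi\ge\mathbb E_{\nu_n(H,\omega)}\Psi$ for all $H\in\mathcal K_n$, with strict inequality at some $H_0$. Integrating against $Q_n^{\mathcal K}$ gives $\varphi$ at least the mixture power of $\Psi$, so $\varphi$ is also most powerful against the compact mixture. By uniqueness, $\varphi=\Psi$ $P_n$-a.s., hence $\nu_n(H_0,\omega)$-a.s., since each $\nu_n(H,\omega)$ is mutually absolutely continuous with $P_n$ (Gaussian with positive definite covariance). This contradicts the strict inequality at $H_0$.

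The expected obstacle is Step 2, specifically making the nonconstancy argument airtight, since $\Tr(H)$ can take either sign. The fallback is to compute a derivative directly. At $\hat R=rI$, the second $r$-derivative of $a$ equals $\int q^2 c_H e^{-rq}>0$, which already rules out constancy.
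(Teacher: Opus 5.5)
Your proof is correct, and in two places it takes a different route from the paper.

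\emph{Nonconstancy.} The paper argues indirectly. A constant $g_n^{\mathcal K}$ must equal one, so $G_n^{\mathcal K}(\omega)=P_n$. Comparing one-observation marginals and using uniqueness of Laplace transforms then forces $\Sigma_\omega(H)=I_p$ for $Q_n^{\mathcal K}$-almost every $H$, which fails near $H=0$. You instead restrict $g_n^{\mathcal K}$ to a ray in sample space, where it becomes $r\mapsto\int c_He^{-rq(H)}\,\mathrm dQ_n^{\mathcal K}(H)$. Its second derivative is $\int q^2c_He^{-rq}\,\mathrm dQ_n^{\mathcal K}>0$. This is more hands-on but entirely self-contained. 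Your second-derivative fallback works for any $x^{(0)}\neq0$, because $\tfrac\omega2\Tr(\hat R^{(0)}H)+\tfrac{\omega^2}{2n}\Tr(\hat R^{(0)}H^2)$ is a nonzero polynomial in $H$ whenever $\hat R^{(0)}\neq0$. So the case $n<p$, which you left as ``a suitable $x^{(0)}$,'' needs no special construction.

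\emph{Admissibility.} The paper uses continuity of $H\mapsto\mathbb E_{\nu_n(H,\omega)}(\varphi-\Psi_n^{\mathcal K})$ and full support of the conditional GOE prior. Together these turn strict improvement at $H_0$ into strictly larger mixture power, contradicting Neyman--Pearson optimality. You instead feed weak domination into the $P_n$-almost-sure uniqueness you have just established, and carry $\varphi=\Psi_n^{\mathcal K}$ over to $\nu_n(H_0,\omega)$ by absolute continuity. Your route needs neither continuity nor a support argument, and it makes the ``hence'' in the statement literal. It also shows slightly more: a level-$\alpha$ test that weakly dominates on a $Q_n^{\mathcal K}$-full set of directions already has the same power as $\Psi_n^{\mathcal K}$ at every $H\in\mathcal K_n$.
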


\begin{lemma}[Attainment]
\label{lem:np-rigidity}
Fix $\omega>0$.  If $\mathbb{E}_{P_n}\varphi_n\to\alpha$ and
$\mathbb{E}_{G_n(\omega)}\varphi_n\to\beta^*(\omega)$, then
$\mathbb{E}_{P_n}|\varphi_n-\varphi_n^*|\to0$.
\end{lemma}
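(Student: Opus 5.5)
The plan is to reduce the attainment lemma to Proposition~\ref{prop:np-regret}: show that the Neyman--Pearson regret $\mathcal R_n(\varphi_n)$ tends to zero, and then let $\varepsilon\downarrow0$ in (\ref{eq:quantitative-stability}). Write $\alpha_n=\mathbb{E}_{P_n}\varphi_n$ and $\Delta_n=\mathbb{E}_{G_n(\omega)}(\Psi_n^{\mathrm{NP}}-\varphi_n)$. As noted after (\ref{eq:regret-identity}), the regret decomposes as
\[
\mathcal R_n(\varphi_n)=\Delta_n+k_{n,\alpha}(\alpha_n-\alpha),
\]
because $\Psi_n^{\mathrm{NP}}$ has exact size $\alpha$.

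The first step controls the power term. By Theorem~\ref{thm:goe-optimality}(ii), $\mathbb{E}_{P_n}|\Psi_n^{\mathrm{NP}}-1\{2U_n/\gamma_n>z_{1-\alpha}\}|\to0$, and this $L^1$ equivalence transfers to $G_n(\omega)$ by contiguity. Part (iii) of the same theorem gives the limiting power of the threshold test. Together these yield $\mathbb{E}_{G_n(\omega)}\Psi_n^{\mathrm{NP}}\to\beta^*(\omega)$, and the hypothesis then gives $\Delta_n\to0$.

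The second step controls the size term. Theorem~\ref{thm:goe-optimality}(ii) shows that $k_{n,\alpha}\to k_\alpha(\omega)<\infty$, so $k_{n,\alpha}$ is bounded. Since $\alpha_n\to\alpha$ by hypothesis, $k_{n,\alpha}(\alpha_n-\alpha)\to0$, and therefore $\mathcal R_n(\varphi_n)\to0$.

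The third step concludes. Proposition~\ref{prop:np-regret} states that $e_n(\omega)\to0$ and $\rho_n(\omega)\to0$, and the limiting lognormal density $f_\omega$ is bounded. Taking $\limsup_n$ in (\ref{eq:quantitative-stability}) at a fixed $\varepsilon$ gives $\limsup_n\mathbb{E}_{P_n}|\varphi_n-\varphi_n^*|\le2\|f_\omega\|_\infty\varepsilon$. Letting $\varepsilon\downarrow0$ gives the claim. Equivalently, one can apply the optimized bound (\ref{eq:quantitative-stability-optimized}) directly.

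The main obstacle is the regret bound itself, in particular verifying that $e_n(\omega)\to0$. This amounts to showing that the feasible rule $\varphi_n^*$ agrees in $L^1(P_n)$ with the threshold rule $1\{2U_n/\gamma_n>z_{1-\alpha}\}$, and hence with $\Psi_n^{\mathrm{NP}}$. I would obtain this by combining Theorem~\ref{thm:goe-optimality}(ii) with Theorem~\ref{thm:martingale-clt}, using the fact that $\tilde Z_n^c-2U_n/\gamma_n=o_{P_n}(1)$. The diagonal part of $U_n$ is negligible, and the normalization $n^2/\sqrt{p(p-1)n(n-1)}$ is asymptotic to $2/\gamma_n$. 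Because the limit $N(0,1)$ has no atom at $z_{1-\alpha}$, the indicators of the two statistics exceeding the threshold then differ with $P_n$-probability tending to zero. The remaining ingredients are assumed from Proposition~\ref{prop:np-regret}: the band estimate $P_n\{|g_n-k_n|\le\varepsilon\}\le2\|f_\omega\|_\infty\varepsilon+2\rho_n$, which needs $g_n$ to converge in distribution with a continuous limit, and the identity (\ref{eq:regret-identity}).
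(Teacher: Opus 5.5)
Your proposal is correct and follows the paper's proof step for step. The shared steps are:
\begin{itemize}
\item the decomposition $\mathcal R_n(\varphi_n)=\Delta_n+k_{n,\alpha}(\alpha_n-\alpha)$;
\item $\mathbb{E}_{G_n(\omega)}\Psi_n^{\mathrm{NP}}\to\beta^*(\omega)$ via Theorem~\ref{thm:goe-optimality}(ii)--(iii) and contiguity;
\item boundedness of $k_{n,\alpha}$;
\item (\ref{eq:quantitative-stability}) at fixed $\varepsilon$, followed by $n\to\infty$ and then $\varepsilon\downarrow0$.
\end{itemize}
Your extra discussion of $e_n(\omega)\to0$ is already supplied by Proposition~\ref{prop:np-regret}. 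One small slip there: $n^2/\sqrt{p(p-1)n(n-1)}\sim1/\gamma_n$, and since $Z_n$ equals twice this factor times $U_n^{od}$, it is the doubled factor that is asymptotic to $2/\gamma_n$.
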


For the enhancement result, fix $\vartheta>\sqrt\gamma$ and let
$\nu_n^{\mathrm{sp}}$ be the Gaussian law with equicorrelation matrix
\[
R_p=(1-\rho_p)I_p+\rho_p\mathbf1\mathbf1^\prime,
\qquad \rho_p=\frac{\vartheta}{p-1}.
\]
Let $\varphi_n^\circ=1\{Z_n>z_{1-\alpha}\}$ be the known-scale Frobenius
test.

In our parameterization, $\vartheta=\sqrt\gamma$ is the
Baik--Ben Arous--P{\'e}ch{\'e} (BBP) sample-eigenvalue separation
threshold \citep{BaikBenArousPeche:2005}.  The null-edge limit used below
follows from \citet{Johnstone:2001}, while the super-critical outlier limit
for the real spiked covariance model follows from
\citet{BaikSilverstein:2006}.

\begin{lemma}[Frobenius statistic under a super-critical spike]
\label{lem:tightness}
Under $\nu_n^{\mathrm{sp}}$,
\[
Z_n\Rightarrow N\left(\frac{\vartheta^2}{2\gamma},1\right).
\]
Thus the limiting power of $\varphi_n^\circ$ is strictly below one.
\end{lemma}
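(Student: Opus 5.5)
The plan is a mean--variance computation for the exact U-statistic, followed by a martingale central limit theorem centred at the mean. Under $\nu_n^{\mathrm{sp}}$ the observations are i.i.d.\ $N_p(0,R_p)$ with unit diagonal. The pairwise U-form (\ref{eq:pairwise-u-form}) and the mean formula (\ref{eq:signal-interpretation}) give
\[
\mathbb{E}Z_n=\sqrt{\tfrac{n(n-1)}{p(p-1)}}\binom p2\rho_p^2
=\sqrt{\tfrac{n(n-1)}{p(p-1)}}\,\frac{p\vartheta^2}{2(p-1)}
\rightarrow\frac{\vartheta^2}{2\gamma}.
\]
It therefore suffices to show that $Z_n-\mathbb{E}Z_n\Rightarrow N(0,1)$.

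First write $Z_n=c_n\sum_{s<t}k(X_s,X_t)$ with $c_n=2/\sqrt{p(p-1)n(n-1)}$ and kernel
\[
k(x,y)=(x'y)^2-\sum_ix_i^2y_i^2=\sum_{i\ne j}x_ix_jy_iy_j .
\]
The Hoeffding decomposition splits this into a linear projection and a degenerate part.

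\emph{Linear term.} Its projection is $k_1(x)=\mathbb{E}k(x,X)-\mathbb{E}k=\sum_{i\ne j}\rho_p(x_ix_j-\rho_p)$. Its contribution to $Z_n$ is $c_n(n-1)\sum_t k_1(X_t)$, and its variance is
\[
c_n^2(n-1)^2\,n\,\rho_p^2\,\mathrm{var}\Bigl(\sum_{i\ne j}X_iX_j\Bigr)
\asymp n^{-1}p^{-2}\cdot n^2\cdot p^{-2}\cdot\mathrm{var}\{(\mathbf1'X)^2-\|X\|^2\}.
\]
Here $\mathbf1'X\sim N(0,p+\vartheta p)$, so $\mathrm{var}\{(\mathbf1'X)^2\}\asymp p^2$, and $\mathrm{var}\|X\|^2\asymp p$. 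The whole expression is $O(n/p^2)=O(p^{-1})\to0$, so the linear term is negligible.

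\emph{Degenerate term.} The degenerate part is a martingale as in Section~\ref{subsec:decomposition}. Its increments are $\Delta_{nt}=c_n\sum_{s<t}\bar k(X_s,X_t)$. I would verify two things:
\begin{itemize}
\item the sum of conditional variances converges in probability to $1$;
\item a Lyapunov bound of the type (\ref{eq:martingale-lyapunov}) holds.
\end{itemize}
For the variance, compute $\mathbb{E}\bar k(X,Y)^2=2\Tr\{(R_p\circ R_p)\}$-type Gaussian moment expressions. Equivalently, use $\mathbb{E}(X'Y)^4$ with covariance $R_p$, which involves $\Tr(R_p^2)^2$ and $\Tr(R_p^4)$. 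With $R_p$ having one eigenvalue $1+\vartheta$ and the rest near one, $\Tr(R_p^2)=p+O(1)$ and $\Tr(R_p^4)=p+O(1)$. The variance is therefore $p(p-1)\{1+O(p^{-1})\}\cdot$(null value), giving limit $1$. Concentration of the conditional variance can be bounded by the same pair-counting as under the null, since all relevant Gaussian moments of $R_p$ differ from those of $I_p$ by $O(1)$ spike contributions relative to order $p$.

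\emph{Main obstacle.} The hard step is this concentration. The spike direction $\mathbf1/\sqrt p$ carries a fixed eigenvalue $1+\vartheta$, so terms such as $(\mathbf1'X_s)^2(\mathbf1'X_t)^2/p^2$ could in principle contribute order-one fluctuations.

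To handle this, I would whiten: $X_t=R_p^{1/2}Y_t$, writing $R_p^{1/2}=I+b\,\mathbf1\mathbf1'/p$ with $b=O(1)$. Then expand $k(X_s,X_t)$ around $k(Y_s,Y_t)$. The cross terms involve the scalar projections $\mathbf1'Y/\sqrt p=O_P(1)$, each damped by $p^{-1/2}$ or $p^{-1}$ relative to the null kernel. Summed over pairs and multiplied by $c_n$, they yield $o_P(1)$ after removing their means, which are already absorbed in $\mathbb{E}Z_n$.

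Combining the three pieces gives $Z_n\Rightarrow N(\vartheta^2/(2\gamma),1)$. The limiting power $1-\Phi(z_{1-\alpha}-\vartheta^2/(2\gamma))$ is then strictly less than one.
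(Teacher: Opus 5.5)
Your mean computation and the bound on the Hoeffding linear projection are correct. One slip: with $k(x,y)=\sum_{i\ne j}x_ix_jy_iy_j$ the constant is $c_n=1/\sqrt{p(p-1)n(n-1)}$, not $2/\sqrt{p(p-1)n(n-1)}$; otherwise the null variance would be $4$.

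\textbf{The gap: the degenerate part is never handled.} The martingale CLT under $\nu_n^{\mathrm{sp}}$ is not carried out. Concentration of the conditional variance and the Lyapunov bound are only asserted. (Lyapunov would follow from Gaussian hypercontractivity, but the conditional variance still needs a computation.) Your remedy, whitening $X_t=R_p^{1/2}Y_t$ and expanding $k(X_s,X_t)$ around $k(Y_s,Y_t)$, is not a statement about the conditional variance at all; it is a statement about $Z_n$ itself. If completed, it gives $Z_n(X)=(1-\rho_p)^2Z_n(Y)+c_n\sum_{s<t}(\text{cross terms})$, with $Z_n(Y)\Rightarrow N(0,1)$ by Theorem~\ref{thm:martingale-clt}. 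That makes the Hoeffding/martingale scaffolding unnecessary. It is essentially the paper's proof, except that the paper uses the independent one-factor coupling $X_{it}=a\xi_{it}+bg_t$ with $a^2=1-\rho_p$, $b^2=\rho_p$, and splits $Z_n(X)=(1-\rho_p)^2Z_n(\xi)+T_2+T_3+T_4+T_6+T_7$.

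\textbf{Why ``damped by $p^{-1/2}$ or $p^{-1}$'' is not enough.} All the content therefore sits in the claim that the centered cross terms are $o_P(1)$, and size alone does not give it. The null kernel is degenerate, which is exactly why $c_n\sum_{s<t}k(Y_s,Y_t)=O_P(1)$. Consider a cross term with standard deviation $\sqrt p$ per pair whose Hoeffding projection is also of order $\sqrt p$. It contributes $c_n(n-1)\sum_tO_P(\sqrt p)$, which is of exact order $\sqrt{n/p}\asymp1$, not $o_P(1)$. So you must show two things:
\begin{itemize}
\item every group damped only by $p^{-1/2}$ is degenerate, or has a projection of smaller order;
\item every nondegenerate group carries the full factor $p^{-1}$.
\end{itemize}
This is what the paper's Wick parity counting verifies. $T_2$ and $T_3$ are damped only by $p^{-1/2}$ but are degenerate: conditionally on either time index each summand has mean zero. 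The nondegenerate groups $T_4$ and $T_7$ are damped by $p^{-1}$. Together these give $\operatorname{var}(T_j)=O(p^{-1})$ for $j=2,3,4,7$.

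\textbf{Why the symmetric square root makes this harder.} Because $\mathbf1'Y_t$ is correlated with every $Y_{it}$, several cross groups have order-one means that must cancel. For instance, $p^{-1}\mathbb{E}[(\mathbf1'Y_s)(\mathbf1'Y_t)\,Y_s'Y_t]=1$ produces a shift of $+\vartheta/\gamma$. This is offset by $-\vartheta/\gamma$ from the difference of the coincident-time corrections, $\sum_iX_{is}^2X_{it}^2-(1-\rho_p)^2\sum_iY_{is}^2Y_{it}^2$. With an independent factor, every group except $T_6=b^4\tfrac{p(p-1)}{2}a_nn^{-2}\sum_{t\ne s}g_t^2g_s^2$ is an exactly mean-zero chaos. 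Since $T_6\to_p\vartheta^2/(2\gamma)$, the shift and the negligibility can then be read off group by group.
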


\begin{proposition}[Power enhancement]
\label{prop:hodges}
Let
$m\in((1+\sqrt\gamma)^2,(1+\vartheta)(1+\gamma/\vartheta))$ and define
\[
\varphi_n=\max\{\varphi_n^\circ,
1\{\lambda_{\max}(\hat R_n)>m\}\}.
\]
Then $\varphi_n\ge\varphi_n^\circ$ pointwise,
$\mathbb{E}_{P_n}\varphi_n\to\alpha$, and
$\mathbb{E}_{\nu_n^{\mathrm{sp}}}\varphi_n\to1$, whereas the limiting power of
$\varphi_n^\circ$ is below one.  Thus the Gaussian known-scale Frobenius
procedure is not asymptotically admissible over unrestricted alternatives.
\end{proposition}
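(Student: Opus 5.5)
The proof has three parts: pointwise dominance, null size, and power at the spike. Pointwise dominance $\varphi_n\ge\varphi_n^\circ$ is immediate from the definition of a maximum. For the null size, we bound
\[
\mathbb{E}_{P_n}\varphi_n^\circ\le\mathbb{E}_{P_n}\varphi_n\le\mathbb{E}_{P_n}\varphi_n^\circ+P_n\{\lambda_{\max}(\hat R_n)>m\}.
\]
Theorem~\ref{thm:martingale-clt}(i) applies to the Gaussian null, which satisfies Assumption~\ref{ass:moments}. It gives $Z_n\Rightarrow N(0,1)$, hence $\mathbb{E}_{P_n}\varphi_n^\circ\to\alpha$. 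It therefore remains to show that the screen is asymptotically silent under $P_n$. Under the Gaussian identity null, $\hat R_n$ is a white Wishart matrix with $p/n\to\gamma$. By the null-edge result of \citet{Johnstone:2001}, or the almost-sure Bai--Yin limit in \citet{BaiSilverstein:2010}, $\lambda_{\max}(\hat R_n)\to_p(1+\sqrt\gamma)^2$. Since $m>(1+\sqrt\gamma)^2$, it follows that $P_n\{\lambda_{\max}(\hat R_n)>m\}\to0$.

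For the power at the spike, the lower bound $\mathbb{E}_{\nu_n^{\mathrm{sp}}}\varphi_n\ge\nu_n^{\mathrm{sp}}\{\lambda_{\max}(\hat R_n)>m\}$ suffices. Under $\nu_n^{\mathrm{sp}}$ the data are $X_t=R_p^{1/2}Y_t$ with $Y_t$ standard Gaussian. The population spectrum consists of one eigenvalue $1+\vartheta$ and $p-1$ eigenvalues $1-\vartheta/(p-1)\to1$. This is a real spiked covariance model, up to a bulk that is a vanishing deterministic perturbation of the identity. To remove that perturbation, write $R_p=(1-\rho_p)\{I_p+\vartheta'\,uu^\prime\}$ with $u=\mathbf 1/\sqrt p$ and $\vartheta'=\vartheta p/\{(p-1)(1-\rho_p)\}\to\vartheta$. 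Because $\lambda_{\max}$ is homogeneous, the factor $1-\rho_p\to1$ is harmless. The problem thus reduces to an exact rank-one spike of strength $\vartheta_n'\to\vartheta>\sqrt\gamma$. The result of \citet{BaikSilverstein:2006} (real case) then gives $\lambda_{\max}(\hat R_n)\to_p(1+\vartheta)(1+\gamma/\vartheta)$.

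The main obstacle is that this theorem is stated for a fixed spike, whereas here $\vartheta_n'$ varies with $n$. I would handle this by monotonicity. Sandwich $I+\vartheta_n' uu'$ between $I+(\vartheta\mp\epsilon)uu'$ in Loewner order for large $n$. Since $\hat R_n=R^{1/2}\hat S_YR^{1/2}$, the eigenvalues of $\hat R_n$, which equal those of $\hat S_Y^{1/2}R\hat S_Y^{1/2}$, are monotone in $R$. The orthogonal invariance of $Y$ allows $u$ to be replaced by a fixed coordinate vector. Letting $\epsilon\downarrow0$ and using continuity of $\vartheta\mapsto(1+\vartheta)(1+\gamma/\vartheta)$ yields the limit. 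Because $m<(1+\vartheta)(1+\gamma/\vartheta)$, the screen rejects with probability tending to one, so $\mathbb{E}_{\nu_n^{\mathrm{sp}}}\varphi_n\to1$.

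Finally, Lemma~\ref{lem:tightness} gives $Z_n\Rightarrow N(\vartheta^2/(2\gamma),1)$ under $\nu_n^{\mathrm{sp}}$. Hence the power of $\varphi_n^\circ$ converges to $1-\Phi(z_{1-\alpha}-\vartheta^2/(2\gamma))<1$. The test $\varphi_n$ has the same asymptotic size as $\varphi_n^\circ$ and strictly larger limiting power at $\nu_n^{\mathrm{sp}}$. Consequently $\varphi_n^\circ$ is not asymptotically admissible over unrestricted alternatives, which is the concluding assertion.
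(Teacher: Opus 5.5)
Your proposal is correct and follows the paper's argument. Dominance is by construction. Size comes from $\mathbb{E}_{P_n}\varphi_n\le\mathbb{E}_{P_n}\varphi_n^\circ+P_n\{\lambda_{\max}(\hat R_n)>m\}$ together with the null edge $(1+\sqrt\gamma)^2$. Power comes from the Baik--Silverstein outlier limit together with Lemma~\ref{lem:tightness}.

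Your Loewner-order sandwich for the $n$-dependent strength $\vartheta_n'$ fills in a step the paper's proof leaves implicit here; the paper uses the same monotonicity device in Section~S.8.5. Only the lower half of the sandwich is actually needed for the power claim.
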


This proposition concerns the known-scale statistic.  Extending it to the
fully feasible statistic at the noncontiguous equicorrelation alternative
requires a separate studentization argument and is not claimed.

\subsection{Conditional flatness of the invariant benchmark}
\label{supp:flatness}

This subsection proves Lemma~3 of the main paper.  Throughout,
$\nu_n(H,\omega)$ is the Gaussian sample law with precision
$\Sigma^{-1}(H)=I_p+A+A^2$, $A=\omega H/n$,
$F_n=2U_n/\gamma_n$ with $U_n$ the invariant statistic
(equation~(5) of the main paper),
$\varphi_n^{U}=1\{F_n>z_{1-\alpha}\}$,
$\gamma_n=p/n\to\gamma\in(0,\infty)$, and
\[
\mathcal K_n^{\mathrm{rig}}
=\Big\{H:\ \|H\|_{\mathrm{op}}\le3\sqrt p,\
\big|p^{-2}\Tr(H^2)-1\big|\le n^{-1/4}\Big\}.
\]
All matrices below are functions of $H$ and commute with one another,
and $\|\Sigma\|_{\mathrm{op}}\le2$ for $n$ large, uniformly on
$\mathcal K_n^{\mathrm{rig}}$.

\medskip
\noindent\emph{Step A: geometry of the bulk.}\quad
The exact identity
\begin{equation}
\Delta:=\Sigma-I_p=-A+A^3\Sigma,
\qquad\text{from }(1+u+u^2)^{-1}=1-u+\frac{u^3}{1+u+u^2},
\label{eq:supp-exact-D}
\end{equation}
gives, on $\mathcal K_n^{\mathrm{rig}}$: (i)
$\|\Delta\|_{\mathrm{op}}\le\|A\|_{\mathrm{op}}
+\|A\|_{\mathrm{op}}^3\|\Sigma\|_{\mathrm{op}}=O(n^{-1/2})$; (ii)
\begin{align*}
\Tr(\Delta^2)
&=\Tr(A^2)-2\Tr(A^4\Sigma)+\Tr(A^3\Sigma A^3\Sigma)\\
&=\Tr(A^2)\{1+O(n^{-1})\}
=\omega^2\gamma_n^2\{1+O(n^{-1/4})\},
\end{align*}
using $|\Tr(A^4\Sigma)|\le\|A\|_{\mathrm{op}}^2\|\Sigma\|_{\mathrm{op}}
\Tr(A^2)$ and
$\Tr(A^3\Sigma A^3\Sigma)\le\|A\|_{\mathrm{op}}^4
\|\Sigma\|_{\mathrm{op}}^2\Tr(A^2)$.  In particular
$\|\Delta\|_F=O(1)$.  No condition on the diagonal of $H$ is used.

\medskip
\noindent\emph{Step B: exact decomposition under a null coupling.}\quad
Write $h(x,y)=(x^\prime y)^2-\|x\|^2-\|y\|^2+p
=\Tr\{(xx^\prime-I_p)(yy^\prime-I_p)\}$, so that
$U_n=(2n^2)^{-1}\sum_{s<t}h(X_s,X_t)$ and
$F_n=(\gamma_nn^2)^{-1}\sum_{s<t}h(X_s,X_t)$.  Couple
$X_t=\Sigma^{1/2}Y_t$ with $Y_1,\ldots,Y_n$ independent $N_p(0,I_p)$,
and put $B_t=Y_tY_t^\prime-I_p$.  Then
$X_tX_t^\prime-I_p=\Sigma^{1/2}B_t\Sigma^{1/2}+\Delta$, and because
$\Sigma^{1/2}$ and $\Delta$ commute,
\[
h(X_s,X_t)
=h(Y_s,Y_t)+\Tr\{B_s(\Sigma B_t\Sigma-B_t)\}
+\Tr(B_s\Sigma\Delta)+\Tr(B_t\Sigma\Delta)+\Tr(\Delta^2).
\]
Summing over $s<t$,
\begin{equation}
F_n(X)=F_n(Y)+\frac{n-1}{2p}\Tr(\Delta^2)+R_n^{d}+R_n^{l},
\label{eq:supp-flat-decomp}
\end{equation}
where
\[
R_n^{d}=\frac1{\gamma_nn^2}\sum_{s<t}\Tr\{B_s\mathcal M(B_t)\},
\qquad
R_n^{l}=\frac{n-1}{\gamma_nn^2}\sum_{t}\Tr(B_t\Sigma\Delta),
\]
and
where $\mathcal M(C)=\Sigma C\Sigma-C$ acts on the space of symmetric
matrices with the Frobenius inner product and is self-adjoint there.
The decomposition is exact for every $n$, $p$, and $H$.

\medskip
\noindent\emph{Step C: second moments of the remainders.}\quad
For symmetric deterministic $C$ and $D$ and $Y\sim N_p(0,I_p)$,
$\Tr\{(YY^\prime-I_p)C\}=Y^\prime CY-\Tr C$, so
\begin{equation}
\mathbb{E}\,\Tr(B_tC)\Tr(B_tD)=2\Tr(CD).
\label{eq:supp-flat-isotropy}
\end{equation}
For $R_n^{l}$, the summands are independent across $t$, and
(\ref{eq:supp-flat-isotropy}) gives
\[
\mathbb{E}(R_n^{l})^2
=\frac{2(n-1)^2}{\gamma_n^2n^3}\Tr\{(\Sigma\Delta)^2\}
\le\frac{2(n-1)^2\|\Sigma\|_{\mathrm{op}}^2}{\gamma_n^2n^3}\Tr(\Delta^2)
=O(n^{-1}).
\]
For $R_n^{d}$, each summand $\xi_{st}=\Tr\{B_s\mathcal M(B_t)\}$ has
conditional mean zero given $Y_s$ and given $Y_t$, so summands with
distinct unordered pairs are uncorrelated, and $\xi_{st}$ is
uncorrelated with every summand of $R_n^{l}$.  Conditioning on $Y_t$
and applying (\ref{eq:supp-flat-isotropy}) twice, once in $Y_s$ and
once in $Y_t$ over an orthonormal basis of symmetric matrices,
\[
\mathbb{E}\xi_{st}^2
=2\,\mathbb{E}\|\mathcal M(B_t)\|_F^2
=4\|\mathcal M\|_{\mathrm{HS}}^2 .
\]
Since $\mathcal M$ is the restriction to symmetric matrices of
$\Sigma\otimes\Sigma-I_p\otimes I_p
=\Delta\otimes I_p+I_p\otimes\Delta+\Delta\otimes\Delta$,
\[
\|\mathcal M\|_{\mathrm{HS}}
\le2\sqrt p\,\|\Delta\|_F+\|\Delta\|_F^2=O(\sqrt p),
\]
hence
\[
\mathbb{E}(R_n^{d})^2
=\frac{2(n-1)}{\gamma_n^2n^3}\|\mathcal M\|_{\mathrm{HS}}^2
=O\Big(\frac p{n^2}\Big)=O(n^{-1}).
\]
All bounds are uniform over $\mathcal K_n^{\mathrm{rig}}$, so
$\sup_{H\in\mathcal K_n^{\mathrm{rig}}}
\mathbb{E}(R_n^{d}+R_n^{l})^2=O(n^{-1})$.

\medskip
\noindent\emph{Step D: conclusion.}\quad
By Step A, $(n-1)\Tr(\Delta^2)/(2p)
=\tfrac{n-1}{2n}\,\omega^2\gamma_n\{1+O(n^{-1/4})\}
\to\omega^2\gamma/2$ uniformly on $\mathcal K_n^{\mathrm{rig}}$.
The statistic $F_n(Y)$ in (\ref{eq:supp-flat-decomp}) is the null
statistic, whose law does not depend on $H$, and
$F_n(Y)\Rightarrow N(0,1)$ by Theorem~1.  Hence for every sequence
$H_n\in\mathcal K_n^{\mathrm{rig}}$,
$F_n(X)\Rightarrow N(\omega^2\gamma/2,\,1)$ under $\nu_n(H_n,\omega)$,
so $\mathbb{E}_{\nu_n(H_n,\omega)}\varphi_n^{U}\to\beta^*(\omega)$ by
continuity of $\Phi$ at $z_{1-\alpha}$; the subsequence criterion
turns this into the uniform statement of Lemma~3.
\qed

\medskip
\noindent\emph{Remark (the invariant kernel is essential).}\quad
The argument uses the full kernel $h$.  The off-diagonal known-scale
statistic $Z_n=a_n\sum\nolimits_{i<j}D_{ij}$ of the main paper is
not flat on $\mathcal K_n^{\mathrm{rig}}$; here
$a_n=n^2/\sqrt{p(p-1)n(n-1)}$.  Take
$H^{\mathrm d}=\sqrt p\,\diag(\varepsilon_1,\ldots,\varepsilon_p)$
with signs $\varepsilon_i=\pm1$; then
$\|H^{\mathrm d}\|_{\mathrm{op}}=\sqrt p$ and
$\Tr\{(H^{\mathrm d})^2\}=p^2$ exactly, so
$H^{\mathrm d}\in\mathcal K_n^{\mathrm{rig}}$ for every $n$, while
$\Sigma(H^{\mathrm d})$ is diagonal.  Write $X_{it}=s_iY_{it}$ with
$s_i^2=\Sigma_{ii}(H^{\mathrm d})=1+O(n^{-1/2})$ uniformly in $i$ and
$Y_{it}$ i.i.d.\ $N(0,1)$.  The exact scaling identity
$D_{ij}(X)=s_i^2s_j^2D_{ij}(Y)$ gives
$Z_n(X)-Z_n(Y)=a_n\sum_{i<j}(s_i^2s_j^2-1)D_{ij}(Y)$, and, because
distinct pairs are uncorrelated,
\[
\mathbb{E}\{Z_n(X)-Z_n(Y)\}^2
\le\max_{i<j}|s_i^2s_j^2-1|^2\,a_n^2\sum_{i<j}\mathbb{E}D_{ij}(Y)^2
=O(n^{-1}).
\]
Hence $Z_n(X)=Z_n(Y)+o_p(1)\Rightarrow N(0,1)$ and the
directionwise power of $1\{Z_n>z_{1-\alpha}\}$ at $H^{\mathrm d}$
converges to $\alpha$ for every $\omega>0$.  Such directions are
$Q_n$-negligible, and they are not contiguous to the null even though
their GOE mixture is, so they do not affect any integrated statement;
they show that a correlation-based benchmark would require a
diagonal-mass restriction such as $\sum_iH_{ii}^2\le p^{3/2}$, which
the invariant benchmark avoids.

\section{Minimum-distance projections}
\label{app:projections}

Two deterministic facts underlie the scaling interpretation in Section~4.2
and the deflation heuristic in the Discussion.

\emph{Diagonal scaling.}  For a population covariance $R$ with
$r=\diag(R)$, and for $D=\diag(d_1,\ldots,d_p)$ with $v_i=d_i^2\ge0$,
\[
\|I_p-DRD\|_F^2
=p-2r'v+v^\prime(R\circ R)v,
\]
a convex quadratic in $v$ because the Hadamard square $R\circ R$ is
positive semidefinite.  The constrained minimizer over $v\ge0$ satisfies
the Karush--Kuhn--Tucker conditions
\[
v\ge0,\qquad (R\circ R)v-r\ge0,\qquad v_i\{(R\circ R)v-r\}_i=0
\quad\text{for every }i,
\]
and when it is interior and $R\circ R\succ0$ it is unique and equals
$v=(R\circ R)^{-1}r$.  At a diagonal $R$ with $R_{ii}>0$ this gives
$v_i=R_{ii}^{-1}$: ordinary studentization is the exact population
minimizer.

\begin{lemma}[Best positive-semidefinite rank-$k$ approximation]
\label{lem:psd-rank-k}
Let $1\le k\le p$.  Let $S$ be symmetric with eigenvalues
$\lambda_1\ge\cdots\ge\lambda_p$ and orthonormal eigenvectors
$u_1,\ldots,u_p$, write $\lambda^+=\max\{\lambda,0\}$, and set
$\lambda_{p+1}^+=0$.  Then
\[
\min_{F\in\mathbb R^{p\times k}}\|S-FF^\prime\|_F^2
=\Tr(S^2)-\sum_{j=1}^k(\lambda_j^+)^2,
\]
and one minimizing matrix is $FF^\prime=\sum_{j=1}^k\lambda_j^+u_ju_j^\prime$.  The
minimizing matrix is unique unless $k<p$ and $\lambda_k=\lambda_{k+1}>0$,
in which case the minimizing eigenspace is not unique.
\end{lemma}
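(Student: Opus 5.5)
The plan is to reduce the problem to the unconstrained quadratic minimization over the positive-semidefinite cone of rank at most $k$, and then apply an eigenvalue majorization argument. Every matrix $FF^\prime$ with $F\in\mathbb R^{p\times k}$ is positive semidefinite with rank at most $k$, and conversely every such matrix has this form. So the problem is to minimize $\|S-M\|_F^2$ over $M\succeq0$ with $\operatorname{rank}M\le k$.

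First, expand
\[
\|S-M\|_F^2=\Tr(S^2)-2\Tr(SM)+\Tr(M^2).
\]
Write $M=\sum_{j=1}^k\mu_jv_jv_j^\prime$ with $\mu_1\ge\cdots\ge\mu_k\ge0$ and orthonormal $v_j$. By von Neumann's trace inequality (equivalently, Ky Fan or the Schur–Horn majorization), $\Tr(SM)\le\sum_{j=1}^k\mu_j\lambda_j$. Equality holds when $v_j=u_j$, since the eigenvalues of $M$ beyond the $k$th vanish. Hence
\[
\|S-M\|_F^2\ge\Tr(S^2)+\sum_{j=1}^k\big(\mu_j^2-2\mu_j\lambda_j\big).
\]
Minimizing each term separately over $\mu_j\ge0$ gives $\mu_j=\lambda_j^+$ and the value $-(\lambda_j^+)^2$. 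The resulting choice $\mu_j=\lambda_j^+$ is automatically nonincreasing, so the ordering constraint is respected. This yields the lower bound $\Tr(S^2)-\sum_{j\le k}(\lambda_j^+)^2$. The bound is attained by $FF^\prime=\sum_{j\le k}\lambda_j^+u_ju_j^\prime$, for example with $F=[\sqrt{\lambda_1^+}u_1,\ldots,\sqrt{\lambda_k^+}u_k]$.

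The main obstacle is uniqueness. I would establish it by tracing the equality cases in the two inequalities. Equality in the scalar step forces $\mu_j=\lambda_j^+$ for every $j$. It then remains to see when equality in von Neumann's inequality pins down $\sum_j\lambda_j^+v_jv_j^\prime$. Let $r$ be the number of strictly positive $\mu_j$ among the first $k$.

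\begin{itemize}
\item Summation by parts gives
\[
\sum_j\mu_j\lambda_j-\Tr(SM)=\sum_{m}(\mu_m-\mu_{m+1})\Big(\sum_{j\le m}\lambda_j-\Tr(SP_m)\Big),
\]
where $P_m$ is the projection onto $\mathrm{span}(v_1,\ldots,v_m)$. Each bracket is nonnegative by Ky Fan.
\item Hence equality forces $\mathrm{span}(v_1,\ldots,v_m)$ to be a top-$m$ eigenspace of $S$ at every jump index $m$ of the $\mu_j$, up to $m=r$.
\item A top-$m$ eigenspace is unique precisely when $\lambda_m>\lambda_{m+1}$. Within blocks of equal $\mu_j$, any rotation inside an eigenspace of $S$ leaves $M$ unchanged.
\item The only freedom that changes $M$ therefore arises at the last positive level. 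When $m=r=k<p$ and $\lambda_k=\lambda_{k+1}>0$, the top-$k$ eigenspace is not determined. Every other potential tie either falls inside a block (harmless) or involves $\lambda_k\le0$, in which case $\mu_k=0$ and the coefficient kills the ambiguity.
\end{itemize}

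Conversely, when $\lambda_k=\lambda_{k+1}>0$, exchanging $u_k$ for $u_{k+1}$ produces a different minimizer with the same value. This gives exactly the stated exception.
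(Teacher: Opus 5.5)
Your proof is correct, but it takes a different route from the paper's.

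\textbf{The paper's route.} The paper splits $S=S^+-S^-$ into its positive and negative spectral parts, which are Frobenius-orthogonal. For any positive-semidefinite $M$ it then uses $\|S-M\|_F^2=\|S^+-M\|_F^2+2\langle M,S^-\rangle+\|S^-\|_F^2\ge\|S^+-M\|_F^2+\|S^-\|_F^2$. The positivity constraint therefore serves only to discard $S^-$. After that, the Eckart--Young theorem applied to $S^+$ gives the minimum value. Its uniqueness clause (unique iff $\lambda_k^+>\lambda_{k+1}^+$ or $\lambda_k^+=0$) gives the exceptional case, since any minimizer must attain equality in both inequalities.

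\textbf{Your route.} You bound $\Tr(SM)$ by von Neumann/Ky Fan, so positivity enters only through the scalar constraints $\mu_j\ge0$ in the separable minimization of $\mu_j^2-2\mu_j\lambda_j$. You then derive uniqueness by hand from the equality cases of Ky Fan, via the representation $M=\sum_m(\mu_m-\mu_{m+1})P_m$. This correctly reduces to two checks: every jump index $m<k$ automatically has $\lambda_m>\lambda_{m+1}$, and a jump at $m=k$requires $\lambda_k>0$.

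\textbf{What each buys.} The paper's argument is shorter and makes transparent that the PSD restriction simply annihilates the negative spectrum, but it outsources uniqueness to Eckart--Young. Yours is self-contained apart from the equality characterization in Ky Fan's inequality. It shows explicitly why ties inside a block of equal $\mu_j$, or at nonpositive levels, leave $M$ unchanged. The same template also adapts to objectives that depend on $M$ only through $\Tr(SM)$ and the spectrum of $M$, where the Frobenius-orthogonality trick is not available.
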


\begin{proof}
Write $S=S^+-S^-$ with $S^+=\sum_j\lambda_j^+u_ju_j^\prime$ and
$S^-=\sum_j(-\lambda_j)^+u_ju_j^\prime$, so $\langle S^+,S^-\rangle=0$.  For any
positive-semidefinite $M=FF^\prime$ of rank at most $k$,
\[
\|S-M\|_F^2
=\|S^+-M\|_F^2+2\langle M,S^-\rangle+\|S^-\|_F^2
\ge\|S^+-M\|_F^2+\|S^-\|_F^2,
\]
since $\langle M,S^-\rangle\ge0$ for positive-semidefinite $M$ and $S^-$.
By the Eckart--Young theorem applied to the positive-semidefinite matrix
$S^+$, $\|S^+-M\|_F^2\ge\sum_{j>k}(\lambda_j^+)^2$, with equality at the
top-$k$ eigencomponents of $S^+$, which are themselves positive
semidefinite of rank at most $k$.  Adding
$\|S^-\|_F^2=\sum_j\{(-\lambda_j)^+\}^2$ and using
$\Tr(S^2)=\sum_j(\lambda_j^+)^2+\sum_j\{(-\lambda_j)^+\}^2$ gives the
displayed minimum.  Uniqueness of the Eckart--Young projection holds
exactly when $\lambda_k^+>\lambda_{k+1}^+$ or $\lambda_k^+=0$; the stated
qualification follows.
\end{proof}

Applied to the covariance discrepancy $S=\hat R_n-I_p$, removing the
largest positive eigencomponents sets the corresponding eigenvalues of $S$
to zero, equivalently resets the corresponding eigenvalues of $\hat R_n$ to
one; it does not set eigenvalues of $\hat R_n$ to zero, which would produce
a singular residual covariance farther from the identity.  These are
deterministic projection facts: they do not by themselves provide the null
distribution of any statistic computed after data-dependent deflation,
where factor estimation, rescaling, and sequential stopping all matter.

\section{The weak-factor experiment and the BBP boundary}
\label{app:weak-factor}

This section proves Theorem~4 of the main paper.  Throughout,
$\gamma_n=p/n\to\gamma\in(0,\infty)$, and $\vartheta=\vartheta_n>0$ is the
factor strength, and $\varepsilon=\vartheta_n/p$.  By the whitening
isomorphism (Lemma~1) it suffices to argue at the identity null
covariance: setting $Y_t=\Sigma_0^{-1/2}X_t$ carries the experiment onto
its whitened version, in which
\[
H_0:\ Y_t\stackrel{\mathrm{i.i.d.}}{\sim}N_p\big(0,(1+\varepsilon)I_p\big)
\quad\text{against}\quad
H_1:\ Y_t\mid f\stackrel{\mathrm{i.i.d.}}{\sim}N_p(0,I_p+ff^\prime),
\ \ f\sim N_p\big(0,\tfrac{\vartheta_n}{p}I_p\big);
\]
under $H_1$ the factor $f$ is drawn first, independently of the Gaussian
innovations that generate the conditional sample, so the sample law
depends on $f$ and the observed sample is a mixture over $f$.  The
Jacobian cancels from every likelihood ratio, and each statistic below is
exactly pivotal (translate to a general known $\Sigma_0$ by
$x'y\mapsto x^\prime\Sigma_0^{-1}y$).  Write
\[
\kappa=\frac{1+2\varepsilon}{1+\varepsilon},\qquad
\mu=\frac{\varepsilon}{1+2\varepsilon},\qquad
\sigma_n=\frac{\vartheta_n^2}{2\gamma_n},
\]
$\hat S_Y=n^{-1}\sum_tY_tY_t^\prime$, $T_n=\sum_t\|Y_t\|^2$, and the standardized trace
statistic
\[
\zeta_n=\frac{T_n-np(1+\varepsilon)}{\sqrt{2np}(1+\varepsilon)} .
\]
Let $P_0$ be the law of $Y_1,\ldots,Y_n$ under $H_0$, $P_f$ the conditional law
under $H_1$ given $f$, $\pi=N_p(0,(\vartheta_n/p)I_p)$,
$\ell(f)=\mathrm dP_f/\mathrm dP_0$, $\mathrm{LR}_n=\int\ell(f)\pi(\mathrm df)$,
and $G_n$ the mixture law of the sample under $H_1$.  The corrected Frobenius
statistic evaluated at the null covariance $(1+\varepsilon)I_p$ is
$V_n=2\tilde U_n/\gamma_n$, where
$\tilde U_n=(2n^2)^{-1}\sum_{s<t}h_\varepsilon(Y_s,Y_t)$ with
\[
h_\varepsilon(y,\tilde y)
=\frac{(y^\prime\tilde y)^2}{(1+\varepsilon)^2}
-\frac{\|y\|^2+\|\tilde y\|^2}{1+\varepsilon}+p ;
\]
by Lemma~1 (with $\Sigma_0=(1+\varepsilon)I_p$) and Theorem~1(i),
$V_n\Rightarrow N(0,1)$ under $P_0$.  We take $0<\vartheta_n\le\vartheta_0$ for a
small absolute constant $\vartheta_0\le\tfrac18$, $p\ge2$, $n\ge2$; constants $C$
depend only on $\gamma$-bounds and $\vartheta_0$ and may change from line to line.
Everything in Sections~S.8.1--S.8.4 is exact and self-contained; the
fixed-strength picture at the BBP boundary is set out at the end
(Section~S.8.5) as a comparison with imported random-matrix limits, following
\citet{OnatskiMoreiraHallin:2013}.

\subsection{Exact structure of the likelihood ratio}

Conditional on $f$, both laws are centered Gaussian, so with
$(I_p+ff^\prime)^{-1}=I_p-ff^\prime/(1+\|f\|^2)$ and $\det(I_p+ff^\prime)=1+\|f\|^2$,
\begin{equation}
\log\ell(f)
=\underbrace{\frac{np}2\log(1+\varepsilon)-\frac{\varepsilon}{2(1+\varepsilon)}T_n}_{A_n\ \text{(scale term)}}
-\frac n2\log(1+\|f\|^2)+\frac n2\frac{f^\prime\hat S_Yf}{1+\|f\|^2},
\label{eq:s8-condLR}
\end{equation}
using $\sum_tY_t'ff'Y_t=nf^\prime\hat S_Yf$.

\begin{lemma}[Exact orthogonality of the trace channel]
\label{lem:s8-orth}
For every $n,p,\vartheta_n$: $\mathbb{E}_{P_0}\ell(f)=1$ for each fixed $f$;
$\mathbb{E}_{G_n}\zeta_n=0$; and
$\operatorname{cov}_{P_0}(\mathrm{LR}_n,\zeta_n)=0$.  These are exact identities.
\end{lemma}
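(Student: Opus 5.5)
The three identities are direct Gaussian moment computations. The plan is to work conditionally on $f$ and then average over $\pi$, using Fubini throughout. Fubini is justified because $\ell(f)\ge0$ and $\zeta_n$ has finite moments under every Gaussian law involved.

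\emph{First identity.} For fixed $f$, $\ell(f)=\mathrm dP_f/\mathrm dP_0$ is the density ratio of two centered Gaussian product laws. Both covariances, $I_p+ff^\prime$ and $(1+\varepsilon)I_p$, are positive definite, so $P_f\ll P_0$ and $\mathbb{E}_{P_0}\ell(f)=P_f(\text{whole space})=1$. As a consistency check, one can also verify this from (\ref{eq:s8-condLR}) with the Gaussian integral, but absolute continuity is enough.

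\emph{Second identity.} Under $P_f$ each $Y_t$ has covariance $I_p+ff^\prime$, so
\[
\mathbb{E}_{P_f}T_n=n\Tr(I_p+ff^\prime)=n(p+\|f\|^2).
\]
Averaging over $f\sim N_p(0,(\vartheta_n/p)I_p)$ gives $\mathbb{E}\|f\|^2=\vartheta_n=p\varepsilon$. Hence
\[
\mathbb{E}_{G_n}T_n=np+np\varepsilon=np(1+\varepsilon),
\]
which is exactly the centering in $\zeta_n$, so $\mathbb{E}_{G_n}\zeta_n=0$. This step is the trace matching written out.

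\emph{Third identity.} Since $\mathbb{E}_{P_0}\mathrm{LR}_n=1$ by the first identity and Fubini, and $\mathbb{E}_{P_0}\zeta_n=0$ because $\mathbb{E}_{P_0}T_n=np(1+\varepsilon)$, the covariance reduces to a single expectation:
\[
\operatorname{cov}_{P_0}(\mathrm{LR}_n,\zeta_n)=\mathbb{E}_{P_0}(\mathrm{LR}_n\zeta_n)=\mathbb{E}_{G_n}\zeta_n=0,
\]
by the change of measure $\mathrm dG_n=\mathrm{LR}_n\,\mathrm dP_0$ and the second identity. The only point needing care is integrability of $\mathrm{LR}_n\zeta_n$ under $P_0$, since the text warns that $\mathrm{LR}_n\notin L^2(P_0)$, so Cauchy--Schwarz is not available. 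This causes no difficulty: write $\mathbb{E}_{P_0}\mathrm{LR}_n|\zeta_n|=\mathbb{E}_{G_n}|\zeta_n|$, which is finite because
\[
\mathbb{E}_{G_n}T_n^2=\int\mathbb{E}_{P_f}T_n^2\,\pi(\mathrm df)
\]
is a polynomial in Gaussian moments of $f$, namely $\mathbb{E}\|f\|^4<\infty$. With integrability in hand, Fubini applies to the signed expectation as well.

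I expect no genuine obstacle. The one subtle step is this integrability check, which replaces the unavailable $L^2$ argument. All three statements hold for every $n$, $p$ and $\vartheta_n$, with no asymptotics involved.
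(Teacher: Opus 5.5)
Your proof is correct and follows the paper's argument: absolute continuity gives $\mathbb{E}_{P_0}\ell(f)=1$, the exact match $\mathbb{E}_\pi\|f\|^2=\vartheta_n=p\varepsilon$ gives $\mathbb{E}_{G_n}\zeta_n=0$, and the change of measure $\mathbb{E}_{P_0}[\mathrm{LR}_n\zeta_n]=\mathbb{E}_{G_n}\zeta_n$ gives the covariance identity. Your explicit check that $\mathbb{E}_{G_n}|\zeta_n|<\infty$ justifies Fubini for the signed integrand without needing any $L^2$ bound on $\mathrm{LR}_n$. That is a worthwhile detail the paper leaves implicit.
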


\begin{proof}
$\ell(f)$ is a ratio of Gaussian densities of the same sample, so
$\mathbb{E}_{P_0}\ell(f)=\int\mathrm dP_f=1$.  Under $P_f$,
$\mathbb{E}\|Y_t\|^2=\Tr(I_p+ff^\prime)=p+\|f\|^2$, hence
$\mathbb{E}_{P_f}\zeta_n=n(\|f\|^2-\vartheta_n)/\{\sqrt{2np}(1+\varepsilon)\}$;
since $\mathbb{E}_\pi\|f\|^2=p\cdot(\vartheta_n/p)=\vartheta_n$ exactly, Fubini
gives $\mathbb{E}_{G_n}\zeta_n=\mathbb{E}_\pi\mathbb{E}_{P_f}\zeta_n=0$.  Finally
$\mathbb{E}_{P_0}[\mathrm{LR}_n\zeta_n]
=\mathbb{E}_\pi\mathbb{E}_{P_0}[\ell(f)\zeta_n]
=\mathbb{E}_\pi\mathbb{E}_{P_f}\zeta_n=0$ and $\mathbb{E}_{P_0}\zeta_n=0$.
\end{proof}

\begin{lemma}[Exact expansion of the scale term]
\label{lem:s8-scale}
For every realization,
$A_n=-\vartheta_n(2\gamma_n)^{-1/2}\zeta_n-\vartheta_n^2/(4\gamma_n)+r_n$ with
$r_n$ deterministic and $|r_n|\le\vartheta_n^3/(6p\gamma_n)$.
\end{lemma}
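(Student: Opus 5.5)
The plan is to write $A_n$ as an affine function of $T_n$, substitute the definition of $\zeta_n$, and Taylor-expand the deterministic part in $\varepsilon=\vartheta_n/p$.

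From the definition, $T_n=np(1+\varepsilon)+\sqrt{2np}(1+\varepsilon)\zeta_n$. Substituting into $A_n$ gives
\[
A_n=\frac{np}{2}\log(1+\varepsilon)-\frac{\varepsilon}{2}np-\frac{\varepsilon}{2}\sqrt{2np}\,\zeta_n .
\]
The $\zeta_n$ coefficient is $-\tfrac{\varepsilon}{2}\sqrt{2np}=-\tfrac{\vartheta_n}{2p}\sqrt{2np}=-\vartheta_n\sqrt{n/(2p)}=-\vartheta_n(2\gamma_n)^{-1/2}$. This is exact, with no remainder, because the factor $(1+\varepsilon)$ in the normalization of $\zeta_n$ cancels the denominator $1+\varepsilon$ in the scale term.

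The remaining term $\tfrac{np}{2}\{\log(1+\varepsilon)-\varepsilon\}$ is deterministic. Taylor's theorem with Lagrange remainder gives $\log(1+\varepsilon)-\varepsilon=-\varepsilon^2/2+\varepsilon^3/\{3(1+\xi)^3\}$ for some $\xi\in(0,\varepsilon)$. Hence
\[
\frac{np}{2}\Bigl(-\frac{\varepsilon^2}{2}\Bigr)=-\frac{n\vartheta_n^2}{4p}=-\frac{\vartheta_n^2}{4\gamma_n}.
\]
For the cubic term, $0<\varepsilon^3/\{3(1+\xi)^3\}\le\varepsilon^3/3$, so $r_n$ is deterministic and satisfies
\[
0\le r_n\le\frac{np}{2}\cdot\frac{\varepsilon^3}{3}=\frac{n\vartheta_n^3}{6p^2}=\frac{\vartheta_n^3}{6p\gamma_n}.
\]

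There is no real obstacle. The only points needing care are confirming that the $(1+\varepsilon)$ factors cancel exactly in the linear coefficient, and that the sign and size of the third-order Lagrange remainder give the stated constant $1/6$. The bound holds for every $\varepsilon>0$, so the restriction $\vartheta_n\le\vartheta_0$ is not needed here.
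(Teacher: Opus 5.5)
Your proposal is correct and follows the paper's proof essentially verbatim: the paper likewise substitutes $T_n=np(1+\varepsilon)+\sqrt{2np}(1+\varepsilon)\zeta_n$ to obtain $A_n=\tfrac{np}{2}[\log(1+\varepsilon)-\varepsilon]-\tfrac{\varepsilon}{2}\sqrt{2np}\,\zeta_n$, and then bounds the cubic remainder $0\le r\le\varepsilon^3/3$ to get $|r_n|\le\vartheta_n^3/(6p\gamma_n)$. Your remark that this bound holds for every $\varepsilon>0$, so the restriction $\vartheta_n\le\vartheta_0$ is not used here, is also correct.
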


\begin{proof}
Substituting $T_n=np(1+\varepsilon)+\sqrt{2np}(1+\varepsilon)\zeta_n$ into
(\ref{eq:s8-condLR}) gives
$A_n=\tfrac{np}2[\log(1+\varepsilon)-\varepsilon]-\tfrac\varepsilon2\sqrt{2np}\zeta_n$.
Now $\log(1+\varepsilon)-\varepsilon=-\varepsilon^2/2+r$, $0\le r\le\varepsilon^3/3$;
$\tfrac{np}2\cdot\tfrac{\varepsilon^2}2=n\vartheta_n^2/(4p)=\vartheta_n^2/(4\gamma_n)$,
$\tfrac\varepsilon2\sqrt{2np}=\vartheta_n\sqrt{n/(2p)}=\vartheta_n/\sqrt{2\gamma_n}$,
and $|npr/2|\le np\varepsilon^3/6=\vartheta_n^3/(6p\gamma_n)$.
\end{proof}

Lemmas~\ref{lem:s8-orth}--\ref{lem:s8-scale} identify the mechanism.
Lemma~\ref{lem:s8-orth} is an exact statement about the likelihood ratio
$\mathrm{LR}_n$ itself: it is uncorrelated with $\zeta_n$ under $P_0$,
which is the finite-sample form of the channel orthogonality in Lemma~2
of the main paper.  The additive decomposition of $\log\ell(f)$ is a
different object, and $\operatorname{cov}(\mathrm{LR}_n,\zeta_n)=0$ does
not by itself give $\operatorname{cov}(\log\mathrm{LR}_n,\zeta_n)=0$.
What the lemmas show is that the scale term loads on $\zeta_n$ with
coefficient $-\vartheta_n/\sqrt{2\gamma_n}$ and that the $\zeta_n$
loading of the mixed factor term is $+\vartheta_n/\sqrt{2\gamma_n}$ to
first order in the small-strength expansion, so the linear $\zeta_n$
contribution to the log likelihood cancels at first order; the exact
integrations in the second-moment proof below do not rely on this
first-order reading.  In the unmatched variant,
with null $(1+\vartheta_n)\Sigma_0$, the same computation gives
$\mathbb{E}_{G_n}\zeta_n\asymp-\vartheta_n\sqrt{np}$, so the trace channel would
dominate and force the rate $\vartheta_n\asymp(np)^{-1/2}$; matching removes this
channel and, with it, the rate constraint.

\subsection{Second-moment machinery (exact and self-contained)}

\begin{lemma}[Gaussian ratio integral]
\label{lem:s8-ratio}
Let $A,B,C\succ0$ and $\varphi_\Sigma$ the $N_p(0,\Sigma)$ density.  If
$M:=A^{-1}+B^{-1}-C^{-1}\succ0$ then
\[
\int\frac{\varphi_A(x)\varphi_B(x)}{\varphi_C(x)}\mathrm dx
=\frac{|C|^{1/2}}{|A|^{1/2}|B|^{1/2}|M|^{1/2}},
\]
and the integral is $+\infty$ if $M$ has a nonpositive eigenvalue.
\end{lemma}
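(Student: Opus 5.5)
The plan is to write all three densities explicitly, $\varphi_\Sigma(x)=(2\pi)^{-p/2}|\Sigma|^{-1/2}\exp(-x^\prime\Sigma^{-1}x/2)$, and collect the integrand into a single Gaussian-type expression. The ratio $\varphi_A\varphi_B/\varphi_C$ equals
\[
(2\pi)^{-p/2}\,\frac{|C|^{1/2}}{|A|^{1/2}|B|^{1/2}}\,\exp\{-x^\prime Mx/2\},
\qquad M=A^{-1}+B^{-1}-C^{-1}.
\]
The normalizing powers of $2\pi$ combine as $(2\pi)^{-p/2-p/2+p/2}=(2\pi)^{-p/2}$. The determinant prefactor does not depend on $x$, so everything reduces to the single integral $\int\exp\{-x^\prime Mx/2\}\,\mathrm dx$ for the symmetric matrix $M$.

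When $M\succ0$, the standard Gaussian integral gives $\int\exp\{-x^\prime Mx/2\}\,\mathrm dx=(2\pi)^{p/2}|M|^{-1/2}$. The powers of $2\pi$ then cancel exactly and we obtain the displayed formula. This step can be checked by diagonalizing $M=O^\prime\Lambda O$ and changing variables $y=Ox$, whose Jacobian is one. The integral then factorizes into $p$ one-dimensional Gaussian integrals $\int e^{-\lambda_jy_j^2/2}\,\mathrm dy_j=\sqrt{2\pi/\lambda_j}$.

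For the divergence claim, suppose $M$ has an eigenvalue $\lambda_1\le0$. In the same rotated coordinates the integrand factorizes as a product of nonnegative functions. The factor in $y_1$ is $e^{-\lambda_1y_1^2/2}\ge1$, so its integral over $\mathbb R$ is $+\infty$. Each remaining factor is a strictly positive function of its own coordinate, so its integral is strictly positive, though possibly infinite. By Tonelli's theorem, which applies because the integrand is nonnegative, the full integral is $+\infty$. No case requires cancellation of infinities, since all terms are nonnegative.

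There is no genuine obstacle here. The only points needing care are the exact bookkeeping of the $2\pi$ factors and the appeal to Tonelli's theorem, rather than Fubini's theorem, in the nonpositive case.
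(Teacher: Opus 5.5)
Your proposal is correct and follows the paper's own argument: the paper likewise writes the integrand as $(2\pi)^{-p/2}|C|^{1/2}|A|^{-1/2}|B|^{-1/2}\exp\{-\tfrac12x^\prime Mx\}$ and integrates. Your diagonalization and the Tonelli argument for the case where $M$ has a nonpositive eigenvalue spell out what the paper leaves implicit.
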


\begin{proof}
The integrand equals
$(2\pi)^{-p/2}|C|^{1/2}|A|^{-1/2}|B|^{-1/2}\exp\{-\tfrac12x'Mx\}$; integrate.
\end{proof}

\begin{lemma}[Exact pair identity]
\label{lem:s8-pair}
Let $a=\|f\|^2$, $b=\|\tilde f\|^2$, $c=f^\prime\tilde f$, and set
$\mathfrak a=\kappa(1+\mu a)$, $\mathfrak b=\kappa(1+\mu b)$.  If
$a,b\le\tfrac14$ then, per observation,
\[
J(f,\tilde f)
:=\int\frac{\varphi_{I+ff^\prime}\varphi_{I+\tilde f\tilde f^\prime}}{\varphi_{(1+\varepsilon)I}}\mathrm dx
=(1+\varepsilon)^{p/2}\kappa^{-(p-2)/2}(\mathfrak a\mathfrak b-c^2)^{-1/2},
\]
and consequently $\mathbb{E}_{P_0}[\ell(f)\ell(\tilde f)]=J^n$.
\end{lemma}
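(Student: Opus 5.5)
The plan is to apply Lemma~\ref{lem:s8-ratio} with $A=I_p+ff^\prime$, $B=I_p+\tilde f\tilde f^\prime$ and $C=(1+\varepsilon)I_p$. After that the whole task is to compute $|A|$, $|B|$, $|C|$ and $|M|$ in closed form and to check that $M\succ0$. The $n$-sample statement should then follow from independence across observations.

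\emph{Step 1: the matrix $M$ and its positivity.} Sherman--Morrison gives
\[
M=2I_p-\frac{ff^\prime}{1+a}-\frac{\tilde f\tilde f^\prime}{1+b}-\frac{I_p}{1+\varepsilon}
=\kappa I_p-\frac{ff^\prime}{1+a}-\frac{\tilde f\tilde f^\prime}{1+b},
\]
because $2-(1+\varepsilon)^{-1}=\kappa$. Each rank-one term has operator norm at most $a/(1+a)\le1/5$ (resp.\ $b/(1+b)\le1/5$) when $a,b\le\tfrac14$. Since $\kappa\ge1$, this gives $\lambda_{\min}(M)\ge1-2/5>0$, so the finite branch of Lemma~\ref{lem:s8-ratio} applies. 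The determinants are immediate: $|A|=1+a$, $|B|=1+b$ and $|C|=(1+\varepsilon)^p$.

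\emph{Step 2: $\det M$ via a $2\times2$ reduction.} Write $M=\kappa I_p-UDU^\prime$ with $U=[f,\tilde f]$ and $D=\diag\{(1+a)^{-1},(1+b)^{-1}\}$. The Gram matrix is $U^\prime U=\bigl(\begin{smallmatrix}a&c\\c&b\end{smallmatrix}\bigr)$. By Sylvester's determinant identity,
\[
|M|=\kappa^p\det(I_2-\kappa^{-1}DU^\prime U)
=\frac{\kappa^{p-2}\bigl[\{\kappa(1+a)-a\}\{\kappa(1+b)-b\}-c^2\bigr]}{(1+a)(1+b)}.
\]
Next, $\kappa(1+a)-a=\kappa\{1+a(\kappa-1)/\kappa\}$, and $(\kappa-1)/\kappa=\varepsilon/(1+2\varepsilon)=\mu$, so this factor equals $\mathfrak a$; likewise the second factor equals $\mathfrak b$. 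Hence $|A||B||M|=\kappa^{p-2}(\mathfrak a\mathfrak b-c^2)$, and Lemma~\ref{lem:s8-ratio} yields the displayed formula for $J$. The bound from Step~1 guarantees $\mathfrak a\mathfrak b-c^2>0$, since this quantity is a positive multiple of $|M|$.

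\emph{Step 3: the $n$-sample product.} Since $\ell(f)\ell(\tilde f)=\prod_t\varphi_{A}(Y_t)\varphi_B(Y_t)/\varphi_C(Y_t)^2$ and the $Y_t$ are i.i.d.\ with density $\varphi_C$ under $P_0$, Fubini gives $\mathbb{E}_{P_0}[\ell(f)\ell(\tilde f)]=\prod_t\int\varphi_A\varphi_B/\varphi_C=J^n$.

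I expect no real obstacle. The only points needing care are the algebraic identification of $\kappa(1+a)-a$ with $\mathfrak a$ and the positivity check that places us in the finite branch of Lemma~\ref{lem:s8-ratio}.
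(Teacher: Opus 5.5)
Your proposal is correct and follows the paper's proof essentially step for step. You use the same application of Lemma~\ref{lem:s8-ratio} with $M=\kappa I_p-ff^\prime/(1+a)-\tilde f\tilde f^\prime/(1+b)$, the same positivity bound ($2/5<1\le\kappa$), and the same reduction of $|M|$ to a $2\times2$ determinant followed by $\kappa(1+a)-a=\kappa(1+\mu a)$ via $\kappa-1=\kappa\mu$. The only cosmetic difference is that you reduce the determinant with Sylvester's identity, while the paper uses the nonzero spectrum of the rank-two perturbation.
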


\begin{proof}
Apply Lemma~\ref{lem:s8-ratio} with $A=I+ff^\prime$, $B=I+\tilde f\tilde f^\prime$,
$C=(1+\varepsilon)I$.  Since $2-1/(1+\varepsilon)=\kappa$,
$M=\kappa I-\alpha ff^\prime-\beta\tilde f\tilde f^\prime$ with $\alpha=1/(1+a)$,
$\beta=1/(1+b)$.  The nonzero spectrum of $\alpha ff^\prime+\beta\tilde f\tilde f^\prime$
equals that of $\bigl(\begin{smallmatrix}\alpha a&\sqrt{\alpha\beta}c\\
\sqrt{\alpha\beta}c&\beta b\end{smallmatrix}\bigr)$, so
\[
|M|=\kappa^{p-2}\{(\kappa-\alpha a)(\kappa-\beta b)-\alpha\beta c^2\}
=\kappa^{p-2}\frac{[(1+a)\kappa-a][(1+b)\kappa-b]-c^2}{(1+a)(1+b)} .
\]
Now $(1+a)\kappa-a=\kappa+a(\kappa-1)=\kappa(1+\mu a)=\mathfrak a$ (using
$\kappa-1=\kappa\mu$), and likewise $(1+b)\kappa-b=\mathfrak b$, so
$|M|=\kappa^{p-2}(\mathfrak a\mathfrak b-c^2)/\{(1+a)(1+b)\}$.  On $a,b\le\tfrac14$,
$\alpha a+\beta b\le\tfrac25<1\le\kappa$, so $M\succ0$.  Assembling
Lemma~\ref{lem:s8-ratio} with $|A|=1+a$, $|B|=1+b$, $|C|=(1+\varepsilon)^p$, the
factors $(1+a)(1+b)$ cancel and the displayed formula follows.  The $n$
observations are i.i.d., so $\mathbb{E}_{P_0}[\ell(f)\ell(\tilde f)]=J^n$.
\end{proof}

\begin{lemma}[Radial truncation]
\label{lem:s8-trunc}
Let $\mathcal F=\{\|f\|^2\le\tfrac14\}$, $\theta_p=\pi(\mathcal F^c)$, and
$\widetilde{\mathrm{LR}}_n=\int_{\mathcal F}\ell(f)\pi(\mathrm df)+\theta_p$,
which keeps $\mathbb{E}_{P_0}\widetilde{\mathrm{LR}}_n=1$.  Then for
$\vartheta_n\le\tfrac18$,
$\mathbb{E}_{P_0}|\mathrm{LR}_n-\widetilde{\mathrm{LR}}_n|\le2\theta_p$ and
$\theta_p\le\exp\{-\tfrac p8(\tfrac1{4\vartheta_n}-1)\}\le e^{-p/8}$.
\end{lemma}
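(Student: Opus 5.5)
The $L^1$ bound and the tail bound are separate facts, and I would prove them in that order.

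\emph{The $L^1$ bound.} Write $\mathrm{LR}_n=\int_{\mathcal F}\ell\,\mathrm d\pi+\int_{\mathcal F^c}\ell\,\mathrm d\pi$. Then
\[
\mathrm{LR}_n-\widetilde{\mathrm{LR}}_n=\int_{\mathcal F^c}\ell(f)\pi(\mathrm df)-\theta_p .
\]
The triangle inequality, nonnegativity of $\ell$, Fubini, and $\mathbb{E}_{P_0}\ell(f)=1$ for each fixed $f$ (Lemma~\ref{lem:s8-orth}) give
\[
\mathbb{E}_{P_0}|\mathrm{LR}_n-\widetilde{\mathrm{LR}}_n|\le\int_{\mathcal F^c}\mathbb{E}_{P_0}\ell(f)\,\pi(\mathrm df)+\theta_p=2\theta_p .
\]
The same Fubini step shows that $\mathbb{E}_{P_0}\widetilde{\mathrm{LR}}_n=\pi(\mathcal F)+\theta_p=1$. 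This part is routine.

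\emph{The tail bound.} Under $\pi$, $\|f\|^2=(\vartheta_n/p)V$ with $V\sim\chi^2_p$. Hence $\theta_p=\mathbb{P}\{V\ge p/(4\vartheta_n)\}$. I would use the Chernoff bound with $\lambda\in(0,1/2)$:
\[
\mathbb{P}(V\ge x)\le(1-2\lambda)^{-p/2}e^{-\lambda x},\qquad x=\frac{p}{4\vartheta_n}.
\]
Rather than optimizing $\lambda$ exactly, I take $\lambda=1/4$. This gives
\[
\theta_p\le 2^{p/2}\exp\{-p/(16\vartheta_n)\}=\exp\Big\{-\frac p8\Big(\frac1{2\vartheta_n}-4\log2\Big)\Big\}.
\]
Since $4\log 2\approx2.77$, this does not reproduce the displayed exponent verbatim.

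To obtain exactly $\exp\{-\tfrac p8(\tfrac1{4\vartheta_n}-1)\}$, I would instead use the cruder inequality $(1-2\lambda)^{-1/2}\le e^{\lambda}$, valid for $\lambda\le1/4$. I check this through $-\tfrac12\log(1-2\lambda)\le\lambda$: at $\lambda=1/4$ the left side is $\tfrac12\log2\approx0.347$, and $0.347>0.25$, so the inequality actually fails there. I would therefore take $\lambda=1/8$ instead. The bound $-\tfrac12\log(3/4)\approx0.144$ against $1/8=0.125$ also fails marginally, so this route needs a short calculus check of the constant.

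The cleanest alternative is the Laurent--Massart inequality,
\[
\mathbb{P}\big(V\ge p+2\sqrt{pu}+2u\big)\le e^{-u},
\]
or equivalently the Chernoff bound in the form $\mathbb{P}(V\ge p(1+s))\le\exp\{-\tfrac p2(s-\log(1+s))\}$. With $1+s=1/(4\vartheta_n)\ge2$, I would verify the elementary inequality
\[
s-\log(1+s)\ge\frac14(1+s)-\frac14\qquad\text{for } s\ge1 ,
\]
which is equivalent to $\tfrac34 s\ge\log(1+s)$ and holds for $s\ge1$. This yields
\[
\theta_p\le\exp\Big\{-\frac p8\Big(\frac1{4\vartheta_n}-1\Big)\Big\}.
\]

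The last inequality, $\theta_p\le e^{-p/8}$, is immediate: $\vartheta_n\le1/8$ gives $1/(4\vartheta_n)-1\ge1$.

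The only delicate step is the choice of constants in the chi-square tail, and the Chernoff form resolves it via the check $\tfrac34s\ge\log(1+s)$ for $s\ge1$.
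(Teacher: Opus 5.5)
Your proof is correct and follows the paper's own argument: the $L^1$ bound comes from Fubini and $\mathbb{E}_{P_0}\ell(f)=1$, and the tail bound is the optimized chi-square Chernoff bound $\mathbb{P}(\chi^2_p\ge py)\le(ye^{1-y})^{p/2}$ at $y=1+s=1/(4\vartheta_n)\ge2$, combined with $y-1-\log y\ge(y-1)/4$, which is exactly your check $\tfrac34 s\ge\log(1+s)$. You can delete the detours: $(1-2\lambda)^{-1/2}\le e^{\lambda}$ is false for every $\lambda\in(0,\tfrac12)$, whereas your $\lambda=\tfrac14$ bound already implies the stated exponent, because $\tfrac1{2\vartheta_n}-4\log2\ge\tfrac1{4\vartheta_n}-1$ whenever $\tfrac1{4\vartheta_n}\ge2>4\log2-1$.
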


\begin{proof}
$\mathbb{E}_{P_0}|\mathrm{LR}_n-\widetilde{\mathrm{LR}}_n|
\le\mathbb{E}_\pi[\mathbb{E}_{P_0}\ell(f);\mathcal F^c]+\theta_p=2\theta_p$ by
Lemma~\ref{lem:s8-orth}.  With $\|f\|^2=(\vartheta_n/p)\chi^2_p$,
$\theta_p=\mathbb{P}(\chi^2_p\ge p/(4\vartheta_n))$; the Chernoff bound
$\mathbb{P}(\chi^2_p\ge py)\le(ye^{1-y})^{p/2}$ for $y\ge1$, at
$y=1/(4\vartheta_n)\ge2$, gives
$\theta_p\le\exp\{-\tfrac p8(\tfrac1{4\vartheta_n}-1)\}$, using
$y-1-\log y\ge(y-1)/4$ for $y\ge2$.
\end{proof}

The untruncated second moment $\mathbb{E}_{P_0}\mathrm{LR}_n^2$ is $+\infty$ for
every $n$ (on the $\pi\otimes\pi$-positive event $\{\mathfrak a\mathfrak b\le c^2\}$
the integral of Lemma~\ref{lem:s8-ratio} diverges); the radial truncation
of Lemma~\ref{lem:s8-ratio} removes this tail at a cost of only
$e^{-p/8}$ in $L^1(P_0)$.

\subsection{Moments of the corrected Frobenius statistic}

\begin{proposition}[Exact moments]
\label{prop:s8-moments}
For every $n,p,\varepsilon$:
(i) $\mathbb{E}_{P_0}V_n=0$ and
$\operatorname{var}_{P_0}(V_n)=\dfrac{(n-1)(p+1)}{np}\to1$;
(ii) for every fixed $f$, with $\Omega=I_p+ff^\prime$,
\[
\mathbb{E}_{P_f}V_n
=\frac{n-1}{2n\gamma_n}\Tr\Big[\Big(\frac{\Omega}{1+\varepsilon}-I_p\Big)^2\Big]
=\frac{n-1}{2n\gamma_n(1+\varepsilon)^2}\big[\|f\|^4-2\varepsilon\|f\|^2+p\varepsilon^2\big];
\]
(iii) $\mathbb{E}_{G_n}V_n=\sigma_n(1+O(n^{-1}+p^{-1}+\varepsilon))$, with
$\sigma_n=\vartheta_n^2/(2\gamma_n)$.
\end{proposition}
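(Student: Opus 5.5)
The plan is to write the kernel as a Frobenius inner product and exploit degeneracy, exactly as in the proofs of Lemma~2 and of Lemma~3 (Step~B). Put $Z_t=Y_t/\sqrt{1+\varepsilon}$ and $B_t^\varepsilon=Z_tZ_t^\prime-I_p$. Then
\[
h_\varepsilon(Y_s,Y_t)=\Tr(B_s^\varepsilon B_t^\varepsilon),
\qquad
V_n=\frac{1}{\gamma_nn^2}\sum_{s<t}\Tr(B_s^\varepsilon B_t^\varepsilon).
\]
All three parts then reduce to first and second moments of independent matrices $B_t^\varepsilon$.

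\emph{Part (ii).} Fix $f$. Under $P_f$ the $B_t^\varepsilon$ are i.i.d.\ with mean
\[
\Delta_f=\Omega/(1+\varepsilon)-I_p=(ff^\prime-\varepsilon I_p)/(1+\varepsilon).
\]
Independence of $s\ne t$ gives $\mathbb{E}_{P_f}h_\varepsilon=\Tr(\Delta_f^2)$. Multiplying by the $n(n-1)/2$ pairs and by $(\gamma_nn^2)^{-1}$ yields the factor $(n-1)/(2n\gamma_n)$. Expanding $\Tr\{(ff^\prime-\varepsilon I_p)^2\}=\|f\|^4-2\varepsilon\|f\|^2+p\varepsilon^2$ then gives the second expression.

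\emph{Part (i).} Under $P_0$ we have $Z_t\sim N_p(0,I_p)$, so $\Delta=0$ and $\mathbb{E}_{P_0}V_n=0$. The kernel is degenerate: $\mathbb{E}\{\Tr(B_s^\varepsilon B_t^\varepsilon)\mid Z_s\}=0$. Hence summands indexed by distinct unordered pairs are uncorrelated, and
\[
\operatorname{var}(V_n)=\frac{n(n-1)}{2\gamma_n^2n^4}\,\mathbb{E}\{\Tr(B_s^\varepsilon B_t^\varepsilon)^2\}.
\]
Conditioning on $Z_t$ and applying the isotropy identity $\mathbb{E}\Tr(B C)\Tr(B D)=2\Tr(CD)$ from Step~C of the flatness proof with $C=D=B_t^\varepsilon$, we get
\[
\mathbb{E}\{\Tr(B_s^\varepsilon B_t^\varepsilon)^2\}=2\,\mathbb{E}\Tr\{(B_t^\varepsilon)^2\}.
\]
Using $\mathbb{E}\|Z\|^4=p^2+2p$, this equals $2\{\mathbb{E}\|Z\|^4-2\,\mathbb{E}\|Z\|^2+p\}=2p(p+1)$. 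Substituting and using $\gamma_n=p/n$ gives $(n-1)(p+1)/(np)$, which tends to $1$.

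\emph{Part (iii).} Integrate (ii) over $\pi$ by Fubini. This is legitimate because $V_n$ is a polynomial in Gaussian data and $\mathbb{E}_{P_f}|V_n|$ is bounded by a polynomial in $\|f\|^2$, which is $\pi$-integrable. Since $\|f\|^2=(\vartheta_n/p)\chi^2_p$, we have $\mathbb{E}_\pi\|f\|^2=\vartheta_n$ and $\mathbb{E}_\pi\|f\|^4=\vartheta_n^2(1+2/p)$. With $p\varepsilon=\vartheta_n$ the bracket averages to
\[
\vartheta_n^2(1+2/p)-2\vartheta_n^2/p+\vartheta_n^2/p=\vartheta_n^2(1+1/p).
\]
Therefore
\[
\mathbb{E}_{G_n}V_n=\sigma_n\,(1-n^{-1})(1+p^{-1})(1+\varepsilon)^{-2},
\]
an exact identity that is of the stated form $\sigma_n\{1+O(n^{-1}+p^{-1}+\varepsilon)\}$.

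There is no real obstacle here. The only points needing care are the degeneracy and isotropy step in the variance computation, and justifying Fubini in (iii) despite the unbounded factor prior; the latter is handled by the polynomial moment bound above.
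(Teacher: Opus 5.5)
Your proposal is correct and follows the paper's proof. In both, degeneracy of the kernel kills every cross-pair covariance, the mean under $P_f$ comes from independence of distinct observations, and (iii) is Fubini over $\pi$ with the chi-square moments.

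The differences are cosmetic. You obtain $\mathbb{E}h^2=2p(p+1)$ from $h_\varepsilon=\Tr(B_s^\varepsilon B_t^\varepsilon)$ and the Gaussian isotropy identity, whereas the paper expands $h=Q-R$ moment by moment. You also record the exact value $\sigma_n(1-n^{-1})(1+p^{-1})(1+\varepsilon)^{-2}$ in (iii), which the paper states only up to $O(n^{-1}+p^{-1}+\varepsilon)$.
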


\begin{proof}
(i) By pivotality it suffices to take $\varepsilon=0$ and $Z_t$ i.i.d.\ $N_p(0,I)$
with kernel $h(z,\tilde z)=(z^\prime\tilde z)^2-\|z\|^2-\|\tilde z\|^2+p$.  Degeneracy
$\mathbb{E}[h(z,Z)]=0$ (Section~S.8.1 form with $\varepsilon=0$) kills all
covariances between pairs sharing at most one index, so
$\operatorname{var}(\tilde U_n)=\tfrac1{4n^4}\binom n2\mathbb{E}h^2$.  With
$Q=(Z_1'Z_2)^2$ and $R=\|Z_1\|^2+\|Z_2\|^2-p$, conditioning on $Z_2$ gives
$\mathbb{E}Q=p$, $\mathbb{E}Q^2=3p(p+2)$, $\mathbb{E}[Q\|Z_2\|^2]=\mathbb{E}[Q\|Z_1\|^2]=p(p+2)$,
$\mathbb{E}R=p$, $\mathbb{E}R^2=p^2+4p$, whence
$\mathbb{E}h^2=3p(p+2)-2\{2p(p+2)-p^2\}+p^2+4p=2p(p+1)$ and
$\operatorname{var}(\tilde U_n)=(n-1)p(p+1)/(4n^3)$, so
$\operatorname{var}(V_n)=4\gamma_n^{-2}\operatorname{var}(\tilde U_n)=(n-1)(p+1)/(np)$.
(ii) For $s\ne t$ with $Y\sim N_p(0,\Omega)$,
$\mathbb{E}(Y_s'Y_t)^2=\Tr(\Omega^2)$ and $\mathbb{E}\|Y\|^2=\Tr(\Omega)$, so
$\mathbb{E}h_\varepsilon=\Tr(\Omega^2)/(1+\varepsilon)^2-2\Tr(\Omega)/(1+\varepsilon)+p
=\Tr[(\Omega/(1+\varepsilon)-I)^2]$; multiply by $\binom n2/(2n^2)\cdot2/\gamma_n$
and expand $\Tr[(ff^\prime-\varepsilon I)^2]=\|f\|^4-2\varepsilon\|f\|^2+p\varepsilon^2$.
(iii) uses $\mathbb{E}_\pi\|f\|^4=\vartheta_n^2(1+2/p)$,
$\mathbb{E}_\pi\|f\|^2=\vartheta_n$, $p\varepsilon^2=\vartheta_n^2/p$; the bracket
integrates to $\vartheta_n^2(1+O(p^{-1}))$, and
$(n-1)/(2n\gamma_n(1+\varepsilon)^2)=\gamma_n^{-1}(1+O(n^{-1}+\varepsilon))/2$.
\end{proof}

\subsection{The decay regime}

Throughout this subsection $\vartheta_n\to0$; for the sharp expansion we assume in
addition $n\vartheta_n\to\infty$ (equivalently $\varepsilon=o(\sigma_n)$).

\begin{proposition}[Second moment]
\label{prop:s8-second}
As $\vartheta_n\to0$, $\mathbb{E}_{P_0}[\widetilde{\mathrm{LR}}_n^2]\to1$; if
moreover $n\vartheta_n\to\infty$, then
$\mathbb{E}_{P_0}[\widetilde{\mathrm{LR}}_n^2]=1+\sigma_n^2(1+o(1))$.
\end{proposition}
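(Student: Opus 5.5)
The plan is to compute $\mathbb{E}_{P_0}[\widetilde{\mathrm{LR}}_n^2]$ exactly from Lemma~\ref{lem:s8-pair} and then expand. The expansion rests on the cancellation $\sigma_n-2\sigma_n=-\sigma_n$ between the deterministic radial term and the leading $\pi$-mean of the angular term (Step 4); every other contribution is shown to be $o(\sigma_n^2)$.

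\emph{Step 1 (reduction to $\pi\otimes\pi$).} Expanding the square of $\widetilde{\mathrm{LR}}_n=\int_{\mathcal F}\ell\,\mathrm d\pi+\theta_p$ and using Fubini and $\mathbb{E}_{P_0}\ell(f)=1$ (Lemma~\ref{lem:s8-orth}) gives
\[
\mathbb{E}_{P_0}\widetilde{\mathrm{LR}}_n^2=\mathbb{E}_{\pi\otimes\pi}\big[J(f,\tilde f)^n;\,\mathcal F\times\mathcal F\big]+2\theta_p(1-\theta_p)+\theta_p^2 .
\]
Since $\theta_p\le e^{-p/8}=o(\sigma_n^2)$ when $n\vartheta_n\to\infty$, only the first term matters.

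\emph{Step 2 (exact logarithm).} Put $a=\|f\|^2$, $b=\|\tilde f\|^2$, and $c=\sqrt{ab}\,u$, where $u$ is the cosine of the angle between $f$ and $\tilde f$. Under $\pi\otimes\pi$, $u^2\sim\mathrm{Beta}(1/2,(p-1)/2)$ independently of $(a,b)$. Lemma~\ref{lem:s8-pair} then gives
\[
n\log J=n\Big[\tfrac p2\log(1+\varepsilon)-\tfrac p2\log\kappa\Big]
-\tfrac n2\log(1+\mu a)-\tfrac n2\log(1+\mu b)
-\tfrac n2\log\Big(1-\tfrac{ab\,u^2}{\mathfrak a\mathfrak b}\Big).
\]
Using $\log\kappa=\varepsilon-\tfrac32\varepsilon^2+O(\varepsilon^3)$, the first bracket equals $n\{p\varepsilon^2/2+O(p\varepsilon^3)\}=\sigma_n+O(\sigma_n\varepsilon)$.

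\emph{Step 3 (the radial factor).} Because $a=(\vartheta_n/p)\chi^2_p$, the quantity $\mathbb{E}\exp\{-\tfrac n2\log(1+\mu a)\}$ is an explicit chi-square moment. Expanding it around $a=\vartheta_n$ gives the deterministic term $-\tfrac n2\mu\vartheta_n=-\sigma_n(1+O(\varepsilon))$ per factor. The fluctuation of $a$ contributes a relative correction of order $(n\mu)^2\var(a)=O(\sigma_n^2/p)$, and the quadratic term $n\mu^2a^2$ contributes $O(\vartheta_n^4/p)$; both are $o(\sigma_n^2)$. The two radial factors together therefore contribute $-2\sigma_n+o(\sigma_n^2)$ to the exponent. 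The error $O(\sigma_n\varepsilon)$ is $o(\sigma_n^2)$ exactly because $\varepsilon=o(\sigma_n)$, which is where $n\vartheta_n\to\infty$ is used.

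\emph{Step 4 (the angular factor, the main obstacle).} Conditional on $(a,b)$, set $x=ab\,u^2/(\mathfrak a\mathfrak b)$ and compute $\mathbb{E}_u(1-x)^{-n/2}$. On $\mathcal F\times\mathcal F$ we have $x\le1/16$, so $(1-x)^{-n/2}\le e^{Cnx}$. Moreover $nu^2$ is approximately $\chi^2_1/\gamma_n$, so exponential moments of $C n\vartheta_n^2u^2$ are finite and uniformly bounded once $\vartheta_n$ is small. This domination justifies a Taylor expansion inside the expectation. With $\mathbb{E}u^2=1/p$ and $\mathbb{E}u^4=3/\{p(p+2)\}$, and replacing $ab/(\mathfrak a\mathfrak b)$ by $\vartheta_n^2(1+O(\varepsilon+p^{-1/2}))$ (a step I will have to control jointly with the radial fluctuations), this gives
\[
\mathbb{E}(1-x)^{-n/2}=1+\sigma_n+\tfrac32\sigma_n^2+o(\sigma_n^2).
\]
The delicate point is the coupling between $(a,b)$ in $x$ and in the radial factors. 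I would handle it by writing $a=\vartheta_n(1+\xi)$ and $b=\vartheta_n(1+\tilde\xi)$ with $\xi,\tilde\xi=O_p(p^{-1/2})$, and showing that the cross terms are $O(\sigma_n^2p^{-1/2})$ by Cauchy--Schwarz together with the chi-square exponential moments.

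\emph{Step 5 (assembly).} Multiplying the pieces,
\[
e^{\sigma_n-2\sigma_n}\big(1+\sigma_n+\tfrac32\sigma_n^2\big)=1+\sigma_n^2\big(\tfrac32-1+\tfrac12\big)+o(\sigma_n^2)=1+\sigma_n^2(1+o(1)).
\]
For the first claim, where only $\vartheta_n\to0$ is assumed, the same domination shows that every exponent tends to zero uniformly on the truncated set: the deterministic terms are $O(\sigma_n+\vartheta_n^2/p)\to0$, and the $\varepsilon$-corrections also vanish. Dominated convergence then yields $\mathbb{E}_{P_0}\widetilde{\mathrm{LR}}_n^2\to1$, with no rate required.
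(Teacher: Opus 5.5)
Your argument is correct in substance, but it follows a genuinely different route from the paper's. The paper fixes $\tilde f$ and projects $f$ onto its direction, giving independent $s\sim N(0,\varepsilon)$ and $r=\|f_\perp\|^2\sim\varepsilon\chi^2_{p-1}$. It replaces $\log(1+\mu a)$ by $\varepsilon a$ and $-\log(1-w)$ by $c^2=s^2b$, integrates the resulting exponent $E_0$ exactly with Gaussian and chi-square moment generating functions, and reads off $\sigma_n^2$ as $\mathbb{E}x^2/4$ with $x=n\varepsilon(b-\varepsilon)$, i.e.\ as a second moment of $b$. The remainder is then bounded under the tilted law and on a bad set. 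Because the paper splits $a=s^2+r$, two contributions of order $\sigma_n/p$ appear and must cancel exactly, since $\sigma_n/p$ is not $o(\sigma_n^2)$ unless $n\vartheta_n^2\to\infty$. Your polar variables $(a,b,u^2)$, with $u^2\sim\mathrm{Beta}(1/2,(p-1)/2)$ independent of $(a,b)$, keep $a$ whole. Exact chi-square means therefore give $\sigma_n$ directly, and the $\sigma_n/p$ bookkeeping never arises. The angular factor becomes an explicit power series in $\beta=ab/(\mathfrak a\mathfrak b)$ with Beta-moment coefficients, and $\sigma_n^2$ comes out as $\tfrac32-1+\tfrac12$. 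The price is that you must justify expanding inside the expectation and must control how the angular factor depends on $(a,b)$.

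Two points need tightening.

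\emph{Domination.} On $\mathcal F\times\mathcal F$ you only know $ab\le\tfrac1{16}$, not $ab\lesssim\vartheta_n^2$, so exponential moments of $Cn\vartheta_n^2u^2$ do not dominate $e^{Cnx}$ there. Use instead $x\le abu^2\le\tfrac14\,au^2$ (since $\mathfrak a\mathfrak b\ge1$ and $b\le\tfrac14$), where $au^2=c^2/b\sim\varepsilon\chi^2_1$ is independent of $b$. Then $(1-x)^{-n/2}\le\exp(\tfrac{2n}{15}au^2)$, which has bounded exponential moments because $n\varepsilon\to0$. Combined with $\mathbb{P}(a>\tfrac32\vartheta_n)+\mathbb{P}(b>\tfrac32\vartheta_n)\le e^{-cp}$ (via Cauchy--Schwarz), this disposes of the large-radius region. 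The same bound supplies the uniform integrability needed for the first claim, where the exponents tend to zero in probability rather than uniformly.

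\emph{Coupling.} A pointwise replacement $\beta=\vartheta_n^2(1+O(p^{-1/2}))$ does not work: it leaves an error of order $\sigma_np^{-1/2}$, which is not $o(\sigma_n^2)$ when $\vartheta_n=o(p^{-1/4})$, a case permitted by $n\vartheta_n\to\infty$. Your fluctuation argument survives only because the linear terms have exact mean zero ($\mathbb{E}a=\vartheta_n$), and you should say so explicitly. There is also a cleaner exact device. Since $\beta^k=\kappa^{-2k}a^k(1+\mu a)^{-k}\,b^k(1+\mu b)^{-k}$ and $\mathcal F\times\mathcal F$ is a product set, each term $\mathbb{E}[(1+\mu a)^{-n/2}(1+\mu b)^{-n/2}\beta^k;\mathcal F\times\mathcal F]$ is the square of a one-dimensional chi-square integral. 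The coupling corrections then come out as $O(\sigma_n^2/p)$.
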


\begin{proof}
Since $\widetilde{\mathrm{LR}}_n=\int_{\mathcal F}\ell\mathrm d\pi+\theta_p$
and $\mathbb{E}_{P_0}\int_{\mathcal F}\ell\mathrm d\pi=\pi(\mathcal F)$,
Lemma~\ref{lem:s8-pair} and Fubini's theorem give
\[
\mathbb{E}_{P_0}[\widetilde{\mathrm{LR}}_n^2]
=\mathbb{E}_{\pi\otimes\pi}[J^n;\mathcal F\times\mathcal F]
+2\theta_p\pi(\mathcal F)+\theta_p^2
=\mathbb{E}_{\pi\otimes\pi}[J^n;\mathcal F\times\mathcal F]+O(e^{-p/8}).
\]
When $n\vartheta_n\to\infty$ we have $\vartheta_n\ge n^{-1}$ eventually,
hence $\sigma_n\ge(2np)^{-1}$ and $\log\sigma_n^{-1}=O(\log n)$, such that
$e^{-cp}=o(\sigma_n^2)$ for every fixed $c>0$; this is used repeatedly.
For the plain contiguity bound only $\theta_p\to0$ is needed.

\emph{Step 1: coordinates and the exactly integrable part.}
Fix $\tilde f\ne0$, put $u=\tilde f/\|\tilde f\|$, $s=u^\prime f$,
$f_\perp=f-su$, and $r=\|f_\perp\|^2$.  Because $f\sim N_p(0,\varepsilon I_p)$
is independent of $\tilde f$ and rotation invariant, conditionally on
$\tilde f$ the variables $s\sim N(0,\varepsilon)$ and
$r\sim\varepsilon\chi^2_{p-1}$ are independent, and
\[
a=s^2+r,\qquad c=s\sqrt b,\qquad c^2=s^2b .
\]
Let $A_n=\tfrac{np}2[\log(1+\varepsilon)-\log\kappa]
=\tfrac{np}2\{2\log(1+\varepsilon)-\log(1+2\varepsilon)\}$; since
$np\varepsilon^2=2\sigma_n$,
\begin{equation}
A_n=\tfrac{np}2\{\varepsilon^2-2\varepsilon^3+O(\varepsilon^4)\}
=\sigma_n+O(\sigma_n\varepsilon).
\label{eq:s8-An}
\end{equation}
With $J$ from Lemma~\ref{lem:s8-pair}, using
$\log\mathfrak a=\log\kappa+\log(1+\mu a)$ and recombining $p\log\kappa$,
\[
n\log J
=A_n-\tfrac n2\log(1+\mu a)-\tfrac n2\log(1+\mu b)
-\tfrac n2\log(1-w),
\qquad w=\frac{c^2}{\mathfrak a\mathfrak b},
\]
which we split as $n\log J=E_0+\rho$ with
\[
E_0
=A_n-\frac{n\varepsilon}2b-\frac{n\varepsilon}2r+\frac n2(b-\varepsilon)s^2
=A_n-\frac{n\varepsilon}2(a+b)+\frac n2c^2
\]
and
\[
\rho=-\frac n2\{\log(1+\mu a)-\varepsilon a\}
-\frac n2\{\log(1+\mu b)-\varepsilon b\}
-\frac n2\{\log(1-w)+c^2\}.
\]
The $s^2$-part of $a$ contributes $-n\varepsilon s^2/2$ to $E_0$; it is
integrated jointly with $nc^2/2=nbs^2/2$ and is not dropped: its mean
$n\varepsilon^2/2=\sigma_n/p$ is not $o(\sigma_n^2)$ unless
$n\vartheta_n^2\to\infty$, and it is the cancellation below that removes
it.  Put $x=n\varepsilon(b-\varepsilon)$ and $y=n\varepsilon^2=2\sigma_n/p$.
If $x<1$, the Gaussian and chi-square moment generating functions give,
conditionally on $\tilde f$,
\begin{equation}
\mathbb{E}_{s,r}\,e^{E_0}
=\exp\Big\{A_n-\frac{n\varepsilon b}2-\frac12\log(1-x)
-\frac{p-1}2\log(1+y)\Big\},
\label{eq:s8-exact-E0}
\end{equation}
and under the tilted law with density $e^{E_0}/\mathbb{E}_{s,r}e^{E_0}$ the
variables $s$ and $r$ remain independent, with
$s^2\sim\{\varepsilon/(1-x)\}\chi^2_1$ and
$r\sim\{\varepsilon/(1+y)\}\chi^2_{p-1}$.  Writing $\mathbb{E}^\star$ for
expectation under this tilt, if $x\le\tfrac12$ then, for every fixed $k$,
\begin{equation}
\mathbb{E}^\star s^{2k}\le C_k\varepsilon^k,\qquad
\mathbb{E}^\star r^k\le C_k(p\varepsilon)^k,\qquad
\mathbb{E}^\star a^k\le C_k(p\varepsilon)^k .
\label{eq:s8-tilted-moments}
\end{equation}

\emph{Step 2: the remainder on $\mathcal F\times\mathcal F$.}
On $a,b\le\tfrac14$ we have $|\log(1+\mu a)-\mu a|\le\mu^2a^2/2
\le\varepsilon^2a^2/2$ and $|\mu-\varepsilon|a\le2\varepsilon^2a$;
moreover $\mathfrak a\mathfrak b=\kappa^2(1+\mu a)(1+\mu b)\in[1,1+3\varepsilon]$,
so $0\le w\le c^2\le ab\le\tfrac1{16}$, $|w-c^2|\le3\varepsilon c^2$, and
$w\le-\log(1-w)\le w+w^2$.  Hence
\begin{equation}
|\rho|\le\bar\rho
:=\frac{n\varepsilon^2}4(a^2+b^2)+n\varepsilon^2(a+b)
+\frac{3n\varepsilon}2c^2+\frac n2c^4 .
\label{eq:s8-rho-bound}
\end{equation}

\emph{Step 3: a good event and the complementary set.}
Let $\tau_n\to\infty$ satisfy $\varepsilon\tau_n\to0$ and
$\tau_n^2\vartheta_n^2/p\to0$; for the sharp expansion take
$\tau_n=12\log(p/\sigma_n)=O(\log n)$.  Define
\[
\mathcal E=\{b\le\tfrac32\vartheta_n\}\cap\{r\le\tfrac32\vartheta_n\}
\cap\{s^2\le\varepsilon\tau_n\},
\]
which is contained in $\mathcal F\times\mathcal F$ for all large $n$.  On
$\mathcal E$, $x\le\tfrac32n\varepsilon\vartheta_n=3\sigma_n$ and, by
$n\varepsilon^2=2\sigma_n/p$ and $n\varepsilon\vartheta_n=2\sigma_n$,
$\bar\rho\le C\sigma_n(\vartheta_n^2/p+\varepsilon+\varepsilon\tau_n
+\tau_n^2\vartheta_n^2/p)\to0$, such that $|e^{\rho}-1|\le3\bar\rho$ on
$\mathcal E$ for all large $n$.

On $\mathcal F\times\mathcal F$, dropping the two nonpositive logarithms,
$J^n\le e^{A_n}\exp\{-\tfrac n2\log(1-w)\}\le e^{A_n}\exp(\tfrac{2n}{15}s^2)$,
because $-\log(1-w)\le\tfrac{16}{15}w$ for $w\le\tfrac1{16}$ and
$w\le s^2b\le s^2/4$.  Since $n\varepsilon=\vartheta_n/\gamma_n\to0$,
$\mathbb{E}_s\exp(\tfrac{2n}{15}s^2)=(1-\tfrac{4n\varepsilon}{15})^{-1/2}\le2$ and
$\mathbb{E}_s[\exp(\tfrac{2n}{15}s^2);s^2>\varepsilon\tau_n]
\le2\mathbb{P}(\chi^2_1>\tau_n/2)\le4e^{-\tau_n/4}$ for all large $n$.
Because $(\mathcal F\times\mathcal F)\setminus\mathcal E$ is covered by
$\{b>\tfrac32\vartheta_n\}\cup\{r>\tfrac32\vartheta_n\}\cup\{s^2>\varepsilon\tau_n\}$,
the independence of $s$, $r$, and $\tilde f$ and the Chernoff bound
$\mathbb{P}(\chi^2_m\ge\tfrac32m)\le e^{-cm}$ give
\begin{equation}
\mathbb{E}[J^n;(\mathcal F\times\mathcal F)\setminus\mathcal E]
\le Ce^{A_n}\{e^{-cp}+e^{-\tau_n/4}\}.
\label{eq:s8-bad-set}
\end{equation}

\emph{Step 4: the leading integral.}
On $\{b\le\tfrac32\vartheta_n\}$, $-y\le x\le3\sigma_n$, so
$-\tfrac12\log(1-x)=\tfrac x2+\tfrac{x^2}4+O(|x|^3)$ and
$-\tfrac{p-1}2\log(1+y)=-\tfrac{(p-1)y}2+O(py^2)$, and
(\ref{eq:s8-exact-E0}) becomes
\begin{align*}
\log\mathbb{E}_{s,r}e^{E_0}
&=A_n-\frac{n\varepsilon b}2+\frac{n\varepsilon(b-\varepsilon)}2
-\frac{(p-1)n\varepsilon^2}2+\frac{x^2}4+O(\sigma_n^3+\sigma_n^2/p)\\
&=\frac{x^2}4+O(\sigma_n\varepsilon+\sigma_n^3+\sigma_n^2/p),
\end{align*}
uniformly on $\{b\le\tfrac32\vartheta_n\}$: the terms linear in $b$ cancel
exactly, and the deterministic linear terms sum to
$A_n-np\varepsilon^2/2=A_n-\sigma_n$, which is $O(\sigma_n\varepsilon)$ by
(\ref{eq:s8-An}).  This is Lemma~\ref{lem:s8-orth} reappearing at the
second-moment level: the two apparent contributions of order $\sigma_n/p$,
from $-n\varepsilon s^2/2$ and from the $p-1$ orthogonal coordinates of
$f$, cancel against each other.  Since $x^2/4\le\tfrac94\sigma_n^2\to0$,
$\mathbb{E}_{s,r}e^{E_0}=1+x^2/4+O(\sigma_n\varepsilon+\sigma_n^3+\sigma_n^2/p)$
uniformly on $\{b\le\tfrac32\vartheta_n\}$.  As $b\sim\varepsilon\chi^2_p$,
$\mathbb{E}x^2=n^2\varepsilon^2\mathbb{E}(b-\varepsilon)^2
=n^2\varepsilon^4(p^2+1)=4\sigma_n^2(1+p^{-2})$, and the Cauchy--Schwarz
inequality bounds the contribution of $\{b>\tfrac32\vartheta_n\}$ to
$\mathbb{E}x^2$ by $O(\sigma_n^2e^{-cp/2})$.  Therefore
\begin{equation}
\mathbb{E}[e^{E_0};b\le\tfrac32\vartheta_n]
=1+\sigma_n^2+O(\sigma_n\varepsilon+\sigma_n^3+\sigma_n^2/p+e^{-cp}).
\label{eq:s8-leading}
\end{equation}
The same exact integrals control the part of $\{b\le\tfrac32\vartheta_n\}$
outside $\mathcal E$: $\mathbb{E}_r[e^{-n\varepsilon r/2};r>\tfrac32\vartheta_n]
\le\mathbb{P}(r>\tfrac32\vartheta_n)\le e^{-cp}$ and
$\mathbb{E}_s[e^{n(b-\varepsilon)s^2/2};s^2>\varepsilon\tau_n]
=(1-x)^{-1/2}\mathbb{P}\{\chi^2_1>(1-x)\tau_n\}\le4e^{-\tau_n/4}$, while the
remaining factors in (\ref{eq:s8-exact-E0}) are bounded by $Ce^{A_n}$.
Hence
\begin{equation}
\mathbb{E}[e^{E_0};\mathcal E]
=\mathbb{E}[e^{E_0};b\le\tfrac32\vartheta_n]+O(e^{-cp}+e^{-\tau_n/4}).
\label{eq:s8-E0-good}
\end{equation}

\emph{Step 5: the remainder under the tilt.}
By (\ref{eq:s8-exact-E0}), $\mathbb{E}_{s,r}e^{E_0}\le e^{A_n}(1-x)^{-1/2}\le C$
on $\{b\le\tfrac32\vartheta_n\}$.  Since $e^{E_0}\bar\rho\ge0$, we may
enlarge $\mathcal E$ to $\{b\le\tfrac32\vartheta_n\}$, integrate $s$ and
$r$ first, and use (\ref{eq:s8-tilted-moments}),
(\ref{eq:s8-rho-bound}), and $b\le\tfrac32p\varepsilon$:
\begin{align*}
\mathbb{E}[e^{E_0}\bar\rho;\mathcal E]
&\le C\mathbb{E}_b\big[\mathbb{E}^\star\bar\rho;b\le\tfrac32\vartheta_n\big]
\le C\{n\varepsilon^2(p\varepsilon)^2+n\varepsilon^2p\varepsilon
+n\varepsilon\cdot p\varepsilon\cdot\varepsilon+n(p\varepsilon)^2\varepsilon^2\}\\
&=O(\sigma_n\vartheta_n^2/p+\sigma_n\varepsilon)
=O(\sigma_n^2/n+\sigma_n\varepsilon),
\end{align*}
using $np^2\varepsilon^4=2\sigma_n\vartheta_n^2/p$,
$np\varepsilon^3=2\sigma_n\varepsilon$, and
$\vartheta_n^2/p=2\sigma_n/n$.

\emph{Step 6: assembly.}
On $\mathcal E$, $J^n=e^{E_0}e^{\rho}$ with $|e^{\rho}-1|\le3\bar\rho$, so
(\ref{eq:s8-bad-set}), (\ref{eq:s8-leading}), (\ref{eq:s8-E0-good}), and
Step~5 give
\[
\mathbb{E}[J^n;\mathcal F\times\mathcal F]
=1+\sigma_n^2
+O\big(\sigma_n\varepsilon+\sigma_n^3+\sigma_n^2/p+\sigma_n^2/n
+e^{-cp}+e^{-\tau_n/4}\big).
\]
If $n\vartheta_n\to\infty$, then $\varepsilon/\sigma_n=2/(n\vartheta_n)\to0$,
$e^{-cp}=o(\sigma_n^2)$, and $e^{-\tau_n/4}=(\sigma_n/p)^3=o(\sigma_n^2)$,
which proves the sharp expansion.  If only $\vartheta_n\to0$, take
$\tau_n=\log p$: every error term and $\sigma_n^2$ itself tend to zero, so
$\mathbb{E}[J^n;\mathcal F\times\mathcal F]\to1$.
\end{proof}

\begin{theorem}[Decay regime; Theorem~4]
\label{thm:s8-decay}
Let $\vartheta_n\to0$.  Then $G_n$ is contiguous to $P_0$ at every decay rate.
If in addition $n\vartheta_n\to\infty$, then
\[
\mathbb{E}_{P_0}[(\widetilde{\mathrm{LR}}_n-1-\sigma_nV_n)^2]=o(\sigma_n^2),
\qquad
\mathrm{LR}_n=1+\sigma_nV_n+o_{L^1(P_0)}(\sigma_n),
\]
and $\|G_n-P_0\|_{\mathrm{TV}}=\sigma_n/\sqrt{2\pi}+o(\sigma_n)$,
with $V_n\Rightarrow N(0,1)$ under $P_0$; no $L^2$ expansion holds for
$\mathrm{LR}_n$ itself, whose second moment is infinite.  Moreover, for
every $\alpha\in(0,1)$ and every sequence of tests $0\le\psi_n\le1$ with
$\mathbb{E}_{P_0}\psi_n\to\alpha$,
\begin{equation}
\limsup_{n\to\infty}\sigma_n^{-1}\big\{\mathbb{E}_{G_n}\psi_n-\mathbb{E}_{P_0}\psi_n\big\}
\le\Phi^\prime(z_{1-\alpha}),
\label{eq:s8-first-order}
\end{equation}
with equality for $\psi_n=1\{V_n>z_{1-\alpha}\}$, where $\Phi^\prime$ is
the standard normal density.  If the size constraint is weakened to
$\limsup_n\mathbb{E}_{P_0}\psi_n\le\alpha$, the right side of
(\ref{eq:s8-first-order}) is replaced by
$\sup_{0\le\beta\le\alpha}\Phi^\prime(z_{1-\beta})$, which equals
$\Phi^\prime(z_{1-\alpha})$ exactly when $\alpha\le\tfrac12$ and equals
$\Phi^\prime(0)$ otherwise.
\end{theorem}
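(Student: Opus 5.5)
The plan is to prove everything from the truncated surrogate $\widetilde{\mathrm{LR}}_n$ and then pass to $\mathrm{LR}_n$ via Lemma~\ref{lem:s8-trunc}, whose $L^1(P_0)$ cost $2e^{-p/8}$ is $o(\sigma_n)$ whenever $n\vartheta_n\to\infty$.

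\emph{Contiguity.} Proposition~\ref{prop:s8-second} gives $\mathbb{E}_{P_0}(\widetilde{\mathrm{LR}}_n-1)^2=\mathbb{E}_{P_0}\widetilde{\mathrm{LR}}_n^2-1\to0$, because $\mathbb{E}_{P_0}\widetilde{\mathrm{LR}}_n=1$. Hence $\widetilde{\mathrm{LR}}_n\to1$ in $L^1(P_0)$. Adding Lemma~\ref{lem:s8-trunc} gives $\mathrm{LR}_n\to1$ in $L^1(P_0)$, so $\|G_n-P_0\|_{\mathrm{TV}}\to0$. This is stronger than contiguity and holds at every decay rate.

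\emph{$L^2$ expansion.} Expand
\[
\mathbb{E}(\widetilde{\mathrm{LR}}_n-1-\sigma_nV_n)^2=\{\mathbb{E}\widetilde{\mathrm{LR}}_n^2-1\}-2\sigma_n\mathbb{E}[\widetilde{\mathrm{LR}}_nV_n]+\sigma_n^2\mathbb{E}V_n^2,
\]
using $\mathbb{E}_{P_0}V_n=0$. Each piece is controlled as follows.
\begin{itemize}
\item The first piece is $\sigma_n^2(1+o(1))$ by Proposition~\ref{prop:s8-second}.
\item The last piece is $\sigma_n^2(1+o(1))$ by Proposition~\ref{prop:s8-moments}(i).
\item For the cross term, Fubini gives $\mathbb{E}_{P_0}[\widetilde{\mathrm{LR}}_nV_n]=\mathbb{E}_\pi[\mathbb{E}_{P_f}V_n;\mathcal F]+\theta_p\mathbb{E}_{P_0}V_n$. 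The second summand vanishes.
\end{itemize}
For the first summand of the cross term, I use the exact formula of Proposition~\ref{prop:s8-moments}(ii). On $\mathcal F^c$ this formula contributes at most a polynomial moment times $\theta_p^{1/2}$ by Cauchy--Schwarz, which is $o(\sigma_n)$. So the first summand equals $\mathbb{E}_{G_n}V_n+o(\sigma_n)=\sigma_n(1+o(1))$ by part (iii). Combining, the right-hand side is $\sigma_n^2(1-2+1)+o(\sigma_n^2)=o(\sigma_n^2)$.

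The $L^1$ statement then follows from Cauchy--Schwarz and Lemma~\ref{lem:s8-trunc}. That lemma's bound is $o(\sigma_n)$ because $\sigma_n\ge(2np)^{-1}$ eventually, as noted in the proof of Proposition~\ref{prop:s8-second}. The infinite second moment of $\mathrm{LR}_n$ itself is already noted after Lemma~\ref{lem:s8-trunc}.

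\emph{Total variation and power.} We have $2\|G_n-P_0\|_{\mathrm{TV}}=\mathbb{E}_{P_0}|\mathrm{LR}_n-1|=\sigma_n\mathbb{E}_{P_0}|V_n|+o(\sigma_n)$. The family $\{V_n\}$ is uniformly integrable, since its second moments are bounded (a fourth-moment bound would also work). Together with $V_n\Rightarrow N(0,1)$ from Theorem~\ref{thm:goe-optimality}(i) via Lemma~\ref{lem:whitening}, this gives $\mathbb{E}|V_n|\to\sqrt{2/\pi}$, hence $\|G_n-P_0\|_{\mathrm{TV}}=\sigma_n/\sqrt{2\pi}+o(\sigma_n)$.

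For a test $\psi_n$, the power gain is
\[
\mathbb{E}_{G_n}\psi_n-\mathbb{E}_{P_0}\psi_n=\mathbb{E}_{P_0}[\psi_n(\mathrm{LR}_n-1)]=\sigma_n\mathbb{E}_{P_0}[\psi_nV_n]+o(\sigma_n).
\]
Fix $M>0$ and truncate $V_n$ at level $M$; the error is controlled by uniform integrability. Then pass along a subsequence on which $(\psi_n,V_n)$ converges jointly in law (tightness). The limit is $\mathbb{E}[\psi V]$ with $\psi\in[0,1]$, $\mathbb{E}\psi=\alpha$, and $V\sim N(0,1)$. The Neyman--Pearson bathtub argument shows $\mathbb{E}[\psi V]\le\mathbb{E}[V;V>z_{1-\alpha}]=\Phi'(z_{1-\alpha})$.

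Equality for $1\{V_n>z_{1-\alpha}\}$ follows from the continuous mapping theorem together with uniform integrability, because the limit law has no atom at $z_{1-\alpha}$. Under the weakened size constraint, the same argument applies with $\mathbb{E}\psi=\beta\le\alpha$. We then maximise $\Phi'(z_{1-\beta})$ over $\beta\in[0,\alpha]$. Its maximum is at $\beta=\alpha$ when $\alpha\le1/2$, and at $\beta=1/2$, giving $\Phi'(0)$, otherwise.

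The main obstacle is the cross term. It needs the exact conditional mean of $V_n$ integrated only over $\mathcal F$. It also needs the bracket in Proposition~\ref{prop:s8-moments}(ii), including the term $p\varepsilon^2=\vartheta_n^2/p$, to combine to $\sigma_n(1+o(1))$ with errors that are $o(\sigma_n)$. The delicate point is that the $O(\varepsilon)$ relative errors must be $o(1)$, which holds since $\vartheta_n\to0$. Everything else is bookkeeping on top of Proposition~\ref{prop:s8-second}.
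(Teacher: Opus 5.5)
Your proposal is correct and follows the paper's proof essentially step for step. The $L^2$ part uses the same square expansion, the same Fubini identity for the cross term with the $\mathcal F^c$ tail bounded by Cauchy--Schwarz, and the same $L^1$ transfer via Lemma~\ref{lem:s8-trunc}, and total variation uses the same uniform-integrability argument. The only variations are minor: you observe that Proposition~\ref{prop:s8-second} already yields $\|G_n-P_0\|_{\mathrm{TV}}\to0$, which is stronger than the contiguity the paper obtains from bounded second moments, and you run the Neyman--Pearson bathtub argument in a subsequential joint weak limit of $(\psi_n,V_n)$ rather than at finite $n$ with the randomized quantile test $\psi_n^\circ$.
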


\begin{proof}
Expanding the square and using $\mathbb{E}_{P_0}\widetilde{\mathrm{LR}}_n=1$,
$\mathbb{E}_{P_0}V_n=0$, $\operatorname{var}(V_n)=(n-1)(p+1)/(np)$
(Proposition~\ref{prop:s8-moments}(i)),
\[
\mathbb{E}(\widetilde{\mathrm{LR}}_n-1-\sigma_nV_n)^2
=\mathbb{E}\widetilde{\mathrm{LR}}_n^2-1-2\sigma_n\mathbb{E}[\widetilde{\mathrm{LR}}_nV_n]+\sigma_n^2\operatorname{var}(V_n).
\]
By Fubini's theorem and Proposition~\ref{prop:s8-moments}(ii)--(iii),
$\mathbb{E}[\widetilde{\mathrm{LR}}_nV_n]
=\mathbb{E}_\pi[\mathbb{E}_{P_f}V_n;\mathcal F]
=\mathbb{E}_{G_n}V_n-\mathbb{E}_\pi[\mathbb{E}_{P_f}V_n;\mathcal F^c]
=\sigma_n(1+o(1))$, because $|\mathbb{E}_{P_f}V_n|\le C(\|f\|^4+1)$ and
$\mathbb{E}_\pi[\|f\|^4+1;\mathcal F^c]\le C\vartheta_n^2\theta_p^{1/2}+\theta_p
=O(e^{-p/16})=o(\sigma_n)$ by the Cauchy--Schwarz inequality and
Lemma~\ref{lem:s8-trunc}.  With Proposition~\ref{prop:s8-second},
\[
\mathbb{E}(\widetilde{\mathrm{LR}}_n-1-\sigma_nV_n)^2
=[1+\sigma_n^2(1+o(1))]-1-2\sigma_n^2(1+o(1))+\sigma_n^2(1+o(1))=o(\sigma_n^2).
\]
Combining this $L^2$ expansion of the truncated surrogate with
$\mathbb{E}_{P_0}|\mathrm{LR}_n-\widetilde{\mathrm{LR}}_n|\le2e^{-p/8}=o(\sigma_n)$
from Lemma~\ref{lem:s8-trunc} gives
$\mathbb{E}_{P_0}|\mathrm{LR}_n-1-\sigma_nV_n|=o(\sigma_n)$, the $L^1$
expansion of the true likelihood ratio; in particular
$\mathrm{LR}_n=1+\sigma_nV_n+o_P(\sigma_n)$, and $\log(1+z)=z+O(z^2)$ with
$\sigma_nV_n=O_P(\sigma_n)\to0$ gives the equivalent logarithmic form
$\log\mathrm{LR}_n=\sigma_nV_n+o_P(\sigma_n)$ (the second-order term
$-\tfrac12\sigma_n^2$ is of order $o(\sigma_n)$ and is absorbed into the
remainder).
Contiguity for any $\vartheta_n\to0$: Proposition~\ref{prop:s8-second} gives
$\sup_n\mathbb{E}_{P_0}\widetilde{\mathrm{LR}}_n^2<\infty$, so for $A_n$ with
$P_0(A_n)\to0$,
$G_n(A_n)\le(\mathbb{E}\widetilde{\mathrm{LR}}_n^2)^{1/2}P_0(A_n)^{1/2}+2\theta_p\to0$.

\emph{Total variation.}  Since $V_n\Rightarrow N(0,1)$ with
$\sup_n\mathbb{E}_{P_0}V_n^2<\infty$, the family $\{V_n\}$ is uniformly
integrable and $\mathbb{E}_{P_0}|V_n|\to\sqrt{2/\pi}$; hence
$\|G_n-P_0\|_{\mathrm{TV}}=\tfrac12\mathbb{E}_{P_0}|\mathrm{LR}_n-1|
=\tfrac12\sigma_n\mathbb{E}_{P_0}|V_n|+o(\sigma_n)
=\sigma_n/\sqrt{2\pi}+o(\sigma_n)$.

\emph{First-order optimality.}  For any test $0\le\psi_n\le1$, the $L^1$
expansion gives
\[
\mathbb{E}_{G_n}\psi_n-\mathbb{E}_{P_0}\psi_n
=\mathbb{E}_{P_0}[(\mathrm{LR}_n-1)\psi_n]
=\sigma_n\mathbb{E}_{P_0}[V_n\psi_n]+o(\sigma_n),
\]
with a remainder bounded by $\mathbb{E}_{P_0}|\mathrm{LR}_n-1-\sigma_nV_n|$,
uniformly over tests.  Let $\alpha_n=\mathbb{E}_{P_0}\psi_n\to\alpha\in(0,1)$,
let $q_n$ be an upper $\alpha_n$-quantile of $V_n$ under $P_0$, and let
$\psi_n^\circ=1\{V_n>q_n\}+\eta_n1\{V_n=q_n\}$ with $\eta_n\in[0,1]$
chosen such that $\mathbb{E}_{P_0}\psi_n^\circ=\alpha_n$.  Since
$(V_n-q_n)(\psi_n^\circ-\psi_n)\ge0$ pointwise,
\[
\mathbb{E}_{P_0}[V_n\psi_n^\circ]-\mathbb{E}_{P_0}[V_n\psi_n]
=\mathbb{E}_{P_0}[(V_n-q_n)(\psi_n^\circ-\psi_n)]\ge0,
\]
which is the Neyman--Pearson argument with $V_n$ in the role of the
likelihood ratio.  Because the limit law of $V_n$ has a continuous,
strictly increasing distribution function, $q_n\to z_{1-\alpha}$ and
$P_0(V_n=q_n)\to0$; uniform integrability then gives
$\mathbb{E}_{P_0}[V_n\psi_n^\circ]\to\mathbb{E}[Z1\{Z>z_{1-\alpha}\}]
=\Phi^\prime(z_{1-\alpha})$ for $Z\sim N(0,1)$.  This proves
(\ref{eq:s8-first-order}).  The same uniform integrability gives
$P_0(V_n>z_{1-\alpha})\to\alpha$ and
$\mathbb{E}_{P_0}[V_n1\{V_n>z_{1-\alpha}\}]\to\Phi^\prime(z_{1-\alpha})$, which
is the equality case.  Under the weaker constraint
$\limsup_n\mathbb{E}_{P_0}\psi_n\le\alpha$, pass to a subsequence along which
$\alpha_n\to\beta\in[0,\alpha]$; the same argument bounds the limit of the
normalized gain by $\Phi^\prime(z_{1-\beta})$, with the value $0$ when
$\beta=0$ because then $q_n\to\infty$ and
$\mathbb{E}_{P_0}[V_n\psi_n^\circ]\to0$ by uniform integrability.  Since
$\beta\mapsto\Phi^\prime(z_{1-\beta})$ increases on $(0,\tfrac12]$ and
decreases on $[\tfrac12,1)$, the supremum over $\beta\le\alpha$ is
$\Phi^\prime(z_{1-\alpha})$ for $\alpha\le\tfrac12$ and $\Phi^\prime(0)$,
attained at asymptotic size $\tfrac12$, otherwise.
\end{proof}

\begin{proof}[Proof of Corollary~4 of the main paper]
Put $\varphi_n^V=1\{V_n>z_{1-\alpha}\}$ and
$L_n=\mathrm dG_n/\mathrm dP_0=1+\sigma_nV_n+r_n$ with
$\mathbb{E}_{P_0}|r_n|=o(\sigma_n)$, which is the $L^1$ expansion of
Theorem~4.  For any test $\psi$,
$\mathbb{E}_{G_n}\psi-\mathbb{E}_{P_0}\psi
=\mathbb{E}_{P_0}\{(L_n-1)\psi\}
=\sigma_n\mathbb{E}_{P_0}(V_n\psi)+\mathbb{E}_{P_0}(r_n\psi)$, so, by
Cauchy--Schwarz and $|\psi_n^F-\varphi_n^V|^2=|\psi_n^F-\varphi_n^V|$,
\[
\big|(\mathbb{E}_{G_n}-\mathbb{E}_{P_0})\psi_n^F
-(\mathbb{E}_{G_n}-\mathbb{E}_{P_0})\varphi_n^V\big|
\le\sigma_n\{\operatorname{var}_{P_0}(V_n)\}^{1/2}d_n^{1/2}
+\mathbb{E}_{P_0}|r_n|,
\]
where $d_n=\mathbb{E}_{P_0}|\psi_n^F-\varphi_n^V|$.
Under $P_0$ the whitened observations $\Sigma_0^{-1/2}X_t$ are i.i.d.\
$N_p(0,(1+\vartheta_n/p)I_p)$.  The scalar factor does not affect the
demeaned and studentized statistic $\tilde Z_n^c$, and $V_n$ is the
statistic $2U_n/\gamma_n$ of the corresponding standard Gaussian
sample, so Theorem~2(i)--(ii) of the main paper give
$\tilde Z_n^c-V_n=o_{P_0}(1)$.  Since $V_n\Rightarrow N(0,1)$ with a
continuous limit, $d_n\to0$.  With
$\operatorname{var}_{P_0}(V_n)=(n-1)(p+1)/(np)$ the right side is
$o(\sigma_n)$, and the equality case of Theorem~4 for $\varphi_n^V$
gives the claim.  No rate for $d_n$ is used, and the argument
concerns the gain relative to $\mathbb{E}_{P_0}\psi_n^F$, not
relative to the nominal level.
\end{proof}

\subsection{Fixed strength: comparison with Onatski--Moreira--Hallin (2013)}

At a fixed strength $\vartheta_n\to\vartheta\in(0,\infty)$ the trace-matched
experiment is no longer covered by the exact second-moment machinery above.
We record the picture as a comparison with the invariant fixed-radius
spiked experiment whose asymptotic power is characterized by
\citet{OnatskiMoreiraHallin:2013}, not as a theorem about the
random-radius mixture.  The connection is exact at the level of the
conditional experiment: given $r_n=\|f\|^2$, the direction $f/\|f\|$ is
Haar-uniform and independent of $r_n$, so the conditional alternative is
their fixed-radius alternative at strength $r_n$, tested here against
the trace-matched null, and $r_n\to_p\vartheta$ with
$r_n-\vartheta_n=O_p(\vartheta_n p^{-1/2})$.  Transferring their conclusions to
the mixture over $r_n$ would require, in addition, that their
likelihood-ratio process $L_n(\cdot)$ be asymptotically equicontinuous in
the strength near $\vartheta$, such that
$\int L_n(r)\pi_r(\mathrm dr)=L_n(\vartheta)\{1+o_{P_0}(1)\}$ for the law
$\pi_r$ of $r_n$ (the contribution of $\{|r_n-\vartheta|>\delta_n\}$ being
$o_{L^1(P_0)}(1)$ for suitable $\delta_n\to0$ because
$\mathbb{E}_{P_0}L_n(r)=1$), together with the joint limit of the trace term
generated by the matched null scale.  We do not carry this out, and no
theorem of the paper relies on it.  The imported statements are applied
to the trace-matched, rescaled matrix
$\bar S_Y=(1+\vartheta_n/p)^{-1}\hat S_Y$, defined with the finite-$n$
matching scale $\vartheta_n\to\vartheta$, whose null covariance is
exactly $I_p$; all spectral quantities below are those of $\bar S_Y$.
The statements below are theirs, imported and not reproved, and they
describe their fixed-radius experiment.

For a subcritical strength $\vartheta<\sqrt\gamma$ their experiment remains
contiguous and its asymptotically optimal test is a likelihood-based
\emph{linear spectral statistic} of $\bar S_Y$: a
centered sum $\sum_j\log\{z_0(\vartheta)-\bar\lambda_j\}$ over the
eigenvalues $\bar\lambda_j$ of $\bar S_Y$, with the trace correction
$\Tr(\bar S_Y)-p$, organized around the
saddlepoint $z_0(\vartheta)=(1+\vartheta)(\gamma+\vartheta)/\vartheta$
\citep{OnatskiMoreiraHallin:2013,BaiSilverstein:2004}.  Explicitly, in
the form of equations (4.1)--(4.2) of
\citet{OnatskiMoreiraHallin:2013}, the log likelihood ratio of the
subcritical experiment is, up to deterministic centering,
\[
-\frac12\,\Delta_{p,n}\{z_0(\vartheta)\}
-\frac{\vartheta}{2\gamma}\,\big\{\Tr(\bar S_Y)-p\big\}
+o_P(1),
\qquad
\Delta_{p,n}(z)=\sum_j\log\{z-\bar\lambda_j\}-c_{p,n}(z),
\]
with $c_{p,n}(z)$ the deterministic Mar\v{c}enko--Pastur centering.
The trace term arises from a density-ratio identity.  With
$a=\vartheta_n/p$, the likelihood ratio of the spiked alternative
against the matched null $N_p(0,(1+a)I_p)$ equals its likelihood ratio
against $N_p(0,I_p)$ times the ratio of the two null densities, so
\[
L_{\mathrm{match}}(\vartheta;\bar\lambda)
=\exp\Big\{\frac{np}{2}\log(1+a)-\frac{na}{2}\sum_j\bar\lambda_j\Big\}
\,L_{\mathrm{OMH}}\big(\vartheta;(1+a)\bar\lambda\big),
\]
where $L_{\mathrm{OMH}}$ is their known-scale likelihood ratio
evaluated at the eigenvalues $(1+a)\bar\lambda_j$ of $\hat S_Y$.  Since
$\frac{np}{2}\log(1+a)-\frac{na}{2}\sum_j\bar\lambda_j
=-\frac{\vartheta_n}{2\gamma_n}\{\Tr(\bar S_Y)-p\}
-\frac{\vartheta_n^2}{4\gamma_n}+O\{\vartheta_n^3/(p\gamma_n)\}$,
the prefactor supplies the trace term of the display up to
deterministic centering.  Evaluating the centered spectral sum at
$(1+a)\bar\lambda_j$ rather than at $\bar\lambda_j$ changes it by
$-(\vartheta_n/p)\sum_j\bar\lambda_j/\{z_0-\bar\lambda_j\}+o_P(1)$,
a deterministic constant plus $o_P(1)$ that is absorbed in
$c_{p,n}$.  Their equation (4.2) concerns instead the trace-normalized
eigenvalues of the unknown-scale invariant experiment; the matched-null
experiment uses deterministic scale matching, and the two
normalizations are distinct.  As $\vartheta\downarrow0$
this statistic collapses onto its leading quadratic term: the linear ($k=1$)
spectral moment is removed by the exact cancellation of Section~S.8.1, and the
quadratic ($k=2$) term is $\sigma_nV_n+o_P(\sigma_n)$, recovering the corrected
Frobenius statistic $V_n$ of the decay regime.  Thus $V_n$ is first-order optimal
only as the strength vanishes; at fixed subcritical $\vartheta$ the higher
spectral moments carry information about the rank-one alternative beyond
its Frobenius norm, and $V_n$ is generally suboptimal.  The removal of
the linear spectral moment is the first-order counterpart of the exact
mean and covariance identities of Lemma~\ref{lem:s8-orth}: the linear
trace signal is removed at first order, and the quadratic-and-higher
spectral content remains.

For a supercritical strength $\vartheta>\sqrt\gamma$ contiguity fails in
the trace-matched experiment itself, without any transfer argument.
Conditional on $f$ the alternative is a spiked model with spike
$1+\|f\|^2$, and for fixed Gaussian innovations $\lambda_{\max}(\bar S_Y)$
is nondecreasing in the spike size, so $\|f\|^2\to_p\vartheta$ and the
Baik--Ben Arous--P\'ech\'e and Baik--Silverstein spike phase transition
\citep{BaikBenArousPeche:2005,BaikSilverstein:2006}, applied at the
spikes $1+\vartheta\pm\delta$, give
$\lambda_{\max}(\bar S_Y)\to_p(1+\vartheta)(1+\gamma/\vartheta)>(1+\sqrt\gamma)^2$
under $H_1$, while under $H_0$ the null edge is $(1+\sqrt\gamma)^2$; a
threshold test on $\lambda_{\max}$ then has size $\to0$ and power $\to1$,
so $\|G_n-P_0\|_{\mathrm{TV}}\to1$.  We do not analyze the critical case
$\vartheta=\sqrt\gamma$.

\section{Additional simulations: robustness and stress tests}
\label{supp:extra-sims}

This section reports the two descriptive stress tests referenced in
Section~7 of the main paper: a dense-spectrum and path-robustness
comparison, and a common-scale design outside the product-coordinate
null.  The simulation setup (sample sizes, replication counts, and
procedures) is that of the main paper.

\subsection{Dense-spectrum and path robustness}
\label{supp:robustness}

The GOE supplies both Haar eigenvectors and a random semicircle-type
spectrum.  Panel A of Table~\ref{tab:adm-robustness-main} retains Haar
eigenvectors but replaces the eigenvalues by two deterministic dense spectra
normalized such that $\Tr(H^2)=p^2$.  Power is similar across the three
designs.  At $\omega=2$, the rejection frequencies are $0.557$, $0.562$,
and $0.582$, compared with the asymptotic GOE benchmark $0.639$.  This is a
descriptive robustness check, not a proof of fixed-radius or spectral
universality.

Panel B compares the canonical quadratic path with the exponential bridge
using the same GOE matrix and Gaussian innovations in every replication.
The two paths remain numerically very close throughout the reported grid;
their largest absolute power difference is $0.0004$.  This numerical
proximity supports the bridge as a finite-sample approximation, while the theorem and reported
power envelope continue to concern only the quadratic path.

\begin{table}[t]
\centering
\caption{Dense spectra and the exponential proof bridge ($n=p=200$)}
\label{tab:adm-robustness-main}
\footnotesize
\begin{tabular*}{\textwidth}{@{\extracolsep{\fill}}lcccc}
\toprule
Design & $\omega=1$ & $\omega=1.5$ & $\omega=2$ & $\omega=2.5$\\
\midrule
\multicolumn{5}{l}{\emph{Panel A: direction spectrum, quadratic path}}\\
GOE spectrum &0.121&0.273&0.557&0.842\\
Fixed smooth spectrum &0.127&0.275&0.562&0.855\\
Balanced two-point spectrum &0.125&0.283&0.582&0.874\\
Asymptotic theory &0.126&0.302&0.639&0.931\\
\addlinespace
\multicolumn{5}{l}{\emph{Panel B: canonical path and proof bridge}}\\
Canonical quadratic path &0.121&0.273&0.557&0.842\\
Exponential proof bridge &0.121&0.273&0.557&0.841\\
\bottomrule
\end{tabular*}
\par
\vspace{0.05cm}
\begin{minipage}{\textwidth}
\footnotesize\emph{Notes:} Rejection frequencies of $\tilde Z_n^c$
from $10{,}000$ replications.  Deterministic spectra satisfy
$\Tr(H^2)=p^2$.  Panel B uses common random numbers across paths.
\end{minipage}
\end{table}

\subsection{A stress test outside the null model}
\label{supp:stress}

The last experiment makes the scope of Assumption~2 of the main paper
visible.  Generate $X_{it}=\sigma_te_{it}$ with independent standard-normal
$e_{it}$ and $\sigma_t^2\sim\chi^2_\nu/\nu$.  The covariance and correlation
matrices are $I_p$, but coordinates are dependent through their squares.
The design therefore lies outside the maintained product-coordinate null.

Table~\ref{tab:adm-scale-main} separates centering from variance effects.
The data-driven correction keeps the mean of $\tilde Z_n^c$ relatively
close to zero, but the variance is inflated.  With $\nu=5$, for example, its
mean is $0.182$, its standard deviation is $1.351$, and its rejection
frequency is $0.137$.  Distortion declines as the common scale becomes less
variable.  Deterministic centering fails completely because
$\mathbb{E}(X_{it}^2X_{jt}^2)=\mathbb{E}(\sigma_t^4)>1$.  The experiment shows that correcting
the location does not by itself deliver the product-coordinate variance
formula under dependent-but-uncorrelated observations.

\begin{table}[t]
\centering
\caption{Common-scale stress test outside the product-coordinate null
($n=p=200$)}
\label{tab:adm-scale-main}
\footnotesize
\begin{tabular*}{\textwidth}{@{\extracolsep{\fill}}lccccc}
\toprule
$\nu$ & Rej. $\tilde Z_n^c$ & Mean & S.d.
& Rej. $Z_n^{\rm det}$ & Rej. $\lambda_{\max}$\\
\midrule
$5$ &0.137&0.182&1.351&1.000&1.000\\
$10$ &0.095&0.102&1.184&1.000&1.000\\
$20$ &0.077&0.073&1.093&1.000&0.973\\
\bottomrule
\end{tabular*}
\par
\vspace{0.05cm}
\begin{minipage}{\textwidth}
\footnotesize\emph{Notes:} $10{,}000$ replications at nominal level $0.05$.
Although $\operatorname{var}(X_t)=I_p$, coordinate independence fails.
\end{minipage}
\end{table}

\appendix
\renewcommand{\theHsection}{App.\Alph{section}}
\renewcommand{\thesubsection}{\thesection.\arabic{subsection}}
\numberwithin{equation}{section}

\section{Proofs for Section 2.1}
\label{app:goe}

\begin{proof}[Proof of Proposition \ref{prop:goe-polar}]
Choose the Frobenius-orthonormal basis of $\mathbb S^p$ consisting of the
diagonal matrices $E_{ii}$ and the off-diagonal matrices
$(E_{ij}+E_{ji})/\sqrt2$, $i<j$.  The coordinates of $H$ in this basis are
$H_{ii}$ and $\sqrt2H_{ij}$, respectively.  By the definition of the GOE,
all $d_p$ coordinates are independent $N(0,2)$ variables.  Hence the
coordinate vector is spherically Gaussian in $\mathbb R^{d_p}$.  Its
direction is uniform on the Euclidean unit sphere and independent of its
length, and its squared length divided by two is $\chi^2_{d_p}$.  The
Euclidean length in these coordinates is exactly $\|H\|_F$, which proves
the first three assertions.

Finally, $2d_p=p(p+1)$ and
$\chi^2_{d_p}=d_p+O_p(\sqrt{d_p})$.  Therefore
\[
\frac{(R_p^{\mathrm{GOE}})^2}{p^2}
=\frac{2d_p}{p^2}+O_p\left(\frac{\sqrt{d_p}}{p^2}\right)
=1+O_p(p^{-1}),
\]
and a Taylor expansion of the square root gives
$R_p^{\mathrm{GOE}}/p=1+O_p(p^{-1})$.
\end{proof}

\begin{proof}[Proof of Lemma \ref{lem:traces}]
Let $\lambda_{1},\ldots,\lambda_{p}$ be the eigenvalues of $p^{-1/2}H$.
By the symmetry of $H$ and Proposition~\ref{prop:goe-spectrum},
\begin{equation}
p^{-1-k/2}\Tr\{H^{k}\}
=
\frac{1}{p}\sum_{j=1}^{p}\lambda_{j}^{k}
\overset{p}{\rightarrow}
\frac{1}{2\pi}\int_{-2}^{2}x^{k}\sqrt{4-x^{2}}\mathrm{d}x.
\label{eq:semicircle-moments}
\end{equation}
For even $k=2m$ the right-hand side equals the Catalan number
$C_{m}=\tfrac{1}{m+1}\binom{2m}{m}$; in particular the limits are $1$ for
$k=2$ and $2$ for $k=4$. For odd $k$, the semicircle moment in
(\ref{eq:semicircle-moments}) is zero, such that display yields only
\[
p^{-1-k/2}\Tr(H^k)\overset{p}{\rightarrow}0.
\]
The odd traces needed below fluctuate on smaller, power-specific scales:
\[
p^{-1/2}\Tr(H)=O_p(1),\qquad
p^{-3/2}\Tr(H^3)=O_p(1).
\]
The fifth-order quantity is used only through the separate remainder bound
proved below; no generic odd-power central limit theorem is invoked.

For $k=1$ the result is exact:
$p^{-1/2}\Tr\{H\}=p^{-1/2}\sum_{j}H_{jj}\sim N(0,2)$.

For $k=3$, note $[H^{3}]_{il}=\sum_{j,r}H_{ij}H_{jr}H_{rl}$, and put
$Y_{ijk}=H_{ij}H_{jk}H_{ki}$. Then
\[
\begin{aligned}
p^{-3/2}\Tr\{H^{3}\}
&=p^{-3/2}\sum_{i,j,k}Y_{ijk}
\overset{d}{\rightarrow}N(0,\sigma_{H^{3}}^{2}),\\
\sigma_{H^{3}}^{2}
&=\lim_{p}\frac{1}{p^{3}}
\sum_{\substack{i,j,k\\i^{\prime},j^{\prime},k^{\prime}}}
\mathbb{E}(Y_{ijk}Y_{i^{\prime}j^{\prime}k^{\prime}}).
\end{aligned}
\]
Partition the sum by the cardinality $C_{ijk}$ of $\{i,j,k\}$. For
$C_{ijk}=1$ (all equal), nonzero expectations require two or three of
$i^{\prime},j^{\prime},k^{\prime}$ to equal $i$:
\[
\sum_{C_{ijk}=1}
=
\sum_{i}\mathbb{E}[H_{ii}^{6}]
+3\sum_{i}\sum_{j\neq i}\mathbb{E}[H_{ii}^{4}H_{ij}^{2}]
=
2^{3}\cdot15p+3p(p-1)\cdot2^{2}\cdot3
=O(p^{2}).
\]
For $C_{ijk}=2$, say $i=j\neq k$ (three such configurations), the primed
indices must contain a matching pair or all equal $i$:
\[
\begin{aligned}
\sum_{C_{ijk}=2}
&=9\sum_i\sum_{k\ne i}
\Big\{\mathbb{E}(H_{ii}^{2}H_{ik}^{4})
+\sum_{k^\prime\notin\{i,k\}}
\mathbb{E}(H_{ii}^{2}H_{ik}^{2}H_{ik^\prime}^{2})\Big\}
+3\sum_i\sum_{k\ne i}\mathbb{E}(H_{ii}^{4}H_{ik}^{2})\\
&=9p(p-1)\{2\cdot3+(p-2)\cdot2\}+36p(p-1)
=18p^{3}+O(p^{2}),
\end{aligned}
\]
where the last term collects the primed triples $(i,i,i)$.  (With
these blocks the finite-$p$ total is exact: $492$ at $p=2$ and $1260$
at $p=3$.)
For $C_{ijk}=3$, the primed indices must be one of the six permutations of
$(i,j,k)$:
\[
\sum_{C_{ijk}=3}
=
6\sum_{i}\sum_{j\neq i}\sum_{k\neq i,j}
\mathbb{E}[H_{ij}^{2}H_{jk}^{2}H_{ik}^{2}]
=
6p(p-1)(p-2)
=
6p^{3}+O(p^{2}).
\]
Hence $\sigma_{H^{3}}^{2}=(18p^{3}+6p^{3}+O(p^{2}))/p^{3}\rightarrow24$.
Asymptotic normality follows from the central limit theorem for linear
eigenvalue statistics of Wigner matrices; see
\citet{LytovaPastur:2009} and \citet[Ch.~2]{AndersonGuionnetZeitouni:2010}.
Finally, $\Tr\{H^{5}\}=o_{p}(p^{7/2})$ follows from
(\ref{eq:semicircle-moments}) with $k=5$.
\end{proof}

\section{The Mixture Likelihood: Proofs for Section
3}
\label{app:mixture}

Throughout this appendix, $\lambda_{1},\ldots,\lambda_{p}$ denote the
eigenvalues of $\hat{R}_{n}$ (not of $p^{-1/2}H$), and expectations
over $H$ are taken with respect to $Q_{n}$, holding the data fixed. Recall
that under the null the empirical distribution of the $\lambda_{i}$
converges to the Mar\v{c}enko--Pastur law with ratio $\gamma$
\citep{MarchenkoPastur:1967}, whose first
noncentral moments are
\[
M_{1}=1,\qquad M_{2}=1+\gamma,\qquad M_{3}=1+3\gamma+\gamma^{2},
\]
and $\lambda_{\max}(\hat{R}_{n})=O_{p}(1)$.

\subsection{Proof of Lemma \ref{lem:approximation}}

The quadratic-form terms in $\log\ell_n$ and $\log\psi_n$ cancel
identically.  Writing $x_j=\lambda_j(H)/n$ gives
\[
\log\frac{\psi_n(H)}{\ell_n(H)}
=\frac n2\sum_{j=1}^p
\left[\omega x_j+\frac{\omega^2x_j^2}{2}
-\log(1+\omega x_j+\omega^2x_j^2)\right]
+\frac{\omega^4\gamma_n^3}{4},
\]
which proves the exact formula in the lemma.  For
$\phi(u)=u+u^2/2-\log(1+u+u^2)$,
\[
\phi(u)=\frac23u^3-\frac14u^4+O(u^5),
\qquad |u|\le\tfrac12.
\]
On $\mathcal K_n^{\mathrm{LR}}$, $|\omega x_j|=O(n^{-1/2})$ and
$n\sum_j|x_j|^5=O(n^{-1/2})$, so
\[
f_n(H)
=\frac{\omega^3}{3}\frac{\Tr(H^3)}{n^2}
-\frac{\omega^4}{8}\frac{\Tr(H^4)}{n^3}
+\frac{\omega^4\gamma_n^3}{4}+o(1).
\]
The trace constraints make $f_n(H)$ uniformly bounded on
$\mathcal K_n^{\mathrm{LR}}$. Under $Q_n$, the two normalized traces satisfy
$\Tr(H^3)/n^2\to0$ and $\Tr(H^4)/n^3\to2\gamma^3$ in probability. Hence
$f_n(H)\to0$ in $Q_n$-probability.
\qed

\subsection[Evaluation of the factorized mixture]{Evaluation of the mixture $h_{n}$}
\label{app:mixture-evaluation}

Start from the rotated representation (13). The
diagonal and off-diagonal blocks of $W_{n}$ are independent, so
$h_{n}=\int\psi_{n}\mathrm{d}Q_{n}$ factorizes as
\begin{align*}
h_{n}
={}&
e^{\omega^{4}\gamma_n^{3}/4}
\prod_{i}\mathbb{E}\Big[
\exp\Big(\tfrac{\omega}{2}(1-\lambda_{i})W_{ii}
+\tfrac{\omega^{2}}{4n}(1-2\lambda_{i})W_{ii}^{2}\Big)\Big]\\
&\times
\prod_{i<j}\mathbb{E}\Big[
\exp\Big(\tfrac{\omega^{2}}{2n}\big(1-\lambda_{i}-\lambda_{j}\big)W_{ij}^{2}\Big)\Big].
\end{align*}

\emph{Diagonal factors.} With $W_{ii}\sim N(0,2)$, identity
(14) with $\tau^{2}=2$, $a=\tfrac{\omega}{2}(1-\lambda_{i})$
and $b=-\tfrac{\omega^{2}}{2n}(1-2\lambda_{i})$ gives
\[
\log\mathbb{E}[\cdot]
=
\frac{\tfrac{\omega^{2}}{4}(1-\lambda_{i})^{2}}
{1-\tfrac{\omega^{2}}{n}(1-2\lambda_{i})}
-\frac{1}{2}\log\Big(1-\tfrac{\omega^{2}}{n}(1-2\lambda_{i})\Big).
\]
Expanding to the order relevant for a sum of $p=O(n)$ terms,
\begin{equation}
\sum_{i}\log\mathbb{E}[\cdot]
=
\frac{\omega^{2}}{4}\sum_{i}(1-\lambda_{i})^{2}
+\frac{\omega^{4}}{4n}\sum_{i}(1-\lambda_{i})^{2}(1-2\lambda_{i})
+\frac{\omega^{2}}{2n}\sum_{i}(1-2\lambda_{i})
+o_{p}(1).
\label{eq:diag-expansion}
\end{equation}
The second and third sums converge, by the Mar\v{c}enko--Pastur law, to
\[
\frac{\omega^{4}}{4}\gamma(-3+5M_{2}-2M_{3})
=
-\frac{\omega^{4}}{4}\big(\gamma^{2}+2\gamma^{3}\big)
\qquad\text{and}\qquad
-\frac{\omega^{2}}{2}\gamma,
\]
respectively, using $-3+5M_{2}-2M_{3}=-\gamma-2\gamma^{2}$.

\emph{Off-diagonal factors.} With $W_{ij}\sim N(0,1)$ and
$\kappa_{ij}=\tfrac{\omega^{2}}{2n}(1-\lambda_{i}-\lambda_{j})$,
$\mathbb{E}[e^{\kappa_{ij}W_{ij}^{2}}]=(1-2\kappa_{ij})^{-1/2}$, so
\[
\begin{aligned}
\sum_{i<j}\log\mathbb{E}[\cdot]
&=-\frac{1}{2}\sum_{i<j}
\log\Big\{1-\tfrac{\omega^{2}}{n}(1-\lambda_{i}-\lambda_{j})\Big\}\\
&=\frac{\omega^{2}}{2n}\sum_{i<j}(1-\lambda_{i}-\lambda_{j})
+\frac{\omega^{4}}{4n^{2}}\sum_{i<j}(1-\lambda_{i}-\lambda_{j})^{2}
+o_{p}(1).
\end{aligned}
\]
The first term equals
$\tfrac{\omega^{2}}{4n}\{p(p-1)-2(p-1)\sum_{i}\lambda_{i}\}$; this is
$O_{p}(n)$ and must be combined with the $O_{p}(n)$ leading term of
(\ref{eq:diag-expansion}) and with the $\omega^{2}/n$ diagonal term; their
sum is $\tfrac{\omega^{2}}{4}S_{n}$ with $S_{n}$ as in (16).
Writing $\Tr\{\hat{R}_{n}\}=\sum_{i}\lambda_{i}$, the deterministic
part of the added terms is $-p(p+1)/n$ and the random part is
$-2\gamma_n(\Tr\{\hat{R}_{n}\}-p)+o_{p}(1)$, which gives the second
expression in (16). The second term converges to the constant
\[
\frac{\omega^{4}}{4}\gamma^{2}\big(M_{2}-\tfrac{1}{2}\big)
=
\frac{\omega^{4}}{4}\gamma^{2}\big(\gamma+\tfrac{1}{2}\big).
\]

\emph{Collecting terms.} Adding the three groups of terms gives
(15):
\[
\log h_{n}
=
\frac{\omega^{2}}{4}S_{n}+\kappa_{n}+r_n,
\quad
\kappa_{n}\rightarrow
\omega^{4}\Big[
\frac{\gamma^3}{4}
+\tfrac{\gamma}{4}(-3+5M_{2}-2M_{3})
+\tfrac{\gamma^{2}}{4}\big(M_{2}-\tfrac{1}{2}\big)
\Big]
=
-\frac{\omega^4\gamma^2}{8}.
\]
Here $r_n=o_{P_n}(1)$ collects the Taylor remainders in
(\ref{eq:diag-expansion}), the off-diagonal expansion, and the centered
fluctuations of the spectral sums around their deterministic
Mar\v{c}enko--Pastur equivalents. The three $\gamma^3$ contributions to the
deterministic equivalent cancel exactly. The remaining constant is minus one
half of the variance of the limiting statistic $\omega^2U_n$, as required for
a mean-one lognormal likelihood-ratio limit. Thus (15) is an
asymptotic expansion, not a finite-sample identity.
\qed

\subsection{Proof of Lemma \ref{lem:centering}}

By (16) it suffices to show that the data-driven correction
reproduces the centering of the mixture calculation, i.e., that
\begin{equation}
C_{L}+C_{D}
=
\frac{1}{n^{2}}\sum_{t}\Big[\Big(\sum_{i}X_{it}^{2}\Big)^{2}
+p-2\sum_{i}X_{it}^{2}\Big]
=
\frac{p(p+1)}{n}
+2\gamma_n\big(\Tr\{\hat{R}_{n}\}-p\big)+o_{p}(1)
\label{eq:CLCD-expansion}
\end{equation}
under the null. Write $Q_{t}=\sum_{i}X_{it}^{2}$ and expand
$Q_{t}^{2}=(Q_{t}-p)^{2}+2p(Q_{t}-p)+p^{2}$:
\[
C_{L}+C_{D}
=
\underbrace{\frac{1}{n^{2}}\sum_{t}(Q_{t}-p)^{2}}_{(a)}
+\underbrace{\frac{2p}{n^{2}}\sum_{t}(Q_{t}-p)}_{(b)}
+\underbrace{\frac{p^{2}+p}{n}-\frac{2}{n^{2}}\sum_{t}Q_{t}}_{(c)}.
\]
For $(a)$: $\mathbb{E}(Q_{t}-p)^{2}=\sum_{i}\var(X_{it}^{2})\leq Cp$ by
independence across $i$, so under the null
$(a)=\tfrac{1}{n^{2}}\sum_{t}\mathbb{E}(Q_{t}-p)^{2}
+o_{p}(1)\rightarrow2\gamma$, since in the Gaussian model of this
appendix $\var(X_{it}^{2})=2$ for every coordinate. The fluctuation vanishes by a
variance calculation for the triangular array
$Y_{nt}=(Q_{t}-p)^{2}-\mathbb{E}(Q_{t}-p)^{2}$, $t\le n$, whose summands are
independent across $t$: in the Gaussian null model considered in this
appendix all moments are uniformly bounded, and expanding the fourth
moment of the centered sum $Q_t-p$ over the independent coordinates gives,
uniformly in $t$,
\[
\var(Y_{nt})\le\mathbb{E}(Q_{t}-p)^{4}
\leq
p\,\mathbb{E}(X^{2}-1)^{4}
+6p^{2}\big(\mathbb{E}(X^{2}-1)^{2}\big)^{2}
=O(p^{2}),
\]
such that
\[
\var\Big(\frac1{n^{2}}\sum_{t=1}^{n}Y_{nt}\Big)
=\frac1{n^{4}}\sum_{t=1}^{n}\var(Y_{nt})
=O\Big(\frac{np^{2}}{n^{4}}\Big)=O(n^{-1})\to0,
\]
because $p/n\to\gamma$; Chebyshev's inequality then gives
$(a)-\tfrac1{n^2}\sum_t\mathbb{E}(Q_t-p)^2\to_p0$.
For $(b)$: since $\sum_{t}Q_{t}=n\Tr\{\hat{R}_{n}\}$,
$(b)=\tfrac{2p}{n}(\Tr\{\hat{R}_{n}\}-p)
=2\gamma_n(\Tr\{\hat{R}_{n}\}-p)$ exactly, which is the $O_{p}(1)$
random re-centering. For $(c)$:
$(c)=\tfrac{p^{2}+p}{n}-\tfrac{2}{n}\Tr\{\hat{R}_{n}\}
=\tfrac{p^{2}+p}{n}-\tfrac{2p}{n}+O_{p}(n^{-1})$. Summing,
\begin{align*}
C_{L}+C_{D}
&=
2\gamma+\frac{p^{2}-p}{n}
+2\gamma_n\big(\Tr\{\hat{R}_{n}\}-p\big)+o_{p}(1)\\
&=
\frac{p(p+1)}{n}
+2\gamma_n\big(\Tr\{\hat{R}_{n}\}-p\big)+o_{p}(1),
\end{align*}
which is (\ref{eq:CLCD-expansion}) (in the Gaussian case; see the remark
below for non-Gaussian coordinates). Combining with (16),
$S_{n}=\Tr\{(\hat{R}_{n}-I_{p})^{2}\}-(C_{L}+C_{D})+o_{p}(1)$, and the
identity
$\Tr\{(\hat{R}_{n}-I_{p})^{2}\}-(C_{L}+C_{D})=S_{L}+S_{D}=4U_{n}$,
equation~(8) in Section~2.1 of the main paper, completes the argument. Contiguity
extends the conclusion to the local alternatives.
\qed

\begin{remark}
Step $(a)$ is where Gaussianity of the null model enters the
\emph{centering}: for non-Gaussian coordinates,
$\var(X_{it}^{2})=\mathbb{E}(X_{it}^{2}-1)^{2}$ need not equal $2$, and
the mixture calculation (which is a Gaussian likelihood computation)
centers with the Gaussian value. The data-driven correction $C_{L}+C_{D}$
automatically centers with the true value, which is precisely why the
corrected statistic, unlike the deterministically centered one, retains a
mean-zero limit under Assumption 2.
\end{remark}

The pointwise approximation in Lemma~\ref{lem:approximation} is not, by
itself, sufficient to interchange approximation and integration over the
GOE. The following tail lemma supplies the required uniformity for the
factorized approximating mixture.

\begin{lemma}[Tail negligibility for the approximating mixture]
\label{lem:psi-tail}
Let
\[
\mathcal K_n^{\mathrm{LR}}
=
\left\{H:\begin{gathered}
\lambda_{\max}(|H|)\leq3\sqrt p,\quad
\Tr(H^{2})\leq2p^{2},\quad |\Tr(H^{3})|\leq p^{2},\\
\Tr(H^{4})\leq3p^{3},\quad \Tr(H^{6})\leq20p^{4}
\end{gathered}\right\}.
\]
Then $Q_n(\mathcal K_n^{\mathrm{LR}})\to1$ and, under $P_n$,
\begin{equation}
\int_{(\mathcal K_n^{\mathrm{LR}})^c}\psi_n(H)\mathrm dQ_n(H)=o_p(1).
\label{eq:psi-tail}
\end{equation}
\end{lemma}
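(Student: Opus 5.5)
The plan is to reduce the $P_n$-probability statement to a purely GOE integral by Fubini and Markov's inequality, using the fact that $\psi_n/\ell_n=e^{f_n}$ does not depend on the data (Lemma~\ref{lem:approximation}).

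\emph{Step 1: $Q_n(\mathcal K_n^{\mathrm{LR}})\to1$.} Treat each constraint separately.
\begin{itemize}
\item The operator-norm constraint follows from Proposition~\ref{prop:goe-spectrum}, since $p^{-1/2}\|H\|_{\mathrm{op}}\to_p2<3$.
\item The constraints on $\Tr(H^2)$ and $\Tr(H^4)$ follow from Lemma~\ref{lem:traces}: the limits are $1<2$ and $2<3$.
\item The cubic constraint follows because $\Tr(H^3)=O_p(p^{3/2})=o_p(p^2)$.
\item For $\Tr(H^6)$, Lemma~\ref{lem:traces} gives $p^{-4}\Tr(H^6)\to_pC_3=5<20$.
\end{itemize}
A union bound over the five constraints gives the claim.

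\emph{Step 2: exact reduction to a GOE integral.} For fixed $H$, the data-dependent part of $\psi_n(H)$ equals that of $\ell_n(H)$, namely $\exp\{-\frac12\sum_tX_t'(A+A^2)X_t\}$ with $A=\omega H/n$. Because $I_p+A+A^2$ is positive definite, $\mathbb{E}_{P_n}\ell_n(H)=1$. Hence
\[
\mathbb{E}_{P_n}\psi_n(H)=e^{f_n(H)}
\]
exactly. By Tonelli's theorem,
\[
\mathbb{E}_{P_n}\int_{(\mathcal K_n^{\mathrm{LR}})^c}\psi_n\,\mathrm dQ_n=\int_{(\mathcal K_n^{\mathrm{LR}})^c}e^{f_n}\,\mathrm dQ_n .
\]
Therefore (\ref{eq:psi-tail}) follows from Markov's inequality once the right side tends to zero. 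By Cauchy--Schwarz, the right side is at most
\[
Q_n\{(\mathcal K_n^{\mathrm{LR}})^c\}^{1/2}\Big\{\int e^{2f_n}\,\mathrm dQ_n\Big\}^{1/2},
\]
and Step~1 makes the first factor vanish. It therefore suffices to show $\limsup_n\int e^{2f_n}\,\mathrm dQ_n<\infty$.

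\emph{Step 3: exponential moment of $f_n$ (the main obstacle).} Recall $\phi(u)=u+u^2/2-\log(1+u+u^2)$, so $\phi(u)=\frac23u^3-\frac14u^4+O(u^5)$ near zero and $\phi(u)\le u^2/2+|u|+\log(4/3)$ globally. I would split on the event $E=\{\|H\|_{\mathrm{op}}\le\epsilon n\}$ for a small fixed $\epsilon>0$.

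On $E$, I expect a global bound $\phi(u)\le\frac23u^3+Cu^4$ for $|u|\le\omega\epsilon$; this should be checked directly. It gives
\[
2f_n\le C+\frac{2\omega^3}{3}\frac{|\Tr(H^3)|}{n^2}+C\frac{\Tr(H^4)}{n^3}.
\]
Bound $|\Tr(H^3)|\le\|H\|_{\mathrm{op}}\Tr(H^2)$ and $\Tr(H^4)\le p\|H\|_{\mathrm{op}}^4$. Both terms are then controlled by functions of $\|H\|_{\mathrm{op}}$ and $\|H\|_F^2$. Gaussian concentration then gives the needed tail bounds: $\|H\|_{\mathrm{op}}$ is $\sqrt2$-Lipschitz in the standard Gaussian coordinates, so $Q_n(\|H\|_{\mathrm{op}}>3\sqrt p+t)\le e^{-t^2/4}$, and $\chi^2$ tails control $\|H\|_F^2$. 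With $s=\|H\|_{\mathrm{op}}\le\epsilon n$, the term $Cps^4/n^3\le C\epsilon^2s^2$ is dominated by the $e^{-t^2/4}$ tail when $\epsilon$ is small. Likewise the cubic term is $O(\epsilon\|H\|_F^2/n)$, which is absorbed by the $\chi^2$ tail of $\|H\|_F^2=\sum(\text{coordinates})^2$ with coordinates $N(0,2)$.

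On $E^c$, use the crude bound
\[
2f_n\le\frac{\omega^2\Tr(H^2)}{2n}+\frac{2\omega\sqrt p\,\|H\|_F}{1}\cdot\frac1{\sqrt n}+Cp .
\]
The GOE density factor $e^{-\Tr(H^2)/4}$ absorbs $e^{\omega^2\Tr(H^2)/(2n)}$ for large $n$, leaving a Gaussian integral of size $e^{O(p)}$. Meanwhile $Q_n(E^c)\le e^{-c\epsilon^2n^2}$. Applying Cauchy--Schwarz once more on $E^c$ then makes this contribution $o(1)$, since $e^{-cn^2}$ beats $e^{O(p)}=e^{O(n)}$.

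Combining the two pieces gives $\sup_n\int e^{2f_n}\,\mathrm dQ_n<\infty$, and hence the lemma. The delicate point is getting the constants right in the moderate-deviation region $3\sqrt p\ll\|H\|_{\mathrm{op}}\le\epsilon n$, where the quartic term must be beaten by the concentration tail. If that fails, I would refine the split into dyadic shells in $\|H\|_{\mathrm{op}}$.
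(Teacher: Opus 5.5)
Steps 1 and 2 of your proposal are correct, and Step 2 takes a genuinely different route from the paper. Since $\psi_n/\ell_n=e^{f_n(H)}$ is data-free and $\mathbb{E}_{P_n}\ell_n(H)=1$, Tonelli reduces the claim to the deterministic limit $\int_{(\mathcal K_n^{\mathrm{LR}})^c}e^{f_n}\,\mathrm dQ_n\to0$. The paper instead tilts the GOE by $\psi_n$ itself, giving an explicit Gaussian law with variances $1+O(n^{-1})$ and bounded diagonal means. It shows, by coupling to a standard GOE, that the tilted law still charges $\mathcal K_n^{\mathrm{LR}}$ with probability tending to one, and then multiplies by $h_n=O_p(1)$.

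Your target $\limsup_n\int e^{2f_n}\,\mathrm dQ_n<\infty$ is true, but Step 3 does not prove it. The failure is on the typical set $\{\|H\|_{\mathrm{op}}\le3\sqrt p\}$, not in the moderate-deviation range you flagged. The inequality $|\Tr(H^3)|\le\|H\|_{\mathrm{op}}\Tr(H^2)$ discards the sign cancellation that makes $\Tr(H^3)=O_{Q_n}(p^{3/2})$. At a typical $H$ the right side is about $2p^{5/2}$, so your bound on $2f_n$ is of order $\omega^3\gamma^2\sqrt p$ on the bulk, and of order $\epsilon\gamma p$ once $\|H\|_{\mathrm{op}}\le\epsilon n$ is inserted. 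The $\chi^2$ tail cannot absorb this: $\int\exp(c\epsilon\|H\|_F^2/n)\,\mathrm dQ_n=(1-4c\epsilon/n)^{-p(p+1)/4}=e^{(1+o(1))c\epsilon\gamma p}\to\infty$. Likewise $\int\exp(C\epsilon^2\|H\|_{\mathrm{op}}^2)\,\mathrm dQ_n\ge e^{(4+o(1))C\epsilon^2p}$, because $\|H\|_{\mathrm{op}}$ concentrates at $2\sqrt p$, not at $0$. Dyadic shells in $\|H\|_{\mathrm{op}}$ cannot repair the bulk. Separately, on $E^c$ the global bound $\phi(u)\le u+u^2/2+\log(4/3)$ produces the constant $np\log(4/3)$, not $Cp$. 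Use $\phi(u)\le C_0u^2$ instead, which gives $e^{O(p^2/n)}$ against $Q_n(E^c)\le e^{-c\epsilon^2n^2}$.

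What is missing is concentration for the cubic trace on the bulk $B=\{\|H\|_{\mathrm{op}}\le3\sqrt p\}$. There, $\phi(u)\le\tfrac23u^3+C|u|^5$ gives $2f_n\le\tfrac{2\omega^3}{3n^2}\Tr(H^3)+C$. On $B$, $\Tr(H^3)=F(H):=\Tr\{g(H)\}$ with $g(x)=(\max\{-3\sqrt p,\min\{x,3\sqrt p\}\})^3$. This $F$ is odd, so $\int F\,\mathrm dQ_n=0$, and by Hoffman--Wielandt it is Frobenius-Lipschitz with constant $27p^{3/2}$. Gaussian concentration then gives $\int e^{\lambda F}\,\mathrm dQ_n\le e^{C\lambda^2p^3}=O(1)$ at $\lambda=2\omega^3/(3n^2)$. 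Off $B$ your crude route does work. The bound $\phi(u)\le C|u|^3$ gives $2f_n\le C\|H\|_{\mathrm{op}}\|H\|_F^2/n^2+C$, and on each shell $\{s<\|H\|_{\mathrm{op}}\le2s\}$ with $3\sqrt p\le s\le\epsilon n$, the $\chi^2$ factor $e^{Cs}$ loses to $Q_n(\|H\|_{\mathrm{op}}>s)^{1/2}\le e^{-cs^2}$. Alternatively, keep the lossy bound $\int_Ee^{2f_n}\,\mathrm dQ_n\le e^{C\epsilon p}$ but strengthen Step 1 to $Q_n\{(\mathcal K_n^{\mathrm{LR}})^c\}\le e^{-c_0p}$, again via concentration of clipped traces, and take $\epsilon$ small relative to $c_0$. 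Convergence in probability alone does not close your Cauchy--Schwarz step.
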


\begin{proof}
The assertion $Q_n(\mathcal K_n^{\mathrm{LR}})\to1$ follows from Proposition
\ref{prop:goe-spectrum} and Lemma~\ref{lem:traces}. To prove the second
assertion, retain the unrestricted, factorized integral
$h_n=\int\psi_n\mathrm dQ_n$ and, conditional on the data, introduce the
probability measure
\[
\tilde Q_{n,\mathcal X_n}(\mathrm dH)
=h_n^{-1}\psi_n(H)Q_n(\mathrm dH).
\]
This is well defined for all sufficiently large $n$. Indeed, in the rotated
coordinates of (13), the tilted coordinates remain
independent Gaussian variables. Put
\[
d_{ij}=1-\frac{\omega^2}{n}(1-\lambda_i-\lambda_j),\qquad
d_i=1-\frac{\omega^2}{n}(1-2\lambda_i).
\]
Then, for $i<j$,
\[
W_{ij}\sim N(0,d_{ij}^{-1}),
\]
while
\[
W_{ii}\sim N(m_i,2d_i^{-1}),
\qquad
m_i=\frac{\omega(1-\lambda_i)}{d_i}.
\]

Fix $K<\infty$ and consider the data event
\[
\mathcal D_{n,K}
=\left\{\lambda_{\max}(\hat R_n)\le K,\
|\Tr(\hat R_n)-p|\le n^{1/4}\right\}.
\]
For $K$ sufficiently large, $P_n(\mathcal D_{n,K})\to1$. Uniformly on
$\mathcal D_{n,K}$,
\[
d_{ij}=1+O(n^{-1}),\qquad d_i=1+O(n^{-1}),\qquad
\max_i|m_i|\le C_K,
\qquad
\left|\sum_i m_i\right|=O(n^{1/4}+1).
\]
Write $W=G+M$, where $M=\diag(m_1,\ldots,m_p)$ and $G$ is centered. Couple
$G$ to a standard GOE $H_0$ using the same independent standard-normal
coordinates. The variance formulas above give
\[
\mathbb{E}\big(\|G-H_0\|_F^2|\mathcal X_n\big)\le C_K
\]
uniformly on $\mathcal D_{n,K}$.  Thus $\|G-H_0\|_{\mathrm{op}}=O_{\tilde Q}(1)$,
while $\|M\|_{\mathrm{op}}=O(1)$ and $\|H_0\|_{\mathrm{op}}/\sqrt p\to2$, so
\[
\frac{\|W\|_{\mathrm{op}}}{\sqrt p}\to2
\]
in $\tilde Q_{n,\mathcal X_n}$-probability, uniformly on
$\mathcal D_{n,K}$.

The same coupling and the trace inequality
\[
|\Tr(A^k)-\Tr(B^k)|
\le k p\max(\|A\|_{\mathrm{op}},\|B\|_{\mathrm{op}})^{k-1}
\|A-B\|_{\mathrm{op}}
\]
show, for $k=2,4,6$, that
\[
p^{-2}\Tr(W^2)\to1,\qquad
p^{-3}\Tr(W^4)\to2,\qquad
p^{-4}\Tr(W^6)\to5.
\]
For the odd trace, expand
\[
\Tr(W^3)=\Tr(G^3)+3\Tr(G^2M)+3\Tr(GM^2)+\Tr(M^3).
\]
Uniform Gaussian Wick bounds give
$\mathbb{E}_{\tilde Q}\{\Tr(G^3)^2\}\le Cp^3$. Moreover, the Gaussian
Poincar\'e inequality gives
$\var_{\tilde Q}\{\Tr(G^2M)\}\le Cp^2$, while its conditional mean is
$O\{p|\sum_i m_i|+p\}=o(p^2)$ on $\mathcal D_{n,K}$. Finally,
$\mathbb{E}_{\tilde Q}\{\Tr(GM^2)^2\}\le Cp$ and
$|\Tr(M^3)|\le Cp$. Therefore
\[
p^{-2}\Tr(W^3)\to0
\]
uniformly on $\mathcal D_{n,K}$. All inequalities defining $\mathcal K_n^{\mathrm{LR}}$ thus
hold with $\tilde Q_{n,\mathcal X_n}$-probability tending to one.

By (15), Lemma~\ref{lem:centering}, and
Theorem~2, $\log h_n=O_p(1)$ and hence $h_n=O_p(1)$.
It follows that
\[
\int_{(\mathcal K_n^{\mathrm{LR}})^c}\psi_n\mathrm dQ_n
=h_n\tilde Q_{n,\mathcal X_n}\{(\mathcal K_n^{\mathrm{LR}})^c\}=o_p(1),
\]
which proves (\ref{eq:psi-tail}).
\end{proof}

\subsection{Mixture-likelihood and optimality parts of
Theorem~1}

\emph{Likelihood expansion and contiguity.} Keep both $g_n$ and the factorized integral
$h_n=\int\psi_n\mathrm dQ_n$ unrestricted, and let
$\mathcal K_n^{\mathrm{LR}}$ be the likelihood-expansion bulk in
Lemma~\ref{lem:psi-tail}. The exact likelihood tail satisfies
\[
\mathbb{E}_{P_n}\int_{(\mathcal K_n^{\mathrm{LR}})^c}\ell_n\mathrm dQ_n
=Q_n\{(\mathcal K_n^{\mathrm{LR}})^c\}\to0
\]
by Tonelli's theorem, while the approximating-likelihood tail is $o_p(1)$
by Lemma~\ref{lem:psi-tail}. On the bulk, Lemma~\ref{lem:approximation}
shows that $\psi_n/\ell_n=e^{f_n(H)}$ is independent of the data. Therefore
\[
\mathbb{E}_{P_n}\int_{\mathcal K_n^{\mathrm{LR}}}|\ell_n-\psi_n|\mathrm dQ_n
=\int_{\mathcal K_n^{\mathrm{LR}}}|1-e^{f_n(H)}|\mathrm dQ_n.
\]
The integrand is uniformly bounded on $\mathcal K_n^{\mathrm{LR}}$ and converges to zero in
$Q_n$-probability, again by Lemma~\ref{lem:approximation}.  Bounded
convergence therefore makes the last display $o(1)$. Combining the
two tails and the bulk comparison gives $g_n-h_n=o_p(1)$ without a
change-of-measure or uniform-integrability argument under the alternatives.

Appendix~\ref{app:mixture-evaluation}, Lemma~\ref{lem:centering}, and Theorem
2 yield
$\log h_{n}=\omega^{2}U_{n}+\kappa_{n}+o_{p}(1)$ with
$U_{n}\overset{d}{\rightarrow}N(0,\gamma^{2}/4)$ and
$\kappa_n\to-\tfrac{1}{2}\omega^{4}\gamma^{2}/4$, so
$h_n$ converges to a strictly positive lognormal variable and is bounded
away from zero in probability. Therefore
$(g_n-h_n)/h_n=o_p(1)$ and $\log g_n-\log h_n=o_p(1)$. It follows that
$g_{n}\overset{d}{\rightarrow}\exp(V-\tfrac{1}{2}\var(V))$,
$V\sim N(0,\omega^{4}\gamma^{2}/4)$. Since the limit has expectation one,
contiguity of $\{G_{n}\}$ with respect to $\{P_{n}\}$ follows from Le
Cam's first lemma.

\emph{Optimality and asymptotically equivalent implementations.}
Fix $\alpha\in(0,1)$ and set
\[
T_n=\frac{2U_n}{\gamma_n},\qquad
s_{n,\omega}=\frac{\omega^2\gamma_n}{2},\qquad
s_\omega=\frac{\omega^2\gamma}{2}.
\]
The likelihood expansion above can be written as
\begin{equation}
\log g_n(\omega)
=s_{n,\omega}T_n-\frac12s_{n,\omega}^2+r_n,
\qquad r_n=o_{P_n}(1),\qquad T_n\Rightarrow N(0,1).
\label{eq:np-threshold-expansion}
\end{equation}
Let $\Psi_n^{\mathrm{NP}}$ be an exact level-$\alpha$ Neyman--Pearson
test, written as
\[
\Psi_n^{\mathrm{NP}}
=1\{g_n(\omega)>k_{n,\alpha}\}
+\eta_{n,\alpha}1\{g_n(\omega)=k_{n,\alpha}\},
\qquad 0\le\eta_{n,\alpha}\le1.
\]
The limit of $\log g_n(\omega)$ in
(\ref{eq:np-threshold-expansion}) is normal with a continuous, strictly
increasing cdf.  Quantile convergence therefore gives
\[
\log k_{n,\alpha}\rightarrow
s_\omega z_{1-\alpha}-\frac12s_\omega^2,
\qquad
k_{n,\alpha}\rightarrow
k_\alpha(\omega)
=\exp\{s_\omega z_{1-\alpha}-s_\omega^2/2\}.
\]
Indeed, this conclusion holds for any threshold and randomization that
give exact size $\alpha$, because the limiting cdf is continuous and
strictly increasing at its $(1-\alpha)$ quantile.

Define the threshold for $T_n$ induced by $k_{n,\alpha}$ as
\[
t_{n,\alpha}
=\frac{\log k_{n,\alpha}+s_{n,\omega}^2/2}{s_{n,\omega}}.
\]
Then $t_{n,\alpha}\to z_{1-\alpha}$.  For every $\varepsilon>0$, once
$|t_{n,\alpha}-z_{1-\alpha}|\le\varepsilon$, disagreement between
$\Psi_n^{\mathrm{NP}}$ and $1\{T_n>z_{1-\alpha}\}$ can occur only if
$|T_n-z_{1-\alpha}|\le2\varepsilon$ or
$|r_n|/s_{n,\omega}>\varepsilon$; this also covers possible
randomization on $\{g_n=k_{n,\alpha}\}$.  Hence
\[
\limsup_n\mathbb{E}_{P_n}\left|
\Psi_n^{\mathrm{NP}}-1\{T_n>z_{1-\alpha}\}\right|
\le P\{|Z-z_{1-\alpha}|\le2\varepsilon\},
\qquad Z\sim N(0,1).
\]
Letting $\varepsilon\downarrow0$ proves
\begin{equation}
\mathbb{E}_{P_n}\left|
\Psi_n^{\mathrm{NP}}-1\{T_n>z_{1-\alpha}\}\right|\rightarrow0.
\label{eq:np-indicator-equivalence}
\end{equation}
Contiguity transfers convergence in probability to $G_n(\omega)$, and
boundedness of the two tests then gives the same $L^1$ equivalence under
$G_n(\omega)$.  Replacing $U_n$ by any
$U_n^\prime=U_n+o_{P_n}(1)$ leaves the indicator equivalence unchanged under
$P_n$ and, by contiguity, under $G_n(\omega)$.  The claimed local
asymptotic Bayes optimality now follows from the Neyman--Pearson lemma
applied to the mixture $G_n$, as in \citet{AndrewsPloberger:1994} and
\citet{CarrascoHuPloberger:2014}.
\qed

\begin{proof}[Gaussian-shift part of Theorem~1]
For each fixed $j$, Theorem~1(i), with
$\omega=\sqrt{\theta_j}$, gives
\[
\log g_n(\sqrt{\theta_j})
=\theta_jU_n-\tfrac12\theta_j^2\tau^2+o_{P_n}(1).
\]
There are only finitely many $j$, so the maximum of the finitely many
remainders is $o_{P_n}(1)$.  Theorem~2 gives
$U_n\Rightarrow U\sim N(0,\tau^2)$, and the asserted joint convergence
follows from Slutsky's theorem.  The limiting likelihood ratio
$\exp(\theta U-\tfrac12\theta^2\tau^2)$ is exactly the likelihood ratio
of $N(\theta\tau^2,\tau^2)$ with respect to $N(0,\tau^2)$, which proves
the finite-subexperiment statement.  Finally, Le Cam's third lemma shifts
the mean of $U$ from zero to $\theta_0\tau^2$ under
$G_n(\sqrt{\theta_0})$.
\end{proof}

\subsection{Proof of Proposition~2 (path selection)}
\label{supp:path-selection}

Fix $c>\tfrac14$ and write $u=\omega x$, $x_j=\lambda_j(H)/n$.  Since
$1+u+cu^2\ge1-1/(4c)>0$ for every real $u$, the path
$\Sigma_c^{-1}=I_p+A+cA^2$ is positive definite for every $H$.  The
conditional log likelihood is
\[
\log\ell_n^{(c)}(H)
=\frac n2\sum_j\log\big(1+\omega x_j+c\,\omega^2x_j^2\big)
-\frac\omega2\Tr(\hat R_nH)
-\frac{c\,\omega^2}{2n}\Tr(\hat R_nH^2),
\]
with data-dependent part exactly quadratic in $H$, and
\[
\log(1+u+cu^2)
=u+(c-\tfrac12)u^2+(\tfrac13-c)u^3
+(c-\tfrac{c^2}2-\tfrac14)u^4+O(u^5).
\]

\emph{Step 1: comparison and tails.}  Define
\begin{align*}
\log\psi_n^{(c)}(H)
&=b_{n,c}(\omega)
+\frac\omega2\Tr\{H(I_p-\hat R_n)\}
+\frac{\omega^2}{4n}\Tr\{H^2\big((2c-1)I_p-2c\hat R_n\big)\},\\
b_{n,c}(\omega)&=\omega^4\gamma_n^3\Big(c-\frac{c^2}2-\frac14\Big),
\end{align*}
whose data terms coincide exactly with those of
$\log\ell_n^{(c)}$.  As in the $c=1$ case, the ratio
$\psi_n^{(c)}/\ell_n^{(c)}$ is a function of $H$ alone, with
\[
\varphi_c(u):=u+(c-\tfrac12)u^2-\log(1+u+cu^2)
=(c-\tfrac13)u^3+(\tfrac14+\tfrac{c^2}2-c)u^4+O(u^5),
\]
so on the likelihood bulk $\mathcal K_n^{\mathrm{LR}}$ the log ratio is
uniformly bounded, and since the quartic coefficient of $\varphi_c$ is
$-(c-c^2/2-1/4)$ while $\Tr(H^4)/(2n^3)\to\gamma^3$ under $Q_n$, the
normalizer $b_{n,c}$ cancels its limit exactly:
$\log\{\psi_n^{(c)}/\ell_n^{(c)}\}\to0$ in $Q_n$ probability.  The
bounded-convergence reduction of Appendix~B applies verbatim.  For the tail, the $\psi_n^{(c)}$-tilted
GOE has independent Gaussian entries with variance denominators
$1-(\omega^2/n)\{(2c-1)-2c\lambda_i\}$ and their off-diagonal
analogues; since $\lambda_i\ge0$, each denominator is at least
$1-\max(2c-1,0)\,\omega^2/n$, and the proof of the tail lemma of
Appendix~B applies with $(2c-1)-2c\lambda_i$ in place of
$1-2\lambda_i$.

\emph{Step 2: integration.}  The Gaussian factor identity gives, for
the $\omega^2$-order data terms, the diagonal square
$(\omega^2/4)\sum_i(1-\lambda_i)^2$, which is free of $c$, together
with the first-order factor terms
\[
\frac{\omega^2}{4n}\Big[p(p-1)(2c-1)-2c(p-1)\sum_i\lambda_i\Big]
+\frac{\omega^2}{2n}\sum_i\big\{(2c-1)-2c\lambda_i\big\}.
\]
Writing $\Tr(\hat R_n)=p+T$ with $T=\Tr(\hat R_n)-p$, these combine to
$\omega^2S_n^{(c)}/4$, where
\[
S_n^{(c)}
=\Tr\{(\hat R_n-I_p)^2\}-\frac{p(p+1)}n-2c\gamma_nT+o_{P_n}(1)
=4U_n+2(1-c)\gamma_nT+o_{P_n}(1),
\]
the last equality by the $c=1$ centering identity.  The remaining $\omega^4$-order terms are deterministic and are
evaluated by direct Gaussian integration, exactly as in the $c=1$
case.  The second-order diagonal contribution converges, by the
Mar\v{c}enko--Pastur moments $M_1=1$, $M_2=1+\gamma$,
$M_3=1+3\gamma+\gamma^2$, to
$(\omega^4/4)\gamma\{(2c-1)\gamma-2c(\gamma+\gamma^2)\}
=-(\omega^4/4)(\gamma^2+2c\gamma^3)$; the squared off-diagonal factor
terms converge to $(\omega^4/8)\gamma^2(1+2c^2\gamma)$; and adding
$b_{n,c}$ gives
\[
\kappa_c(\omega)
=\omega^4\gamma^3\Big(c-\frac{c^2}2-\frac14\Big)
-\frac{\omega^4}{4}\big(\gamma^2+2c\gamma^3\big)
+\frac{\omega^4}{8}\gamma^2\big(1+2c^2\gamma\big)
=-\frac{\omega^4\gamma^2}{8}
-\frac{(1-c)^2\omega^4\gamma^3}{4}.
\]
As a check, this equals minus one half of the variance of the limit of
$\omega^2U_n+(1-c)(\omega^2\gamma/2)T$ with $U\sim N(0,\gamma^2/4)$
independent of $T\sim N(0,2\gamma)$, consistent with
$\mathbb{E}_{P_n}g_n^{(c)}=1$.
This proves the displayed expansion of Proposition~2.

\emph{Step 3: joint limit.}  Exact null uncorrelatedness of $U_n$ and
$T$ is Lemma~2.  For joint asymptotic normality, let
$\Delta_{nt}=n^{-2}\sum_{i<j}S_{ij,t-1}X_{it}X_{jt}$ be the
martingale increment of Appendix~C, so that the off-diagonal part of
$U_n$ is $\sum_{t\ge2}\Delta_{nt}$ and the diagonal part is
negligible, and apply the martingale argument of Appendix~C to an
arbitrary fixed linear combination
$\alpha\Delta_{nt}+\beta n^{-1}\sum_i(X_{it}^2-1)$: the trace increments
are i.i.d.\ across $t$ with finite fourth moments, the conditional
cross-covariance vanishes identically because
$\mathbb{E}[\Delta_{nt}\sum_i(X_{it}^2-1)\mid\mathcal F_{n,t-1}]=0$
under the null (each term $X_{it}X_{jt}$ with $i<j$ is odd in some
coordinate of $X_t$), and the Lindeberg conditions hold as in
Appendix~C.  By Cram\'er--Wold,
$(U_n,T)$ is asymptotically bivariate normal with independent
components, $U_n\Rightarrow N(0,\gamma^2/4)$ and
$T\Rightarrow N(0,2\gamma)$, so $c=1$ is the unique member for which
the mixture-optimal statistic loads only on $U_n$.

\emph{Step 4: covariance form and small $c$.}  Inverting the path,
$\Sigma_c=I_p-A+(1-c)A^2+O(A^3)$, which vanishes at second order
uniquely at $c=1$.  For $c\le\tfrac14$ the polynomial $1+u+cu^2$ need
not be strictly positive; restricting $Q_n$ to
$\{\|H\|_{\mathrm{op}}\le3\sqrt p\}$, on which the eigenvalues of
$A+cA^2$ are $O(n^{-1/2})$, makes the path positive definite for all
large $n$, the restriction has $Q_n$-probability tending to one, and
the expansion is unchanged.
\qed

\section{Martingale-CLT part of Theorem~2}
\label{app:martingale-clt}

Throughout this appendix, all limits are under the product-coordinate null of
Assumption~2. Let
\[
\mathcal F_{nt}=\sigma(X_1,\ldots,X_t),
\qquad
X_t=(X_{1t},\ldots,X_{pt})^\prime,
\]
and define, for \(i<j\),
\[
S_{ij,t-1}
=
\sum_{s<t}X_{is}X_{js}.
\]
Then the off-diagonal martingale term can be written as
\[
U_n^{od}
=
\sum_{t=2}^n \Delta_{nt},
\qquad
\Delta_{nt}
=
\frac1{n^2}
\sum_{i<j}S_{ij,t-1}X_{it}X_{jt}.
\]
Since \(X_t\) is independent of \(\mathcal F_{n,t-1}\), and since
\(\mathbb{E} X_{it}=0\), we have
\[
\mathbb{E}(\Delta_{nt}|\mathcal F_{n,t-1})=0.
\]
Thus \(\{ \Delta_{nt},\mathcal F_{nt}\}\) is a martingale-difference array.

\subsection*{C.1 Conditional variance}

The conditional variance is
\[
v_{nt}
=
\mathbb{E}(\Delta_{nt}^2|\mathcal F_{n,t-1})
=
\frac1{n^4}
\sum_{i<j}S_{ij,t-1}^2 .
\]
Indeed, all cross terms vanish. If two unordered pairs \(\{i,j\}\) and
\(\{k,\ell\}\) are distinct, then the expectation of
\(X_{it}X_{jt}X_{kt}X_{\ell t}\) is zero, either because at least one
coordinate appears only once, or because the two pairs are disjoint. For the
same pair, the expectation equals one by the unit-variance normalization.

We now show that
\[
V_n
=
\sum_{t=2}^n v_{nt}
=
\frac1{n^4}
\sum_{i<j}\sum_{t=2}^n S_{ij,t-1}^2
\ \overset{p}{\rightarrow}\
\frac{\gamma^2}{4}.
\]
First,
\[
\mathbb{E} S_{ij,t-1}^2=t-1,
\]
because the variables \(X_{is}X_{js}\) are independent over \(s\), have mean
zero, and have variance one. Hence
\[
\mathbb{E} V_n
=
\frac1{n^4}
\binom{p}{2}
\sum_{t=2}^n(t-1)
=
\frac{p(p-1)n(n-1)}{4n^4}
\rightarrow
\frac{\gamma^2}{4}.
\]

It remains to show that \(\operatorname{var}(V_n)\to0\). Put
\[
T_{ij}
=
\sum_{t=2}^n S_{ij,t-1}^2.
\]
Then
\[
V_n=n^{-4}\sum_{i<j}T_{ij}.
\]
For a fixed pair \((i,j)\), \(S_{ij,t}\) is a partial sum of the independent,
mean-zero products \(X_{is}X_{js}\), whose fourth moments
\(\mathbb{E}(X_{is}X_{js})^4=\mathbb{E}(X_{is}^4)\mathbb{E}(X_{js}^4)\) are uniformly bounded under
Assumption~2. For $r\le u$, write
$S_{ij,u}=S_{ij,r}+(S_{ij,u}-S_{ij,r})$, where the two terms on the right
are independent. Hence
\[
\operatorname{cov}(S_{ij,r}^2,S_{ij,u}^2)=\var(S_{ij,r}^2)=O(r^2),
\]
because $\mathbb{E} S_{ij,r}^4=O(r^2)$. Summing these covariances gives the correct
integrated-random-walk order
\[
\operatorname{var}(T_{ij})
\le C\sum_{r=1}^{n-1}\sum_{u=1}^{n-1}\min(r,u)^2
=O(n^4).
\]
If two pairs \((i,j)\) and \((k,\ell)\) are disjoint, then \(T_{ij}\) and
\(T_{k\ell}\) are independent. If they share one index, Cauchy's inequality and
the preceding bound imply
\[
\left|\operatorname{cov}(T_{ij},T_{ik})\right|
\le
\{\operatorname{var}(T_{ij})\operatorname{var}(T_{ik})\}^{1/2}
=
O(n^4).
\]
There are \(O(p^2)\) identical-pair covariance terms and \(O(p^3)\) overlapping
pair covariance terms. Therefore
\[
\operatorname{var}(V_n)
\le
\frac{C}{n^8}\{p^2n^4+p^3n^4\}
=
O(n^{-2})+O(n^{-1})
\to0,
\]
because \(p/n\to\gamma\). This proves
\[
V_n=\sum_{t=2}^n v_{nt}
\overset{p}{\rightarrow}
\frac{\gamma^2}{4}.
\]

\subsection*{C.2 Lindeberg condition}

We verify the martingale Lindeberg condition using a Lyapunov bound. Choose
\[
0<\delta\le \min\{1,\eta/2\},
\qquad
q=2+\delta.
\]
Then \(2q=4+2\delta\le 4+\eta\), so Assumption~2 implies
\[
\sup_{i,t}\mathbb{E}|X_{it}|^{2q}<\infty.
\]

We use the following standard moment inequality for homogeneous quadratic
forms. Let \(Z_1,\ldots,Z_p\) be independent mean-zero random variables with
\[
\mathbb{E} Z_i^2=1,
\qquad
\sup_i \mathbb{E}|Z_i|^{2q}<\infty,
\]
where \(q\ge2\). Then for every deterministic symmetric matrix \(A\) with zero
diagonal,
\[
\mathbb{E}\left|
\sum_{i<j}a_{ij}Z_iZ_j
\right|^q
\le
C_q
\left(\sum_{i<j}a_{ij}^2\right)^{q/2},
\]
where \(C_q\) depends only on \(q\) and on
\(\sup_i\mathbb{E}|Z_i|^{2q}\). This follows from the usual decoupling inequality for
order-two U-statistics and the Rosenthal--Burkholder inequality for sums of
independent variables.

Applying this inequality conditionally on \(\mathcal F_{n,t-1}\), with
\(a_{ij}=S_{ij,t-1}\), gives
\[
\mathbb{E}\left(|\Delta_{nt}|^q|\mathcal F_{n,t-1}\right)
\le
\frac{C_q}{n^{2q}}
\left(
\sum_{i<j}S_{ij,t-1}^2
\right)^{q/2}.
\]
Taking expectations, Minkowski's inequality in \(L^{q/2}\) gives
\[
\mathbb{E}\left(\sum_{i<j}S_{ij,t-1}^2\right)^{q/2}
\le
\left(\sum_{i<j}\left(\mathbb{E}|S_{ij,t-1}|^{q}\right)^{2/q}\right)^{q/2},
\]
and the Marcinkiewicz--Zygmund inequality, applied to the partial sum
\(S_{ij,t-1}\) of independent mean-zero products with uniformly bounded
\(q\)-th moments (\(\mathbb{E}|X_{is}X_{js}|^{q}=\mathbb{E}|X_{is}|^{q}\mathbb{E}|X_{js}|^{q}\), with
\(q\le 4+\eta\)), yields \(\mathbb{E}|S_{ij,t-1}|^{q}\le Ct^{q/2}\). Hence
\[
\mathbb{E}|\Delta_{nt}|^q
\le
\frac{C_q}{n^{2q}}(p^2t)^{q/2}.
\]
Therefore,
\[
\sum_{t=2}^n \mathbb{E}|\Delta_{nt}|^q
\le
C_q
\frac{p^q}{n^{2q}}
\sum_{t=2}^n t^{q/2}
=
O\left(
n^{-q}
n^{1+q/2}
\right)
=
O\left(n^{1-q/2}\right)
=
O(n^{-\delta/2})
\to0.
\]
Thus, for every \(\varepsilon>0\),
\[
\sum_{t=2}^n
\mathbb{E}\left[
\Delta_{nt}^2
1\{|\Delta_{nt}|>\varepsilon\}
\right]
\le
\varepsilon^{-\delta}
\sum_{t=2}^n\mathbb{E}|\Delta_{nt}|^{2+\delta}
\to0.
\]
This is the martingale Lindeberg condition.

The martingale central limit theorem now gives
\[
U_n^{od}
=
\sum_{t=2}^n\Delta_{nt}
\Rightarrow
N\left(0,\frac{\gamma^2}{4}\right).
\]

\subsection*{C.3 Negligibility of the diagonal term}

Let
\[
W_{it}=X_{it}^2-1.
\]
The diagonal martingale term in equation~(5) of the main paper is
\[
D_n
=
\frac1{2n^2}
\sum_i\sum_{t=2}^n
W_{it}\sum_{s<t}W_{is},
\qquad
U_n=U_n^{od}+D_n .
\]
For each fixed \(i\), define
\[
D_{ni}
=
\sum_{t=2}^n W_{it}\sum_{s<t}W_{is}
=
\sum_{1\le s<t\le n}W_{is}W_{it}.
\]
Since \(W_{it}\) are independent over \(t\) with  zero mean and
uniformly bounded second moments, we have
\[
\mathbb{E} D_{ni}=0,
\qquad
\operatorname{var}(D_{ni})
=
\sum_{1\le s<t\le n}\mathbb{E}(W_{is}^{2})\mathbb{E}(W_{it}^{2})
=
O(n^{2}),
\]
uniformly in \(i\). The row-wise terms are independent across \(i\). Therefore
\[
\operatorname{var}(D_n)
=
\frac1{4n^4}
\sum_i\operatorname{var}(D_{ni})
=
O\left(\frac{pn^2}{n^4}\right)
=
O(n^{-1}),
\]
because \(p/n\to\gamma\). Hence
\[
D_n=O_p(n^{-1/2})=o_p(1).
\]

Combining this with the asymptotic normality of \(U_n^{od}\) gives
\[
U_n=U_n^{od}+o_p(1)
\Rightarrow
N\left(0,\frac{\gamma^2}{4}\right).
\]
Finally,
\[
\frac{2n^2}{\sqrt{p(p-1)n(n-1)}}
\rightarrow
\frac{2}{\gamma},
\]
so
\[
Z_n
=
\frac{2n^2}{\sqrt{p(p-1)n(n-1)}}U_n^{od}
\Rightarrow N(0,1).
\]
This completes the martingale-CLT part of Theorem~2.

\section{Proof of the optimal-scaling calculation in Section 4.2}
\label{app:optimal-scaling}

This subsection derives the quadratic form used in
Section~4.2 and explains the order of the
minimum-distance scaling. Recall that \(V_i=d_i^2\) and that
\[
Q(V)
=
\sum_{i\ne j}V_iV_j\hat r_{ij}^{2}
+
\sum_i(1-V_i\hat r_{ii})^2
-
\frac1{n^2}\sum_{t=1}^n
\left\{
\left(\sum_j V_jX_{jt}^2\right)^2
-
2\sum_j V_jX_{jt}^2
\right\}.
\]
Expanding the second term gives
\[
\sum_i(1-V_i\hat r_{ii})^2
=
p-2\sum_iV_i\hat r_{ii}
+
\sum_iV_i^2\hat r_{ii}^{2}.
\]
Therefore
\[
\sum_{i\ne j}V_iV_j\hat r_{ij}^{2}
+
\sum_iV_i^2\hat r_{ii}^{2}
=
\sum_{i,j}V_iV_j\hat r_{ij}^{2}.
\]
Moreover,
\[
\frac1{n^2}\sum_{t=1}^n2\sum_jV_jX_{jt}^2
=
\frac2n\sum_jV_j\hat r_{jj}.
\]
Thus
\[
Q(V)
=
p
+
\sum_{i,j}V_iV_j\hat r_{ij}^{2}
-
\frac1{n^2}\sum_{t=1}^n
\sum_{i,j}V_iV_jX_{it}^2X_{jt}^2
-
2\left(1-\frac1n\right)\sum_iV_i\hat r_{ii}.
\]
Define
\[
[A_n]_{ij}
=
\hat r_{ij}^{2}
-
\frac1{n^2}\sum_{t=1}^nX_{it}^2X_{jt}^2,
\qquad
[b_n]_i
=
\left(1-\frac1n\right)\hat r_{ii}.
\]
Then
\[
Q(V)=p+V'A_nV-2b_n'V.
\]
Since
\[
\hat r_{ij}^{2}
=
\frac1{n^2}\sum_{s=1}^n\sum_{t=1}^n
X_{is}X_{js}X_{it}X_{jt},
\]
we have
\[
[A_n]_{ij}
=
\frac1{n^2}\sum_{s\ne t}
X_{is}X_{js}X_{it}X_{jt}.
\]
Equivalently,
\[
A_n
=
\frac1{n^2}\sum_{s\ne t}
(X_s\circ X_t)(X_s\circ X_t)^\prime ,
\]
which is positive semidefinite. If \(A_n\) is nonsingular, the
unconstrained first-order condition is
\[
A_nV=b_n,
\]
and hence the unconstrained minimizer is
\[
V^\ast=A_n^{-1}b_n .
\]
When $A_n^{-1}b_n>0$ componentwise, this is also the minimizer over the
admissible scaling region. If positivity fails, the constrained solution is
characterized by the Kuhn--Tucker conditions; the argument below is
explicitly an interior, conditional calculation.

We now compare \(V^\ast\) with ordinary studentization. Let
\[
v_i^0=\frac1{\hat r_{ii}}.
\]
Then
\[
V^\ast-v^0
=
-A_n^{-1}(A_nv^0-b_n).
\]
The \(i\)-th component of \(A_nv^0-b_n\) is
\[
(A_nv^0-b_n)_i
=
\sum_{j=1}^p
\frac{A_{ij}}{\hat r_{jj}}
-
\left(1-\frac1n\right)\hat r_{ii}.
\]
Separating the diagonal term gives
\[
(A_nv^0-b_n)_i
=
\sum_{j\ne i}
\frac{A_{ij}}{\hat r_{jj}}
+
\frac{A_{ii}}{\hat r_{ii}}
-
\left(1-\frac1n\right)\hat r_{ii}.
\]
Because
\[
A_{ii}
=
\hat r_{ii}^{2}
-
\frac1{n^2}\sum_{t=1}^nX_{it}^4,
\]
we obtain the exact identity
\[
(A_nv^0-b_n)_i
=
\sum_{j\ne i}
\frac{A_{ij}}{\hat r_{jj}}
+
\frac{\hat r_{ii}}{n}
-
\frac{1}{n^2\hat r_{ii}}
\sum_{t=1}^nX_{it}^4 .
\]
The final two terms are \(O_p(n^{-1})\) under the moment assumptions of
Assumption~2. The leading term is the off-diagonal sum. For
intuition, first ignore the harmless denominators and define
\[
S_i=\sum_{j\ne i}A_{ij}.
\]
Since
\[
A_{ij}
=
\frac1{n^2}
\sum_{s\ne t}
X_{is}X_{js}X_{it}X_{jt},
\]
we have, conditional on the \(i\)-th row,
\[
\mathbb{E}(S_i|X_{i1},\ldots,X_{in})=0.
\]
Furthermore, using the independence of the coordinates,
\[
\operatorname{var}(S_i|X_{i1},\ldots,X_{in})
=
\frac{2(p-1)}{n^4}
\sum_{s\ne t}X_{is}^2X_{it}^2.
\]
Since
\[
\sum_{s\ne t}X_{is}^2X_{it}^2=O_p(n^2),
\]
it follows that
\[
S_i=O_p\left(\frac{\sqrt p}{n}\right).
\]
The weighted sum
\[
\sum_{j\ne i}\frac{A_{ij}}{\hat r_{jj}}
=S_i+\sum_{j\ne i}A_{ij}\big(\hat r_{jj}^{-1}-1\big)
\]
does not inherit this order from a bound on the weights alone: the
weights are computed from the same rows as the summands, and reweighting
a centered sum can destroy its cancellation, so the second term needs its
own argument.  Write
\(\hat r_{jj}^{-1}-1=-(\hat r_{jj}-1)+(\hat r_{jj}-1)^2/\hat r_{jj}\).
The linear correction \(-\sum_{j\ne i}A_{ij}(\hat r_{jj}-1)\) is
conditionally centered given row \(i\) by
Lemma~\ref{lem:scaling-orthogonality}, with summands that are
conditionally independent across \(j\), so its conditional variance is
\(\sum_{j\ne i}\mathbb{E}\{A_{ij}^2(\hat r_{jj}-1)^2|X_{i1},\ldots,X_{in}\}\);
under uniformly bounded eighth moments, the quadratic-form moment
inequality of Appendix~\ref{app:martingale-clt} (with \(q=4\)) and
\(\mathbb{E}(\hat r_{jj}-1)^4=O(n^{-2})\) make each term
\(O_p(n^{-3})\), so the linear correction is
\(O_p(\sqrt p\,n^{-3/2})\).  The quadratic correction is a sum of
\(p-1\) terms with \(\mathbb{E}|A_{ij}|(\hat r_{jj}-1)^2=O(n^{-2})\), hence
\(O_p(p/n^2)\) on the event \(\min_j\hat r_{jj}\ge\tfrac12\), whose
probability tends to one.  Both corrections are \(O_p(n^{-1})\) in the
proportional regime, so the weighted row sum has the same order
\(O_p(\sqrt p/n)\) as \(S_i\).  We state this under eighth moments
because the calculation is motivational; a \(4+\eta\) version would
require the truncation of Appendix~\ref{app:studentization}, and no
formal result uses it.  Thus, for each fixed \(i\),
\[
(A_nv^0-b_n)_i
=
O_p\left(\frac{\sqrt p}{n}+\frac1n\right).
\]
Therefore, if the inverse of \(A_n\) is well conditioned in the sense that
it does not amplify this residual, for example if
\[
\|A_n^{-1}\|_{\infty}=O_p(1)
\]
and the preceding row-sum bound holds uniformly in \(i\), then
\[
V_i^\ast
=
v_i^0
+
O_p\left(\frac{\sqrt p}{n}+\frac1n\right)
=
\frac1{\hat r_{ii}}
+
O_p\left(\frac{\sqrt p}{n}+\frac1n\right).
\]
In the proportional regime \(p/n\to\gamma\in(0,\infty)\), this becomes
\[
V_i^\ast
=
\frac1{\hat r_{ii}}
+
O_p(n^{-1/2}).
\]
This proves that the minimum-distance diagonal scaling agrees with ordinary
studentization to first order. The sharper statement needed for the actual
test is Theorem~2, which proves directly that
studentizing the corrected off-diagonal statistic is asymptotically
innocuous.

\section{Proofs for Section~4.2}
\label{app:studentization}

Throughout this appendix, all limits are under the product-coordinate null of
Assumption~2; no moment condition beyond \(4+\eta\) is used.
Without loss of generality \(\eta\le1\); otherwise replace \(\eta\) by
\(\min\{\eta,1\}\). Constants \(C\) depend only on \(\gamma\), \(\eta\), and
\(\mu:=\sup_{i,t}\mathbb{E}|X_{it}|^{4+\eta}\), and may change from one occurrence to
the next. The studentization step is more delicate than the martingale CLT:
a direct \(L^2\) treatment of the studentization error involves moments such
as \(\mathbb{E}[X^2(X^2-1)^2]\), of order six, which are generated by repeated time
indices. These repeated-index terms are of lower order, and the proof below
makes this precise by truncating the array (Section~E.2) before carrying out
the \(L^2\) arguments; the tail contribution is controlled by the \(4+\eta\)
moment bound alone.

The proof proceeds in four stages. First, we record the exact scaling and
orthogonality identities. Second, we truncate the observations such that rare
extremes cannot dominate the studentization error. Third, for the truncated
array, we establish uniform consistency of the sample variances under
\(4+\eta\) moments and control the linear and quadratic Taylor terms by a
row-wise Hoeffding decomposition. Fourth, we remove the truncation by
showing that the truncation event has probability tending to one.

Write
\[
R_i=\hat r_{ii}
=
\frac1n\sum_{t=1}^nX_{it}^2,
\qquad
\varepsilon_i=R_i-1
=
\frac1n\sum_{t=1}^n(X_{it}^2-1),
\]
and
\[
a_n=\frac{n^2}{\sqrt{p(p-1)n(n-1)}}.
\]
Since \(p/n\to\gamma\in(0,\infty)\), \(a_n\to1/\gamma\). For \(i<j\), define
\begin{equation}
D_{ij}
=
\hat r_{ij}^{2}
-
\frac1{n^2}\sum_{t=1}^nX_{it}^2X_{jt}^2
=
\frac1{n^2}\sum_{s\ne t}
X_{is}X_{it}X_{js}X_{jt}.
\label{eq:Dij-cross}
\end{equation}
The exact scaling identity gives
\[
\tilde D_{ij}=\frac{D_{ij}}{R_iR_j},
\]
and therefore
\begin{equation}
\tilde Z_n-Z_n
=
a_n\sum_{i<j}D_{ij}
\left[
\frac1{R_iR_j}-1
\right].
\label{eq:studentization-error}
\end{equation}

\subsection*{E.1 Exact scaling and orthogonality}

We first prove Lemma~\ref{lem:scaling-orthogonality}. Since
\[
\tilde r_{ij}
=
\frac{\hat r_{ij}}{(R_iR_j)^{1/2}},
\qquad
\tilde X_{it}^{2}\tilde X_{jt}^{2}
=
\frac{X_{it}^2X_{jt}^2}{R_iR_j},
\]
both terms of \(\tilde D_{ij}\) equal the corresponding terms of
\(D_{ij}\) divided by \(R_iR_j\). Hence
\[
\tilde D_{ij}=\frac{D_{ij}}{R_iR_j}.
\]

Next, using (\ref{eq:Dij-cross}), conditional on the \(i\)-th row,
\[
\mathbb{E}(X_{js}X_{jt})=0,\qquad s\ne t.
\]
Thus
\[
\mathbb{E}(D_{ij}|X_{i1},\ldots,X_{in})=0.
\]
The same argument applies conditional on the \(j\)-th row.

Finally,
\[
R_j-1
=
\frac1n\sum_{u=1}^n(X_{ju}^2-1).
\]
For \(s\ne t\),
\[
\mathbb{E}\{X_{js}X_{jt}(X_{ju}^2-1)\}=0
\]
for every \(u\). If \(u\notin\{s,t\}\), this follows from independence and
\(\mathbb{E}(X_{js}X_{jt})=0\). If \(u=s\), then
\[
\mathbb{E}\{X_{js}X_{jt}(X_{js}^2-1)\}
=
\mathbb{E}\{X_{js}(X_{js}^2-1)\}\mathbb{E}(X_{jt})
=
0,
\]
and the case \(u=t\) is identical. Therefore
\[
\mathbb{E}\left[
D_{ij}(R_j-1)|X_{i1},\ldots,X_{in}
\right]=0.
\]
The symmetric statement with \(i\) and \(j\) interchanged follows similarly.
No zero-third-moment condition is used.

\subsection*{E.2 Truncation}
\label{sec:E2}
Set
\begin{equation}
\tau_n=n^{1/2},
\qquad
X_{it}^{(\tau)}=X_{it}1\{|X_{it}|\le \tau_n\},
\qquad
\mathcal A_n=\Big\{\max_{1\le i\le p,1\le t\le n}|X_{it}|\le \tau_n\Big\}.
\label{eq:truncation}
\end{equation}
By Markov's inequality and \(p\asymp n\),
\begin{equation}
\mathbb{P}(\mathcal A_n^c)
\le
\sum_{i,t}\mathbb{P}(|X_{it}|>\tau_n)
\le
pn\mu\tau_n^{-(4+\eta)}
=
O\big(n^{-\eta/2}\big)
\rightarrow0.
\label{eq:truncation-event}
\end{equation}
On \(\mathcal A_n\) the arrays \(\{X_{it}\}\) and \(\{X_{it}^{(\tau)}\}\) coincide
entrywise, and hence so does every statistic computed from them. Let
\(\bar R_i\), \(\bar\varepsilon_i=\bar R_i-1\), and \(\bar D_{ij}\) denote the
quantities defined above, computed from the truncated array. The exact scaling
identity of Lemma~\ref{lem:scaling-orthogonality}, part (i), and the representation
(\ref{eq:Dij-cross}) are algebraic and hold verbatim for any array, so
(\ref{eq:studentization-error}) becomes, for the truncated array,
\[
\bar{\tilde Z}_n-\bar Z_n
=
a_n\sum_{i<j}\bar D_{ij}
\left[
\frac1{\bar R_i\bar R_j}-1
\right],
\]
and \(\tilde Z_n-Z_n=\bar{\tilde Z}_n-\bar Z_n\) on \(\mathcal A_n\).
Since \(\mathbb{P}(\mathcal A_n)\to1\), the studentization assertion of Theorem~2 follows once
\begin{equation}
\bar{\tilde Z}_n-\bar Z_n=o_p(1).
\label{eq:truncated-target}
\end{equation}

Truncation perturbs the first two moments only negligibly while capping the
higher moments. Write
\[
m_{it}=\mathbb{E}X_{it}^{(\tau)},
\qquad
\bar\sigma_{it}^2=\mathbb{E}{X_{it}^{(\tau)}}^2 .
\]
Since \(\mathbb{E}X_{it}=0\) and \(\mathbb{E}X_{it}^2=1\),
\begin{equation}
|m_{it}|
\le
\mathbb{E}\big[|X_{it}|1\{|X_{it}|>\tau_n\}\big]
\le
\mu \tau_n^{-(3+\eta)}
=:\bar m_n,
\qquad
0\le 1-\bar\sigma_{it}^2
\le
\mu \tau_n^{-(2+\eta)},
\label{eq:truncated-means}
\end{equation}
and, for every \(k\ge 4+\eta\),
\begin{equation}
\mathbb{E}|X_{it}^{(\tau)}|^{k}
\le
\tau_n^{k-(4+\eta)}\mathbb{E}|X_{it}|^{4+\eta}
\le
\mu \tau_n^{k-(4+\eta)} .
\label{eq:truncated-moments}
\end{equation}
For \(k\le 4+\eta\), \(\mathbb{E}|X_{it}^{(\tau)}|^{k}\le\mu^{k/(4+\eta)}\); in particular
third and fourth moments of \(X_{it}^{(\tau)}\) are uniformly bounded. Note that
\(\bar m_n=\mu n^{-(3+\eta)/2}\), so \(n\bar m_n^2\to0\); all contributions
carrying a factor \(\bar m_n\) below are asymptotically negligible relative to
the terms they accompany. Finally, write
\[
\bar W_{it}={X_{it}^{(\tau)}}^2-\bar\sigma_{it}^2,
\qquad
\bar\varepsilon_i
=
\frac1n\sum_{t=1}^n\bar W_{it}+\beta_i,
\qquad
\beta_i=\frac1n\sum_{t=1}^n(\bar\sigma_{it}^2-1),
\]
such that \(\max_i|\beta_i|\le\mu \tau_n^{-(2+\eta)}=\mu n^{-1-\eta/2}\), and the
\(\bar W_{it}\) are independent over \(t\), mean zero, with
\begin{equation}
\sup_{i,t}\mathbb{E}|\bar W_{it}|^{2+\eta/2}<\infty,
\qquad
\sup_{i,t}\mathbb{E}\bar W_{it}^{4}
\le
C\big(\sup_{i,t}\mathbb{E}{X_{it}^{(\tau)}}^{8}+1\big)
\le
C\tau_n^{4-\eta},
\label{eq:Wbar-moments}
\end{equation}
the first bound because \(2(2+\eta/2)=4+\eta\), the second by
(\ref{eq:truncated-moments}).

\subsection*{E.3 Uniform consistency of sample variances}

\begin{lemma}
\label{lem:max-variance}
Let \(q=2+\eta/2\) and \(b_n=n^{-a}\) with
\[
0<a<\frac{q/2-1}{q}=\frac{\eta}{8+2\eta}.
\]
Then, uniformly in \(i\),
\begin{equation}
\mathbb{P}(|\bar\varepsilon_i|>b_n)
\le
Cn^{-1-\kappa},
\qquad
\kappa:=\frac{q}{2}-1-aq>0,
\label{eq:eps-tail}
\end{equation}
and consequently
\[
\bar m_n^{\ast}:=\max_{1\le i\le p}|\bar\varepsilon_i|
\overset{p}{\rightarrow}0,
\qquad
\mathbb{P}(\bar m_n^{\ast}>b_n)\le Cn^{-\kappa}\to0 .
\]
\end{lemma}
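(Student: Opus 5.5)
The approach is a Markov bound on a moment of order $q=2+\eta/2$, followed by a union bound. Split $\bar\varepsilon_i=n^{-1}\sum_t\bar W_{it}+\beta_i$. Since $\max_i|\beta_i|\le\mu n^{-1-\eta/2}$ and $b_n=n^{-a}$ with $a<1$, the bias term satisfies $|\beta_i|\le b_n/2$ for all large $n$, uniformly in $i$. It therefore suffices to prove
\[
\mathbb{P}\Big(\Big|\frac1n\sum_{t=1}^n\bar W_{it}\Big|>\frac{b_n}2\Big)\le Cn^{-1-\kappa}
\]
uniformly in $i$.

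The summands $\bar W_{it}$ are independent over $t$ and mean zero, and by (\ref{eq:Wbar-moments}) they have uniformly bounded moments of order $q$. The Marcinkiewicz--Zygmund inequality, or Rosenthal's inequality with $q\ge2$, gives
\[
\mathbb{E}\Big|\sum_t\bar W_{it}\Big|^q\le C_q\Big\{\Big(\sum_t\mathbb{E}\bar W_{it}^2\Big)^{q/2}+\sum_t\mathbb{E}|\bar W_{it}|^q\Big\}\le C n^{q/2}.
\]
The constant depends only on $\mu$ and $\eta$, and $\mathbb{E}\bar W_{it}^2$ is bounded because $q\ge2$. Markov's inequality then yields
\[
\mathbb{P}\Big(\Big|n^{-1}\sum_t\bar W_{it}\Big|>b_n/2\Big)\le \frac{2^q C n^{q/2}}{n^q b_n^q}=C' n^{-q/2+aq}=C'n^{-1-\kappa},
\]
with $\kappa=q/2-1-aq$. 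This $\kappa$ is positive exactly when $a<(q/2-1)/q=\eta/(8+2\eta)$, which is the stated range. That proves (\ref{eq:eps-tail}).

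For the maximum, take a union bound over $i\le p$ and use $p\le Cn$:
\[
\mathbb{P}(\bar m_n^\ast>b_n)\le p\,Cn^{-1-\kappa}\le C n^{-\kappa}\to0.
\]
Since $b_n\to0$, it follows that $\bar m_n^\ast\to_p0$.

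I expect no step to be a genuine obstacle. The only point to check is that truncation is not actually needed for this bound: the order-$q$ moments of $\bar W_{it}$ are controlled by the untruncated $4+\eta$ moments. If one tried the fourth-moment route instead, the bound $\mathbb{E}\bar W^4\le C\tau_n^{4-\eta}$ would have to be tracked and would give a worse exponent. I would therefore use the order-$q$ route, which needs only the first bound in (\ref{eq:Wbar-moments}).
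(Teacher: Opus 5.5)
Your proposal is correct and follows essentially the same route as the paper: a Rosenthal bound of order $q$ on $n^{-1}\sum_t\bar W_{it}$, absorbing the bias $\beta_i=o(b_n)$, Markov's inequality giving $Cn^{aq-q/2}=Cn^{-1-\kappa}$, and a union bound over $i\le p\asymp n$. Your remark that only the first bound in (\ref{eq:Wbar-moments}) is needed, since $2q=4+\eta$, also matches the paper's choice.
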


\begin{proof}
By Rosenthal's inequality and (\ref{eq:Wbar-moments}), uniformly in \(i\),
\[
\mathbb{E}\Big|\frac1n\sum_{t=1}^n\bar W_{it}\Big|^{q}
\le
\frac{C}{n^{q}}
\Big\{\sum_{t}\mathbb{E}|\bar W_{it}|^{q}
+\Big(\sum_{t}\mathbb{E}\bar W_{it}^2\Big)^{q/2}\Big\}
\le
Cn^{-q/2}.
\]
Since \(\max_i|\beta_i|\le\mu n^{-1-\eta/2}=o(b_n)\), Markov's inequality
gives, for all large \(n\),
\[
\mathbb{P}(|\bar\varepsilon_i|>b_n)
\le
P\Big(\Big|\frac1n\sum_t\bar W_{it}\Big|>\frac{b_n}2\Big)
\le
Cb_n^{-q}n^{-q/2}
=
Cn^{aq-q/2}
=
Cn^{-1-\kappa},
\]
with \(\kappa=q/2-1-aq>0\) by the choice of \(a\). The union bound over
\(i\le p\asymp n\) completes the proof.
\end{proof}

Fix such \(a\) and \(b_n\), and define
\begin{equation}
\mathcal E_n=\{\bar m_n^{\ast}\le b_n\},
\qquad
I_i^b=1\{|\bar\varepsilon_i|\le b_n\},
\qquad
I_{ij}^b=I_i^bI_j^b,
\qquad
J_i=1-I_i^b .
\label{eq:good-event}
\end{equation}
Then \(\mathbb{P}(\mathcal E_n)\to1\), and on \(\mathcal E_n\), \(I_{ij}^b=1\) for
every \(i<j\). On \(\{I_i^b=1\}\) we have \(\bar R_i\ge1-b_n\ge\tfrac12\) for
large \(n\), so every ratio \(1/\bar R_i\) appearing below in the presence of
\(I_i^b\) is bounded by \(2\).

\subsection*{E.4 Moment bounds for truncated pairs}

We collect the second-moment bounds used below. All bounds are uniform over
\(i<j\) and, where applicable, over \(s\ne t\). First, moment bounds for
\(\bar\varepsilon_i\) and for mixed products. By Rosenthal's inequality,
(\ref{eq:Wbar-moments}), and \(\max_i|\beta_i|\le\mu n^{-1-\eta/2}\),
\begin{equation}
\begin{aligned}
\mathbb{E}\bar\varepsilon_i^2
&\le Cn^{-1},\\
\mathbb{E}\bar\varepsilon_i^4
&\le \frac{C}{n^{4}}
\Big\{n\sup_{t}\mathbb{E}\bar W_{it}^4
+\big(n\sup_t\mathbb{E}\bar W_{it}^2\big)^{2}\Big\}+C\beta_i^4\\
&\le C\big(n^{-3}\tau_n^{4-\eta}+n^{-2}\big)
\le Cn^{-1-\eta/2},
\end{aligned}
\label{eq:eps-moments}
\end{equation}
since \(\tau_n^{4-\eta}=n^{2-\eta/2}\) and \(\eta\le1\). Next, for \(s\ne t\),
write
\begin{equation}
\bar\varepsilon_j
=
\frac{\bar W_{js}+\bar W_{jt}}{n}
+
\bar\varepsilon_j^{(st)},
\qquad
\bar\varepsilon_j^{(st)}
:=
\frac1n\sum_{u\notin\{s,t\}}\bar W_{ju}+\beta_j,
\label{eq:eps-split}
\end{equation}
such that \(\bar\varepsilon_j^{(st)}\) is independent of
\((X_{js}^{(\tau)},X_{jt}^{(\tau)})\) and satisfies the bounds
(\ref{eq:eps-moments}). Then, uniformly over \(j\) and \(s\ne t\),
\begin{equation}
\mathbb{E}\big[{X_{js}^{(\tau)}}^2{X_{jt}^{(\tau)}}^2\bar\varepsilon_j^2\big]
\le
Cn^{-1},
\qquad
\mathbb{E}\big[{X_{js}^{(\tau)}}^2{X_{jt}^{(\tau)}}^2\bar\varepsilon_j^4\big]
\le
Cn^{-1-\eta/2}.
\label{eq:mixed-moments}
\end{equation}
Indeed,
\(\bar\varepsilon_j^2\le 2n^{-2}(\bar W_{js}+\bar W_{jt})^2
+2(\bar\varepsilon_j^{(st)})^2\). The second part factorizes by independence
and is at most \(Cn^{-1}\). By (\ref{eq:truncated-moments}), the first is at
most
\[
Cn^{-2}\sup \mathbb{E}[{X^{(\tau)}}^2\bar W^2]
\le Cn^{-2}(\mathbb{E}{X^{(\tau)}}^6+1)
\le Cn^{-1-\eta/2}.
\]
The fourth-moment bound follows in the same way from
\(\bar\varepsilon_j^4\le 8n^{-4}(\bar W_{js}+\bar W_{jt})^4
+8(\bar\varepsilon_j^{(st)})^4\), (\ref{eq:eps-moments}), and
\[
\mathbb{E}[{X^{(\tau)}}^2\bar W^4]
\le C(\mathbb{E}|X^{(\tau)}|^{10}+1)
\le C\tau_n^{6-\eta}=Cn^{3-\eta/2}.
\]

\begin{lemma}
\label{lem:Dij-moment-bounds}
Uniformly over \(i<j\),
\begin{equation}
\mathbb{E}(\bar D_{ij}^2|X_j^{(\tau)})
\le
\frac{C}{n^{2}}\bar R_j^{2},
\qquad
\mathbb{E}\bar D_{ij}^2=O(n^{-2}),
\label{eq:D2-cond}
\end{equation}
where \(X_j^{(\tau)}=(X_{j1}^{(\tau)},\ldots,X_{jn}^{(\tau)})\), and the same bound holds
with \(i\) and \(j\) interchanged. Moreover,
\begin{equation}
\mathbb{E}\left[
\bar D_{ij}^2(\bar\varepsilon_i^2+\bar\varepsilon_j^2)I_{ij}^b
\right]
=
O(n^{-3}).
\label{eq:D2eps2-bound}
\end{equation}
\end{lemma}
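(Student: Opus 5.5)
The plan is to condition on one row and treat $\bar D_{ij}$ as a quadratic form in the other, independent row. By (\ref{eq:Dij-cross}) applied to the truncated array,
\[
\bar D_{ij}=\frac1{n^2}\sum_{s\ne t}c_{st}X_{is}^{(\tau)}X_{it}^{(\tau)},
\qquad c_{st}=X_{js}^{(\tau)}X_{jt}^{(\tau)}.
\]
Given $X_j^{(\tau)}$, the coefficients $c_{st}$ are fixed and the entries $X_{it}^{(\tau)}$ are independent over $t$. The one complication, and the step I expect to need the most care, is that truncation destroys exact centering: $X_{it}^{(\tau)}$ has mean $m_{it}$, which is nonzero in general. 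So the cross terms that would vanish identically in the untruncated case have to be bounded using (\ref{eq:truncated-means}).

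For (\ref{eq:D2-cond}), write $X_{it}^{(\tau)}=m_{it}+\xi_{it}$ with $\xi_{it}$ centered, $\mathbb{E}\xi_{it}^2\le1$, and expand the quadratic form into three pieces.
\begin{itemize}
\item The pure centered part $n^{-2}\sum_{s\ne t}c_{st}\xi_{is}\xi_{it}$ is degenerate. Its conditional second moment is therefore at most
\[
2n^{-4}\sum_{s\ne t}c_{st}^2\le 2n^{-4}\Bigl(\sum_s X_{js}^{(\tau)2}\Bigr)^2=2n^{-2}\bar R_j^2 .
\]
\item The linear part is $2n^{-2}\sum_t\xi_{it}\sum_{s\ne t}c_{st}m_{is}$. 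By Cauchy--Schwarz its conditional second moment is at most
\[
Cn^{-4}\sum_t X_{jt}^{(\tau)2}\bar m_n^2\Bigl(\sum_s|X_{js}^{(\tau)}|\Bigr)^2\le Cn^{-4}\,nR\cdot\bar m_n^2\,n\cdot n\bar R_j=C n^{-2}(n\bar m_n^2)\bar R_j^2 ,
\]
writing $\bar R_j$ for $R$.
\item The deterministic part $n^{-2}\sum_{s\ne t}c_{st}m_{is}m_{it}$ has absolute value at most $n^{-2}\bar m_n^2(\sum_s|X_{js}^{(\tau)}|)^2\le \bar m_n^2\bar R_j$. Its square is at most $\bar m_n^4\bar R_j^2$.
\end{itemize}
Since $n\bar m_n^2\to0$, all three pieces are bounded by $Cn^{-2}\bar R_j^2$, which gives the conditional bound. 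Taking expectations and using $\mathbb{E}\bar R_j^2\le 2+2\mathbb{E}\bar\varepsilon_j^2\le C$ from (\ref{eq:eps-moments}) gives $\mathbb{E}\bar D_{ij}^2=O(n^{-2})$. Because the setup is symmetric in the two rows, the same argument conditioning on $X_i^{(\tau)}$ gives the interchanged bound.

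For (\ref{eq:D2eps2-bound}), treat the two terms separately, using $I_{ij}^b\le I_j^b$ and $I_{ij}^b\le I_i^b$. Both $\bar\varepsilon_j$ and $I_j^b$ are measurable with respect to $X_j^{(\tau)}$. Conditioning on row $j$ and applying (\ref{eq:D2-cond}) gives
\[
\mathbb{E}\bigl[\bar D_{ij}^2\bar\varepsilon_j^2I_{ij}^b\bigr]\le \mathbb{E}\bigl[\bar\varepsilon_j^2I_j^b\,\mathbb{E}(\bar D_{ij}^2\mid X_j^{(\tau)})\bigr]\le Cn^{-2}\mathbb{E}\bigl[\bar\varepsilon_j^2\bar R_j^2I_j^b\bigr].
\]
On $\{I_j^b=1\}$ we have $\bar R_j\le 1+b_n\le 2$, so the right side is at most $4Cn^{-2}\mathbb{E}\bar\varepsilon_j^2=O(n^{-3})$ by (\ref{eq:eps-moments}). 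The term with $\bar\varepsilon_i^2$ is handled identically by conditioning on row $i$ and using the interchanged version of (\ref{eq:D2-cond}). All constants depend only on $\mu$, $\eta$ and $\gamma$, so the bounds are uniform over $i<j$. In particular, the mixed bounds (\ref{eq:mixed-moments}) are not needed here, because the indicator confines $\bar R_j$ to a bounded range.
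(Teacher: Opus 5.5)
Your proof is correct and follows essentially the same route as the paper: condition on row $j$, treat $\bar D_{ij}$ as a quadratic form in the independent truncated row $i$, control the truncation-induced means through $\bar m_n$ and $n\bar m_n^2\to0$, and then obtain the localized bound by conditioning on each row and using $\bar R_j\le 2$ on $\{I_j^b=1\}$ together with $\mathbb{E}\bar\varepsilon_j^2=O(n^{-1})$. The only cosmetic difference is in the first bound: you split $X_{it}^{(\tau)}=m_{it}+\xi_{it}$ into centered, linear and deterministic pieces, whereas the paper expands $\mathbb{E}(X_{is}^{(\tau)}X_{it}^{(\tau)}X_{iu}^{(\tau)}X_{iv}^{(\tau)})$ directly according to the coincidence pattern of $\{s,t\}$ and $\{u,v\}$; both yield the same $Cn^{-2}\bar R_j^2$ bound.
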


\begin{proof}
By (\ref{eq:Dij-cross}) for the truncated array,
\[
\mathbb{E}(\bar D_{ij}^2|X_j^{(\tau)})
=
\frac1{n^4}
\sum_{\substack{s\ne t\\u\ne v}}
X_{js}^{(\tau)}X_{jt}^{(\tau)}X_{ju}^{(\tau)}X_{jv}^{(\tau)}
\mathbb{E}(X_{is}^{(\tau)}X_{it}^{(\tau)}X_{iu}^{(\tau)}X_{iv}^{(\tau)}).
\]
If \(\{u,v\}=\{s,t\}\), the expectation equals
\(\bar\sigma_{is}^2\bar\sigma_{it}^2\le1\); each index of \(\{s,t\}\triangle
\{u,v\}\) appearing exactly once contributes a factor \(m_{i\cdot}\), bounded
by \(\bar m_n\) in absolute value. Hence
\[
\mathbb{E}(\bar D_{ij}^2|X_j^{(\tau)})
\le
\frac{2}{n^4}\sum_{s\ne t}{X_{js}^{(\tau)}}^2{X_{jt}^{(\tau)}}^2
+
\frac{C\bar m_n^2}{n^4}
\Big(\sum_{s}|X_{js}^{(\tau)}|\Big)^{4}
\le
\frac{2}{n^{2}}\bar R_j^{2}
+
C\bar m_n^2\bar R_j^{2}
\le
\frac{C}{n^{2}}\bar R_j^{2},
\]
using
\[
\sum_{s\ne t}{X_{js}^{(\tau)}}^2{X_{jt}^{(\tau)}}^2
\le \Big(\sum_s{X_{js}^{(\tau)}}^2\Big)^2
=n^2\bar R_j^2,
\qquad
\Big(\sum_s|X_{js}^{(\tau)}|\Big)^4
\le n^2\Big(\sum_s{X_{js}^{(\tau)}}^2\Big)^2,
\]
and \(\bar m_n^2=\mu^2n^{-(3+\eta)}\ll n^{-2}\). Taking expectations, and
using \(\mathbb{E}\bar R_j^2\le 2(1+\mathbb{E}\bar\varepsilon_j^2)\le C\) by
(\ref{eq:eps-moments}), gives \(\mathbb{E}\bar D_{ij}^2=O(n^{-2})\).

For (\ref{eq:D2eps2-bound}), condition on \(X_j^{(\tau)}\): since
\(\bar\varepsilon_j^2I_j^b\) and \(\bar R_j\) are \(X_j^{(\tau)}\)-measurable and
\(I_i^b\le1\),
\[
\mathbb{E}\left[\bar D_{ij}^2\bar\varepsilon_j^2I_{ij}^b\right]
\le
\mathbb{E}\left[\bar\varepsilon_j^2I_j^b\mathbb{E}(\bar D_{ij}^2|X_j^{(\tau)})\right]
\le
\frac{C}{n^{2}}
\mathbb{E}\left[\bar\varepsilon_j^2I_j^b\bar R_j^{2}\right]
\le
\frac{C}{n^{2}}4\mathbb{E}\bar\varepsilon_j^2
=
O(n^{-3}),
\]
because \(\bar R_j\le1+b_n\le2\) on \(\{I_j^b=1\}\) and
\(\mathbb{E}\bar\varepsilon_j^2=O(n^{-1})\) by (\ref{eq:eps-moments}). The term with
\(\bar\varepsilon_i^2\) is handled symmetrically, conditioning on
\(X_i^{(\tau)}\).
\end{proof}

The lemma isolates the reason truncation is needed: without it, the
unconditional moment \(\mathbb{E}[D_{ij}^2\varepsilon_i^2]\) contains repeated-index
terms proportional to \(\mathbb{E}[X^2(X^2-1)^2]\), a sixth-moment quantity that
Assumption~2 does not control; the localized, truncated
version (\ref{eq:D2eps2-bound}) requires only fourth moments.

\subsection*{E.5 Projection bounds and localization errors}

The projections of the localized kernels onto single rows are quadratic
forms in that row. The following master bound reduces all projection
estimates to bounds on scalar coefficients.

\begin{lemma}
\label{lem:projection-master}
Let \(Y_j\) be a random variable measurable with respect to
\(X_j^{(\tau)}\), with \(\mathbb{E}|Y_j|<\infty\), let
\[
c_{st}=\mathbb{E}\big[X_{js}^{(\tau)}X_{jt}^{(\tau)}Y_j\big],
\qquad s\ne t,
\]
and suppose \(\max_{s\ne t}|c_{st}|\le\bar c\). Then
\begin{equation}
\mathbb{E}(\bar D_{ij}Y_j|X_i^{(\tau)})
=
\frac1{n^2}\sum_{s\ne t}X_{is}^{(\tau)}X_{it}^{(\tau)}c_{st},
\qquad
\mathbb{E}\left[
\mathbb{E}(\bar D_{ij}Y_j|X_i^{(\tau)})^2
\right]
\le
Cn^{-2}\bar c^{2}.
\label{eq:projection-master}
\end{equation}
The same statement holds with the roles of \(i\) and \(j\) interchanged.
\end{lemma}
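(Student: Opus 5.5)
The identity follows from the representation (\ref{eq:Dij-cross}) of $\bar D_{ij}$ as a sum over $s\ne t$ of $X_{is}^{(\tau)}X_{it}^{(\tau)}X_{js}^{(\tau)}X_{jt}^{(\tau)}/n^2$. Conditional on row $i$, the factors $X_{is}^{(\tau)}X_{it}^{(\tau)}$ are fixed. Row $j$ is independent of row $i$, so the conditional expectation of each summand times $Y_j$ equals $X_{is}^{(\tau)}X_{it}^{(\tau)}\mathbb{E}[X_{js}^{(\tau)}X_{jt}^{(\tau)}Y_j]=X_{is}^{(\tau)}X_{it}^{(\tau)}c_{st}$. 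Integrability is automatic because truncated variables are bounded by $\tau_n$. This gives the first display in (\ref{eq:projection-master}), exactly.

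For the second-moment bound I will expand the square:
\[
\mathbb{E}\Big[\Big(\tfrac1{n^2}\textstyle\sum_{s\ne t}X_{is}^{(\tau)}X_{it}^{(\tau)}c_{st}\Big)^2\Big]
=\tfrac1{n^4}\textstyle\sum_{s\ne t}\sum_{u\ne v}c_{st}c_{uv}\,\mathbb{E}\big(X_{is}^{(\tau)}X_{it}^{(\tau)}X_{iu}^{(\tau)}X_{iv}^{(\tau)}\big).
\]
I will bound the expectation exactly as in the proof of Lemma~\ref{lem:Dij-moment-bounds}, splitting by the overlap of $\{s,t\}$ and $\{u,v\}$.

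When $\{u,v\}=\{s,t\}$, the expectation is $\bar\sigma_{is}^2\bar\sigma_{it}^2\le1$. There are at most $2n^2$ such ordered terms, giving a contribution at most $2n^{-4}n^2\bar c^2=2n^{-2}\bar c^2$.

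When the two pairs share exactly one index, the expectation is $\mathbb{E}[{X^{(\tau)}_{is}}^2]\,m_{it}m_{iv}$ up to relabelling, hence bounded by $\bar m_n^2$. There are $O(n^3)$ such terms, contributing at most $Cn^{-4}n^3\bar m_n^2\bar c^2$.

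When the pairs are disjoint, the expectation is a product of four means, bounded by $\bar m_n^4$. There are $O(n^4)$ terms, contributing at most $C\bar m_n^4\bar c^2$.

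Since $\bar m_n=\mu n^{-(3+\eta)/2}$ by (\ref{eq:truncated-means}), we have $n^{-1}\bar m_n^2\ll n^{-2}$ and $\bar m_n^4\ll n^{-2}$. All three contributions are therefore $O(n^{-2}\bar c^2)$, with constants depending only on $\mu$.

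The symmetric statement, with $i$ and $j$ interchanged, follows by the same argument conditioning on row $j$ instead.

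The only delicate point is the bookkeeping of the nonzero truncated means. Truncation destroys exact centering, so the off-diagonal overlap configurations do not vanish as they would in the untruncated case. The rate of $\bar m_n$ from (\ref{eq:truncated-means}) is what renders these configurations negligible. No moment of $Y_j$ beyond the coefficient bound $\bar c$ is needed.
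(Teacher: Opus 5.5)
Your proposal is correct and follows essentially the same route as the paper's proof: you obtain the identity by row independence from (\ref{eq:Dij-cross}), then expand the square and split by the overlap of $\{s,t\}$ and $\{u,v\}$, bounding the coincident-pair terms by $2n^2\bar c^{2}$ and the overlapping and disjoint configurations by $O(n^3\bar m_n^2+n^4\bar m_n^4)\bar c^{2}=o(n^2)\bar c^{2}$ via (\ref{eq:truncated-means}). Your explicit remarks on integrability, which follows from the boundedness of the truncated entries, and on the symmetry $c_{st}=c_{ts}$ implicit in the $2n^2$ count, only make explicit what the paper leaves tacit.
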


\begin{proof}
The representation follows from (\ref{eq:Dij-cross}) and the independence of
the rows. Expanding the square,
\[
\mathbb{E}\Big(\sum_{s\ne t}X_{is}^{(\tau)}X_{it}^{(\tau)}c_{st}\Big)^{2}
=
\sum_{\substack{s\ne t\\u\ne v}}
c_{st}c_{uv}\mathbb{E}\big[X_{is}^{(\tau)}X_{it}^{(\tau)}X_{iu}^{(\tau)}X_{iv}^{(\tau)}\big].
\]
Terms with \(\{u,v\}=\{s,t\}\) contribute at most \(2\sum_{s\ne
t}c_{st}^2\le2n^2\bar c^2\). In all other terms at least two of the four
indices appear exactly once, and each such index contributes a factor
\(m_{i\cdot}\): terms sharing exactly one index carry \(\bar m_n^2\) and
number \(O(n^3)\), terms with four distinct indices carry \(\bar m_n^4\) and
number \(O(n^4)\). These contribute at most
\(C\bar c^{2}(n^3\bar m_n^2+n^4\bar m_n^4)\le C\bar c^2n^2\), because
\(n\bar m_n^2\le\mu^2 n^{-2-\eta}\to0\). Dividing by \(n^4\) gives
(\ref{eq:projection-master}).
\end{proof}

\begin{lemma}
\label{lem:projection-coefficients}
Uniformly over \(j\) and \(s\ne t\), the coefficients
\(c_{st}=\mathbb{E}[X_{js}^{(\tau)}X_{jt}^{(\tau)}Y_j]\) obey the following bounds:
\begin{equation}
\begin{array}{llcl}
\text{(i)} & Y_j=1: & |c_{st}|\le\bar m_n^2; &\\[2pt]
\text{(ii)} & Y_j=\bar\varepsilon_j: & |c_{st}|\le C\bar m_n/n; &\\[2pt]
\text{(iii)} & Y_j=\bar\varepsilon_j^2: & |c_{st}|\le Cn^{-2}; &\\[2pt]
\text{(iv)} & Y_j=J_j: & |c_{st}|\le Cn^{-(1+\kappa)/2}; &\\[2pt]
\text{(v)} & Y_j=\bar\varepsilon_jJ_j: & |c_{st}|\le
Cn^{-1/2}n^{-(1+\kappa)/2}; &\\[2pt]
\text{(vi)} & Y_j=\bar\varepsilon_j^2J_j: & |c_{st}|\le
Cn^{-(1+\eta/2)/2}n^{-(1+\kappa)/2}; &\\[2pt]
\text{(vii)} & Y_j=\bar\varepsilon_jI_j^b/\bar R_j: & |c_{st}|\le Cn^{-1};
&\\[2pt]
\text{(viii)} & Y_j=\bar\varepsilon_j^2I_j^b/\bar R_j: & |c_{st}|\le
C(n^{-2}+b_nn^{-1}). &
\end{array}
\label{eq:coefficient-table}
\end{equation}
\end{lemma}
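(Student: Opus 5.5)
The plan is to reduce every coefficient to a computation in the single row $j$. Because the truncated entries $X_{ju}^{(\tau)}$ are independent over $u$, a function of $X_{js}^{(\tau)}$, $X_{jt}^{(\tau)}$ and of aggregates of the other entries factorizes under independence.

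First, (i) is immediate: $c_{st}=m_{js}m_{jt}$, so $|c_{st}|\le\bar m_n^2$ by (\ref{eq:truncated-means}).

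For (ii) and (iii), use the split (\ref{eq:eps-split}), $\bar\varepsilon_j=(\bar W_{js}+\bar W_{jt})/n+\bar\varepsilon_j^{(st)}$, with $\bar\varepsilon_j^{(st)}$ independent of the pair.
\begin{itemize}
\item In (ii), the term $\mathbb{E}[X_{js}X_{jt}\bar W_{js}]/n$ equals $\mathbb{E}[X_{js}\bar W_{js}]\,m_{jt}/n$. The first factor is bounded by third and fourth moments, so this term is at most $C\bar m_n/n$.
\item Still in (ii), the independent part contributes $m_{js}m_{jt}\,\mathbb{E}\bar\varepsilon_j^{(st)}$. This is at most $\bar m_n^2\max_i|\beta_i|$, which is even smaller.
\item For (iii), expand the square. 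The cross term $2n^{-2}\mathbb{E}[X_{js}\bar W_{js}]\,\mathbb{E}[X_{jt}\bar W_{jt}]$ is $O(n^{-2})$, and this is the dominant piece.
\item The remaining terms in (iii) are $n^{-2}\mathbb{E}[X_{js}\bar W_{js}^2]\,m_{jt}$, the mixed terms carrying $\bar\varepsilon_j^{(st)}$ (which factor into $m$'s times $\mathbb{E}\bar\varepsilon^{(st)}$), and $m_{js}m_{jt}\,\mathbb{E}(\bar\varepsilon_j^{(st)})^2$. All of these are $o(n^{-2})$ once we use $\mathbb{E}\bar W^2|X|\le C\tau_n^{1-\eta}$, which gives $\bar m_n\tau_n^{1-\eta}n^{-2}\ll n^{-2}$, together with $n\bar m_n^2\to0$.
\end{itemize}

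For the indicator cases (iv)--(vi), apply Cauchy--Schwarz:
\[
|c_{st}|\le\bigl\{\mathbb{E}[X_{js}^2X_{jt}^2Y'^2]\bigr\}^{1/2}\,\mathbb{P}(J_j=1)^{1/2},
\]
where $Y'\in\{1,\bar\varepsilon_j,\bar\varepsilon_j^2\}$ and $J_j^2=J_j$.
\begin{itemize}
\item The probability factor is $Cn^{-(1+\kappa)/2}$ by Lemma~\ref{lem:max-variance}.
\item The moment factor is bounded by the boundedness of fourth moments when $Y'=1$. It is bounded by (\ref{eq:mixed-moments}) when $Y'=\bar\varepsilon_j$ (giving $Cn^{-1/2}$) and when $Y'=\bar\varepsilon_j^2$ (giving $Cn^{-(1+\eta/2)/2}$).
\end{itemize}
This yields exactly the stated rates.

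For (vii) and (viii), write $I_j^b=1-J_j$ and $1/\bar R_j=1-\bar\varepsilon_j/\bar R_j$. On $\{I_j^b=1\}$ we have $|\bar\varepsilon_j|\le b_n$ and $\bar R_j\ge\tfrac12$.
\begin{itemize}
\item In (vii), write $\bar\varepsilon_jI_j^b/\bar R_j=\bar\varepsilon_j-\bar\varepsilon_jJ_j-\bar\varepsilon_j^2I_j^b/\bar R_j$. The first two pieces are controlled by (ii) and (v). For the last piece, $|c_{st}|\le2\,\mathbb{E}[|X_{js}X_{jt}|\bar\varepsilon_j^2]$. Using the split and Hölder, this is $O(n^{-1})$, since $\mathbb{E}[|X_{js}X_{jt}|(\bar\varepsilon_j^{(st)})^2]\le Cn^{-1}$ and the $\bar W_{js}$ part contributes $Cn^{-2}\mathbb{E}|X|\bar W^2\le Cn^{-1-\eta/2}$. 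Hence the total is $Cn^{-1}$, because $\bar m_n/n$ and $n^{-1}n^{-(1+\kappa)/2}$ are smaller.
\item In (viii), write $\bar\varepsilon_j^2I_j^b/\bar R_j=\bar\varepsilon_j^2-\bar\varepsilon_j^2J_j-\bar\varepsilon_j^3I_j^b/\bar R_j$. The first two pieces are bounded by (iii) and (vi), and (vi) is $o(n^{-1})\le b_nn^{-1}$. The last piece satisfies $|\cdot|\le2b_n\bar\varepsilon_j^2$, so its coefficient is at most $2b_n\,\mathbb{E}[|X_{js}X_{jt}|\bar\varepsilon_j^2]\le Cb_nn^{-1}$ by the bound just shown.
\end{itemize}

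The main obstacle is making the repeated-index contributions in (iii), (vii) and (viii) compatible with only $4+\eta$ moments. Terms such as $\mathbb{E}[X_{js}\bar W_{js}^2]$ involve fifth moments of the truncated variable. I would handle them with (\ref{eq:truncated-moments}), which gives $\tau_n^{1-\eta}=n^{(1-\eta)/2}$, and check that the prefactor $n^{-2}$ (times $\bar m_n$ where applicable) still gives the claimed order. Where a factor $m_{jt}$ is absent, the bound comes out as $Cn^{-2}\tau_n^{1-\eta}\le Cn^{-1}$, and that is enough for (vii) and (viii).
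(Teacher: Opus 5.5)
Your proposal is correct and follows the paper's proof essentially step for step: factorization over time indices for (i)--(iii), Cauchy--Schwarz against $\mathbb{P}(|\bar\varepsilon_j|>b_n)$ for (iv)--(vi), and the decomposition $\bar\varepsilon_jI_j^b/\bar R_j=\bar\varepsilon_j-\bar\varepsilon_jJ_j-\bar\varepsilon_j^2I_j^b/\bar R_j$ and its quadratic analogue for (vii)--(viii). The differences are cosmetic: you work with the split (\ref{eq:eps-split}) rather than expanding $\bar\varepsilon_j=n^{-1}\sum_u\{(X_{ju}^{(\tau)})^2-1\}$, and you bound $\mathbb{E}[|X_{js}^{(\tau)}X_{jt}^{(\tau)}|\bar\varepsilon_j^2]$ directly instead of via (\ref{eq:mixed-moments}); just note that in (viii), absorbing the (vi) term into $b_nn^{-1}$ uses its explicit rate $n^{-1-\eta/4-\kappa/2}$ together with $a<\eta/4$, not merely $o(n^{-1})$.
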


\begin{proof}
(i) is \(|m_{js}m_{jt}|\le\bar m_n^2\), by independence over \(t\).

(ii) Expand
\(\bar\varepsilon_j=n^{-1}\sum_u({X_{ju}^{(\tau)}}^2-1)\).
For \(u\notin\{s,t\}\), the summand factorizes and has absolute value at
most \(\bar m_n^2\mu\tau_n^{-(2+\eta)}\). For \(u=s\), it is
\[
\mathbb{E}\big[X_{js}^{(\tau)}\{{X_{js}^{(\tau)}}^2-1\}\big]m_{jt},
\]
whose absolute value is at most \(C\bar m_n\); the case \(u=t\) is
symmetric. Summing and dividing by \(n\) gives
\[
|c_{st}|
\le C\{\bar m_n/n+\bar m_n^2\tau_n^{-(2+\eta)}\}
\le C\bar m_n/n.
\]

(iii) Expanding
\(\bar\varepsilon_j^2=n^{-2}\sum_{u,v}
({X_{ju}^{(\tau)}}^2-1)({X_{jv}^{(\tau)}}^2-1)\), the terms with
\(\{u,v\}=\{s,t\}\) give
\(2n^{-2}\mathbb{E}[X_{js}^{(\tau)}({X_{js}^{(\tau)}}^2-1)]\mathbb{E}[X_{jt}^{(\tau)}({X_{jt}^{(\tau)}}^2-1)]\),
bounded by \(Cn^{-2}\) through third moments. The remaining terms carry at
least one factor \(m_{j\cdot}\) or one factor \(\mathbb{E}({X_{ju}^{(\tau)}}^2-1)\), or a
repeated index \(u=v\): terms with \(u=v\notin\{s,t\}\) are
\(m_{js}m_{jt}\mathbb{E}({X_{ju}^{(\tau)}}^2-1)^2\), summing to at most
\(Cn^{-1}\bar m_n^2\); terms with \(u=v=s\) are
\(n^{-2}\mathbb{E}[X_{js}^{(\tau)}({X_{js}^{(\tau)}}^2-1)^2]m_{jt}\), and
\(\mathbb{E}|X^{(\tau)}({X^{(\tau)}}^2-1)^2|\le C(\mathbb{E}|X^{(\tau)}|^5+1)\le C\tau_n^{1-\eta}\), so these are
bounded by \(Cn^{-2}\tau_n^{1-\eta}\bar m_n=o(n^{-2})\); the mixed terms
(\(u\in\{s,t\}\), \(v\notin\{s,t\}\), and their transposes) are bounded by
\(Cn^{-1}\bar m_n\tau_n^{-(2+\eta)}\). All are \(o(n^{-2})\).

(iv)--(vi) follow from the Cauchy--Schwarz inequality,
\[
|c_{st}|
\le
\big(\mathbb{E}[{X_{js}^{(\tau)}}^2{X_{jt}^{(\tau)}}^2Y_j^{\ast2}]\big)^{1/2}
\mathbb{P}(|\bar\varepsilon_j|>b_n)^{1/2},
\qquad Y_j^{\ast}\in\{1,\bar\varepsilon_j,\bar\varepsilon_j^2\},
\]
together with \(\mathbb{E}[{X_{js}^{(\tau)}}^2{X_{jt}^{(\tau)}}^2]\le C\), the mixed-moment bounds
(\ref{eq:mixed-moments}), and the tail bound (\ref{eq:eps-tail}).

(vii) Write \(I_j^b/\bar R_j=1+(I_j^b/\bar R_j-1)\) and note
\(I_j^b/\bar R_j-1=-I_j^b\bar\varepsilon_j/\bar R_j-J_j\), such that
\[
c_{st}
=
\mathbb{E}[X_{js}^{(\tau)}X_{jt}^{(\tau)}\bar\varepsilon_j]
-
\mathbb{E}\Big[X_{js}^{(\tau)}X_{jt}^{(\tau)}\frac{\bar\varepsilon_j^2I_j^b}{\bar R_j}\Big]
-
\mathbb{E}[X_{js}^{(\tau)}X_{jt}^{(\tau)}\bar\varepsilon_jJ_j].
\]
The first term is \(O(\bar m_n/n)\) by (ii). For the second,
\(I_j^b/\bar R_j\le2\), so by the Cauchy--Schwarz inequality and
(\ref{eq:mixed-moments}) it is bounded by
\(2(\mathbb{E}[{X_{js}^{(\tau)}}^2{X_{jt}^{(\tau)}}^2\bar\varepsilon_j^2])^{1/2}
(\mathbb{E}\bar\varepsilon_j^2)^{1/2}\le Cn^{-1}\). The third is covered by (v).

(viii) Similarly,
\[
c_{st}
=
\mathbb{E}[X_{js}^{(\tau)}X_{jt}^{(\tau)}\bar\varepsilon_j^2]
-
\mathbb{E}\Big[X_{js}^{(\tau)}X_{jt}^{(\tau)}\frac{\bar\varepsilon_j^3I_j^b}{\bar R_j}\Big]
-
\mathbb{E}[X_{js}^{(\tau)}X_{jt}^{(\tau)}\bar\varepsilon_j^2J_j].
\]
The first term is \(O(n^{-2})\) by (iii). For the second,
\(|\bar\varepsilon_j^3|I_j^b\le b_n\bar\varepsilon_j^2\), so the
Cauchy--Schwarz inequality gives
\[
2b_n\mathbb{E}[|X_{js}^{(\tau)}X_{jt}^{(\tau)}|\bar\varepsilon_j^2]
\le
2b_n\big(\mathbb{E}[{X_{js}^{(\tau)}}^2{X_{jt}^{(\tau)}}^2
\bar\varepsilon_j^2]\big)^{1/2}
\big(\mathbb{E}\bar\varepsilon_j^2\big)^{1/2}
\le Cb_nn^{-1}.
\]
The third is covered by (vi)
and is \(o(n^{-1-\eta/4})\).
\end{proof}

Lemmas~\ref{lem:projection-master}
and~\ref{lem:projection-coefficients} combine to give the following
bounds for the quantities needed below; the constants are uniform over
\(i<j\), depending only on \(\gamma\), \(\eta\), and \(\mu\).  Uniformly
over \(i<j\),
\begin{equation}
\begin{aligned}
\mathbb{E}\left[\mathbb{E}(\bar D_{ij}\bar\varepsilon_j^2|X_i^{(\tau)})^2\right]
&=O(n^{-6}),\\
\mathbb{E}\left[\mathbb{E}(\bar D_{ij}J_j|X_i^{(\tau)})^2\right]
&=O(n^{-3-\kappa}),\\
\mathbb{E}\left[\mathbb{E}(\bar D_{ij}\bar\varepsilon_jJ_j|X_i^{(\tau)})^2\right]
&=O(n^{-4-\kappa}),
\end{aligned}
\label{eq:projection-eps2-bound}
\end{equation}
and, for the ratio-weighted variables,
\begin{equation}
\mathbb{E}\left[
\mathbb{E}\Big(\bar D_{ij}\frac{\bar\varepsilon_jI_j^b}{\bar R_j}\Big|X_i^{(\tau)}\Big)^2
\right]
=
O(n^{-4}),
\quad
\mathbb{E}\left[
\mathbb{E}\Big(\bar D_{ij}\frac{\bar\varepsilon_j^2I_j^b}{\bar R_j}\Big|
X_i^{(\tau)}\Big)^2
\right]
=
O(b_n^2n^{-4}),
\label{eq:projection-ratio-bound}
\end{equation}
where the second bound uses
\((n^{-2}+b_nn^{-1})^2\le4b_n^2n^{-2}\), valid because
\(b_n=n^{-a}\ge n^{-1}\). The unlocalized projections carry only truncation
bias: uniformly over \(i<j\),
\begin{equation}
\mathbb{E}\left[
\mathbb{E}(\bar D_{ij}|X_i^{(\tau)})^2
\right]
=
O(n^{-2}\bar m_n^4),
\qquad
\mathbb{E}\left[
\mathbb{E}(\bar D_{ij}\bar\varepsilon_j|X_i^{(\tau)})^2
\right]
=
O(n^{-4}\bar m_n^2),
\label{eq:localization-proj-error}
\end{equation}
both \(o(n^{-6})\); these replace the exact orthogonality identities of
Lemma~\ref{lem:scaling-orthogonality}, parts (ii)--(iii), which hold for the original
array but are perturbed by truncation.

\subsection*{E.6 The linear studentization term}

Define the localized linear term
\[
L_n^{b}
=
a_n\sum_{i<j}\bar D_{ij}(\bar\varepsilon_i+\bar\varepsilon_j)I_{ij}^b .
\]
Localization is not optional here: without truncation and localization the
second moment \(\mathbb{E}[D_{ij}^2(\varepsilon_i+\varepsilon_j)^2]\) involves the
sixth-moment quantity \(\mathbb{E}[X^2(X^2-1)^2]\) and need not be finite under
Assumption~2.

Both here and in Section~E.7 we use the Hoeffding decomposition for
independent, not necessarily identically distributed rows: a kernel
\(G_{ij}=G_{ij}(X_i^{(\tau)},X_j^{(\tau)})\) with \(\mathbb{E}G_{ij}^2<\infty\) is written as
\begin{equation}
G_{ij}
=
\theta_{ij}
+
g_{ij}^{(i)}(X_i^{(\tau)})
+
g_{ij}^{(j)}(X_j^{(\tau)})
+
g_{ij}^{(0)}(X_i^{(\tau)},X_j^{(\tau)}),
\label{eq:hoeffding-decomposition}
\end{equation}
where \(\theta_{ij}=\mathbb{E}G_{ij}\),
\(g_{ij}^{(i)}=\mathbb{E}(G_{ij}|X_i^{(\tau)})-\theta_{ij}\),
\(g_{ij}^{(j)}=\mathbb{E}(G_{ij}|X_j^{(\tau)})-\theta_{ij}\), and
\(g_{ij}^{(0)}\) has zero conditional expectation given either row. The four
components are orthogonal, so
\(\mathbb{E}g_{ij}^{(0)2}\le \mathbb{E}G_{ij}^2\); the \(g_{ij}^{(0)}\) have cross-covariances
that vanish unless the unordered row pairs coincide, and the first
projections aggregate into independent row-wise sums.

\begin{lemma}
\label{lem:linear-bound}
\(L_n^{b}=o_p(1)\).
\end{lemma}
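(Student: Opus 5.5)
The plan is a second-moment bound on $L_n^b$ via the Hoeffding decomposition (\ref{eq:hoeffding-decomposition}). By symmetry in $i$ and $j$ it suffices to treat
\[
L_n^{b,j}=a_n\sum_{i<j}G_{ij},\qquad G_{ij}=\bar D_{ij}\bar\varepsilon_jI_i^bI_j^b .
\]
The term carrying $\bar\varepsilon_i$ is handled identically with the roles of the rows interchanged. Each $G_{ij}$ splits into four pieces, $\theta_{ij}+g_{ij}^{(i)}+g_{ij}^{(j)}+g_{ij}^{(0)}$, and I will show that each of the four aggregated pieces is $o_p(1)$. Throughout, $a_n\to1/\gamma$ and $p\asymp n$.

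\emph{Degenerate part.} The $g_{ij}^{(0)}$ are uncorrelated across distinct unordered pairs, and $\mathbb{E}g_{ij}^{(0)2}\le\mathbb{E}G_{ij}^2$. By (\ref{eq:D2eps2-bound}) this is $O(n^{-3})$, so
\[
\mathbb{E}\Big(a_n\sum_{i<j}g_{ij}^{(0)}\Big)^2\le Ca_n^2p^2n^{-3}=O(n^{-1}).
\]
This is the step where the truncation and localization are indispensable: Lemma~\ref{lem:Dij-moment-bounds} replaces the uncontrolled sixth-moment quantity by a fourth-moment one.

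\emph{Projection onto row $i$.} Here $\mathbb{E}(G_{ij}\mid X_i^{(\tau)})=I_i^b\,\mathbb{E}(\bar D_{ij}Y_j\mid X_i^{(\tau)})$ with $Y_j=\bar\varepsilon_jI_j^b=\bar\varepsilon_j-\bar\varepsilon_jJ_j$. By Lemma~\ref{lem:projection-master} together with entries (ii) and (v) of Lemma~\ref{lem:projection-coefficients}, and by (\ref{eq:localization-proj-error}) and (\ref{eq:projection-eps2-bound}), each such projection has second moment $O(n^{-4}\bar m_n^2+n^{-4-\kappa})=O(n^{-4-\kappa})$. For fixed $i$, the row sum $\sum_jg_{ij}^{(i)}$ is bounded in $L^2$ by Cauchy--Schwarz, giving $O(p^2n^{-4-\kappa})$. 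These row sums are independent and mean zero across $i$, so the total variance is
\[
O(a_n^2p^3n^{-4-\kappa})=O(n^{-1-\kappa})\to0 .
\]

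\emph{Projection onto row $j$.} Here $\mathbb{E}(G_{ij}\mid X_j^{(\tau)})=\bar\varepsilon_jI_j^b\,\mathbb{E}(\bar D_{ij}I_i^b\mid X_j^{(\tau)})$, and $I_i^b=1-J_i$. The lemmas give a quadratic form in row $j$ whose coefficients are bounded by $\bar m_n^2+Cn^{-(1+\kappa)/2}$, using entries (i) and (iv). I will use the localization bound $\bar\varepsilon_j^2I_j^b\le b_n^2$ to pull the prefactor out, and then apply the master bound. This gives second moment $O(b_n^2n^{-3-\kappa})$ per pair, and $O(p^2b_n^2n^{-3-\kappa})$ per row sum by Cauchy--Schwarz. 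Summing over the independent rows $j$ gives
\[
O(a_n^2p^3b_n^2n^{-3-\kappa})=O(b_n^2n^{-\kappa})\to0 .
\]

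\emph{Constants.} The same row-$i$ projection bound gives $|\theta_{ij}|\le\{\mathbb{E}\,\mathbb{E}(G_{ij}\mid X_i^{(\tau)})^2\}^{1/2}=O(n^{-2-\kappa/2})$, so $a_n\sum_{i<j}|\theta_{ij}|=O(n^{-\kappa/2})$. Centering the row sums by the $\theta_{ij}$ is therefore harmless. Chebyshev's inequality then yields $L_n^b=o_p(1)$.

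The main obstacle is the two projection steps. The indicators $I^b$ destroy the exact orthogonality of Lemma~\ref{lem:scaling-orthogonality}(ii)--(iii), and the Cauchy--Schwarz aggregation within a row costs a full factor $p^2$. The argument therefore relies on the extra decay $n^{-\kappa}$ from the tail bound (\ref{eq:eps-tail}), entering through the $J$-coefficients (iv)--(v). If the crude row-sum Cauchy--Schwarz step turns out too lossy, I would instead compute the within-row covariances $\mathbb{E}[g_{ij}^{(i)}g_{ik}^{(i)}]$ directly. These are quadratic forms in the same row with small coefficients, and this recovers a factor of $p$.
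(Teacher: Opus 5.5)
Your proposal is correct and follows essentially the same route as the paper. It uses a Hoeffding decomposition of the localized kernel, controls the degenerate part by (\ref{eq:D2eps2-bound}) and the first projections by Lemma~\ref{lem:projection-master} with the coefficient table, and aggregates row sums by the triangle inequality in $L^2$ and independence across rows. The differences are cosmetic: you split off the $\bar\varepsilon_i$ and $\bar\varepsilon_j$ pieces, and you bound the mean through the opposite-row projection rather than by Cauchy--Schwarz against $\mathbb{E}\bar\varepsilon_i^2$. Your fallback is unnecessary, since the $n^{-\kappa}$ margin already absorbs the crude factor $p^2$, exactly as in the paper.
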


\begin{proof}
Let \(H_{ij}^{b}=\bar D_{ij}(\bar\varepsilon_i+\bar\varepsilon_j)I_{ij}^b\),
a bounded kernel of \((X_i^{(\tau)},X_j^{(\tau)})\), and apply
(\ref{eq:hoeffding-decomposition}) with mean \(\theta_{ij}^L\). We bound the
three components in turn; throughout, \(p\asymp n\) and \(a_n\to1/\gamma\).

\emph{Means.} Since \(I_i^b\) and \(\bar\varepsilon_i\) are
\(X_i^{(\tau)}\)-measurable,
\[
\mathbb{E}\big[\bar D_{ij}\bar\varepsilon_iI_{ij}^b\big]
=
\mathbb{E}\Big[\bar\varepsilon_iI_i^b
\Big\{\mathbb{E}(\bar D_{ij}|X_i^{(\tau)})-\mathbb{E}(\bar D_{ij}J_j|X_i^{(\tau)})\Big\}\Big],
\]
so by the Cauchy--Schwarz inequality, (\ref{eq:eps-moments}),
(\ref{eq:projection-eps2-bound}), and (\ref{eq:localization-proj-error}),
\[
\begin{aligned}
\big|\mathbb{E}\big[\bar D_{ij}\bar\varepsilon_iI_{ij}^b\big]\big|
&\le (\mathbb{E}\bar\varepsilon_i^2)^{1/2}
\Big\{
\big(\mathbb{E}\mathbb{E}(\bar D_{ij}|X_i^{(\tau)})^2\big)^{1/2}\\[-0.2em]
&\hspace{8em}+
\big(\mathbb{E}\mathbb{E}(\bar D_{ij}J_j|X_i^{(\tau)})^2\big)^{1/2}
\Big\}\\
&\le Cn^{-1/2}n^{-(3+\kappa)/2}
=Cn^{-2-\kappa/2}.
\end{aligned}
\]
The term with \(\bar\varepsilon_j\) is symmetric. Hence
\begin{equation}
a_n\sum_{i<j}|\theta_{ij}^L|
\le
Cp^2n^{-2-\kappa/2}
=
O(n^{-\kappa/2})
=
o(1).
\label{eq:linear-mean}
\end{equation}

\emph{First projections.} Conditioning on \(X_i^{(\tau)}\), and using
\(I_j^b=1-J_j\),
\[
\begin{aligned}
\mathbb{E}(H_{ij}^{b}|X_i^{(\tau)})
&=I_i^b\Big\{
\bar\varepsilon_i
\big[\mathbb{E}(\bar D_{ij}|X_i^{(\tau)})
-\mathbb{E}(\bar D_{ij}J_j|X_i^{(\tau)})\big]\\
&\qquad
+\mathbb{E}(\bar D_{ij}\bar\varepsilon_j|X_i^{(\tau)})
-\mathbb{E}(\bar D_{ij}\bar\varepsilon_jJ_j|X_i^{(\tau)})
\Big\}.
\end{aligned}
\]
Since \(|\bar\varepsilon_i|I_i^b\le b_n\) and \(\bar\varepsilon_iI_i^b\) is
\(X_i^{(\tau)}\)-measurable, the bounds (\ref{eq:projection-eps2-bound}) and
(\ref{eq:localization-proj-error}) give, uniformly over \(i<j\),
\begin{equation}
\mathbb{E}\left[\mathbb{E}(H_{ij}^{b}|X_i^{(\tau)})^2\right]
\le
C\Big\{
b_n^2\big(n^{-2}\bar m_n^4+n^{-3-\kappa}\big)
+
n^{-4}\bar m_n^2
+
n^{-4-\kappa}
\Big\}
\le
Cn^{-3-\kappa_0},
\label{eq:first-proj-linear}
\end{equation}
with \(\kappa_0=\min\{\kappa,1\}>0\), and symmetrically for
\(\mathbb{E}(H_{ij}^{b}|X_j^{(\tau)})\); the margin \(n^{-\kappa_0}\) is what survives
the summation over \(p^3\asymp n^3\) row--pair combinations below. Grouping the first projections by row,
\[
A_i
=
\sum_{j>i}\big[\mathbb{E}(H_{ij}^{b}|X_i^{(\tau)})-\theta_{ij}^L\big]
+
\sum_{j<i}\big[\mathbb{E}(H_{ji}^{b}|X_i^{(\tau)})-\theta_{ji}^L\big],
\]
the \(A_i\) are independent across \(i\) and mean zero, with
$$
\mathbb{E} A_i^2
\le
\left(
\sum_{j\ne i}
\left\|
\mathbb{E}(H_{i\wedge j,i\vee j}^{b}|X_i^{(\tau)})
-\theta_{i\wedge j,i\vee j}^{L}
\right\|_2
\right)^2
\le
Cp^2n^{-3-\kappa_0}.$$ 
Hence
\[
\operatorname{var}\Big(a_n\sum_iA_i\Big)
\le
Ca_n^2p\cdot p^2n^{-3-\kappa_0}
=
O(n^{-\kappa_0})
\to0.
\]

\emph{Degenerate part.} The degenerate components \(g_{ij}^{(0)}\) satisfy
\(\mathbb{E} g_{ij}^{(0)2}\le \mathbb{E}(H_{ij}^{b})^2\) by the orthogonality of the Hoeffding
components, and their cross-covariances vanish unless the unordered row pairs
coincide. By (\ref{eq:D2eps2-bound}),
\(\mathbb{E}(H_{ij}^{b})^2\le2\mathbb{E}[\bar D_{ij}^2(\bar\varepsilon_i^2
+\bar\varepsilon_j^2)I_{ij}^b]=O(n^{-3})\), so
\[
\operatorname{var}\Big(a_n\sum_{i<j}g_{ij}^{(0)}\Big)
\le
Ca_n^2p^2n^{-3}
=
O(n^{-1})
\to0.
\]
Combining the three parts proves the lemma.
\end{proof}

\subsection*{E.7 The nonlinear remainder}

For \(\bar R_i,\bar R_j>0\), combining the elementary identity
\(1/\bar R-1=-\bar\varepsilon+\bar\varepsilon^2/\bar R\) with
\[
\frac1{\bar R_i\bar R_j}-1
=
\Big(\frac1{\bar R_i}-1\Big)
+\Big(\frac1{\bar R_j}-1\Big)
+\Big(\frac1{\bar R_i}-1\Big)\Big(\frac1{\bar R_j}-1\Big)
\]
gives the exact decomposition
\begin{equation}
\frac1{\bar R_i\bar R_j}-1
=
-(\bar\varepsilon_i+\bar\varepsilon_j)+\bar\rho_{ij},
\qquad
\bar\rho_{ij}
=
\frac{\bar\varepsilon_i^2}{\bar R_i}
+\frac{\bar\varepsilon_j^2}{\bar R_j}
+\frac{\bar\varepsilon_i\bar\varepsilon_j}{\bar R_i\bar R_j}.
\label{eq:taylor-denominator}
\end{equation}
On \(\{I_{ij}^b=1\}\), the ratios are bounded by \(2\) and \(4\), so
\begin{equation}
|\bar\rho_{ij}|
\le
C\big(\bar\varepsilon_i^2+\bar\varepsilon_j^2
+|\bar\varepsilon_i\bar\varepsilon_j|\big)
\le
Cb_n\big(|\bar\varepsilon_i|+|\bar\varepsilon_j|\big).
\label{eq:rho-bound}
\end{equation}
Define the pairwise localized remainder kernel
\begin{equation}
K_{ij}^{b}
=
\bar D_{ij}\bar\rho_{ij}I_{ij}^b .
\label{eq:localized-K}
\end{equation}

\begin{lemma}
\label{lem:remainder-bound}
\(a_n\sum_{i<j}K_{ij}^{b}=o_p(1)\).
\end{lemma}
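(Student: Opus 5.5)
The plan follows the template of Lemma~\ref{lem:linear-bound}: apply the Hoeffding decomposition (\ref{eq:hoeffding-decomposition}) to the bounded kernel $K_{ij}^b$ and show that each of the three aggregated components is $o_p(1)$.

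\emph{Degenerate part.} The degenerate components are uncorrelated across distinct unordered pairs. Hence it suffices to show $\mathbb{E}(K_{ij}^b)^2=o(n^{-2})$ uniformly. By (\ref{eq:rho-bound}), $|\bar\rho_{ij}|I_{ij}^b\le Cb_n(|\bar\varepsilon_i|+|\bar\varepsilon_j|)$. Combined with (\ref{eq:D2eps2-bound}), this gives $\mathbb{E}(K_{ij}^b)^2\le Cb_n^2n^{-3}$, so the degenerate sum has variance $O(a_n^2p^2b_n^2n^{-3})=o(1)$.

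\emph{First projections.} Write $\bar\rho_{ij}I_{ij}^b$ as the sum of three pieces:
\[
I_i^b\frac{\bar\varepsilon_i^2}{\bar R_i}\,I_j^b,\qquad
I_i^b\,\frac{\bar\varepsilon_j^2I_j^b}{\bar R_j},\qquad
\frac{\bar\varepsilon_iI_i^b}{\bar R_i}\cdot\frac{\bar\varepsilon_jI_j^b}{\bar R_j}.
\]
Conditioning on $X_i^{(\tau)}$, the factors measurable in row $i$ pull out, and each is bounded by $C$, $Cb_n^2$ or $Cb_n$ respectively. What remains is a projection $\mathbb{E}(\bar D_{ij}Y_j\mid X_i^{(\tau)})$ with $Y_j\in\{I_j^b=1-J_j,\ \bar\varepsilon_j^2I_j^b/\bar R_j,\ \bar\varepsilon_jI_j^b/\bar R_j\}$. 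Lemma~\ref{lem:projection-master} together with Lemma~\ref{lem:projection-coefficients}(i),(iv),(vii),(viii) and the bounds (\ref{eq:projection-eps2-bound})--(\ref{eq:localization-proj-error}) controls these projections. For the first piece I would not use the prefactor $C$. Instead I would keep $\bar\varepsilon_i^2$, whose second moment is $O(n^{-1-\eta/2})$ by (\ref{eq:eps-moments}), and multiply by the $O(n^{-3-\kappa})+o(n^{-6})$ projection of $\bar D_{ij}(1-J_j)$. For the second piece the projection is $O(b_n^2n^{-4})$, and for the third it is $b_n^2\,O(n^{-4})$. The target is a bound $\mathbb{E}[\mathbb{E}(K_{ij}^b\mid X_i^{(\tau)})^2]\le Cn^{-3-\kappa_1}$ for some $\kappa_1>0$, uniformly in $i<j$. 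Given that bound, the row sums $A_i$ are independent and centered with $\mathbb{E}A_i^2\le Cp^2n^{-3-\kappa_1}$, so $\operatorname{var}(a_n\sum_iA_i)=O(n^{-\kappa_1})$, exactly as in Lemma~\ref{lem:linear-bound}. The $j$-side projections are handled symmetrically.

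\emph{Means.} This is the main obstacle, because $p^2$ mean terms must sum to $o(1)$, which requires $|\theta_{ij}^K|=o(n^{-2})$. I would use Cauchy--Schwarz on the same conditional representation: $|\theta_{ij}^K|\le\|\text{row-}i\text{ factor}\|_2\,\|\text{projection}\|_2$. For the $\bar\varepsilon_i^2$ piece this gives $n^{-(1+\eta/2)/2}\cdot n^{-(3+\kappa)/2}=o(n^{-2})$. For the $\bar\varepsilon_j^2$ piece, projecting onto row $i$ with factor $I_i^b$ (split as $1-J_i$) gives $O(b_nn^{-2})$ via (viii). For the cross piece the product of the two $O(n^{-1/2})$ factors with the $O(n^{-2})$ projection gives $O(n^{-5/2})$. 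If the $\bar\varepsilon_j^2$ piece turns out to be only $O(b_nn^{-2})$ after the $J_i$ correction, this still suffices, since $b_n\to0$. Summing the three components and passing from the truncated array back to the original one via $\mathcal A_n$ then completes the argument.
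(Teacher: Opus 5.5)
Your proposal is essentially the paper's proof. It uses the same Hoeffding decomposition, the same $O(b_n^2n^{-3})$ second-moment bound for the degenerate part, the same three-piece expression for $\mathbb{E}(K_{ij}^b\mid X_i^{(\tau)})$ controlled by Lemmas~\ref{lem:projection-master}--\ref{lem:projection-coefficients}, and the same Cauchy--Schwarz bounds $n^{-2-\eta/4-\kappa/2}$, $b_nn^{-2}$, $n^{-5/2}$ for the means.

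One local step needs repair. In the first-projection bound for the $\bar\varepsilon_i^2$ piece, you cannot multiply $\mathbb{E}\bar\varepsilon_i^4$ by the second moment of $\mathbb{E}(\bar D_{ij}I_j^b\mid X_i^{(\tau)})$, because both are functions of row $i$ and need not be independent. Use instead the uniform bound $|I_i^b\bar\varepsilon_i^2/\bar R_i|\le 2b_n^2$; your "respectively" list assigned the bounds to the wrong pieces. This gives $O(b_n^4n^{-3-\kappa})$, which already meets your target, exactly as in the paper. Even the constant bound you set aside would suffice here. The $\mathbb{E}\bar\varepsilon_i^4$ factor is genuinely needed only for the means, where Cauchy--Schwarz legitimately supplies it.
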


\begin{proof}
Apply (\ref{eq:hoeffding-decomposition}) to the bounded kernel
\(K_{ij}^{b}\), with mean \(\theta_{ij}^K\).

\emph{Second moment.} By (\ref{eq:rho-bound}) and (\ref{eq:D2eps2-bound}),
\begin{equation}
\mathbb{E}(K_{ij}^{b})^2
\le
Cb_n^2
\mathbb{E}\big[\bar D_{ij}^2(\bar\varepsilon_i^2+\bar\varepsilon_j^2)I_{ij}^b\big]
=
O(b_n^2n^{-3}).
\label{eq:K-second}
\end{equation}

\emph{First projections.} By (\ref{eq:taylor-denominator}), conditioning on
\(X_i^{(\tau)}\) and using that \(\bar\varepsilon_i\), \(\bar R_i\), and
\(I_i^b\) are \(X_i^{(\tau)}\)-measurable,
\[
\mathbb{E}(K_{ij}^{b}|X_i^{(\tau)})
=
I_i^b\left\{
\frac{\bar\varepsilon_i^2}{\bar R_i}
\mathbb{E}\big(\bar D_{ij}I_j^b\,\big|\,X_i^{(\tau)}\big)
+
\mathbb{E}\Big(\bar D_{ij}\frac{\bar\varepsilon_j^2I_j^b}{\bar R_j}
\,\Big|\,X_i^{(\tau)}\Big)
+
\frac{\bar\varepsilon_i}{\bar R_i}
\mathbb{E}\Big(\bar D_{ij}\frac{\bar\varepsilon_jI_j^b}{\bar R_j}
\,\Big|\,X_i^{(\tau)}\Big)
\right\}.
\]
For the first term, \(|\bar\varepsilon_i^2I_i^b/\bar R_i|\le2b_n^2\), and
\(\mathbb{E}(\bar D_{ij}I_j^b|X_i^{(\tau)})
=\mathbb{E}(\bar D_{ij}|X_i^{(\tau)})-\mathbb{E}(\bar D_{ij}J_j|X_i^{(\tau)})\), whose second
moment is \(O(n^{-2}\bar m_n^4+n^{-3-\kappa})\) by
(\ref{eq:projection-eps2-bound}) and (\ref{eq:localization-proj-error}). For
the second and third terms, \(|\bar\varepsilon_iI_i^b/\bar R_i|\le2b_n\), and
(\ref{eq:projection-ratio-bound}) applies. Hence, uniformly over \(i<j\),
\begin{equation}
\mathbb{E}\left[\mathbb{E}(K_{ij}^{b}|X_i^{(\tau)})^2\right]
\le
C\big\{
b_n^4n^{-3-\kappa}
+b_n^2n^{-4}
+b_n^4n^{-2}\bar m_n^4
\big\},
\label{eq:first-proj-K}
\end{equation}
and the same bound holds conditional on \(X_j^{(\tau)}\), by symmetry. Each term
is \(o(n^{-3})\), with the explicit margins \(n^{-\kappa}\), \(n^{-1}\), and
\(n\bar m_n^4=\mu^4n^{-5-2\eta}\) needed for the summation over \(p^3\asymp
n^3\) row--pair combinations below.

\emph{Means.} Taking expectations of the three terms in the display above:
by the Cauchy--Schwarz inequality, (\ref{eq:eps-moments}),
(\ref{eq:projection-eps2-bound}), (\ref{eq:projection-ratio-bound}), and
(\ref{eq:localization-proj-error}),
\begin{equation}
\begin{aligned}
|\theta_{ij}^K|
&\le C\Big\{
(\mathbb{E}\bar\varepsilon_i^4)^{1/2}n^{-(3+\kappa)/2}
+n^{-1}(n^{-2}+b_nn^{-1})
+(\mathbb{E}\bar\varepsilon_i^2)^{1/2}n^{-2}
\Big\}\\
&\le C\Big\{
n^{-2-\eta/4-\kappa/2}+b_nn^{-2}+n^{-5/2}
\Big\}
=o(n^{-2}),
\end{aligned}
\label{eq:theta-K}
\end{equation}
uniformly over \(i<j\), so
\[
a_n\sum_{i<j}|\theta_{ij}^K|
\le
Cp^2\big\{n^{-2-\eta/4-\kappa/2}+b_nn^{-2}+n^{-5/2}\big\}
\le
C\big\{n^{-\eta/4-\kappa/2}+b_n+n^{-1/2}\big\}
=
o(1).
\]

\emph{Assembly.} The means sum to \(o(1)\) by the last display. The row-wise
first-projection sums \(A_i\), formed as in the proof of
Lemma~\ref{lem:linear-bound}, are independent and mean zero with
\(\mathbb{E}A_i^2\le Cp^2\{b_n^4n^{-3-\kappa}+b_n^2n^{-4}+b_n^4n^{-2}\bar m_n^4\}\)
by (\ref{eq:first-proj-K}), so
\[
\begin{aligned}
\operatorname{var}\Big(a_n\sum_iA_i\Big)
&\le Ca_n^2p^3
\big\{b_n^4n^{-3-\kappa}+b_n^2n^{-4}
+b_n^4n^{-2}\bar m_n^4\big\}\\
&\le C\big\{b_n^4n^{-\kappa}+b_n^2n^{-1}
+\mu^4b_n^4n^{-5-2\eta}\big\}
\rightarrow0.
\end{aligned}
\]
Every term vanishes at the displayed rate. The
degenerate components satisfy
\[
\operatorname{var}\Big(a_n\sum_{i<j}g_{ij}^{(0)}\Big)
\le Ca_n^2\sum_{i<j}\mathbb{E}(K_{ij}^{b})^2
\le Cp^2b_n^2n^{-3}=O(b_n^2n^{-1})\to0
\]
by (\ref{eq:K-second}). Hence \(a_n\sum_{i<j}K_{ij}^{b}=o_p(1)\).
\end{proof}

\subsection*{E.8 Studentization part of Theorem~2}

By Section~E.2 and (\ref{eq:truncated-target}), it suffices to show
\(\bar{\tilde Z}_n-\bar Z_n=o_p(1)\). On \(\mathcal E_n\) we have
\(I_{ij}^b=1\) for every pair and \(\bar R_i\ge\tfrac12>0\), so
(\ref{eq:taylor-denominator}) gives
\[
\bar{\tilde Z}_n-\bar Z_n
=
a_n\sum_{i<j}\bar D_{ij}
\left[
\frac1{\bar R_i\bar R_j}-1
\right]
=
-L_n^{b}
+
a_n\sum_{i<j}K_{ij}^{b}
\qquad
\text{on }\mathcal E_n .
\]
The two terms on the right are \(o_p(1)\) by
Lemmas~\ref{lem:linear-bound} and~\ref{lem:remainder-bound}, and
\(\mathbb{P}(\mathcal E_n)\to1\) by Lemma~\ref{lem:max-variance}; hence
\(\bar{\tilde Z}_n-\bar Z_n=o_p(1)\), which is
(\ref{eq:truncated-target}). Since \(\tilde Z_n-Z_n
=\bar{\tilde Z}_n-\bar Z_n\) on \(\mathcal A_n\) and
\(\mathbb{P}(\mathcal A_n)\to1\) by (\ref{eq:truncation-event}),
\[
\tilde Z_n-Z_n=o_p(1).
\]
The martingale-CLT argument in Appendix~\ref{app:martingale-clt} gives
\(Z_n\Rightarrow N(0,1)\) under
Assumptions~1 and~2 alone, and Slutsky's
theorem yields \(\tilde Z_n\Rightarrow N(0,1)\). This completes the
studentization part of the proof.

\section{Proofs for demeaning}
\label{app:demeaning}

\begin{proof}[Proof of Lemma \ref{lem:demeaning-centering}]
Let $\Pi_n=I_n-n^{-1}\mathbf1\mathbf1^\prime$ and write
$z_j=(\tilde X_{j1}^c,\ldots,\tilde X_{jn}^c)^\prime$, with $z_j=0$ on
$\{q_j^c=0\}$.  On $\{q_j^c>0\}$, temporal i.i.d.\ sampling makes the
law of $z_j$ permutation invariant with $\mathbf1^\prime z_j=0$ and
$z_j^\prime z_j=n$; hence
$\mathbb{E}(z_jz_j^\prime\mid q_j^c>0)$ is compound symmetric,
annihilates $\mathbf1$, and has trace $n$, which uniquely gives
$\mathbb{E}(z_jz_j^\prime\mid q_j^c>0)=\tfrac{n}{n-1}\Pi_n$ and
therefore
\[
\mathbb{E}(z_jz_j^\prime)=\pi_{j,n}\,\frac{n}{n-1}\,\Pi_n,
\qquad
\mathbb{E} z_{jt}^2=\pi_{j,n}.
\]
The rows $z_i$ and $z_j$ are independent.  Conditional on $z_i$,
\[
\mathbb{E}\{(n^{-1}z_i'z_j)^2|z_i\}
=n^{-2}z_i^\prime\mathbb{E}(z_jz_j^\prime)z_i
=\frac{\pi_{j,n}}{n-1}\cdot\frac{z_i^\prime z_i}{n}
=\frac{\pi_{j,n}}{n-1}\,1\{q_i^c>0\},
\]
because $\Pi_nz_i=z_i$ and $z_i^\prime z_i=n\,1\{q_i^c>0\}$.
Similarly,
\[
\mathbb{E}\left\{\left.
\frac1{n(n-1)}\sum_tz_{it}^2z_{jt}^2\right|z_i\right\}
=\frac{\pi_{j,n}}{n(n-1)}\sum_tz_{it}^2
=\frac{\pi_{j,n}}{n-1}\,1\{q_i^c>0\}.
\]
Interchanging the rows proves the remaining assertions.

\end{proof}

\begin{lemma}[Centered-row studentization bound]
\label{lem:centered-studentization}
Let $\Pi_n=I_n-n^{-1}\mathbf1\mathbf1^\prime$, and for independent coordinate
rows $X_i,X_j\in\mathbb R^n$ define
\[
q_i^c=n^{-1}X_i^\prime\Pi_nX_i,
\]
\[
D_{ij}^c
=
\frac{(X_i^\prime\Pi_nX_j)^2}{n^2}
-
\frac{1}{n(n-1)}
\sum_{t=1}^n(\Pi_nX_i)_t^2(\Pi_nX_j)_t^2,
\qquad
\tilde D_{ij}^c=\frac{D_{ij}^c}{q_i^cq_j^c}.
\]
Under Assumptions~1, 2, and
3,
\[
\frac{n^2}{\sqrt{p(p-1)n(n-1)}}
\sum_{i<j}(\tilde D_{ij}^c-D_{ij}^c)
\overset{p}{\rightarrow}0.
\]
\end{lemma}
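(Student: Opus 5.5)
The plan is to transplant the known-mean studentization argument of Appendix~\ref{app:studentization} to the centered rows. The centered analogues of its two structural facts are available: exact degeneracy of the corrected pair kernel, and orthogonality of the kernel to the linear variance fluctuation. The exact scaling identity $\tilde D_{ij}^c=D_{ij}^c/(q_i^cq_j^c)$ is algebraic, as in Lemma~\ref{lem:scaling-orthogonality}(i). Hence
\[
\frac{n^2}{\sqrt{p(p-1)n(n-1)}}\sum_{i<j}(\tilde D_{ij}^c-D_{ij}^c)=a_n\sum_{i<j}D_{ij}^c\Big[\frac1{q_i^cq_j^c}-1\Big],
\]
and the decomposition (\ref{eq:taylor-denominator}) applies with $\varepsilon_i^c=q_i^c-1$. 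It splits the error into a linear term $-a_n\sum_{i<j}D_{ij}^c(\varepsilon_i^c+\varepsilon_j^c)$ and a nonlinear remainder $a_n\sum_{i<j}D_{ij}^c\rho_{ij}^c$. On the event $\{\min_iq_i^c=0\}$, which has probability at most $p(1-c)^{n-1}$, we invoke the zero convention and discard it.

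\emph{Step 1 (structure).} Under Assumption~\ref{ass:time-iid}, the proof of Lemma~\ref{lem:demeaning-centering} shows that $\mathbb{E}\{(\Pi_nX_j)(\Pi_nX_j)'\}$ is compound symmetric and annihilates $\mathbf1$, so it equals $c_j\Pi_n$. I would use this to check $\mathbb{E}(D_{ij}^c\mid X_i)=0$: both terms of $D_{ij}^c$ then have conditional mean $c_j\,X_i'\Pi_nX_i/(n(n-1))$. Next I would establish the orthogonality identity $\mathbb{E}\{D_{ij}^c(q_j^c-1)\mid X_i\}=0$ up to an exponentially or polynomially small error. Exchangeability in time suggests that $\mathbb{E}[(\Pi X_j)_s(\Pi X_j)_t\,q_j^c]$ has the same compound-symmetric, $\mathbf1$-annihilating form $c_j'\Pi_n$ on the off-diagonal structure. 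The degenerate combination in $D_{ij}^c$ then cancels exactly as before, because $(\Pi X_j)_t^2$ times $q_j^c$ is again permutation equivariant. I expect the diagonal-versus-off-diagonal split to leave a residual proportional to the difference between the diagonal and off-diagonal entries of such moment matrices, multiplied by $n^{-1}$. I would compute that residual explicitly, using third moments of $X_{jt}$ (possibly nonzero), and show it is $O(n^{-2})$ per coefficient. This is the analogue of items (ii)--(iii) of Lemma~\ref{lem:projection-coefficients}, and it suffices because the projection bound only needs coefficients of order $n^{-1}\cdot o(1)$.

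\emph{Step 2 (truncation and localization).} The remaining machinery carries over nearly verbatim. I truncate at $\tau_n=n^{1/2}$ as in (\ref{eq:truncation}) and note that centering is applied after truncation, so the identities are algebraic. Lemma~\ref{lem:max-variance} gives $\max_i|\bar\varepsilon_i^c|\le b_n$ with probability tending to one, since $q_i^c=\hat r_{ii}-\bar X_i^2$ and $\max_i\bar X_i^2=O_p(n^{-1}\log n)$ under the truncated moments. The Hoeffding decomposition over rows then bounds the means, the first projections, and the degenerate parts of the localized linear kernel and of the remainder kernel, exactly as in Lemmas~\ref{lem:linear-bound} and~\ref{lem:remainder-bound}. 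This requires the centered analogues of Lemma~\ref{lem:Dij-moment-bounds} and the coefficient table. I would obtain these by writing $(\Pi X)_t=X_t-\bar X$ and expanding. Every extra term carries a factor $\bar X_j$, which contributes an additional $n^{-1/2}$ in $L^2$, so all the bounds survive with the same or better orders.

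\emph{Main obstacle.} The main obstacle is the coefficient bounds for the projections, that is, the centered forms of (\ref{eq:projection-eps2-bound})--(\ref{eq:localization-proj-error}). Exact orthogonality is lost twice: once through truncation bias and once through the demeaning cross terms. In the known-mean case, $c_{st}$ vanished by independence over time. Here, $(\Pi X_j)_s$ and $(\Pi X_j)_t$ are dependent through $\bar X_j$, so $c_{st}$ picks up terms like $-n^{-1}\mathbb{E}[X_{js}^2\bar\varepsilon_j]$ of size $n^{-2}$. In the master bound, Lemma~\ref{lem:projection-master}, such a term is not harmless if it is constant in $(s,t)$: summing $\sum_{s\ne t}X_{is}X_{it}c$ gives $c\,(n\bar X_i)^2$, which is of order $nc$ rather than $n\,\bar c$ in $L^2$. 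Fortunately, the sum for the centered kernel is $\sum_{s\ne t}(\Pi X_i)_s(\Pi X_i)_t\,c_{st}$, and the constant part of $c_{st}$ is then annihilated by $\mathbf1'\Pi=0$ up to the diagonal. So I would decompose $c_{st}=c^{\circ}+\tilde c_{st}$ into its compound-symmetric constant part plus a remainder, kill the constant part with $\Pi_n$, and apply the master bound to $\tilde c_{st}$ only.
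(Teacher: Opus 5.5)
Your plan would close, but it takes a much heavier route than the paper, and the obstacle you single out does not arise.

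\textbf{The paper's route.} The paper never splits the error into a linear term and a Taylor remainder, and it does not truncate the array. Since $D_{ij}^c$, and by Lemma~\ref{lem:demeaning-centering} also $\tilde D_{ij}^c$, are exactly conditionally centered given either row, the difference is a degenerate kernel. The variance of the normalized sum is therefore $a_{n,p}^2\sum_{i<j}\mathbb{E}(\tilde D_{ij}^c-D_{ij}^c)^2$. The pairwise bound $o(n^{-2})$ then follows by Vitali from three facts:
\begin{itemize}
\item uniform integrability of $n^2(D_{ij}^c)^2$, from a $(2+\delta)$-moment bound for the trace-free form $X_i'B_jX_i$;
\item the bound $|\tilde D_{ij}^c|\le3$, combined with an exponential small-ball estimate for $q_i^c$;
\item $\max_i|q_i^c-1|\to_p 0$.
\end{itemize}

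\textbf{What your route misses.} Your Step~1 contains the fact that makes your projection analysis unnecessary, but you use it only for $D_{ij}^c$. For any permutation-invariant function $Y_j$ of row $j$, Assumption~\ref{ass:time-iid} makes $\mathbb{E}\{(\Pi_nX_j)(\Pi_nX_j)'Y_j\}$ compound symmetric and $\mathbf1$-annihilating, hence a multiple of $\Pi_n$. The two terms of $D_{ij}^c$ then have identical conditional means given $X_i$. Consequences:
\begin{itemize}
\item $\mathbb{E}\{D_{ij}^c(q_j^c-1)\mid X_i\}=0$ holds exactly, not up to a small error.
\item All means and first projections of your localized linear and remainder kernels vanish identically. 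This also holds after truncation, which preserves identical distribution over $t$.
\item The split $c_{st}=c^\circ+\tilde c_{st}$ and a centered coefficient table are not needed. Even without the cancellation, a constant coefficient of order $n^{-2}$ gives a first projection of $L^2$ order $n^{-3}$, far below the $o(n^{-3/2})$ that the row-wise summation tolerates.
\end{itemize}

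\textbf{The degenerate part needs no truncation either.} The event $\{\max_i|q_i^c-1|\le\tfrac12\}$ has probability tending to one. On it the sum equals the sum of the localized kernels $D_{ij}^c\{(q_i^cq_j^c)^{-1}-1\}1\{|q_i^c-1|\vee|q_j^c-1|\le\tfrac12\}$, which are exactly degenerate. Conditioning on one row and using $\mathbb{E}\{(D_{ij}^c)^2\mid X_j\}\le Cn^{-2}(q_j^c)^2$ gives each kernel second moment $O(n^{-3})$. The normalized sum therefore has variance $O(n^{-1})$.

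Streamlined this way, your route is about as short as the paper's and yields an explicit rate, which the Vitali step does not. As written, it rebuilds the truncation and coefficient machinery of Appendix~E for difficulties that are absent in the centered case.
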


\begin{proof}
Write $a_{n,p}=n^2/\sqrt{p(p-1)n(n-1)}=O(1)$.  The exact scaling identity
is
\begin{equation}
\tilde D_{ij}^c-D_{ij}^c
=D_{ij}^c\{(q_i^cq_j^c)^{-1}-1\}.
\label{eq:centered-scaling-error}
\end{equation}
Both $D_{ij}^c$ and $\tilde D_{ij}^c$ are conditionally mean zero
given either coordinate row.  For $D_{ij}^c$ this follows from
$\mathbb{E}(X_jX_j^\prime)=I_n$, $\Pi_n^2=\Pi_n$, and
$\operatorname{diag}(\Pi_n)=(1-1/n)\mathbf1$; for
$\tilde D_{ij}^c$ it is Lemma~\ref{lem:demeaning-centering}.
Thus the difference in (\ref{eq:centered-scaling-error}) is a
degenerate order-two kernel in the independent row index.  Covariances
between distinct pairs therefore vanish, including pairs sharing one row.
It is enough to prove, uniformly in $i\ne j$,
\begin{equation}
\mathbb{E}(\tilde D_{ij}^c-D_{ij}^c)^2=o(n^{-2}).
\label{eq:centered-pair-l2}
\end{equation}

We first record the centered-row moment bounds used to establish
(\ref{eq:centered-pair-l2}).  Put $z=\Pi_nX_j$ and, conditional on $X_j$,
write
\[
D_{ij}^c=X_i'B_jX_i,
\qquad
B_j=\frac{zz^\prime}{n^2}
-\frac{\Pi_n\operatorname{diag}(z_1^2,\ldots,z_n^2)\Pi_n}{n(n-1)}.
\]
The matrix $B_j$ is symmetric and trace-free.  The standard quadratic-form
identity for independent standardized entries with uniformly bounded
fourth moments gives
\[
\mathbb{E}\{(D_{ij}^c)^2|X_j\}
\le C\|B_j\|_F^2.
\]
Since $\Pi_n$ is an orthogonal projection,
\[
\|B_j\|_F
\le \frac{\|z\|^2}{n^2}
+\frac{\{\sum_tz_t^4\}^{1/2}}{n(n-1)}
\le \frac{C\|z\|^2}{n^2},
\]
and therefore
\begin{equation}
\mathbb{E}\{(D_{ij}^c)^2|X_j\}
\le \frac{C}{n^2}(q_j^c)^2,
\qquad
\mathbb{E}(D_{ij}^c)^2\le Cn^{-2}.
\label{eq:centered-D2}
\end{equation}

Choose $0<\delta\le\eta/4$.  Apply the off-diagonal quadratic-form moment
inequality from Appendix~\ref{app:martingale-clt} to the off-diagonal part of
$B_j$ and Rosenthal's inequality to
$\sum_t[B_j]_{tt}(X_{it}^2-1)$.  Since $\Tr(B_j)=0$, this gives, conditional
on $X_j$,
\[
\mathbb{E}\{|D_{ij}^c|^{2+\delta}|X_j\}
\le C\|B_j\|_F^{2+\delta}.
\]
The required entry moment is of order $4+2\delta\le4+\eta$.  Since
$q_j^c\le n^{-1}\sum_tX_{jt}^2$, Jensen's inequality and
Assumption~2 imply $\mathbb{E}(q_j^c)^{2+\delta}\le C$.
Thus, uniformly over $i\ne j$,
\begin{equation}
\mathbb{E}|D_{ij}^c|^{2+\delta}
\le Cn^{-(2+\delta)}.
\label{eq:centered-higher-moment}
\end{equation}
It follows that the family $\{n^2(D_{ij}^c)^2:i<j,n\ge1\}$ is uniformly
integrable.

We next control self-normalization without assuming inverse moments of the
sample variance.  There are constants $c_0,c_1,c_2>0$, depending only on the
uniform moment bound, such that
\begin{equation}
\sup_iP(q_i^c<c_0)\le c_1e^{-c_2n}.
\label{eq:centered-small-ball}
\end{equation}
To see this, choose a fixed $B$ sufficiently large and let
$Y_{it}=\max(-B,\min(X_{it},B))$.  The uniform $4+\eta$ bound permits $B$ to
be chosen such that $\var(Y_{it})\ge v_0>0$ uniformly in $(i,t)$: indeed,
$\|X_{it}-Y_{it}\|_2$ can be made uniformly smaller than $1/4$, and the
triangle inequality for centered $L^2$ norms then gives
$\{\var(Y_{it})\}^{1/2}\ge1-2\|X_{it}-Y_{it}\|_2\ge1/2$.
The clipping
map is one-Lipschitz, and the pairwise representation of sample variance gives
\[
n^{-1}\sum_t(Y_{it}-\bar Y_i)^2
\le n^{-1}\sum_t(X_{it}-\bar X_i)^2=q_i^c.
\]
Because $Y_{it}$ is bounded, Hoeffding's inequality applied to its first and
second sample moments yields
\[
P\left\{n^{-1}\sum_t(Y_{it}-\bar Y_i)^2<v_0/2\right\}
\le c_1e^{-c_2n}.
\]
This proves (\ref{eq:centered-small-ball}) with $c_0=v_0/2$.

For centered and standardized rows $y_i,y_j$, Cauchy--Schwarz and
$\sum_ty_{it}^2=\sum_ty_{jt}^2=n$ give
\[
|\tilde D_{ij}^c|
\le 1+\frac{1}{n(n-1)}\sum_ty_{it}^2y_{jt}^2
\le1+\frac{n}{n-1}\le3.
\]
On $\{q_i^c\wedge q_j^c\ge c_0\}$, the exact identity
$\tilde D_{ij}^c=D_{ij}^c/(q_i^cq_j^c)$ and
(\ref{eq:centered-higher-moment}) imply uniform integrability of
$n^2(\tilde D_{ij}^c)^2$.  On the complementary event, the last display
and (\ref{eq:centered-small-ball}) give
\[
\mathbb{E}\{n^2(\tilde D_{ij}^c)^2;
q_i^c\wedge q_j^c<c_0\}
\le Cn^2e^{-c_2n}\to0.
\]
Thus
\begin{equation}
\left\{n^2\big[(\tilde D_{ij}^c)^2+(D_{ij}^c)^2\big]:
i<j,\ n\ge1\right\}
\quad\text{is uniformly integrable.}
\label{eq:centered-ui}
\end{equation}

Finally, put $q=2+\eta/2$. Rosenthal's inequality and
Assumption~2 give
\[
\sup_{i\le p}\mathbb{E}\left|\frac1n\sum_{t=1}^n(X_{it}^2-1)\right|^q
+\sup_{i\le p}\mathbb{E}|\bar X_i|^q
\le Cn^{-q/2}.
\]
Therefore, for every $\varepsilon>0$, the union bound and $p=O(n)$ yield
\begin{align*}
&P\left\{\max_{i\le p}\left|
\frac1n\sum_{t=1}^n(X_{it}^2-1)\right|>\varepsilon\right\}
+P\left(\max_{i\le p}|\bar X_i|>\varepsilon\right)\\
&\qquad\le C_\varepsilon p n^{-q/2}
=O(n^{-\eta/4})\rightarrow0.
\end{align*}
Since $q_i^c=n^{-1}\sum_tX_{it}^2-\bar X_i^2$, it follows that
$\max_{i\le p}|q_i^c-1|\to0$ in probability.  We now state the required
pairwise conclusion explicitly.  For fixed $M>0$, the scaling identity
(\ref{eq:centered-scaling-error}), the bound
(\ref{eq:centered-D2}), and the preceding maximum-variance result imply
\begin{align}
&\sup_{i<j}P\{n|\tilde D_{ij}^c-D_{ij}^c|>\varepsilon\}\notag\\
&\quad\le \sup_{i<j}P\{n|D_{ij}^c|>M\}
+P\left\{\max_{k\le p}|q_k^c-1|>\delta_{\varepsilon,M}\right\}
\le \frac{C}{M^2}+o(1),
\label{eq:centered-pair-probability}
\end{align}
where $\delta_{\varepsilon,M}>0$ is chosen such that
$|u-1|\vee|v-1|\le\delta_{\varepsilon,M}$ implies
$|(uv)^{-1}-1|\le\varepsilon/M$.
First letting $n\to\infty$ and then $M\to\infty$ gives
\[
\sup_{i<j}P\{n|\tilde D_{ij}^c-D_{ij}^c|>\varepsilon\}\rightarrow0.
\]
Thus no maximum over the $p(p-1)/2$ pairwise errors is being asserted.
Uniform integrability in (\ref{eq:centered-ui}) and Vitali's theorem now
give (\ref{eq:centered-pair-l2}), uniformly in $i<j$.  Degeneracy gives
\[
\mathbb{E}\left[
 a_{n,p}\sum_{i<j}(\tilde D_{ij}^c-D_{ij}^c)
\right]^2
=a_{n,p}^2\sum_{i<j}\mathbb{E}(\tilde D_{ij}^c-D_{ij}^c)^2=o(1),
\]
which proves the lemma.
\end{proof}

\begin{proof}[Demeaning part of Theorem~2]
We give the perturbation argument because the divisor $n(n-1)$ is essential
for the cancellation.  For two independent coordinate rows $x=(x_t)$ and
$y=(y_t)$, put
\[
u=\sum_tx_t,\quad v=\sum_ty_t,\quad S=\sum_tx_ty_t,
\quad Q_{ab}=\sum_tx_t^ay_t^b,
\]
and let $q_x=n^{-1}Q_{20}$, $q_x^c=q_x-u^2/n^2$, with analogous
definitions for $y$.  Define the unstudentized corrected pairs
\begin{align*}
D^0(x,y)
&=(S/n)^2-Q_{22}/n^2,\\
D^c(x,y)
&=\{x^\prime\Pi_ny/n\}^2
-\{n(n-1)\}^{-1}\sum_t(\Pi_nx)_t^2(\Pi_ny)_t^2.
\end{align*}
The pairwise summands in (25) and
(27) are, respectively,
$D^0/(q_xq_y)$ and $D^c/(q_x^cq_y^c)$.

An exact expansion gives
\begin{align}
D^c-D^0
={}&-\frac{2(n+1)}{n^3(n-1)}Suv
+\frac{n+2}{n^4(n-1)}u^2v^2
-\frac{Q_{22}}{n^2(n-1)} \notag\\
&+\frac{2(vQ_{21}+uQ_{12})}{n^2(n-1)}
-\frac{v^2Q_{20}+u^2Q_{02}}{n^3(n-1)}.
\label{eq:demeaning-pair-expansion}
\end{align}
This identity follows by expanding
$\sum_t(x_t-u/n)^2(y_t-v/n)^2$; it also shows why replacing $n^2$ by
$n(n-1)$ in the correction is not optional.

We now separate the rank-one centering perturbation from studentization.
Both $D^0$ and $D^c$ are degenerate kernels in the coordinate rows.  Indeed,
using $\mathbb{E}(yy^\prime)=I_n$ and $\Pi_n^2=\Pi_n$,
\[
\mathbb{E}\{D^0(x,y)|x\}=0,
\qquad
\mathbb{E}\{D^c(x,y)|x\}=0,
\]
and the same identities hold after interchanging $x$ and $y$.  For the
second identity, the conditional expectation of the squared inner product
is $x^\prime\Pi_nx/n^2$, while
\[
\mathbb{E}\left\{\left.
\frac1{n(n-1)}\sum_t(\Pi_nx)_t^2(\Pi_ny)_t^2\right|x\right\}
=\frac{x^\prime\Pi_nx}{n^2},
\]
because $\mathbb{E}(\Pi_ny)_t^2=1-1/n$.

Let $A_{ij}=D^c(X_i,X_j)-D^0(X_i,X_j)$.  We bound the second moment of
each of the five terms in (\ref{eq:demeaning-pair-expansion}), with
$x=X_i$ and $y=X_j$.  Rosenthal's inequality for the independent
mean-zero summands of $u$, $v$, and $S$, together with the uniformly
bounded fourth moments of Assumption~2, gives $\mathbb{E}u^4+\mathbb{E}v^4\le Cn^2$
and $\mathbb{E}S^4\le Cn^2$; row independence gives $\mathbb{E}u^4v^4\le Cn^4$ and
$\mathbb{E}(v^2Q_{20})^2=\mathbb{E}v^4\,\mathbb{E}Q_{20}^2\le Cn^4$; and expanding
$v^2Q_{21}^2$ over the independent times, only index configurations in
which every time index of $y$ appears at least twice survive, so
$\mathbb{E}(vQ_{21})^2\le Cn^2$.  Hence, uniformly over $i\ne j$,
\begin{align*}
\mathbb{E}(Suv)^2&\le(\mathbb{E}S^4)^{1/2}(\mathbb{E}u^4v^4)^{1/2}\le Cn^3,
\qquad
\mathbb{E}(u^2v^2)^2=\mathbb{E}u^4\,\mathbb{E}v^4\le Cn^4,\\
\mathbb{E}Q_{22}^2&\le Cn^2,
\quad
\mathbb{E}(vQ_{21})^2+\mathbb{E}(uQ_{12})^2\le Cn^2,
\quad
\mathbb{E}(v^2Q_{20})^2+\mathbb{E}(u^2Q_{02})^2\le Cn^4,
\end{align*}
and the squared coefficients of the five terms in
(\ref{eq:demeaning-pair-expansion}) are, respectively, $O(n^{-6})$,
$O(n^{-8})$, $O(n^{-6})$, $O(n^{-6})$, and $O(n^{-8})$.  Therefore
\[
\mathbb{E}A_{ij}^2
\le C\{n^{-3}+n^{-4}+n^{-4}+n^{-4}+n^{-4}\}
\le Cn^{-3},
\]
using only the $4+\eta$ moment condition.  Since $A_{ij}$ is
conditionally mean zero given either row, covariances between distinct
pairs vanish, including pairs with one common index.  Therefore, with
$a_{n,p}=n^2/\sqrt{p(p-1)n(n-1)}=O(1)$,
\begin{equation}
\mathbb{E}\left(a_{n,p}\sum_{i<j}A_{ij}\right)^2
\le Ca_{n,p}^2p^2n^{-3}=O(n^{-1}).
\label{eq:known-scale-demeaning}
\end{equation}

It remains to studentize the centered rows.  Lemma~\ref{lem:centered-studentization} establishes, under the same $4+\eta$ condition and without treating the coordinates of $\Pi_nX_i$ as temporally independent, that
\begin{equation}
a_{n,p}\sum_{i<j}(\tilde D_{ij}^c-D_{ij}^c)=o_p(1).
\label{eq:centered-studentization}
\end{equation}

Let $Z_n^{c,0}=a_{n,p}\sum_{i<j}D_{ij}^c$.  By
(\ref{eq:known-scale-demeaning}), $Z_n^{c,0}-Z_n=o_p(1)$; by
(\ref{eq:centered-studentization}),
$\tilde Z_n^c-Z_n^{c,0}=o_p(1)$; and by
the studentization result established in
Appendix~\ref{app:studentization}, $\tilde Z_n-Z_n=o_p(1)$.  The
triangle inequality proves
$\tilde Z_n^c-\tilde Z_n=o_p(1)$.  The null limit follows from
the martingale limit established in Appendix~\ref{app:martingale-clt}
together with the studentization equivalence of
Appendix~\ref{app:studentization}.  Under the Gaussian null, contiguity
transfers the equivalence to $G_n(\omega)$, and Le Cam's third lemma
gives the stated local-power limit.
\end{proof}

\section{Proofs for Section 5}
\label{app:rigidity}

\begin{proof}[Proof of Proposition \ref{prop:finite-sample}]
Write $g_n^{\mathcal K}=g_n^{\mathcal K}(\omega)$. Viewed as a function
of the full sample
$\mathcal X_n=(X_1,\ldots,X_n)\in\mathbb R^{np}$, each component
likelihood ratio $\ell_n(H,\mathcal X_n)$ is a positive real-analytic
function of $\mathcal X_n$. Because $\mathcal K_n$ is compact and the
map $H\mapsto\Sigma_\omega(H)$ is continuous, the covariance matrices
and their inverses are uniformly bounded on $\mathcal K_n$ for fixed
$n,p,\omega$. Thus the analytic expansions of
$\ell_n(H,\cdot)$, together with all derivatives on compact subsets of
$\mathbb R^{np}$, are dominated uniformly in $H\in\mathcal K_n$.
Integration against $Q_n^{\mathcal K}$ therefore preserves real
analyticity, and $g_n^{\mathcal K}$ is positive and real analytic.

The function is nonconstant. Otherwise
$g_n^{\mathcal K}=1$ Lebesgue-a.e. and hence, by analyticity, everywhere,
which would imply $G_n^{\mathcal K}=P_n$.  Fix $t\in\mathbb R^p$.  Equality
of the centered Gaussian mixture and $N_p(0,I_p)$ would imply, for every
$s\ge0$,
\[
\mathbb{E}_{Q_n^{\mathcal K}}
\exp\left\{-\frac{s}{2}t^\prime\Sigma_\omega(H)t\right\}
=
\exp\left\{-\frac{s}{2}\|t\|^2\right\}.
\]
By uniqueness of Laplace transforms,
$t^\prime\Sigma_\omega(H)t=\|t\|^2$ almost surely.  Applying this conclusion
to a countable dense subset of $\mathbb R^p$ and using continuity of
quadratic forms yields $\Sigma_\omega(H)=I_p$ almost surely.  This is
impossible.  The map $H\mapsto\Sigma_\omega(H)$ is analytic and has a
nonzero derivative at $H=0$ when $\omega>0$; hence an open punctured
neighborhood of zero contains matrices for which
$\Sigma_\omega(H)\ne I_p$.  The conditional GOE prior has positive
density on the interior of $\mathcal K_n$ and therefore assigns positive
probability to that neighborhood.  Thus $g_n^{\mathcal K}$ is a
nonconstant real-analytic function.

The Gaussian law of $\mathcal X_n$ is absolutely continuous on
$\mathbb R^{np}$, and every level set of a nonconstant real-analytic
function has Lebesgue measure zero. Hence the distribution of
$g_n^{\mathcal K}$ under $P_n$ is atomless, and the Neyman--Pearson test
$\Psi_n^{\mathcal K}$ is $P_n$-a.s. unique.

It remains to verify admissibility for the composite alternative. Suppose
that a level-$\alpha$ test $\varphi$ weakly dominates
$\Psi_n^{\mathcal K}$ at every $H\in\mathcal K_n$ and is strictly better
at some $H_0\in\mathcal K_n$. The map
\[
H\mapsto
\mathbb{E}_{\nu_n(H,\omega)}(\varphi-\Psi_n^{\mathcal K})
\]
is continuous: on the compact parameter set $\mathcal K_n$, Gaussian
laws vary continuously in total variation with their covariance
matrices. Strict improvement therefore holds on a relative neighborhood
of $H_0$ in $\mathcal K_n$. The conditional GOE prior has full support
on $\mathcal K_n$, so this neighborhood has positive
$Q_n^{\mathcal K}$-probability. Integrating the power difference against
$Q_n^{\mathcal K}$ yields
\[
\mathbb{E}_{G_n^{\mathcal K}(\omega)}\varphi
>
\mathbb{E}_{G_n^{\mathcal K}(\omega)}\Psi_n^{\mathcal K},
\]
contradicting the Neyman--Pearson optimality of
$\Psi_n^{\mathcal K}$. Hence no such dominator exists.
\end{proof}

\begin{proof}[Proof of Proposition~3]
Write $L_n=g_n(\omega)$, $k_n=k_{n,\alpha}$, and
$\Psi_n=\Psi_n^{\mathrm{NP}}$.  Theorem~1(i)
gives $L_n\Rightarrow L_\omega$, where $L_\omega$ is lognormal and has a
continuous bounded density $f_\omega$.  P\'olya's theorem therefore gives
$\rho_n(\omega)\to0$.  Continuity and strict monotonicity of the limiting
cdf at its $(1-\alpha)$ quantile imply
$k_n\to k_\alpha(\omega)$; any randomization probability is asymptotically
irrelevant.  The likelihood expansion in
Theorem~1(i), parts (i)--(ii) of
Theorem~2, and continuity of the normal limit then give
$e_n(\omega)\to0$ by the threshold-matching argument in
Theorem~1(ii).

By the definition of a Neyman--Pearson test, pointwise (including on the
randomization set, where $L_n=k_n$),
\[
(L_n-k_n)(\Psi_n-\varphi)
=|L_n-k_n||\Psi_n-\varphi|\ge0.
\]
Changing measure in the first term shows that its $P_n$ expectation is
exactly $\mathcal R_n(\varphi)$.  Hence, for every $\varepsilon>0$,
\begin{align*}
\mathbb{E}_{P_n}|\Psi_n-\varphi|
&\le P_n(|L_n-k_n|\le\varepsilon)
+\varepsilon^{-1}\mathcal R_n(\varphi)\\
&\le 2\|f_\omega\|_\infty\varepsilon
+2\rho_n(\omega)
+\varepsilon^{-1}\mathcal R_n(\varphi).
\end{align*}
The triangle inequality with
$\mathbb{E}_{P_n}|\Psi_n-\varphi_n^*|=e_n(\omega)$ proves
(29).  Minimizing
$2\|f_\omega\|_\infty\varepsilon+\mathcal R_n(\varphi)/\varepsilon$ gives
(30); when $\mathcal R_n=0$, the
same conclusion follows by letting $\varepsilon\downarrow0$.

Since $\mathbb{E}_{P_n}\Psi_n=\alpha$,
\[
\mathcal R_n(\varphi)
=\{\mathbb{E}_{G_n}\Psi_n-\mathbb{E}_{G_n}\varphi\}
-k_n\{\alpha-\mathbb{E}_{P_n}\varphi\}.
\]
This proves the two interpretations of regret.  Finally,
\begin{align*}
|\mathbb{E}_{\nu_n}(\varphi-\varphi_n^*)|
&\le \mathbb{E}_{P_n}\{L_n^\nu|\varphi-\varphi_n^*|\}\\
&\le \{\mathbb{E}_{P_n}(L_n^\nu)^2\}^{1/2}
\{\mathbb{E}_{P_n}|\varphi-\varphi_n^*|^2\}^{1/2}\\
&\le K^{1/2}\{\mathbb{E}_{P_n}|\varphi-\varphi_n^*|\}^{1/2},
\end{align*}
because tests take values in $[0,1]$.  This is
(31).
\end{proof}

\begin{proof}[Proof of Lemma \ref{lem:np-rigidity}]
Let $\Psi_n^{\mathrm{NP}}$ be the exact level-$\alpha$ test used in
Proposition~3.  Its power converges to
$\beta^*(\omega)$: by Theorem~1(ii) it is $L^1(P_n)$-equivalent to
$1\{2U_n/\gamma_n>z_{1-\alpha}\}$, contiguity transfers the equivalence
to $G_n(\omega)$, and Theorem~1(iii) gives the limit.  The hypotheses of
the present lemma therefore imply
$\mathcal R_n(\varphi_n)\to0$, because both the mixture-power difference
and the size difference converge to zero and
$k_{n,\alpha}\to k_\alpha(\omega)$.  Applying
(29) first for fixed $\varepsilon$, then
letting $n\to\infty$ and $\varepsilon\downarrow0$, yields
$\mathbb{E}_{P_n}|\varphi_n-\varphi_n^*|\to0$.
\end{proof}

\begin{proof}[Proof of Theorem~3]
\emph{Step 1.} The integrands are bounded by one.  Moreover,
\[
Q_n\{(\mathcal K_n^{\mathrm{rig}})^c\}\rightarrow0.
\]
Therefore
\[
\mathbb{E}_{G_{n}(\omega)}\varphi_{n}
-\mathbb{E}_{G_{n}(\omega)}\varphi_{n}^{U}
\geq
Q_{n}(\mathcal K_n^{\mathrm{rig}})
\inf_{H\in\mathcal K_n^{\mathrm{rig}}}
\big[\mathbb{E}_{\nu_{n}(H,\omega)}\varphi_{n}
-\mathbb{E}_{\nu_{n}(H,\omega)}\varphi_{n}^{U}\big]
-2Q_n\{(\mathcal K_n^{\mathrm{rig}})^c\},
\]
so (32) and
$\mathbb{E}_{G_{n}(\omega)}\varphi_{n}^{U}
\rightarrow\beta^{\ast}(\omega)$, the latter by Theorem~1 (or by
integrating Lemma~3 of the main paper against $Q_n$), give
$\liminf_{n}\mathbb{E}_{G_{n}(\omega)}\varphi_{n}
\geq\beta^{\ast}(\omega)$.

\emph{Step 2.}
Let $\alpha_n=\mathbb{E}_{P_n}\varphi_n$ and pass to an arbitrary subsequence along which $\alpha_n\to a\in[0,\alpha]$.  For $b\in(0,1)$, define
\[
\pi_n(b)
=
\sup_{\substack{0\le\psi\le1\\
                 \mathbb{E}_{P_n}\psi\le b}}
\mathbb{E}_{G_n(\omega)}\psi .
\]
By the Neyman--Pearson lemma, $\mathbb{E}_{G_n(\omega)}\varphi_n
\le \pi_n(\alpha_n)$. Fix any $a^\prime\in(a,1)$.  Eventually $\alpha_n\le a^\prime$, and monotonicity of the optimal power in the level gives
\[
\mathbb{E}_{G_n(\omega)}\varphi_n
\le \pi_n(\alpha_n)
\le \pi_n(a^\prime).
\]
For fixed $a^\prime$, $\pi_n(a^\prime)$ is the power of the exact
level-$a^\prime$ Neyman--Pearson test, and weak convergence of the
likelihood ratio alone does not give convergence of its expectation.
Instead, Theorem~1(ii) at level $a^\prime$ shows that this test is
$L^1(P_n)$-equivalent to $1\{2U_n/\gamma_n>z_{1-a^\prime}\}$, contiguity
transfers the equivalence to $G_n(\omega)$ (the tests are bounded), and
Theorem~1(iii) identifies the limit of the latter's power:
\[
\pi_n(a^\prime)
\rightarrow
1-\Phi\left(z_{1-a^\prime}-s_\omega\right),
\qquad
s_\omega=\tfrac{1}{2}{\omega^2\gamma}.
\]
Thus, $
\limsup_n\mathbb{E}_{G_n(\omega)}\varphi_n
\le
1-\Phi\left(z_{1-a^\prime}-s_\omega\right)$. Letting $a^\prime\downarrow a$ and using right continuity, with the
convention that the bound equals zero when $a=0$, yields
\[
\limsup_n\mathbb{E}_{G_n(\omega)}\varphi_n
\le
1-\Phi\left(z_{1-a}-s_\omega\right)
\le \beta^*(\omega).
\]
The last inequality is strict if $a<\alpha$.

\emph{Step 3.} Combining Steps 1--2 along every subsequence forces
$a=\alpha$ and
$\mathbb{E}_{G_{n}(\omega)}\varphi_{n}\rightarrow\beta^{\ast}(\omega)$;
in particular $\mathbb{E}_{P_{n}}\varphi_{n}\rightarrow\alpha$. Lemma
\ref{lem:np-rigidity} then yields
$\mathbb{E}_{P_{n}}|\varphi_{n}-\varphi_{n}^{\ast}|\rightarrow0$, which
is the first claim.

\emph{Step 4.} If $\{\nu_{n}\}$ is contiguous to $\{P_{n}\}$, then
$|\varphi_{n}-\varphi_{n}^{\ast}|\rightarrow0$ in $P_{n}$-probability
implies the same in $\nu_{n}$-probability, and boundedness gives
$\mathbb{E}_{\nu_{n}}|\varphi_{n}-\varphi_{n}^{\ast}|\rightarrow0$,
which is the second claim.
\end{proof}

\begin{proof}[GOE-negligible gain-set assertion in Theorem~3]
Set
\[
\Delta_n(H)=
\mathbb{E}_{\nu_n(H,\omega)}\varphi_n
-\mathbb{E}_{\nu_n(H,\omega)}\varphi_n^{U}.
\]
By Steps 1--3 above, $\mathbb{E}_{G_n(\omega)}\varphi_n\to
\beta^*(\omega)$, and $\mathbb{E}_{G_n(\omega)}\varphi_n^{U}
\to\beta^*(\omega)$ by Theorem~1; since the
integrands are bounded and
$Q_n\{(\mathcal K_n^{\mathrm{rig}})^c\}\to0$,
\[
\int_{\mathcal K_n^{\mathrm{rig}}}
\Delta_n(H)\mathrm{d}Q_n(H)
\rightarrow0.
\]
By (32),
\[
\epsilon_n=
\left[-\inf_{H\in\mathcal K_n^{\mathrm{rig}}}\Delta_n(H)\right]_+
\rightarrow0,
\qquad
\Delta_n(H)\ge-\epsilon_n
\quad(H\in\mathcal K_n^{\mathrm{rig}}).
\]
Writing $x^+=\max(x,0)$ and $x^-=\max(-x,0)$, we obtain the explicit
positive-part bound
\begin{align*}
\int_{\mathcal K_n^{\mathrm{rig}}}\Delta_n(H)^+\mathrm dQ_n(H)
&=\int_{\mathcal K_n^{\mathrm{rig}}}\Delta_n(H)\mathrm dQ_n(H)
+\int_{\mathcal K_n^{\mathrm{rig}}}\Delta_n(H)^-\mathrm dQ_n(H)\\
&\le
\left|\int_{\mathcal K_n^{\mathrm{rig}}}\Delta_n(H)\mathrm dQ_n(H)\right|
+\epsilon_nQ_n(\mathcal K_n^{\mathrm{rig}})\rightarrow0.
\end{align*}
Thus, for every $\varepsilon>0$, Markov's inequality gives
\[
Q_n\{H\in\mathcal K_n^{\mathrm{rig}}:\Delta_n(H)>\varepsilon\}
\le\frac1\varepsilon
\int_{\mathcal K_n^{\mathrm{rig}}}\Delta_n(H)^+\mathrm dQ_n(H)
\rightarrow0,
\]
which proves the claim.
\end{proof}

\begin{proof}[Proof of Corollary~2]
Fix an arbitrary $\omega>0$.  The upper bound is Step 2 of the preceding
proof, while Theorem~2(iii) gives
$\mathbb{E}_{G_n(\omega)}\varphi_n^*\to\beta^*(\omega)$.
Although the same statistic $\tilde Z_n^c$ is used for every strength,
these convergence statements are pointwise for each fixed $\omega$; no
uniformity over a continuum of $\omega$ is asserted.
\end{proof}

\begin{proof}[Proof of Corollary~3]
(i) The statistic $\tilde Z_n^c$ is a function of the demeaned,
studentized rows alone: replacing $X_{it}$ by $d_iX_{it}$ multiplies the
demeaned row by $d_i$ and its centered standard deviation by $d_i$,
leaving every $\tilde X_{it}^c$, hence $\tilde Z_n^c$, unchanged.
Rejection probabilities are therefore constant in $D$ over the composite
null, equal to their value at $D=I_p$, and constant over each orbit
$\{G_n^{D}(\omega):D\}$; the stated limits are then Theorem~2.
(ii) Fix a sequence of diagonal matrices $D_n$ and set
$\psi_n(x_1,\ldots,x_n)
=\varphi_n(D_n^{1/2}x_1,\ldots,D_n^{1/2}x_n)$.  Then
$\mathbb{E}_{P_n}\psi_n
=\mathbb{E}_{N_p(0,D_n)^{\otimes n}}\varphi_n$ and
$\mathbb{E}_{G_n(\omega)}\psi_n
=\mathbb{E}_{G_n^{D_n}(\omega)}\varphi_n$.  Since
$\mathbb{E}_{P_n}\psi_n=\mathbb{E}_{N_p(0,D_n)^{\otimes n}}\varphi_n
\le\sup_D\mathbb{E}_{N_p(0,D)^{\otimes n}}\varphi_n$, uniform
asymptotic level over the composite null gives
$\limsup_n\mathbb{E}_{P_n}\psi_n\le\alpha$, and Corollary~2 applied
to $\psi_n$ yields
$\limsup_n\mathbb{E}_{G_n^{D_n}(\omega)}\varphi_n
\le\beta^*(\omega)$.
\end{proof}

\begin{proof}[Proof of Lemma \ref{lem:tightness}]
Couple $X_{it}=a\xi_{it}+bg_{t}$ with $a^{2}=1-\rho_{p}$,
$b^{2}=\rho_{p}$, where $\{\xi_{it}\}$ are i.i.d.\ $N(0,1)$ and
$\{g_{t}\}$ are i.i.d.\ $N(0,1)$, independent of each other; then
$\var(X_{t})=(1-\rho_{p})I_{p}+\rho_{p}\mathbf{1}\mathbf{1}^{\prime}
=R_{p}$. By the representation (\ref{eq:Dij-cross}), the corrected
terms are exactly
$D_{ij}(X)=n^{-2}\sum_{t\neq s}X_{it}X_{is}X_{jt}X_{js}$, and
$Z_{n}(X)=a_{n}\sum_{i<j}D_{ij}(X)$ with
$a_{n}=n^{2}/\sqrt{p(p-1)n(n-1)}$. Substituting
\[
X_{it}X_{is}
=
a^{2}\xi_{it}\xi_{is}
+ab(\xi_{it}g_{s}+\xi_{is}g_{t})
+b^{2}g_{t}g_{s}
\]
and expanding the product over the rows $i$ and $j$ decomposes
$Z_{n}(X)$ into six groups,
\[
Z_{n}(X)
=
(1-\rho_{p})^{2}Z_{n}(\xi)+T_{2}+T_{3}+T_{4}+T_{6}+T_{7},
\]
where, writing $S_{ts}=\sum_{i}\xi_{it}\xi_{is}$,
\begin{align*}
T_{2} & =a^{2}b^{2}(p-1)a_{n}n^{-2}
\sum_{t\neq s}g_{t}g_{s}S_{ts}
\qquad \qquad (a^{2}\times b^{2}\ \text{cross terms}),\\
T_{3} & =a^{3}ba_{n}n^{-2}\sum_{t\neq s}\sum_{i<j}
\big[\xi_{it}\xi_{is}(\xi_{jt}g_{s}+\xi_{js}g_{t})
+\xi_{jt}\xi_{js}(\xi_{it}g_{s}+\xi_{is}g_{t})\big],\\
T_{4} & =a^{2}b^{2}a_{n}n^{-2}\sum_{t\neq s}\sum_{i<j}
(\xi_{it}g_{s}+\xi_{is}g_{t})(\xi_{jt}g_{s}+\xi_{js}g_{t}),\\
T_{6} & =b^{4}\tfrac{p(p-1)}{2}a_{n}n^{-2}
\sum_{t\neq s}g_{t}^{2}g_{s}^{2},\\
T_{7} & =ab^{3}(p-1)a_{n}n^{-2}\sum_{t\neq s}g_{t}g_{s}
\sum_{i}(\xi_{it}g_{s}+\xi_{is}g_{t}) .
\end{align*}
We bound each group; throughout, $b^{2}=\rho_{p}=\vartheta/(p-1)$, so
$b^{4}(p-1)^{2}=\vartheta^{2}$ and $a_{n}\asymp n/p$.
For the mixed Gaussian chaoses below we use Wick's formula in its elementary
parity form: after the products are expanded, a cross-moment is zero unless
every basic coordinate $\xi_{it}$ and $g_t$ occurs an even number of times.
Since the degree of every monomial is bounded, each nonzero cross-moment is
bounded by a universal constant. We count the admissible Wick contractions
explicitly in the three places where they are needed.

\emph{$T_{2}$.} Each summand has mean zero. For
$(t,s)\neq(t^{\prime},s^{\prime})$ (as unordered pairs) the expectation
of the product vanishes, so
$\var(T_{2})=a^{4}b^{4}(p-1)^{2}a_{n}^{2}n^{-4}\cdot
2\sum_{t\neq s}\mathbb{E}[g_{t}^{2}g_{s}^{2}]\mathbb{E}[S_{ts}^{2}]
=\frac{2a^{4}\vartheta^{2}}{p-1}
=\frac{2\vartheta^{2}}{p-1}\Big(1-\frac{\vartheta}{p-1}\Big)^{2}
\le\frac{2\vartheta^{2}}{p-1}\rightarrow0$, using
$\mathbb{E}[S_{ts}^{2}]=p$, $b^2(p-1)=\vartheta$, and
$a_n^2=n^3/\{p(p-1)(n-1)\}$.

\emph{$T_{3}$.} Each summand contains a single factor $g$, hence has
mean zero.  Each expanded monomial is indexed by an ordered time pair
$(t,s)$ and an unordered row pair $(i,j)$, and contains three distinct
$\xi$-coordinates and one $g$-coordinate. Once the first monomial is fixed,
the parity requirement forces the second monomial to contain the same
$g$-coordinate and the same three $\xi$-coordinates; there are at most a
fixed number of permutations. Thus there are at most $Cp^2n^2$
nonvanishing Wick contractions. Hence
$\var(T_{3})\leq Ca^{6}b^{2}a_{n}^{2}n^{-4}p^{2}n^{2}
=C\vartheta/(p-1)\rightarrow0$.
 
\emph{$T_{4}$.} $\mathbb{E}[\xi_{it}\xi_{jt}]=0$ for $i\neq j$, so all
summands have mean zero. After expanding the two brackets, a monomial either
contains $g_sg_t$ or contains one squared factor $g_s^2$ or $g_t^2$.
In the first case the second monomial's time indices are forced. In the
squared-factor case one additional time index can be free, but the two
$\xi$-coordinates force the row indices. Hence each of the $O(p^2n^2)$
first monomials has at most $Cn$ admissible partners, giving at most
$Cp^2n^3$ nonzero Wick contractions. Therefore
$\var(T_{4})\leq Cb^{4}a_{n}^{2}n^{-4}p^{2}n^{3}
=C\vartheta^{2}n/p^{2}\rightarrow0$
(recall $p\asymp n$).

\emph{$T_{6}$.} Here
$n^{-2}\sum_{t\neq s}g_{t}^{2}g_{s}^{2}
=(n^{-1}\sum_{t}g_{t}^{2})^{2}-n^{-2}\sum_{t}g_{t}^{4}
\rightarrow1$ almost surely, with $O_{p}(n^{-1/2})$ fluctuation, and
the deterministic coefficient satisfies
$b^{4}\tfrac{p(p-1)}{2}a_{n}
=\tfrac{\vartheta^{2}}{2}\tfrac{p}{p-1}a_{n}
\rightarrow\vartheta^{2}/(2\gamma)$. Hence
$T_{6}\rightarrow_{p}\vartheta^{2}/(2\gamma)$.

\emph{$T_{7}$.} Each summand contains a single $\xi$-factor, hence has
mean zero. An expanded monomial has the form
$\xi_{it}g_tg_s^2$ or $\xi_{is}g_t^2g_s$.  The unique $\xi$-coordinate
forces the corresponding row and time index in its Wick partner; the odd
$g$-coordinate is also forced, while the squared $g$-coordinate leaves at
most one free time index. There are therefore at most $Cpn^3$ nonzero Wick
contractions, and
$\var(T_{7})\leq Ca^{2}b^{6}(p-1)^{2}a_{n}^{2}n^{-4}n^{3}p
=C\vartheta^{3}n/p^{2}\rightarrow0$.

Finally, $(1-\rho_{p})^{2}\rightarrow1$ and
$Z_{n}(\xi)\overset{d}{\rightarrow}N(0,1)$ by Theorem 2,
since $\{\xi_{it}\}$ is a product-coordinate Gaussian array. Slutsky's
lemma yields
$Z_{n}(X)\overset{d}{\rightarrow}N(\vartheta^{2}/(2\gamma),1)$, and the
power statement follows from the continuity of the limit distribution.
\end{proof}

\begin{proof}[Proof of Proposition \ref{prop:hodges}]
Pointwise dominance is by construction. For the size,
$\mathbb{E}_{P_{n}}\varphi_{n}\leq\mathbb{E}_{P_{n}}\varphi_{n}^{\circ}
+P_{n}(\lambda_{\max}(\hat{R}_{n})>m)\rightarrow\alpha$ since
$m>(1+\sqrt{\gamma})^{2}$. For the power,
$\nu_{n}^{\mathrm{sp}}(\lambda_{\max}(\hat{R}_{n})>m)\rightarrow1$
since $m<(1+\vartheta)(1+\gamma/\vartheta)$, while
$\lim_{n}\mathbb{E}_{\nu_{n}^{\mathrm{sp}}}\varphi_{n}^{\circ}<1$ by
Lemma \ref{lem:tightness}.
\end{proof}

\bibliographystyle{imsart-nameyear}
\bibliography{OptimalTest_arXiv}

\end{document}